 \newtheorem{Theorem}{Theorem}[section]
 \newtheorem{Corollary}[Theorem]{Corollary}
 \newtheorem{Lemma}[Theorem]{Lemma}
 \newtheorem{Proposition}[Theorem]{Proposition}
 \newtheorem{Definition}[Theorem]{Definition}
\newtheorem{Problem}[Theorem]{Problem}
 \newtheorem{Conjecture}[Theorem]{Conjecture}
 \newtheorem{Remark}[Theorem]{Remark}
 \numberwithin{equation}{section}
\begin{document}

\title[Concavity property of minimal $L^2$ integrals \uppercase\expandafter{\romannumeral4}]
{Concavity property of minimal $L^2$ integrals with Lebesgue measurable gain \uppercase\expandafter{\romannumeral6} ----- fibrations over products of open Riemann surfaces}

\author{Shijie Bao}
\address{Shijie Bao: School of
Mathematical Sciences, Peking University, Beijing 100871, China.}
\email{bsjie@pku.edu.cn}

\author{Qi'an Guan}
\address{Qi'an Guan: School of
Mathematical Sciences, Peking University, Beijing 100871, China.}
\email{guanqian@math.pku.edu.cn}

\author{Zheng Yuan}
\address{Zheng Yuan: School of
Mathematical Sciences, Peking University, Beijing 100871, China.}
\email{zyuan@pku.edu.cn}

\thanks{}

\subjclass[2010]{32D15, 32E10, 32L10, 32U05, 32W05}

\keywords{fibration, multiplier ideal sheaf, minimal $L^{2}$ integral, concavity, optimal $L^{2}$ extension theorem}

\date{\today}

\dedicatory{}

\commby{}


\begin{abstract}
In this article, we present characterizations of the concavity property of minimal $L^2$ integrals degenerating to linearity in the case of fibrations over products of open Riemann surfaces.
As applications, we obtain characterizations of the holding of equality in optimal jets $L^2$ extension problem from fibers over products of analytic subsets to fibrations over products of open Riemann surfaces,
which implies characterizations of the equality parts of Suita conjecture and extended Suita conjecture for fibrations over products of open Riemann surfaces.
 \end{abstract}

\maketitle

\section{Introduction}\label{introduction}

The strong openness property of multiplier ideal sheaves \cite{GZSOC} (2-dim \cite{JonssonMustata}) i.e. $\mathcal{I}(\varphi)=\mathcal{I}_+(\varphi):=\mathop{\cup} \limits_{\epsilon>0}\mathcal{I}((1+\epsilon)\varphi)$ (conjectured by Demailly \cite{DemaillySoc})
has opened the door to new types of approximation techniques, which was used in the study of several complex variables, complex algebraic geometry and complex differential geometry
(see e.g. \cite{GZSOC,K16,cao17,cdM17,FoW18,DEL18,ZZ2018,GZ20,berndtsson20,ZZ2019,ZhouZhu20siu's,FoW20,KS20,DEL21}),
where $\varphi$ is a plurisubharmonic function of a complex manifold $M$ (see \cite{Demaillybook}), and the multiplier ideal sheaf $\mathcal{I}(\varphi)$ is defined as the sheaf of germs of holomorphic functions $f$ such that $|f|^2e^{-\varphi}$ is locally integrable (see e.g. \cite{Tian,Nadel,Siu96,DEL,DK01,DemaillySoc,DP03,Lazarsfeld,Siu05,Siu09,DemaillyAG,Guenancia}).

When $\mathcal{I}(\varphi)=\mathcal{O}$, the strong openness property degenerates to the openness property conjectured by Demailly-Koll\'ar \cite{DK01}.
Berndtsson \cite{Berndtsson2} (2-dim by Favre-Jonsson \cite{FavreJonsson}) proved the openness property by establishing an effectiveness result of the openness property.
Stimulated by Berndtsson's effectiveness result, and continuing the proof of the strong openness property \cite{GZSOC},
Guan-Zhou \cite{GZeff} established an effectiveness result of the strong openness property by considering the minimal $L^{2}$ integral on the pseudoconvex domain $D$.

Considering the minimal $L^{2}$ integrals on the sublevel sets of the weight $\varphi$,
Guan \cite{G16} obtained a sharp version of Guan-Zhou's effectiveness result,
and established a concavity property of the minimal $L^2$ integrals on the sublevel sets of the weight $\varphi$ (with constant gain).
The concavity property was applied to study the upper bound of the Bergman kernel i.e. a proof of Saitoh's conjecture for conjugate Hardy $H^2$ kernels \cite{Guan2019},
and equisingular approximations for the multiplier ideal sheaves i.e. the sufficient and necessary condition of the existence of decreasing equisingular approximations with analytic singularities for the multiplier ideal sheaves with weights $\log(|z_{1}|^{a_{1}}+\cdots+|z_{n}|^{a_{n}})$ \cite{guan-20}.

For smooth gain, Guan  \cite{G2018} (see also \cite{GM}) presented the concavity property on Stein manifolds (the weakly pseudoconvex K\"{a}hler case was obtained by Guan-Mi\cite{GM_Sci}).
The concavity property \cite{G2018} (see also \cite{GM}) was applied by Guan-Yuan to deduce an optimal support function related to the strong openness property \cite{GY-support} and an effectiveness result of the strong openness property in $L^p$ \cite{GY-lp-effe}.

For Lebesgue measurable gain, Guan-Yuan \cite{GY-concavity} obtained the concavity property on Stein manifolds (the weakly pseudoconvex K\"{a}hler case was obtained by Guan-Mi-Yuan \cite{GMY-concavity2}).
The concavity property \cite{GY-concavity} was applied by Guan-Yuan to deduce a twisted $L^p$ version of the strong openness property \cite{GY-twisted}.

As the linearity is a degenerate case of concavity,
a natural problem was posed in \cite{GY-concavity3}:

\begin{Problem}[\cite{GY-concavity3}]\label{Q:chara}
How to characterize the concavity property degenerating to linearity?
\end{Problem}

For 1-dim case, Guan-Yuan \cite{GY-concavity} gave an answer to Problem \ref{Q:chara} for single point, i.e. for weights may not be subharmonic (the case of subharmonic weights was answered by Guan-Mi \cite{GM}),
and Guan-Yuan \cite{GY-concavity3} gave an answer to Problem \ref{Q:chara} for finite points.
For products of open Riemann surfaces, Guan-Yuan \cite{GY-concavity4} gave answers to Problem \ref{Q:chara} for products of analytic subsets. 
Recently, Bao-Guan-Yuan \cite{BGY-concavity5} gave an answer to Problem \ref{Q:chara} for fibrations over open Riemann surfaces.

In the present article, we give answers to Problem \ref{Q:chara} for fibrations over products of open Riemann surfaces.

Let $\Omega_j$  be an open Riemann surface, which admits a nontrivial Green function $G_{\Omega_j}$ for any  $1\le j\le n_1$. Let $Y$ be an $n_2-$dimensional weakly pseudoconvex K\"ahler manifold, and let $K_Y$ be the canonical (holomorphic) line bundle on $Y$. Let $M=\left(\prod_{1\le j\le n_1}\Omega_j\right)\times Y$ be an $n-$dimensional complex manifold, where $n=n_1+n_2$. Let $\pi_{1}$, $\pi_{1,j}$ and $\pi_2$ be the natural projections from $M$ to $\prod_{1\le j\le n_1}\Omega_j$, $\Omega_j$ and $Y$ respectively. Let $K_M$ be the canonical (holomorphic) line bundle on $M$.

 Let $Z_j$ be a (closed) analytic subset of $\Omega_j$ for any $j\in\{1,\ldots,n_1\}$, and denote that $Z_0:=\left(\prod_{1\le j\le n_1}Z_j\right)\times Y\subset M$. For any $j\in\{1,\ldots,n_1\}$, let $\varphi_j$ be a subharmonic function on $\Omega_{j}$ such that $\varphi_j(z)>-\infty$ for any $z\in Z_j$. Let $\varphi_Y$ be a plurisubharmonic function on $Y$, and denote that $\varphi:=\sum_{1\le j\le n_1}\pi_{1,j}^*(\varphi_j)+\pi_2^*(\varphi_Y)$. Let $\psi$ be a plurisubharmonic function on $M$ such that $\{\psi<-t\}\backslash Z_0$ is a weakly pseudoconvex K\"ahler manifold for any $t\in\mathbb{R}$ and $\psi(z)=-\infty$ for any $z\in Z_0$.
Let $c$ be a positive function on $(0,+\infty)$ such that $\int_{0}^{+\infty}c(t)e^{-t}dt<+\infty$, $c(t)e^{-t}$ is decreasing on $(0,+\infty)$ and $c(-\psi)$ has a positive lower bound on any compact subset of $M\backslash Z_0$. Let $f$ be a holomorphic $(n,0)$ form on a neighborhood of $Z_0$.
Denote
\begin{equation*}
\begin{split}
\inf\bigg\{\int_{\{\psi<-t\}}|\tilde{f}|^{2}e^{-\varphi}c(-\psi):(\tilde{f}-f,z)\in(\mathcal{O}(K_M)&\otimes\mathcal{I}(\varphi+\psi))_{z}\mbox{ for any $z\in Z_0$} \\&\&{\,}\tilde{f}\in H^{0}(\{\psi<-t\},\mathcal{O}(K_{M}))\bigg\}
\end{split}
\end{equation*}
by $G(t;c)$ (without misunderstanding, we denote $G(t;c)$ by $G(t)$),  where $t\in[0,+\infty)$ and
$|f|^{2}:=\sqrt{-1}^{n^{2}}f\wedge\bar{f}$ for any $(n,0)$ form $f$.

Recall that $G(h^{-1}(r))$ is concave with respect to $r$ \cite{GMY-concavity2}, where $h(t)=\int_{t}^{+\infty}c(s)e^{-s}ds$ for any $t\ge0$. 

In the following section, we present the characterizations of the concavity of $G(h^{-1}(r))$ degenerating to linearity.

\subsection{Main results}
\

 We recall some notations (see \cite{OF81}, see also \cite{guan-zhou13ap,GY-concavity,GMY-concavity2}).
 Let $P_j:\Delta\rightarrow\Omega_j$ be the universal covering from unit disc $\Delta$ to $\Omega_j$.
 we call the holomorphic function $f$ (resp. holomorphic $(1,0)$ form $F$) on $\Delta$ a multiplicative function (resp. multiplicative differential (Prym differential)),
 if there is a character $\chi$, which is the representation of the fundamental group of $\Omega_j$, such that $g^{*}(f)=\chi(g)f$ (resp. $g^{*}(F)=\chi(g)F$),
 where $|\chi|=1$ and $g$ is an element of the fundamental group of $\Omega$. Denote the set of such kinds of $f$ (resp. $F$) by $\mathcal{O}^{\chi}(\Omega_j)$ (resp. $\Gamma^{\chi}(\Omega_j)$).

It is known that for any harmonic function $u$ on $\Omega_j$,
there exists a $\chi_{j,u}$ (called  character associate to $u$) and a multiplicative function $f_u\in\mathcal{O}^{\chi_{j,u}}(\Omega_j)$,
such that $|f_u|=P_j^{*}(e^{u})$.
If $u_1-u_2=\log|f|$, then $\chi_{j,u_1}=\chi_{j,u_2}$,
where $u_1$ and $u_2$ are harmonic functions on $\Omega_j$ and $f$ is a holomorphic function on $\Omega_j$. Let $z_j\in \Omega_j$.
Recall that for the Green function $G_{\Omega_j}(z,z_j)$,
there exist a $\chi_{j,z_j}$ and a multiplicative function $f_{z_j}\in\mathcal{O}^{\chi_{j,z_j}}(\Omega_j)$, such that $|f_{z_j}(z)|=P_j^{*}\left(e^{G_{\Omega_j}(z,z_j)}\right)$ (see \cite{suita72}).

Let $Z_0=\{z_0\}\times Y=\{(z_1,\ldots,z_{n_1})\}\times Y\subset M$.
Let $$\psi=\max_{1\le j\le n_1}\left\{2p_j\pi_{1,j}^{*}(G_{\Omega_j}(\cdot,z_j))\right\},$$ where $p_j$ is positive real number for $1\le j\le n_1$.
Let $w_j$ be a local coordinate on a neighborhood $V_{z_j}$ of $z_j\in\Omega_j$ satisfying $w_j(z_j)=0$. Denote that $V_0:=\prod_{1\le j\le n_1}V_{z_j}$, and $w:=(w_1,\ldots,w_{n_1})$ is a local coordinate on $V_0$ of $z_0\in \prod_{1\le j\le n_1}\Omega_j$. Denote that $E:=\left\{(\alpha_1,\ldots,\alpha_{n_1}):\sum_{1\le j\le n_1}\frac{\alpha_j+1}{p_j}=1\,\&\,\alpha_j\in\mathbb{Z}_{\ge0}\right\}$.
Let $f$ be a holomorphic $(n,0)$ form on $V_0\times Y\subset M$.

We present a characterization of the concavity of $G(h^{-1}(r))$ degenerating to linearity for the case $Z_0=\{z_0\}\times Y$.

\begin{Theorem}
	\label{thm:linear-fibra-single}
	Assume that $G(0)\in(0,+\infty)$.  $G(h^{-1}(r))$ is linear with respect to $r\in(0,\int_{0}^{+\infty}c(t)e^{-t}dt]$  if and only if the  following statements hold:
	
	$(1)$ $f=\sum_{\alpha\in E}\pi_{1}^*\left(w^{\alpha}dw_1\wedge\ldots\wedge dw_{n_1}\right)\wedge \pi_2^*(f_{\alpha})+g_0$ on $V_0\times Y$, where  $g_0$ is a holomorphic $(n,0)$ form on $V_0\times Y$ satisfying $(g_0,z)\in(\mathcal{O}(K_M)\otimes\mathcal{I}(\varphi+\psi))_{z}$ for any $z\in Z_0$ and $f_{\alpha}$ is a holomorphic $(n_2,0)$ form on $Y$ such that $\sum_{\alpha\in E}\int_{Y}|f_{\alpha}|^2e^{-\varphi_Y}\in(0,+\infty)$;
	
	$(2)$ $\varphi_j=2\log|g_j|+2u_j$, where $g_j$ is a holomorphic function on $\Omega_j$ such that $g_j(z_j)\not=0$ and $u_j$ is a harmonic function on $\Omega_j$ for any $1\le j\le n_1$;

    $(3)$ $\chi_{j,z_j}^{\alpha_j+1}=\chi_{j,-u_j}$ for any $j\in\{1,2,...,n\}$ and $\alpha\in E$ satisfying $f_{\alpha}\not\equiv 0$.
\end{Theorem}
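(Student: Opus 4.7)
The plan is to prove both directions by combining the product-of-open-Riemann-surfaces characterization of Guan--Yuan \cite{GY-concavity4} with the fibration-over-a-single-open-Riemann-surface characterization of Bao--Guan--Yuan \cite{BGY-concavity5}. The key structural observation is that $\psi$ depends only on the base coordinate in $\prod_{1\le j\le n_1}\Omega_j$, so $\{\psi<-t\}=B_t\times Y$ for a sublevel set $B_t$; moreover $\varphi$ splits as a sum of pull-backs from the base factors and from $Y$. Hence $|\tilde f|^{2}e^{-\varphi}c(-\psi)$ factors in a product-friendly way, and Fubini reduces $G(t)$ to a coupling of a minimal integral on the base with a fiberwise $L^{2}$ integral against $e^{-\varphi_Y}$.

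For sufficiency, assuming $(1)$, $(2)$, $(3)$, I would build an explicit minimizer on $\{\psi<-t\}$ term by term in $\alpha\in E$. For each $\alpha$ with $f_\alpha\not\equiv 0$, the character matching in $(3)$ combined with $\varphi_j=2\log|g_j|+2u_j$ makes it possible to assemble, out of the multiplicative functions $f_{z_j}$ of the Green functions, a globally defined multiplicative-holomorphic object on each $\Omega_j$ whose local shape near $z_j$ is $w_j^{\alpha_j}dw_j$ (after absorbing $g_j$ and the multiplicative function corresponding to $u_j$) and whose $L^{2}$ norm against $e^{-\varphi_j}$ is computable via the Green-function calculus of \cite{suita72,GY-concavity}. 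Tensoring these pieces across $j$, wedging with $\pi_2^{*}f_\alpha$, and summing over $\alpha\in E$ produces a holomorphic $(n,0)$-form $\tilde F$ on $M$ whose germ at $Z_0$ agrees with $f$ modulo $\mathcal{O}(K_M)\otimes\mathcal{I}(\varphi+\psi)$. A direct calculation parallel to those in \cite{GY-concavity4,BGY-concavity5} then shows that $\tilde F$ realizes $G(t)$ for every $t\ge 0$ and that $G(h^{-1}(r))$ is linear.

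For necessity, I would begin with the concavity of $G\circ h^{-1}$ from \cite{GMY-concavity2} and use the linearity hypothesis together with an optimal $L^{2}$ extension argument to produce a single global holomorphic $(n,0)$-form $F$ on $M$ realizing $G(t)$ simultaneously for every $t$. Expanding $F$ in a Taylor series in the base variables on $V_0\times Y$ and imposing the germ constraint $(F-f,z)\in(\mathcal{O}(K_M)\otimes\mathcal{I}(\varphi+\psi))_z$ at $Z_0$, the precise shape $\psi=\max_{1\le j\le n_1}2p_j\pi_{1,j}^{*}G_{\Omega_j}(\cdot,z_j)$ forces the surviving multi-indices to lie exactly in $E$ and identifies the Taylor coefficients with globally defined holomorphic $(n_2,0)$-forms $f_\alpha$ on $Y$; this yields $(1)$. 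For each $\alpha\in E$ with $f_\alpha\not\equiv 0$, restricting the whole setup to a generic fiber reduces the linearity statement to one on $\prod_{1\le j\le n_1}\Omega_j$ with $Z_0$ a single point, and invoking the characterization of \cite{GY-concavity4} (itself bootstrapping the 1-dimensional results of \cite{GY-concavity,GY-concavity3}) factor by factor yields $(2)$ and $(3)$.

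The main obstacle I anticipate is the passage from the germ data at $Z_0$ to the globally defined fiber forms $f_\alpha$ on all of $Y$ satisfying $\sum_\alpha\int_Y|f_\alpha|^{2}e^{-\varphi_Y}<+\infty$. This is precisely the fibration half of the argument and requires transferring the minimizing extension from $\{\psi<-t\}$ to $Y$ uniformly in $\alpha$; since the max-function defining $\psi$ does not factor across the base directions, the weights entering the extension theorem mix non-trivially with $E$, and the book-keeping of characters on each $\Omega_j$ together with the multiplicative functions $f_{z_j}$ must be carried out simultaneously. Weaving the product-structure machinery of \cite{GY-concavity4} together with the fibration techniques of \cite{BGY-concavity5} while respecting this coupling is where the technical difficulty will concentrate.
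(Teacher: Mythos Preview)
Your sufficiency sketch and Step~1 of necessity (producing the single global minimizer $F$, expanding it in the base variables on $V_0\times Y$, and using the shape of $\psi$ to kill the sub-critical multi-indices) match the paper's argument closely.

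The gap is in how you deduce $(2)$ and $(3)$. ``Restricting to a generic fiber'' does not work here: $f$ is not a single pure tensor $\pi_1^{*}f_1\wedge\pi_2^{*}f_2$ but a finite sum $\sum_{\alpha\in E}\pi_1^{*}(w^{\alpha}dw)\wedge\pi_2^{*}f_\alpha$, so Proposition~\ref{p:fibra} (the product formula $G_M=G_X\cdot\int_Y|f_2|^{2}e^{-\varphi_Y}$) is not directly applicable, and there is no reason the restriction of the global minimizer $F$ to $\{y\}\times\prod_j\Omega_j$ should realize the minimal integral for the base problem with data $\sum_\alpha f_\alpha(y)\,w^{\alpha}dw$. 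Nor does linearity of $G$ on $M$ pass to linearity of any single-$\alpha$ base problem by a fiber restriction, because the weights $f_\alpha(y)$ vary with $y$ and the different $\alpha$-pieces interact.

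The paper instead inserts an intermediate step you are missing. It first transfers linearity from the gain $c$ to the gain $\tilde c\equiv 1$ via Lemma~\ref{l:linear2}; this is nontrivial and uses that the unique minimizer $F$ for gain $c$ also lies in $\mathcal H^{2}(\tilde c,t)$. Only for the constant gain $\tilde c\equiv 1$ does one have the orthogonality statements of Lemmas~\ref{l:orth1}--\ref{l:orth2}, which give the exact decomposition
\[
G(t;\tilde c\equiv 1)\;=\;\sum_{\alpha\in E} G_\alpha(t)\int_Y |f_\alpha|^{2}e^{-\varphi_Y},
\]
where each $G_\alpha$ is the purely base-level minimal integral on $\prod_j\Omega_j$ for the single monomial $w^{\alpha}dw$. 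Since each $G_\alpha(-\log r)$ is concave and the weighted sum is linear, every summand with $f_\alpha\not\equiv 0$ must itself be linear; then Theorem~\ref{thm:linear-2d} from \cite{GY-concavity4} yields $(2)$ and $(3)$. Your proposal should replace the fiber-restriction idea with this change-of-gain plus orthogonal-decomposition argument.
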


Let $c_j(z)$ be the logarithmic capacity (see \cite{S-O69}) on $\Omega_j$, which is locally defined by
$$c_j(z_j):=\exp\lim_{z\rightarrow z_j}(G_{\Omega_j}(z,z_j)-\log|w_j(z)|).$$

\begin{Remark}
	Lemma \ref{l:G1=G2} shows that the above result also holds when we replace that sheaf $\mathcal{I}(\varphi+\psi)$ (in the definition of $G(t)$ and statement $(1)$ in Theorem \ref{thm:linear-fibra-single}) by $\mathcal{I}(\psi)$.
\end{Remark}

\begin{Remark}
	\label{r:fibra-single}When the three statements in Theorem \ref{thm:linear-fibra-single} hold,
$$\sum_{\alpha\in E}c_{\alpha}\left(\wedge_{1\le j\le n_1}\pi_{1,j}^*\left(g_j(P_j)_*\left(f_{u_j}f_{z_j}^{\alpha_j}df_{z_j}\right)\right)\right)\wedge \pi_2^*(f_{\alpha})$$
 is the unique holomorphic $(n,0)$ form $F$ on $M$ such that $(F-f,z)\in(\mathcal{O}(K_{M}))_{z}\otimes\mathcal{I}(\varphi+\psi)_{z}$ for any $z\in Z_0$ and
 \begin{displaymath}
 	\begin{split}
 		G(t)&=\int_{\{\psi<-t\}}|F|^2e^{-\varphi}c(-\psi)\\
 		&=\left(\int_t^{+\infty}c(s)e^{-s}ds\right)\sum_{\alpha\in E}\frac{(2\pi)^{n_1}e^{-\sum_{1\le j\le n_1}\varphi_j(z_{j})}}{\prod_{1\le j\le n_1}(\alpha_j+1)c_{j}(z_j)^{2\alpha_{j}+2}}\int_Y|f_{\alpha}|^2e^{-\varphi_Y}
 	\end{split}
 \end{displaymath}
	 for any $t\ge0$, where $f_{u_j}$ is a holomorphic function on $\Delta$ such that $|f_{u_j}|=P_j^*(e^{u_j})$ for any $j\in\{1,\ldots,n_1\}$, $f_{z_j}$ is a holomorphic function on $\Delta$ such that $|f_{z_j}|=P_j^*\left(e^{G_{\Omega_j}(\cdot,z_j)}\right)$ for any $j\in\{1,\ldots,n_1\}$ and $c_{\alpha}$ is a constant such that $c_{\alpha}=\prod_{1\le j\le n_1}\left(\lim_{z\rightarrow z_j}\frac{w_j^{\alpha_j}dw_j}{g_j(P_j)_*\left(f_{u_j}f_{z_j}^{\alpha_j}df_{z_j}\right)}\right)$ for any $\alpha\in E$. We prove the remark in Section \ref{sec:s-1}.
\end{Remark}

 Let $Z_j=\{z_{j,1},\ldots,z_{j,m_j}\}\subset\Omega_j$ for any  $j\in\{1,\ldots,n_1\}$, where $m_j$ is a positive integer.
Let
$$\psi=\max_{1\le j\le n_1}\left\{\pi_{1,j}^*\left(2\sum_{1\le k\le m_j}p_{j,k}G_{\Omega_j}(\cdot,z_{j,k})\right)\right\},$$
where $p_{j,k}$ is a positive real number.
Let $w_{j,k}$ be a local coordinate on a neighborhood $V_{z_{j,k}}\Subset\Omega_{j}$ of $z_{j,k}\in\Omega_j$ satisfying $w_{j,k}(z_{j,k})=0$ for any $j\in\{1,\ldots,n_1\}$ and $k\in\{1,\ldots,m_j\}$, where $V_{z_{j,k}}\cap V_{z_{j,k'}}=\emptyset$ for any $j$ and $k\not=k'$. Denote that $I_1:=\{(\beta_1,\ldots,\beta_{n_1}):1\le \beta_j\le m_j$ for any $j\in\{1,\ldots,n_1\}\}$, $V_{\beta}:=\prod_{1\le j\le n_1}V_{z_{j,\beta_j}}$ for any $\beta=(\beta_1,\ldots,\beta_{n_1})\in I_1$ and $w_{\beta}:=(w_{1,\beta_1},\ldots,w_{n_1,\beta_{n_1}})$ is a local coordinate on $V_{\beta}$ of $z_{\beta}:=(z_{1,\beta_1},\ldots,z_{n_1,\beta_{n_1}})\in \prod_{1\le j\le n_1}\Omega_j$ satisfying $w_\beta(z_\beta)=0$.

Let $\beta^*=(1,\ldots,1)\in I_1$, and let $\alpha_{\beta^*}=(\alpha_{\beta^*,1},\ldots,\alpha_{\beta^*,n_1})\in\mathbb{Z}_{\ge0}^{n_1}$.
Denote that $E':=\left\{\alpha\in\mathbb{Z}_{\ge0}^{n_1}:\sum_{1\le j\le n_1}\frac{\alpha_j+1}{p_{j,1}}>\sum_{1\le j\le n_1}\frac{\alpha_{\beta^*,j}+1}{p_{j,1}}\right\}$. Let $f$ be a holomorphic $(n,0)$ form on $\cup_{\beta\in I_1}V_{\beta}\times Y$ satisfying $f=\pi_1^*\left(w_{\beta^*}^{\alpha_{\beta^*}}dw_{1,1}\wedge\ldots\wedge dw_{n_1,1}\right)\wedge\pi_2^*\left(f_{\alpha_{\beta^*}}\right)+\sum_{\alpha\in E'}\pi_1^*(w^{\alpha}dw_{1,1}\wedge\ldots\wedge dw_{n_1,1})\wedge\pi_2^*(f_{\alpha})$ on $V_{\beta^*}\times Y$, where $f_{\alpha_{\beta^*}}$ and $f_{\alpha}$ are  holomorphic $(n_2,0)$ forms on $Y$.

We present a characterization of the concavity of $G(h^{-1}(r))$ degenerating to linearity for the case  $Z_j$ is a set of finite points.

\begin{Theorem}
	\label{thm:linear-fibra-finite}Assume that $G(0)\in(0,+\infty)$.  $G(h^{-1}(r))$ is linear with respect to $r\in(0,\int_0^{+\infty} c(s)e^{-s}ds]$ if and only if the following statements hold:

	$(1)$ $\varphi_j=2\log|g_j|+2u_j$ for any $j\in\{1,\ldots,n_1\}$, where $u_j$ is a harmonic function on $\Omega_j$ and $g_j$ is a holomorphic function on $\Omega_j$ satisfying $g_j(z_{j,k})\not=0$ for any $k\in\{1,\ldots,m_j\}$;
	
	$(2)$ There exists a nonnegative integer $\gamma_{j,k}$ for any $j\in\{1,\ldots,n_1\}$ and $k\in\{1,\ldots,m_j\}$, which satisfies that $\prod_{1\le k\leq m_j}\chi_{j,z_{j,k}}^{\gamma_{j,k}+1}=\chi_{j,-u_j}$ and $\sum_{1\le j\le n_1}\frac{\gamma_{j,\beta_j}+1}{p_{j,\beta_j}}=1$ for any $\beta\in I_1$;
	
	$(3)$ $f=\pi_1^*\left(c_{\beta}\left(\prod_{1\le j\le n_1}w_{j,\beta_j}^{\gamma_{j,\beta_j}}\right)dw_{1,\beta_1}\wedge\ldots\wedge dw_{n,\beta_n}\right)\wedge\pi_2^*(f_0)+g_\beta$ on $V_{\beta}\times Y$ for any $\beta\in I_1$, where $c_{\beta}$ is a constant, $f_0\not\equiv0$ is a holomorphic $(n_2,0)$ form on $Y$ satisfying $\int_Y|f_0|^2e^{-\varphi_2}<+\infty$, and $g_{\beta}$ is a holomorphic $(n,0)$ form on $V_{\beta}\times Y$ such that $(g_{\beta},z)\in(\mathcal{O}(K_M)\otimes\mathcal{I}(\varphi+\psi))_{z}$ for any $z\in\{z_\beta\}\times Y$;
	
	$(4)$ $c_{\beta}\prod_{1\le j\le n_1}\left(\lim_{z\rightarrow z_{j,\beta_j}}\frac{w_{j,\beta_j}^{\gamma_{j,\beta_j}}dw_{j,\beta_j}}{g_j(P_{j})_*\left(f_{u_j}\left(\prod_{1\le k\le m_j}f_{z_{j,k}}^{\gamma_{j,k}+1}\right)\left(\sum_{1\le k\le m_j}p_{j,k}\frac{df_{z_{j,k}}}{f_{z_{j,k}}}\right)\right)}\right)=c_0$ for any $\beta\in I_1$, where $c_0\in\mathbb{C}\backslash\{0\}$ is a constant independent of $\beta$, $f_{u_j}$ is a holomorphic function $\Delta$ such that $|f_{u_j}|=P_j^*(e^{u_j})$ and $f_{z_{j,k}}$ is a holomorphic function on $\Delta$ such that $|f_{z_{j,k}}|=P_j^*\left(e^{G_{\Omega_j}(\cdot,z_{j,k})}\right)$ for any $j\in\{1,\ldots,n_1\}$ and $k\in\{1,\ldots,m_j\}$.
\end{Theorem}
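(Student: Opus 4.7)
The strategy is to reduce Theorem~\ref{thm:linear-fibra-finite} to the single-point case Theorem~\ref{thm:linear-fibra-single} by localizing near each fiber $\{z_\beta\}\times Y$, $\beta\in I_1$. Since each Green function $G_{\Omega_j}(\cdot,z_{j,k})$ is smooth away from its pole, on a small polydisc neighborhood $V_\beta\times Y$ of $\{z_\beta\}\times Y$ the weight $\psi$ equals $\max_{1\le j\le n_1}\left\{2p_{j,\beta_j}\pi_{1,j}^*(G_{\Omega_j}(\cdot,z_{j,\beta_j}))\right\}$ modulo a bounded smooth function. Consequently, for $t$ sufficiently large the sublevel set $\{\psi<-t\}$ splits as a disjoint union of components, each a neighborhood of some $\{z_\beta\}\times Y$, so the minimization problem for $G(t)$ decouples across $\beta$ at those scales.

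For the sufficiency direction, assuming (1)--(4), I construct a candidate extremal $(n,0)$ form on $M$ guided by Remark~\ref{r:fibra-single}, namely
$$F = c_0\left(\bigwedge_{1\le j\le n_1}\pi_{1,j}^*\left(g_j(P_j)_*\Bigl(f_{u_j}\prod_{1\le k\le m_j} f_{z_{j,k}}^{\gamma_{j,k}+1}\sum_{1\le k\le m_j} p_{j,k}\tfrac{df_{z_{j,k}}}{f_{z_{j,k}}}\Bigr)\right)\right)\wedge\pi_2^*(f_0).$$
The character condition (2) ensures each $\Omega_j$-factor descends from $\Delta$ to $\Omega_j$ as a genuine holomorphic $(1,0)$-form, and condition (4) ensures that this single $F$ reproduces the prescribed leading coefficient $c_\beta$ at each $z_\beta$, so $F-f\in\mathcal{O}(K_M)\otimes\mathcal{I}(\varphi+\psi)$ at every point of $Z_0$. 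A Fubini computation over $\{\psi<-t\}$ — radial integration giving $\int_t^{+\infty}c(s)e^{-s}ds$ and a residue calculation at each $z_\beta$ producing an explicit constant involving the logarithmic capacities $c_j(z_{j,\beta_j})$ — then shows that $G(t)$ is a constant multiple of $\int_t^{+\infty}c(s)e^{-s}ds$, making $G(h^{-1}(r))$ linear.

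For the necessity direction, fix $t_0$ large enough that $\{\psi<-t_0\}$ decouples into disjoint neighborhoods $U_\beta(t_0)$ of the fibers. Linearity of $G(h^{-1}(r))$ descends to each component, via uniqueness of the extremal supplied by the concavity theorem of \cite{GMY-concavity2}. Applying Theorem~\ref{thm:linear-fibra-single} at each $\{z_\beta\}\times Y$ then yields statement (1), which is a global property of $\varphi_j$ and hence consistent across all $\beta$; a Laurent-type expansion of $f$ near each $z_\beta$ with character constraints $\chi_{j,z_{j,\beta_j}}^{\alpha_{j}+1}=\chi_{j,-u_j}$ for every contributing multi-index; and a tensor factorization of the $Y$-direction data. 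The prescribed form of $f$ on $V_{\beta^*}\times Y$ (a single leading term of weight $\alpha_{\beta^*}$ plus strictly heavier $\alpha\in E'$) then forces exactly one multi-index $(\gamma_{1,\beta_1},\ldots,\gamma_{n_1,\beta_{n_1}})$ to carry the leading behaviour on each $V_\beta$ together with a common $(n_2,0)$-factor $f_0$, giving (3) and the saturation equation $\sum_j(\gamma_{j,\beta_j}+1)/p_{j,\beta_j}=1$ that appears in (2).

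The main obstacle will be extracting the global compatibility conditions that link the different $\beta$. The character relation $\prod_{k}\chi_{j,z_{j,k}}^{\gamma_{j,k}+1}=\chi_{j,-u_j}$ in (2) is derived by requiring that the $\Omega_j$-factor of the global extremal exist as a single-valued $(1,0)$-form on $\Omega_j$ — a condition that must hold simultaneously for every $\beta$, which is precisely what pins down each $\gamma_{j,k}$ as a function of $j$ and $k$ alone, independent of the other component indices. The coincidence equation (4) then expresses that the leading coefficient of the \emph{single} global $F$, computed at each $z_\beta$ via the local coordinate $w_{j,\beta_j}$ and the logarithmic capacity $c_j(z_{j,\beta_j})$, matches the prescribed $c_\beta$ up to the $\beta$-independent factor $c_0$. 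Promoting all the local data obtained from Theorem~\ref{thm:linear-fibra-single} to this global coherence is the crux of the argument.
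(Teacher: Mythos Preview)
Your sufficiency argument is essentially correct and matches the paper's, which packages the same computation as an appeal to Proposition~\ref{p:fibra}, Theorem~\ref{thm:prod-finite-point} and Lemma~\ref{l:G1=G2}.

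In the necessity direction there is a real gap. When you apply Theorem~\ref{thm:linear-fibra-single} on the local piece $\{\psi<-t_0\}\cap(V_\beta\times Y)$, the base factors $\{2\sum_k p_{j,k}G_{\Omega_j}<-t_0\}\cap V_{z_{j,\beta_j}}$ are simply connected, so the single--point theorem there produces only a \emph{local} harmonic splitting $\varphi_j=2\log|g_{j,\beta}|+2u_{j,\beta}$ on a small disc and a \emph{trivial} character condition. It does not give you a global holomorphic $g_j$ on $\Omega_j$, nor the relation $\prod_k\chi_{j,z_{j,k}}^{\gamma_{j,k}+1}=\chi_{j,-u_j}$, nor does it tell you that the $Y$--factors $f_{\alpha,\beta}$ obtained at different $\beta$ coincide. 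Your proposed mechanism for (2) --- ``require that the $\Omega_j$--factor of the global extremal be single--valued'' --- presupposes that $F$ \emph{has} a global $\Omega_j$--factor, which is precisely what has not been established by localizing.

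The paper proceeds differently. It applies Theorem~\ref{thm:linear-fibra-single} only at the distinguished index $\beta^*$, where the hypothesis on $f$ forces a single leading monomial; this yields a local product $F=\pi_1^*(h_0)\wedge\pi_2^*(f_{\alpha_{\beta^*}})$ on $(\{\psi_1<-t_{\beta^*}\}\cap V_{\beta^*})\times Y$. The key step you are missing is Lemma~\ref{l:decom-product}, which promotes this local tensor product to a \emph{global} decomposition $F=\pi_1^*(h_1)\wedge\pi_2^*(f_{\alpha_{\beta^*}})$ on all of $M$. With $h_1$ now a global $(n_1,0)$--form on $M'=\prod_j\Omega_j$, Proposition~\ref{p:fibra} shows that the associated base problem $G'(t)$ is also linear in $h^{-1}(r)$, and the full set of global conditions (1)--(4) is then read off directly from Theorem~\ref{thm:prod-finite-point} (the product--of--Riemann--surfaces result of \cite{GY-concavity4}), which already contains the global character analysis. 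You never invoke either Lemma~\ref{l:decom-product} or Theorem~\ref{thm:prod-finite-point}, and without them the passage from local single--point data to the global statements (1), (2), (4) is not accounted for.
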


Denote that
\begin{equation*}
c_{j,k}:=\exp\lim_{z\rightarrow z_{j,k}}\left(\frac{\sum_{1\le k_1\le m_j}p_{j,k_1}G_{\Omega_j}(z,z_{j,k_1})}{p_{j,k}}-\log|w_{j,k}(z)|\right)
\end{equation*}
 for any $j\in\{1,\ldots,n_1\}$ and $k\in\{1,\ldots,m_j\}$.
\begin{Remark}
	\label{r:fibra-finite}When the four statements in Theorem \ref{thm:linear-fibra-finite} hold,
$$c_0\left(\wedge_{1\le j\le n_1}\pi_{1,j}^*\left(g_j(P_{j})_*\left(f_{u_j}\left(\prod_{k=1}^{m_j}f_{z_{j,k}}^{\gamma_{j,k}+1}\right)\left(\sum_{k=1}^{m_j}p_{j,k}\frac{df_{z_{j,k}}}{f_{z_{j,k}}}\right)\right)\right)\right)\wedge\pi_2^*(f_0)$$
 is the unique holomorphic $(n,0)$ form $F$ on $M$ such that $(F-f,z)\in(\mathcal{O}(K_{M}))_{z}\otimes\mathcal{I}(\varphi+\psi)_{z}$ for any $z\in Z_0$ and
 \begin{displaymath}
 	\begin{split}
 		G(t)&=\int_{\{\psi<-t\}}|F|^2e^{-\varphi}c(-\psi)\\
 		&=\left(\int_{t}^{+\infty}c(s)e^{-s}ds\right)\sum_{\beta\in I_1}\frac{|c_{\beta}|^2(2\pi)^{n_1}e^{-\sum_{1\le j\le n_1}\varphi_j(z_{j,\beta_j})}}{\prod_{1\le j\le n_1}(\gamma_{j,\beta_j}+1)c_{j,\beta_j}^{2\gamma_{j,\beta_j}+2}}\int_Y|f_0|^2e^{-\varphi_Y}
 	\end{split}
 \end{displaymath}
	 for any $t\ge0$. We prove the remark in Section \ref{sec:proof-2}.
\end{Remark}

 Let ${Z}_j=\{z_{j,k}:1\le k<\tilde m_j\}$ be a discrete subset of $\Omega_j$ for any  $j\in\{1,\ldots,n_1\}$, where $\tilde{m}_j\in\mathbb{Z}_{\ge2}\cup\{+\infty\}$.
Let $p_{j,k}$ be a positive number for any $1\le j\le n_1$ and $1\le k<\tilde m_j$ such that  $\sum_{1\le k<\tilde{m}_j}p_{j,k}G_{\Omega_j}(\cdot,z_{j,k})\not\equiv-\infty$ for any $j$.
Let
$$\psi=\max_{1\le j\le n_1}\left\{\pi_{1,j}^*\left(2\sum_{1\le k<\tilde{m}_j}p_{j,k}G_{\Omega_j}(\cdot,z_{j,k})\right)\right\}.$$ Assume that $\limsup_{t\rightarrow+\infty}c(t)<+\infty$.

Let $w_{j,k}$ be a local coordinate on a neighborhood $V_{z_{j,k}}\Subset\Omega_{j}$ of $z_{j,k}\in\Omega_j$ satisfying $w_{j,k}(z_{j,k})=0$ for any $j\in\{1,\ldots,n_1\}$ and $1\le k<\tilde{m}_j$, where $V_{z_{j,k}}\cap V_{z_{j,k'}}=\emptyset$ for any $j$ and $k\not=k'$. Denote that $\tilde I_1:=\{(\beta_1,\ldots,\beta_{n_1}):1\le \beta_j< \tilde m_j$ for any $j\in\{1,\ldots,n_1\}\}$, $V_{\beta}:=\prod_{1\le j\le n_1}V_{z_{j,\beta_j}}$ for any $\beta=(\beta_1,\ldots,\beta_{n_1})\in\tilde I_1$ and $w_{\beta}:=(w_{1,\beta_1},\ldots,w_{n_1,\beta_{n_1}})$ is a local coordinate on $V_{\beta}$ of $z_{\beta}:=(z_{1,\beta_1},\ldots,z_{n_1,\beta_{n_1}})\in \prod_{1\le j\le n_1}\Omega_j$.

Let $\beta^*=(1,\ldots,1)\in I_1$, and let $\alpha_{\beta^*}=(\alpha_{\beta^*,1},\ldots,\alpha_{\beta^*,n_1})\in\mathbb{Z}_{\ge0}^{n_1}$. Denote that $E':=\left\{\alpha\in\mathbb{Z}_{\ge0}^{n_1}:\sum_{1\le j\le n_1}\frac{\alpha_j+1}{p_{j,1}}>\sum_{1\le j\le n_1}\frac{\alpha_{\beta^*,j}+1}{p_{j,1}}\right\}$.
Let $f$ be a holomorphic $(n,0)$ form on $\cup_{\beta\in I_1}V_{\beta}\times Y$ satisfying $f=\pi_1^*\left(w_{\beta^*}^{\alpha_{\beta^*}}dw_{1,1}\wedge\ldots\wedge dw_{n_1,1}\right)\wedge\pi_2^*\left(f_{\alpha_{\beta^*}}\right)+\sum_{\alpha\in E'}\pi_1^*(w^{\alpha}dw_{1,1}\wedge\ldots\wedge dw_{n_1,1})\wedge\pi_2^*(f_{\alpha})$ on $V_{\beta^*}\times Y$, where $f_{\alpha_{\beta^*}}$ and $f_{\alpha}$ are  holomorphic $(n_2,0)$ forms on $Y$.

We present that $G(h^{-1}(r))$ is not linear when there exists $j_0\in\{1,\ldots,n_1\}$ such that $\tilde m_{j_0}=+\infty$ as follows.

\begin{Theorem}
	\label{thm:linear-fibra-infinite}If $G(0)\in(0,+\infty)$ and there exists $j_0\in\{1,\ldots,n_1\}$ such that $\tilde m_{j_0}=+\infty$, then $G(h^{-1}(r))$ is not linear with respect to $r\in(0,\int_0^{+\infty} c(s)e^{-s}ds]$.
\end{Theorem}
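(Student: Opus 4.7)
The plan is to argue by contradiction: assume $G(h^{-1}(r))$ is linear on $(0,\int_{0}^{+\infty}c(s)e^{-s}ds]$ with $\tilde{m}_{j_{0}}=+\infty$, and reduce the situation to the analog of this theorem for fibrations over a single open Riemann surface, proved in Bao-Guan-Yuan \cite{BGY-concavity5}.

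First I would slice $M$ transversally to the $\Omega_{j_{0}}$-direction. Fix any multi-index $\beta'=(\beta'_{j})_{j\ne j_{0}}$ with $1\le\beta'_{j}<\tilde{m}_{j}$, and consider the closed submanifold
\[
M_{\beta'}:=\{(z_{1},\ldots,z_{n_{1}},y)\in M:z_{j}=z_{j,\beta'_{j}}\text{ for every }j\ne j_{0}\}\cong\Omega_{j_{0}}\times Y.
\]
Because $G_{\Omega_{j}}(z_{j,\beta'_{j}},z_{j,\beta'_{j}})=-\infty$, the $j\ne j_{0}$ entries of the $\max$ defining $\psi$ are identically $-\infty$ on $M_{\beta'}$, so $\psi|_{M_{\beta'}}=\psi'$, where
\[
\psi':=\pi_{\Omega_{j_{0}}}^{*}\Bigl(2\sum_{1\le k<\tilde{m}_{j_{0}}}p_{j_{0},k}G_{\Omega_{j_{0}}}(\cdot,z_{j_{0},k})\Bigr).
\]
Let $\varphi':=\pi_{\Omega_{j_{0}}}^{*}(\varphi_{j_{0}})+\pi_{Y}^{*}(\varphi_{Y})$, let $f'$ be the $(1+n_{2},0)$-form obtained from $f$ by evaluating at $(z_{j,\beta'_{j}})_{j\ne j_{0}}$ and extracting the factor $\bigwedge_{j\ne j_{0}}dw_{j,\beta'_{j}}$ on each neighborhood $V_{z_{j_{0},k}}\times Y$, and denote by $G'(t)$ the minimal $L^{2}$ integral on $\{\psi'<-t\}$ associated with $(\Omega_{j_{0}}\times Y,\varphi',\psi',f')$.

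Next I would prove that linearity of $G(h^{-1}(r))$ forces linearity of $G'(h^{-1}(r))$. Restricting the minimizer of $G(t)$ along $M_{\beta'}$ (and extracting the $\bigwedge_{j\ne j_{0}}dw_{j,\beta'_{j}}$ factor) produces a competitor for $G'(t)$, giving one inequality between the two minimal integrals; conversely, the optimal jets $L^{2}$ extension theorem with Lebesgue measurable gain lifts a minimizer of $G'(t)$ back to a competitor for $G(t)$, providing the reverse inequality. Chaining these with the known concavity of both $G(h^{-1}(r))$ and $G'(h^{-1}(r))$ (cf.\ \cite{GMY-concavity2}) forces equality to hold at every level $r$, so $G'(h^{-1}(r))$ must be linear. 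The hypothesis $\limsup_{t\to+\infty}c(t)<+\infty$ enters here to guarantee that the lifted form is $L^{2}$-integrable against $c(-\psi)e^{-\varphi}$.

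Since $(\Omega_{j_{0}}\times Y,\varphi',\psi',f')$ is a fibration over the single open Riemann surface $\Omega_{j_{0}}$ whose singular locus $Z_{j_{0}}$ has $\tilde{m}_{j_{0}}=+\infty$ points, the corresponding non-linearity statement in Bao-Guan-Yuan \cite{BGY-concavity5} implies that $G'(h^{-1}(r))$ cannot be linear, yielding the desired contradiction. The main obstacle is the lift half of the linearity transfer: since $\psi$ is not of product type, extending from $M_{\beta'}$ to $M$ must simultaneously achieve the sharp constant $\int_{t}^{+\infty}c(s)e^{-s}ds$, place the extended form in $(\mathcal{O}(K_{M})\otimes\mathcal{I}(\varphi+\psi))_{z}$ at every fiber $\{z_{\beta}\}\times Y$ (not only at those meeting $M_{\beta'}$), and exactly saturate the concavity inequality; controlling all three conditions at once is where the argument will demand the most care.
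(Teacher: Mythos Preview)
Your slicing-and-transfer strategy has a genuine gap: neither the restriction step nor the lift step produces a usable inequality between $G(t)$ and $G'(t)$. For the restriction, the minimizer $F_{t}$ of $G(t)$ is an $(n,0)$-form with a controlled integral over the $n$-dimensional set $\{\psi<-t\}$, whereas $G'(t)$ is an integral over the $(1+n_{2})$-dimensional set $\{\psi'<-t\}\subset M_{\beta'}$; there is no direct comparison between the two, and ``extracting the factor $\bigwedge_{j\ne j_{0}}dw_{j,\beta'_{j}}$'' only makes sense locally on each $V_{\beta}\times Y$ and yields a form with no global definition on $\Omega_{j_{0}}\times Y$ (which is what you need for small $t$). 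For the lift, an $L^{2}$ extension from $M_{\beta'}$ to $\{\psi<-t\}$ would produce a form satisfying the jet condition only along $M_{\beta'}$, but a competitor for $G(t)$ must belong to $\mathcal{O}(K_{M})\otimes\mathcal{I}(\varphi+\psi)$ at \emph{every} fiber $\{z_{\beta}\}\times Y$ with $\beta\in\tilde I_{1}$, including those with $\beta_{j}\ne\beta'_{j}$ for some $j\ne j_{0}$. You correctly flagged this obstacle, but it is not a matter of ``care'': no optimal extension theorem will manufacture the missing jets for free.

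The paper's argument avoids the dimension mismatch entirely. From linearity and Corollary~\ref{c:linear} one obtains a single form $F$ realizing $G(t)$ for all $t$; localizing at $\beta^{*}$ and applying the single-point characterization (Theorem~\ref{thm:linear-fibra-single} and Remark~\ref{r:fibra-single}) together with the special shape of $f$ on $V_{\beta^{*}}\times Y$ shows that $F=\pi_{1}^{*}(h_{0})\wedge\pi_{2}^{*}(f_{\alpha_{\beta^{*}}})$ on a small product neighborhood. Lemma~\ref{l:decom-product} then promotes this to a global product decomposition $F=\pi_{1}^{*}(h_{1})\wedge\pi_{2}^{*}(f_{\alpha_{\beta^{*}}})$ on all of $M$, with $h_{1}$ an $(n_{1},0)$-form on $M'=\prod_{j}\Omega_{j}$. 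At this point Proposition~\ref{p:fibra} (which \emph{does} give an exact identity $G_{M}(t)=G_{M'}(t)\int_{Y}|f_{\alpha_{\beta^{*}}}|^{2}e^{-\varphi_{Y}}$ because the data now have genuine product structure) transfers linearity to the base $M'$, and Theorem~\ref{thm:prod-infinite-point} from \cite{GY-concavity4} furnishes the contradiction. The key step you are missing is the product decomposition of the minimizer $F$; once you have it, no slicing is needed.
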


 Let ${Z}_j=\{z_{j,k}:1\le k<\tilde m_j\}$ be a discrete subset of $\Omega_j$ for any  $j\in\{1,\ldots,n_1\}$, where $\tilde{m}_j\in\mathbb{Z}_{\ge2}\cup\{+\infty\}$.
Let $p_{j,k}$ be a positive number for any $1\le j\le n_1$ and $1\le k<\tilde m_j$ such that $\sum_{1\le k<\tilde{m}_j}p_{j,k}G_{\Omega_j}(\cdot,z_{j,k})\not\equiv-\infty$ for any $j$.
Let
$$\psi=\max_{1\le j\le n_1}\left\{\pi_{1,j}^*\left(2\sum_{1\le k<\tilde{m}_j}p_{j,k}G_{\Omega_j}(\cdot,z_{j,k})\right)\right\}.$$

Let $M_1\subset M$ be an $n-$dimensional weakly pseudoconvex K\"ahler manifold satisfying that $Z_0\subset M_1$.  Let $f$ be a holomorphic $(n,0)$ form on a neighborhood $U_0\subset M_1$ of $Z_0$.
Replace $M$ in the definition of $G(t)$ by $M_1$.

\begin{Proposition}
	\label{p:M=M_1}If $G(0)\in(0,+\infty)$ and $G(h^{-1}(r))$ is  linear with respect to $r\in(0,\int_0^{+\infty} c(s)e^{-s}ds]$, we have $M_1=M$.
\end{Proposition}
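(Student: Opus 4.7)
The plan is to combine the characterization theorems above with an $L^2$-integral comparison between $M_1$ and $M$: linearity on $M_1$ pins down the minimizer to a form that is naturally defined on all of $M$, and comparing its norms on $M_1$ and on $M$ via Fubini forces the complement $M\setminus M_1$ to vanish.

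First I would reduce to the case where each $Z_j$ is finite. The strict-concavity test extensions used in the proof of Theorem \ref{thm:linear-fibra-infinite} are supported in arbitrarily small weakly pseudoconvex K\"ahler neighborhoods of $Z_0\subset M_1$, so the same construction (now carried out inside $M_1$) rules out any $\tilde m_{j_0}=+\infty$. Thus each $Z_j$ is a finite set, and we are in the setting of Theorem \ref{thm:linear-fibra-finite}.

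Next I would adapt the proof of Theorem \ref{thm:linear-fibra-finite} to $M_1$: all essential steps rely only on local analysis near $Z_0\subset M_1$ together with $L^2$-estimates on a weakly pseudoconvex K\"ahler manifold, so they transfer. This yields the characterization $(1)$--$(4)$ and identifies the unique minimizer $F_1$ on $M_1$ as the restriction to $M_1$ of the wedge-of-pullbacks form
\[
\tilde F := c_0\left(\wedge_{1\le j\le n_1}\pi_{1,j}^*\left(g_j(P_j)_*\left(f_{u_j}\prod_{k=1}^{m_j}f_{z_{j,k}}^{\gamma_{j,k}+1}\sum_{k=1}^{m_j}p_{j,k}\frac{df_{z_{j,k}}}{f_{z_{j,k}}}\right)\right)\right)\wedge\pi_2^*(f_0),
\]
which is a holomorphic $(n,0)$ form on the entire product $M$. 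By Fubini on the product structure of $M=(\prod_j\Omega_j)\times Y$, the integral $\int_{M\cap\{\psi<0\}}|\tilde F|^2e^{-\varphi}c(-\psi)$ equals the explicit closed-form expression appearing in the analog of Remark \ref{r:fibra-finite}, which by that analog equals $G_{M_1}(0)=\int_{M_1\cap\{\psi<0\}}|F_1|^2e^{-\varphi}c(-\psi)$. Subtracting and using $\psi<0$ on all of $M$ (each $\Omega_j$ being hyperbolic, so $G_{\Omega_j}<0$), we obtain
\[
\int_{M\setminus M_1}|\tilde F|^2e^{-\varphi}c(-\psi)=0.
\]

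The main obstacle is the final step: promoting this vanishing to $M_1=M$. Since $\tilde F\not\equiv 0$ (as $G_{M_1}(0)>0$) and $e^{-\varphi}c(-\psi)>0$ almost everywhere on $M$, the vanishing forces $M\setminus M_1$ to have Lebesgue measure zero; a priori such a closed null set could still be nonempty, for instance a proper analytic subvariety of positive codimension. To close this gap I would use the weak pseudoconvexity of $M_1$ together with the fact that $\tilde F$ is holomorphic on all of $M$: if a nonempty closed $V\subset M\setminus M_1$ existed, then an Ohsawa--Takegoshi-type extension of $F_1$ across $V$ into a slightly larger weakly pseudoconvex neighborhood would strictly decrease the $L^2$ integral, contradicting the equality-case rigidity dictated by linearity on $M_1$. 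Hence $M\setminus M_1=\emptyset$, i.e.\ $M_1=M$.
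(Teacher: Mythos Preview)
Your overall shape---extend the minimizer on $M_1$ to a form on $M$ and compare the two $L^2$ norms---matches the paper's, but the way you produce the extension is circular. The proofs of Theorem~\ref{thm:linear-fibra-finite}, Theorem~\ref{thm:linear-fibra-infinite} and Remark~\ref{r:fibra-finite} all pass through Lemma~\ref{l:decom-product}: once the minimizer equals $\pi_1^*(h_0)\wedge\pi_2^*(f_0)$ on a tube $(\{\psi_1<-t_\beta\}\cap V_\beta)\times Y$, that lemma propagates the product structure to the \emph{whole} ambient manifold, and only then does Proposition~\ref{p:fibra} yield the base-level linearity feeding into Theorem~\ref{thm:prod-finite-point} or~\ref{thm:prod-infinite-point}. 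Lemma~\ref{l:decom-product} requires the ambient manifold to be a genuine product $X\times Y$; for a general weakly pseudoconvex $M_1\subset M$ there is no such statement, so the identification $F_1=\tilde F|_{M_1}$ (and even the reduction to finite $Z_j$) is not available. In effect you are assuming the product structure of $M_1$ in order to prove $M_1=M$.

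The paper avoids this entirely: it never invokes the characterization theorems. From the minimizer $F$ on $M_1$ it extracts only the local jet data $\{F_{\alpha,\beta}\}$ via Lemma~\ref{l:fibra-decom-2}, bounds $G(0)/\int_0^\infty c(s)e^{-s}\,ds$ from below by $\sum_{\beta}\sum_{\alpha\in E_\beta}\frac{(2\pi)^{n_1}e^{-\sum\varphi_j(z_{j,\beta_j})}}{\prod(\alpha_j+1)}\int_Y|F_{\alpha,\beta}|^2e^{-\varphi_Y}$ using Lemma~\ref{l:limit2}, and then applies Proposition~\ref{p:exten-fibra} directly to build $F_1\in H^0(M,\mathcal O(K_M))$ with this same jet data and $\int_M|F_1|^2e^{-\varphi}c(-\psi)$ bounded above by the same quantity. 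Since $F_1|_{M_1}$ is a competitor for $G(0)$, the chain $G(0)\le\int_{M_1}\le\int_M\le G(0)$ collapses, giving $\int_{M\setminus M_1}|F_1|^2e^{-\varphi}c(-\psi)=0$. No global product decomposition of the minimizer on $M_1$ is needed, and the argument covers arbitrary discrete $Z_j$ in one stroke.

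On your final step: you are right that $\int_{M\setminus M_1}|\tilde F|^2e^{-\varphi}c(-\psi)=0$ a priori only forces $M\setminus M_1$ to be Lebesgue-null, but your patch is backwards---extending $F_1$ across a set $V$ to a larger domain can only \emph{increase} the $L^2$ integral, not decrease it, so no contradiction with the minimality on $M_1$ arises that way. The paper records the equality $\int_M=\int_{M_1}$ and concludes $M_1=M$ directly from $F_1\not\equiv 0$ and the positivity of the weight.
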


\subsection{Applications}
\

Let $\Omega_j$  be an open Riemann surface, which admits a nontrivial Green function $G_{\Omega_j}$ for any  $1\le j\le n_1$. Let $Y$ be an $n_2-$dimensional weakly pseudoconvex K\"ahler manifold, and let $K_Y$ be the canonical (holomorphic) line bundle on $Y$. Let
$M=\left(\prod_{1\le j\le n_1}\Omega_j\right)\times Y$
 be an $n-$dimensional complex manifold, where $n=n_1+n_2$. Let $\pi_{1}$, $\pi_{1,j}$ and $\pi_2$ be the natural projections from $M$ to $\prod_{1\le j\le n_1}\Omega_j$, $\Omega_j$ and $Y$ respectively. Let $K_{M}$ be the canonical (holomorphic) line bundle on $M$.
Let $Z_j$ be a (closed) analytic subset of $\Omega_j$ for any $j\in\{1,\ldots,n_1\}$, and denote that $Z_0:=\left(\prod_{1\le j\le n_1}Z_j\right)\times Y$. Let $M_1\subset M$ be an $n-$dimensional complex manifold satisfying that $Z_0\subset M_1$, and let $K_{M_1}$ be the canonical (holomorphic) line bundle on $M_1$.

In this section, we present the characterizations of the holding of equality in optimal jets $L^2$ extension problem from $Z_0$ to $M_1$.

Let $Z_0=\{z_0\}\times Y\subset M_1$, where $z_0=(z_1,\ldots,z_{n_1})\in\prod_{1\le j\le n_1}\Omega_j$.
Let $w_j$ be a local coordinate on a neighborhood $V_{z_j}$ of $z_j\in\Omega_j$ satisfying $w_j(z_j)=0$. Denote that $V_0:=\prod_{1\le j\le n_1}V_{z_j}$, and $w:=(w_1,\ldots,w_{n_1})$ is a local coordinate on $V_0$ of $z_0\in \prod_{1\le j\le n_1}\Omega_j$. Let $\Psi\le0$ be a  plurisubharmonic function on $\prod_{1\le j\le n_1}\Omega_j$, and let $\varphi_j$ be a Lebesgue measurable function on $\Omega_j$ such that $\Psi+\sum_{1\le j\le n_1}\tilde\pi_{j}^*(\varphi_j)$ is plurisubharmonic on $\prod_{1\le j\le n_1}\Omega_j$, where $\tilde\pi_j$ is the natural projection from $\prod_{1\le j\le n_1}\Omega_j$ to $\Omega_j$. Let $\varphi_Y$ be a plurisubharmonic function on $Y$.  Denote that
$$\psi:=\max_{1\le j\le n_1}\left\{2p_j\pi_{1,j}^{*}(G_{\Omega_j}(\cdot,z_j))\right\}+\pi_1^*(\Psi)$$
 and $\varphi:=\sum_{1\le j\le n_1}\pi_{1,j}^*(\varphi_j)+\pi_2^*(\varphi_Y)$ on $M$, where $p_j$ is a positive real number for $1\le j\le n_1$.  Denote that $E:=\left\{(\alpha_1,\ldots,\alpha_{n_1}):\sum_{1\le j\le n_1}\frac{\alpha_j+1}{p_j}=1\,\&\,\alpha_j\in\mathbb{Z}_{\ge0}\right\}$ and $\tilde E:=\left\{(\alpha_1,\ldots,\alpha_{n_1}):\sum_{1\le j\le n_1}\frac{\alpha_j+1}{p_j}\ge1\,\&\,\alpha_j\in\mathbb{Z}_{\ge0}\right\}$.
Let
$$f=\sum_{\alpha\in\tilde E}\pi_1^*(w^{\alpha}dw_1\wedge\ldots\wedge dw_{n_1})\wedge \pi_2^*(f_{\alpha})$$
 be a holomorphic $(n,0)$ form on a neighborhood $U_0\subset (V_0\times Y)\cap M_1$ of $Z_0$,  where  $f_{\alpha}$ is a holomorphic $(n_2,0)$ form on $Y$. Let $c_j(z)$ be the logarithmic capacity (see \cite{S-O69}) on $\Omega_j$, which is locally defined by
$$c_j(z_j):=\exp\lim_{z\rightarrow z_j}(G_{\Omega_j}(z,z_j)-\log|w_j(z)|).$$

We obtain a characterization of the holding of equality in optimal jets $L^2$ extension problem for the case $Z_0=\{z_0\}\times Y$.

\begin{Theorem}
\label{thm:exten-fibra-single}
Let $c$ be a positive function on $(0,+\infty)$ such that $\int_{0}^{+\infty}c(t)e^{-t}dt<+\infty$ and $c(t)e^{-t}$ is decreasing on $(0,+\infty)$. Assume that
$$\sum_{\alpha\in E}\frac{(2\pi)^{n_1}e^{-\left(\Psi+\sum_{1\le j\le n_1}\tilde\pi_{j}^*(\varphi_j)\right)(z_0)}\int_Y|f_{\alpha}|^2e^{-\varphi_Y}}{\prod_{1\le j\le n_1}(\alpha_j+1)c_{j}(z_j)^{2\alpha_{j}+2}}\in(0,+\infty).$$
Then there exists a holomorphic $(n,0)$ form $F$ on $M_1$ satisfying that $(F-f,z)\in\left(\mathcal{O}(K_{M_1})\otimes\mathcal{I}\left(\max_{1\le j\le n_1}\left\{2p_j\pi_{1,j}^{*}(G_{\Omega_j}(\cdot,z_j))\right\}\right)\right)_{z}$ for any $z\in Z_0$ and
\begin{displaymath}
	\begin{split}
	&\int_{M_1}|F|^2e^{-\varphi}c(-\psi)\\
	\le&\left(\int_0^{+\infty}c(s)e^{-s}ds\right)\sum_{\alpha\in E}\frac{(2\pi)^{n_1}e^{-\left(\Psi+\sum_{1\le j\le n_1}\tilde\pi_{j}^*(\varphi_j)\right)(z_0)}\int_Y|f_{\alpha}|^2e^{-\varphi_Y}}{\prod_{1\le j\le n_1}(\alpha_j+1)c_{j}(z_j)^{2\alpha_{j}+2}}.	
	\end{split}
\end{displaymath}
	
	Moreover, equality $\inf\big\{\int_{M_1}|\tilde{F}|^2e^{-\varphi}c(-\psi):\tilde{F}\in H^0(M_1,\mathcal{O}(K_{M_1}))\,\&\, (\tilde{F}-f,z)\in(\mathcal{O}\left(K_{M_1})\otimes\mathcal{I}\left(\max_{1\le j\le n_1}\left\{2p_j\pi_{1,j}^{*}(G_{\Omega_j}(\cdot,z_j))\right\}\right)\right)_{z}$ for any $z\in Z_0\big\}=\left(\int_0^{+\infty}c(s)e^{-s}ds\right)\times\sum_{\alpha\in E}\frac{(2\pi)^{n_1}e^{-\left(\Psi+\sum_{1\le j\le n_1}\tilde\pi_{j}^*(\varphi_j)\right)(z_0)}\int_Y|f_{\alpha}|^2e^{-\varphi_Y}}{\prod_{1\le j\le n_1}(\alpha_j+1)c_{j}(z_j)^{2\alpha_{j}+2}}$ holds if and only if the following statements hold:

	$(1)$ $M_1=\left(\prod_{1\le j\le n_1}\Omega_j\right)\times Y$ and $\Psi\equiv0$;

	$(2)$ $\varphi_j=2\log|g_j|+2u_j$, where $g_j$ is a holomorphic function on $\Omega_j$ such that $g_j(z_j)\not=0$ and $u_j$ is a harmonic function on $\Omega_j$ for any $1\le j\le n_1$;

    $(3)$ $\chi_{j,z_j}^{\alpha_j+1}=\chi_{j,-u_j}$ for any $j\in\{1,2,...,n\}$ and $\alpha\in E$ satisfying $f_{\alpha}\not\equiv 0$.
\end{Theorem}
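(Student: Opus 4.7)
The plan is to realize the extension problem on $M_1$ as the $G(0)$ quantity from Theorem \ref{thm:linear-fibra-single}, with weight $\psi=\max_{1\le j\le n_1}\{2p_j\pi_{1,j}^{*}(G_{\Omega_j}(\cdot,z_j))\}+\pi_1^*(\Psi)$ and gain $c$ on $M_1$. The Remark following Theorem \ref{thm:linear-fibra-single} (based on Lemma \ref{l:G1=G2}) lets us replace the ideal sheaf $\mathcal{I}(\varphi+\psi)$ by $\mathcal{I}(\max\{2p_j\pi_{1,j}^*(G_{\Omega_j}(\cdot,z_j))\})$ without changing the value, so the infimum on the left is exactly $G(0)$. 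The three main ingredients are: the concavity of $G(h^{-1}(r))$ recalled before Theorem \ref{thm:linear-fibra-single}, an asymptotic computation of $G(t)$ as $t\to+\infty$ giving the upper bound, and Theorem \ref{thm:linear-fibra-single} together with Proposition \ref{p:M=M_1} for the equality part.

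For the inequality, I would combine concavity with $\lim_{t\to+\infty}G(t)=0$ to get
\begin{equation*}
G(0)\le\Bigl(\int_0^{+\infty}c(s)e^{-s}ds\Bigr)\cdot\limsup_{t\to+\infty}\frac{G(t)}{\int_t^{+\infty}c(s)e^{-s}ds},
\end{equation*}
and then identify the $\limsup$ with the claimed bound by localizing $\{\psi<-t\}$ near $Z_0$. For $t$ large, only the principal part $\sum_{\alpha\in E}\pi_1^*(w^\alpha dw_1\wedge\cdots\wedge dw_{n_1})\wedge\pi_2^*(f_\alpha)$ of $f$ contributes to leading order, since the $\alpha\in\tilde E\setminus E$ monomials already lie in $\mathcal{I}(\max\{2p_j\pi_{1,j}^*(G_{\Omega_j}(\cdot,z_j))\})$ at $Z_0$. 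Reducing to the product structure of $\psi$ near $z_0$, the leading contribution factors as a product of one-variable sharp constants $\frac{2\pi}{(\alpha_j+1)c_j(z_j)^{2\alpha_j+2}}e^{-\varphi_j(z_j)}$ (via the Suita/Yamada sharp estimate on each $\Omega_j$) and an $L^2$ integral $\int_Y|f_\alpha|^2e^{-\varphi_Y}$ on $Y$, with an extra factor $e^{-\Psi(z_0)}$ coming from $\Psi$ evaluated at the base point; summing over $\alpha\in E$ then gives the stated upper bound, and existence of an $F$ saturating it up to the infimum comes by passing to a minimizing sequence.

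For the equality direction, if the infimum equals the bound then by the concavity-based inequality above $G(h^{-1}(r))$ must agree with its linear secant on $(0,\int_0^{+\infty}c(s)e^{-s}ds]$, hence is linear; Proposition \ref{p:M=M_1} then forces $M_1=M=(\prod_j\Omega_j)\times Y$. To force $\Psi\equiv 0$, I would compare $G(0)$ with the analogous quantity $G_0(0)$ in which $\Psi$ is dropped: since $\Psi\le 0$, the explicit minimizer for $G_0$ furnished by Remark \ref{r:fibra-single} is a competitor for $G$, and because $c(t)e^{-t}$ is strictly decreasing, the integral against $c(-\psi)$ is strictly smaller than against $c(-\psi_0)$ on any open set where $\Psi<0$; tracing this through the equality in the bound, which depends on $\Psi$ only through the point value $e^{-\Psi(z_0)}$, forces $\Psi\equiv 0$. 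Once $M_1=M$ and $\Psi\equiv 0$, the situation falls squarely into Theorem \ref{thm:linear-fibra-single} and its statements (2)–(3) coincide with ours. The converse direction is immediate: if (1)–(3) hold, Remark \ref{r:fibra-single} produces an explicit $F$ whose $L^2$ norm equals the claimed bound. The main obstacle I expect is precisely the step excluding nontrivial $\Psi$: the upper bound sees only $\Psi(z_0)$ while the infimum sees $\Psi$ globally through $c(-\psi)$, and controlling the strict inequality developed away from $z_0$ requires a careful quantitative use of the explicit minimizer from Remark \ref{r:fibra-single}.
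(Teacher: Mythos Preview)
Your approach has a structural gap: you want to run the concavity machinery for $G(t)$ directly on $M_1$ and then invoke Proposition \ref{p:M=M_1}, but neither tool is available in the generality of Theorem \ref{thm:exten-fibra-single}. The concavity theorem (Theorem \ref{thm:general_concave}) and Proposition \ref{p:M=M_1} both require that $M_1$ (and its sublevel sets $\{\psi<-t\}\cap M_1$) be weakly pseudoconvex K\"ahler, whereas the hypothesis of Theorem \ref{thm:exten-fibra-single} only says $M_1\subset M$ is an $n$-dimensional complex submanifold containing $Z_0$. So the inequality $G(0)\le(\int_0^\infty c e^{-s}\,ds)\cdot\limsup_{t\to\infty}G(t)/\int_t^\infty c e^{-s}\,ds$ is not justified, nor is the conclusion $M_1=M$ via Proposition \ref{p:M=M_1}. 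A second issue is your treatment of $\Psi\equiv 0$: you want to use the explicit minimizer from Remark \ref{r:fibra-single}, but that remark presupposes statements (2) and (3), which are precisely what you are still trying to establish at that stage, so the argument is circular.

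The paper sidesteps both problems by a different order of operations. For existence it applies Proposition \ref{p:exten-fibra} on $M$ (which \emph{is} weakly pseudoconvex K\"ahler) with weight $\varphi_X=\Psi+\sum_j\tilde\pi_j^*(\varphi_j)$ and $\psi_0=\max_j\{2p_j\pi_{1,j}^*G_{\Omega_j}(\cdot,z_j)\}$, obtaining $F$ on $M$ with the chain
\[
\int_{M_1}|F|^2e^{-\varphi}c(-\psi)\ \le\ \int_{M}|F|^2e^{-\varphi-\pi_1^*(\Psi)}c(-\psi_0)\ \le\ \text{(claimed bound)},
\]
where the first inequality uses $\Psi\le 0$ and the decrease of $c(t)e^{-t}$. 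Equality in this chain then forces $M_1=M$ directly (since $F\not\equiv 0$) and, via Lemma \ref{l:psi=G}, also $\Psi\equiv 0$, with no appeal to concavity or to Remark \ref{r:fibra-single}. Only after reducing to $M_1=M$, $\Psi\equiv 0$ does the paper set up $G(t)$ on $M$, use Proposition \ref{p:exten-fibra} on each sublevel set to bound $G(t)/\int_t^\infty c e^{-s}\,ds$, and conclude linearity from Theorem \ref{thm:general_concave}; Theorem \ref{thm:linear-fibra-single} then yields (2) and (3). The sufficiency direction is as you say, via Remark \ref{r:fibra-single}.
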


\begin{Remark}
	\label{r:1.7}If $(f_{\alpha},y)\in(\mathcal{O}(K_Y)\otimes\mathcal{I}(\varphi_Y))_y$ for any $y\in Y$ and $\alpha\in \tilde E\backslash E$, the above result also holds when we replace  the ideal sheaf $\mathcal{I}\left(\max_{1\le j\le n_1}\left\{2p_j\pi_{1,j}^{*}(G_{\Omega_j}(\cdot,z_j))\right\}\right)$  by $\mathcal{I}(\varphi+\psi)$. We prove the remark in Section \ref{sec:proof-1.7}.
\end{Remark}

\begin{Remark}
	Let $f$ be a holomorphic $(n,0)$ form on a neighborhood of $Z_0$. It follows from Lemma \ref{l:fibra-decom-2} that there exists a sequence of holomorphic $(n_2,0)$ form $\{f_{\alpha}\}_{\alpha\in\mathbb{Z}_{\ge0}^{n_1}}$ on $Y$ such that $f=\sum_{\alpha\in\mathbb{Z}_{\ge0}^{n_1}}\pi_1^*(w^{\alpha}dw_1\wedge\ldots\wedge dw_{n_1})\wedge\pi_2^*(f_{\alpha})$ on a neighborhood of $Z_0$. In the setting of Theorem \ref{thm:exten-fibra-single}, we assume that $f_{\alpha}\equiv0$ for $\alpha\in\mathbb{Z}_{\ge0}^{n_1}$ satisfying $\sum_{1\le j\le n_1}\frac{\alpha_j+1}{p_j}<1$.
	\end{Remark}

\begin{Remark}
	Let $\tilde\psi=\max_{1\le j\le n_1}\left\{2n_1\pi_{1,j}^*(G_{\Omega_j}(\cdot,z_j))\right\}$. It follows from Lemma \ref{l:0} that $(H_1-H_2,z)\in(\mathcal{O}(K_M)\otimes\mathcal{I}(\tilde\psi))_z$ for any $z\in Z_0$ if and only if $(H_1-H_2)|_{Z_0}=0$, where $H_1$ and $H_2$ are holomorphic $(n,0)$ forms on a neighborhood of $Z_0$. Thus, Theorem \ref{thm:exten-fibra-single} gives a characterization of the holding of equality in optimal $L^2$ extension theorem when $p_j=n_1$ for any $1\le j\le n_1$.
\end{Remark}

 Let $Z_j=\{z_{j,1},\ldots,z_{j,m_j}\}\subset\Omega_j$ for any  $j\in\{1,\ldots,n_1\}$, where $m_j$ is a positive integer.
Let $w_{j,k}$ be a local coordinate on a neighborhood $V_{z_{j,k}}\Subset\Omega_{j}$ of $z_{j,k}\in\Omega_j$ satisfying $w_{j,k}(z_{j,k})=0$ for any $j\in\{1,\ldots,n_1\}$ and $k\in\{1,\ldots,m_j\}$, where $V_{z_{j,k}}\cap V_{z_{j,k'}}=\emptyset$ for any $j$ and $k\not=k'$. Denote that $I_1:=\{(\beta_1,\ldots,\beta_{n_1}):1\le \beta_j\le m_j$ for any $j\in\{1,\ldots,n_1\}\}$, $V_{\beta}:=\prod_{1\le j\le n_1}V_{z_{j,\beta_j}}$ and $w_{\beta}:=(w_{1,\beta_1},\ldots,w_{n_1,\beta_{n_1}})$ is a local coordinate on $V_{\beta}$ of $z_{\beta}:=(z_{1,\beta_1},\ldots,z_{n_1,\beta_{n_1}})\in\prod_{1\le j\le n_1}\Omega_j$ for any $\beta=(\beta_1,\ldots,\beta_{n_1})\in I_1$. Then $Z_0=\{(z_{\beta},y):\beta\in  I_1\,\&\,y\in Y\}\subset M_1$.

Let $\Psi\le0$ be a  plurisubharmonic function on $\prod_{1\le j\le n_1}\Omega_j$, and let $\varphi_j$ be a Lebesgue measurable function on $\Omega_j$ such that $\Psi+\sum_{1\le j\le n_1}\tilde\pi_{j}^*(\varphi_j)$ is plurisubharmonic on $\prod_{1\le j\le n_1}\Omega_j$, where $\tilde\pi_j$ is the natural projection from $\prod_{1\le j\le n_1}\Omega_j$ to $\Omega_j$. Let $\varphi_Y$ be a plurisubharmonic function on $Y$. Denote that
$$\psi:=\max_{1\le j\le n_1}\left\{2\sum_{1\le k\le m_j}p_{j,k}\pi_{1,j}^{*}(G_{\Omega_j}(\cdot,z_{j,k}))\right\}+\pi_1^*(\Psi)$$
and $\varphi:=\sum_{1\le j\le n_1}\pi_{1,j}^*(\varphi_j)+\pi_2^*(\varphi_Y)$ on $M$, where $p_{j,k}$ is a positive real number for $1\le j\le n_1$ and $1\le k\le m_j$.

 Denote that $E_{\beta}:=\left\{(\alpha_1,\ldots,\alpha_{n_1}):\sum_{1\le j\le n_1}\frac{\alpha_j+1}{p_{j,\beta_j}}=1\,\&\,\alpha_j\in\mathbb{Z}_{\ge0}\right\}$ and $\tilde E_{\beta}:=\left\{(\alpha_1,\ldots,\alpha_{n_1}):\sum_{1\le j\le n_1}\frac{\alpha_j+1}{p_{j,\beta_j}}\ge1\,\&\,\alpha_j\in\mathbb{Z}_{\ge0}\right\}$ for any $\beta\in I_1$.
Let $f$ be a holomorphic $(n,0)$ form on a neighborhood $U_0\subset M_1$ of $Z_0$ such that
$$f=\sum_{\alpha\in\tilde E_{\beta}}\pi_1^*(w_{\beta}^{\alpha}dw_{1,\beta_1}\wedge\ldots\wedge dw_{n_1,\beta_{n_1}})\wedge\pi_2^*(f_{\alpha,\beta})$$ on $U_0\cap(V_{\beta}\times Y)$, where $f_{\alpha,\beta}$ is a holomorphic $(n_2,0)$ form on $Y$ for any $\alpha\in E_{\beta}$ and $\beta\in I_1$. Let $\beta^*=(1,\ldots,1)\in I_1$, and let $\alpha_{\beta^*}=(\alpha_{\beta^*,1},\ldots,\alpha_{\beta^*,n_1})\in E_{\beta^*}$. Denote that $E':=\left\{\alpha\in\mathbb{Z}_{\ge0}^{n_1}:\sum_{1\le j\le n_1}\frac{\alpha_j+1}{p_{j,1}}>1\right\}$. Assume that $f=\pi_1^*\left(w_{\beta^*}^{\alpha_{\beta^*}}dw_{1,1}\wedge\ldots\wedge dw_{n_1,1}\right)\wedge\pi_2^*\left(f_{\alpha_{\beta^*},\beta^*}\right)+\sum_{\alpha\in E'}\pi_1^*(w^{\alpha}dw_{1,1}\wedge\ldots\wedge dw_{n_1,1})\wedge\pi_2^*(f_{\alpha,\beta})$ on $U_0\cap(V_{\beta^*}\times Y)$.
 Denote that
\begin{equation*}
c_{j,k}:=\exp\lim_{z\rightarrow z_{j,k}}\left(\frac{\sum_{1\le k_1\le m_j}p_{j,k_1}G_{\Omega_j}(z,z_{j,k_1})}{p_{j,k}}-\log|w_{j,k}(z)|\right)
\end{equation*}
 for any $j\in\{1,\ldots,n\}$ and $k\in\{1,\ldots,m_j\}$.

We obtain a characterization of the holding of equality in optimal jets $L^2$ extension problem for the case that $Z_j$ is finite.

\begin{Theorem}
\label{thm:exten-fibra-finite}
Let $c$ be a positive function on $(0,+\infty)$ such that $\int_{0}^{+\infty}c(t)e^{-t}dt<+\infty$ and $c(t)e^{-t}$ is decreasing on $(0,+\infty)$. Assume that
$$\sum_{\beta\in I_1}\sum_{\alpha\in E_{\beta}}\frac{(2\pi)^{n_1}e^{-\left(\Psi+\sum_{1\le j\le n_1}\tilde\pi_j^*(\varphi_j)\right)(z_{\beta})}\int_Y|f_{\alpha,\beta}|^2e^{-\varphi_Y}}{\prod_{1\le j\le n_1}(\alpha_j+1)c_{j,\beta_j}^{2\alpha_{j}+2}}\in(0,+\infty).$$
Then there exists a holomorphic $(n,0)$ form $F$ on $M_1$ satisfying that $(F-f,z)\in\left(\mathcal{O}(K_{M_1})\otimes\mathcal{I}\left(\max_{1\le j\le n_1}\left\{2\sum_{1\le k\le m_j}p_{j,k}\pi_{1,j}^{*}(G_{\Omega_j}(\cdot,z_{j,k}))\right\}\right)\right)_{z}$ for any $z\in Z_0$ and
\begin{displaymath}
	\begin{split}
	&\int_{M_1}|F|^2e^{-\varphi}c(-\psi)\\
	\le&\left(\int_0^{+\infty}c(s)e^{-s}ds\right)\sum_{\beta\in I_1}\sum_{\alpha\in E_{\beta}}\frac{(2\pi)^{n_1}e^{-\left(\Psi+\sum_{1\le j\le n_1}\tilde\pi_j^*(\varphi_j)\right)(z_{\beta})}\int_Y|f_{\alpha,\beta}|^2e^{-\varphi_Y}}{\prod_{1\le j\le n_1}(\alpha_j+1)c_{j,\beta_j}^{2\alpha_{j}+2}}.	
	\end{split}
\end{displaymath}
	
	Moreover, equality $\inf\bigg\{\int_{M_1}|\tilde{F}|^2e^{-\varphi}c(-\psi):\tilde{F}\in H^0(M_1,\mathcal{O}(K_{M_1}))\,\&\, (\tilde{F}-f,z)\in\left(\mathcal{O}(K_{M_1})\otimes\mathcal{I}\left(\max_{1\le j\le n_1}\left\{2\sum_{1\le k\le m_j}p_{j,k}\pi_{1,j}^{*}(G_{\Omega_j}(\cdot,z_{j,k}))\right\}\right)\right)_{z}$ for any $z\in Z_0\bigg\}=\left(\int_0^{+\infty}c(s)e^{-s}ds\right)\sum_{\beta\in I_1}\sum_{\alpha\in E_{\beta}}\frac{(2\pi)^{n_1}e^{-\left(\Psi+\sum_{1\le j\le n_1}\tilde\pi_j^*(\varphi_j)\right)(z_{\beta})}\int_Y|f_{\alpha,\beta}|^2e^{-\varphi_Y}}{\prod_{1\le j\le n_1}(\alpha_j+1)c_{j,\beta_j}^{2\alpha_{j}+2}}$ holds if and only if the following statements hold:

	$(1)$ $M_1=\left(\prod_{1\le j\le n_1}\Omega_j\right)\times Y$ and $\Psi\equiv0$;
	
	$(2)$ $\varphi_j=2\log|g_j|+2u_j$ for any $j\in\{1,\ldots,n_1\}$, where $u_j$ is a harmonic function on $\Omega_j$ and $g_j$ is a holomorphic function on $\Omega_j$ satisfying $g_j(z_{j,k})\not=0$ for any $k\in\{1,\ldots,m_j\}$;
	
	$(3)$ There exists a nonnegative integer $\gamma_{j,k}$ for any $j\in\{1,\ldots,n_1\}$ and $k\in\{1,\ldots,m_j\}$, which satisfies that $\prod_{1\le k\leq m_j}\chi_{j,z_{j,k}}^{\gamma_{j,k}+1}=\chi_{j,-u_j}$ and $\sum_{1\le j\le n_1}\frac{\gamma_{j,\beta_j}+1}{p_{j,\beta_j}}=1$ for any $\beta\in I_1$;
	
	$(4)$ $f_{\alpha,\beta}=c_{\beta}f_0$ holds for $\alpha=(\gamma_{1,\beta_1},\ldots,\gamma_{n_1,\beta_{n_1}})$ and $f_{\alpha,\beta}\equiv0$ holds for any $\alpha\in E_{\beta}\backslash\{(\gamma_{1,\beta_1},\ldots,\gamma_{n_1,\beta_{n_1}})\}$, where $\beta\in I_1$, $c_{\beta}$ is a constant and $f_0\not\equiv0$ is a holomorphic $(n_2,0)$ form on $Y$ satisfying $\int_Y|f_0|^2e^{-\varphi_2}<+\infty$;
		
	$(5)$ $c_{\beta}\prod_{1\le j\le n_1}\left(\lim_{z\rightarrow z_{j,\beta_j}}\frac{w_{j,\beta_j}^{\gamma_{j,\beta_j}}dw_{j,\beta_j}}{g_j(P_{j})_*\left(f_{u_j}\left(\prod_{1\le k\le m_j}f_{z_{j,k}}^{\gamma_{j,k}+1}\right)\left(\sum_{1\le k\le m_j}p_{j,k}\frac{df_{z_{j,k}}}{f_{z_{j,k}}}\right)\right)}\right)=c_0$ for any $\beta\in I_1$, where $c_0\in\mathbb{C}\backslash\{0\}$ is a constant independent of $\beta$, $f_{u_j}$ is a holomorphic function $\Delta$ such that $|f_{u_j}|=P_j^*(e^{u_j})$ and $f_{z_{j,k}}$ is a holomorphic function on $\Delta$ such that $|f_{z_{j,k}}|=P_j^*\left(e^{G_{\Omega_j}(\cdot,z_{j,k})}\right)$ for any $j\in\{1,\ldots,n_1\}$ and $k\in\{1,\ldots,m_j\}$.
\end{Theorem}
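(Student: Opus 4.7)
The plan is to translate the optimal jets $L^2$-extension problem into the minimal $L^2$ integral $G(t)$ of Section \ref{introduction}, now set on $M_1$ with the $\psi$, $\varphi$, $c$ of the statement. Using (the analogue of) Lemma \ref{l:G1=G2} to replace the sheaf $\mathcal{I}(\varphi + \psi)$ in the definition of $G$ by $\mathcal{I}(\max_{1 \le j \le n_1}\{2\sum_{k} p_{j,k} \pi_{1,j}^{*}(G_{\Omega_j}(\cdot, z_{j,k}))\})$, the quantity $G(0)$ coincides with the infimum appearing in the ``Moreover'' clause. The concavity of $r \mapsto G(h^{-1}(r))$ recalled just before Theorem \ref{thm:linear-fibra-single} yields the linear bound
\begin{equation*}
G(0) \le h(0)\lambda, \qquad \lambda := \liminf_{r \to 0^+} \frac{G(h^{-1}(r))}{r},
\end{equation*}
so the whole theorem reduces to (a) computing $\lambda$ explicitly and (b) characterizing the saturation $G(0) = h(0)\lambda$ via Theorem \ref{thm:linear-fibra-finite} and Proposition \ref{p:M=M_1}.

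For (a), I would evaluate $\lambda$ by localizing $G(t)$ for large $t$ to the disjoint union of product neighborhoods $V_\beta \times Y$ and testing against the product optimal $L^2$ extension from each fiber $\{z_\beta\} \times Y$. Substituting the polar expansion of $f$ given in the hypothesis, the Green function asymptotics $\frac{1}{p_{j,\beta_j}}\sum_{k_1} p_{j,k_1}G_{\Omega_j}(\cdot, z_{j,k_1}) - \log|w_{j,\beta_j}| \to \log c_{j,\beta_j}$ near $z_{j,\beta_j}$ convert sublevel sets of $\psi$ into products of disks in the $|w_{j,\beta_j}|$ variables. The orthogonal integrals $\int|w^\alpha|^2\,\frac{\sqrt{-1}\,dw\wedge d\bar w}{2}$ over these disks produce the factors $(2\pi)^{n_1}/\prod(\alpha_j+1)$; the capacity factors $c_{j,\beta_j}^{-2\alpha_j-2}$ come from the resulting change of variable; the pointwise evaluation $e^{-(\Psi+\sum\tilde\pi_j^*\varphi_j)(z_\beta)}$ arises from the continuity of the plurisubharmonic weight at $z_\beta$; and only $\alpha \in E_\beta$ survives in the limit because the contributions of $\alpha \in \tilde E_\beta \backslash E_\beta$ decay strictly faster than $h(t)$. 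Summing over $\beta \in I_1$ identifies $\lambda$ with the stated double sum and gives the upper estimate on the extension norm.

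For (b), if $G(0) = h(0)\lambda$ then concavity forces $G(h^{-1}(r))$ to be linear on $(0, h(0)]$. I would first use the sufficient direction of Remark \ref{r:fibra-finite} to produce, in the reference setting $(M, \tilde\psi)$ with $\tilde\psi$ the $\Psi$-free Green exhaustion, an explicit holomorphic $F$ on $M$ with $\int_{M}|F|^2e^{-\varphi}c(-\tilde\psi) = h(0)\lambda$ whenever conditions analogous to (2)--(5) are in force; comparing this $F$ restricted to $M_1$ against $G(0)$ forces the saturation to occur on all of $M$ and with $\pi_1^*(\Psi)$ contributing nothing, hence $M_1 = M$ and $\Psi \equiv 0$, with Proposition \ref{p:M=M_1} providing the $M_1 = M$ step. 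Once in the setting of Theorem \ref{thm:linear-fibra-finite}, its conclusions (1)--(4) translate directly into (2)--(5) of the present theorem, matching $\gamma_{j,k}$, $c_\beta$, $c_0$. The converse direction is immediate from Remark \ref{r:fibra-finite}. The main obstacle will be the last step of (b): propagating the $L^2$ saturation through the strict monotonicity of $c(t)e^{-t}$ to simultaneously rule out any negative Lelong-type mass of $\Psi$ and any nontrivial open complement $M \backslash M_1$, since the ambient manifold, the weight, and the extension form must be pinned down together rather than one at a time.
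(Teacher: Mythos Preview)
Your plan has a structural gap: you set up the minimal $L^2$ integral $G(t)$ on $M_1$ and invoke the concavity of $r\mapsto G(h^{-1}(r))$, but in the hypotheses of Theorem~\ref{thm:exten-fibra-finite} the open submanifold $M_1\subset M$ is \emph{not} assumed to be weakly pseudoconvex K\"ahler. The concavity result (Theorem~\ref{thm:general_concave}, and the sentence you cite before Theorem~\ref{thm:linear-fibra-single}) requires the sublevel sets $\{\psi<-t\}$, taken inside the ambient manifold, to be weakly pseudoconvex K\"ahler; that is available on $M=\big(\prod\Omega_j\big)\times Y$ but not on an arbitrary $M_1$. For the same reason your appeal to Proposition~\ref{p:M=M_1} is illegitimate: that proposition carries the explicit hypothesis that $M_1$ be weakly pseudoconvex K\"ahler. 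So neither the existence bound nor the $M_1=M$ step can be obtained along the route you describe.

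The paper circumvents this by never running the concavity machine on $M_1$. It first applies the extension result Proposition~\ref{p:exten-fibra} on the full product $M$ with the $\Psi$-free weight $\tilde\psi=\max_j\{2\sum_k p_{j,k}\pi_{1,j}^*(G_{\Omega_j}(\cdot,z_{j,k}))\}$ and with $\varphi_X=\Psi+\sum_j\tilde\pi_j^*(\varphi_j)$, producing $F$ on $M$. Restricting to $M_1$ and using that $c(t)e^{-t}$ is decreasing together with $\Psi\le 0$ yields the chain
\[
\int_{M_1}|F|^2e^{-\varphi}c(-\psi)\ \le\ \int_{M}|F|^2e^{-\varphi-\pi_1^*(\Psi)}c(-\tilde\psi)\ \le\ \left(\int_0^\infty c(s)e^{-s}ds\right)\sum_{\beta,\alpha}\cdots,
\]
which is the existence estimate. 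In the equality case this chain is saturated; since $F\not\equiv 0$ on $M$ the first inequality forces $M_1=M$, and Lemma~\ref{l:psi=G} forces $\Psi\equiv 0$. Only \emph{after} this, on $M=M_1$ (now weakly pseudoconvex K\"ahler), does the paper introduce $G(t)$ and $\tilde G(t)$, identify them via Lemma~\ref{l:G1=G2}, and use Proposition~\ref{p:exten-fibra} on each $\{\psi<-t\}$ plus Theorem~\ref{thm:general_concave} to deduce linearity, whereupon Theorem~\ref{thm:linear-fibra-finite} delivers statements~(2)--(5). The sufficiency is indeed Remark~\ref{r:fibra-finite}, as you say. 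Your localization computation of $\lambda$ is essentially a re-derivation of Proposition~\ref{p:exten-fibra} on sublevel sets and would be fine once you are on $M$; the error is only in where you first place $G(t)$.
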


\begin{Remark}
	\label{r:1.8} If $(f_{\alpha,\beta},y)\in(\mathcal{O}(K_Y)\otimes\mathcal{I}(\varphi_Y))_y$ holds for any $y\in Y$,  $\alpha\in \tilde E_{\beta}\backslash E_{\beta}$ and $\beta\in I_1$, the above result also holds when we replace  the ideal sheaf $\mathcal{I}\left(\max_{1\le j\le n_1}\left\{2\sum_{1\le k\le m_j}p_{j,k}\pi_{1,j}^{*}(G_{\Omega_j}(\cdot,z_{j,k}))\right\}\right)$  by $\mathcal{I}(\varphi+\psi)$. We prove the remark in Section \ref{sec:proof-1.8}.
\end{Remark}

 Let $Z_j=\{z_{j,k}:1\le k<\tilde m_j\}$ be a discrete subset of $\Omega_j$ for any  $j\in\{1,\ldots,n_1\}$, where $\tilde m_j\in\mathbb{Z}_{\ge2}\cup\{+\infty\}$.
Let $w_{j,k}$ be a local coordinate on a neighborhood $V_{z_{j,k}}\Subset\Omega_{j}$ of $z_{j,k}\in\Omega_j$ satisfying $w_{j,k}(z_{j,k})=0$ for any $1\le j\le n_1$ and $1\le k<\tilde m_j$, where $V_{z_{j,k}}\cap V_{z_{j,k'}}=\emptyset$ for any $j$ and $k\not=k'$. Denote that $\tilde I_1:=\{(\beta_1,\ldots,\beta_{n_1}):1\le \beta_j<\tilde m_j$ for any $j\in\{1,\ldots,n_1\}\}$, $V_{\beta}:=\prod_{1\le j\le n_1}V_{z_{j,\beta_j}}$  and $w_{\beta}:=(w_{1,\beta_1},\ldots,w_{n_1,\beta_{n_1}})$ is a local coordinate on $V_{\beta}$ of $z_{\beta}:=(z_{1,\beta_1},\ldots,z_{n_1,\beta_{n_1}})\in\prod_{1\le j\le n_1}\Omega_j$ for any $\beta=(\beta_1,\ldots,\beta_{n_1})\in\tilde I_1$. Then $Z_0=\{(z_{\beta},y):\beta\in\tilde I_1\,\&\,y\in Y\}\subset M_1$.

Let $\Psi\le0$ be a  plurisubharmonic function on $\prod_{1\le j\le n_1}\Omega_j$, and let $\varphi_j$ be a Lebesgue measurable function on $\Omega_j$ such that $\Psi+\sum_{1\le j\le n_1}\tilde\pi_{j}^*(\varphi_j)$ is plurisubharmonic on $\prod_{1\le j\le n_1}\Omega_j$, where $\tilde\pi_j$ is the natural projection from $\prod_{1\le j\le n_1}\Omega_j$ to $\Omega_j$. Let $\varphi_Y$ be a plurisubharmonic function on $Y$.
Let $p_{j,k}$ be a positive number for any $1\le j\le n_1$ and $1\le k<\tilde m_j$, which satisfies that $\sum_{1\le k<\tilde m_j}p_{j,k}G_{\Omega_j}(\cdot,z_{j,k})\not\equiv-\infty$ for any $1\le j\le n_1$.
Denote that
$$\psi:=\max_{1\le j\le n_1}\left\{2\sum_{1\le k<\tilde m_j}p_{j,k}\pi_{1,j}^{*}(G_{\Omega_j}(\cdot,z_{j,k}))\right\}+\pi_1^*(\Psi)$$
 and $\varphi:=\sum_{1\le j\le n_1}\pi_{1,j}^*(\varphi_j)+\pi_2^*(\varphi_Y)$ on $M$.

 Denote that $E_{\beta}:=\left\{(\alpha_1,\ldots,\alpha_{n_1}):\sum_{1\le j\le n_1}\frac{\alpha_j+1}{p_{j,\beta_j}}=1\,\&\,\alpha_j\in\mathbb{Z}_{\ge0}\right\}$ and $\tilde E_{\beta}:=\left\{(\alpha_1,\ldots,\alpha_{n_1}):\sum_{1\le j\le n_1}\frac{\alpha_j+1}{p_{j,\beta_j}}\ge1\,\&\,\alpha_j\in\mathbb{Z}_{\ge0}\right\}$ for any $\beta\in\tilde I_1$.
Let $f$ be a holomorphic $(n,0)$ form on a neighborhood $U_0\subset M_1$ of $Z_0$ such that
$$f=\sum_{\alpha\in\tilde E_{\beta}}\pi_1^*(w_{\beta}^{\alpha}dw_{1,\beta_1}\wedge\ldots\wedge dw_{n_1,\beta_{n_1}})\wedge\pi_2^*(f_{\alpha,\beta})$$ on $U_0\cap(V_{\beta}\times Y)$, where $f_{\alpha,\beta}$ is a holomorphic $(n_2,0)$ form on $Y$ for any $\alpha\in E_{\beta}$ and $\beta\in\tilde I_1$. Let $\beta^*=(1,\ldots,1)\in\tilde I_1$, and let $\alpha_{\beta^*}=(\alpha_{\beta^*,1},\ldots,\alpha_{\beta^*,n_1})\in E_{\beta^*}$. Denote that $E':=\left\{\alpha\in\mathbb{Z}_{\ge0}^{n_1}:\sum_{1\le j\le n_1}\frac{\alpha_j+1}{p_{j,1}}>1\right\}$. Assume that $f=\pi_1^*\left(w_{\beta^*}^{\alpha_{\beta^*}}dw_{1,1}\wedge\ldots\wedge dw_{n_1,1}\right)\wedge\pi_2^*\left(f_{\alpha_{\beta^*},\beta^*}\right)+\sum_{\alpha\in E'}\pi_1^*(w^{\alpha}dw_{1,1}\wedge\ldots\wedge dw_{n_1,1})\wedge\pi_2^*(f_{\alpha,\beta})$ on $U_0\cap(V_{\beta^*}\times Y)$.
Denote that
\begin{equation*}
c_{j,k}:=\exp\lim_{z\rightarrow z_{j,k}}\left(\frac{\sum_{1\le k_1<\tilde m_j}p_{j,k_1}G_{\Omega_j}(z,z_{j,k_1})}{p_{j,k}}-\log|w_{j,k}(z)|\right)
\end{equation*}
 for any $j\in\{1,\ldots,n\}$ and $1\le k<\tilde m_j$ (following from Lemma \ref{l:green-sup} and Lemma \ref{l:green-sup2}, we get that the above limit exists).

We obtain that the equality in optimal jets $L^2$ extension problem could not hold when there exists $j_0\in\{1,\ldots,n_1\}$ such that $\tilde m_{j_0}=+\infty$.

\begin{Theorem}
\label{thm:exten-fibra-infinite}Let $c$ be a positive function on $(0,+\infty)$ such that $\int_{0}^{+\infty}c(t)e^{-t}dt<+\infty$ and $c(t)e^{-t}$ is decreasing on $(0,+\infty)$. Assume that
$$\sum_{\beta\in\tilde I_1}\sum_{\alpha\in E_{\beta}}\frac{(2\pi)^{n_1}e^{-\left(\Psi+\sum_{1\le j\le n_1}\tilde\pi_j^*(\varphi_j)\right)(z_{\beta})}\int_Y|f_{\alpha,\beta}|^2e^{-\varphi_Y}}{\prod_{1\le j\le n_1}(\alpha_j+1)c_{j,\beta_j}^{2\alpha_{j}+2}}\in(0,+\infty)$$
and there exists $j_0\in\{1,\ldots,n_1\}$ such that $\tilde m_{j_0}=+\infty$.

Then there exists a holomorphic $(n,0)$ form $F$ on $M_1$ satisfying that $(F-f,z)\in\left(\mathcal{O}(K_{M_1})\otimes\mathcal{I}\left(\max_{1\le j\le n_1}\left\{2\sum_{1\le k<\tilde m_j}p_{j,k}\pi_{1,j}^{*}(G_{\Omega_j}(\cdot,z_{j,k}))\right\}\right)\right)_{z}$ for any $z\in Z_0$ and
\begin{displaymath}
	\begin{split}
	&\int_{M_1}|F|^2e^{-\varphi}c(-\psi)\\
<&\left(\int_0^{+\infty}c(s)e^{-s}ds\right)\sum_{\beta\in\tilde I_1}\sum_{\alpha\in E_{\beta}}\frac{(2\pi)^{n_1}e^{-\left(\Psi+\sum_{1\le j\le n_1}\tilde\pi_j^*(\varphi_j)\right)(z_{\beta})}\int_Y|f_{\alpha,\beta}|^2e^{-\varphi_Y}}{\prod_{1\le j\le n_1}(\alpha_j+1)c_{j,\beta_j}^{2\alpha_{j}+2}}.	
	\end{split}
\end{displaymath}
\end{Theorem}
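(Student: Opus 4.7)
The plan is to first establish existence of the extension $F$ with the stated non-strict inequality by analyzing the minimal $L^2$ integral $G(t)$ on $M_1$ and applying concavity, and then to promote the weak inequality to a strict one via (an extension of) Proposition~\ref{p:M=M_1} combined with Theorem~\ref{thm:linear-fibra-infinite}.

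Set $G(t):=\inf\bigl\{\int_{\{\psi<-t\}}|\tilde F|^2e^{-\varphi}c(-\psi): \tilde F\in H^0(\{\psi<-t\}\cap M_1,\mathcal{O}(K_{M_1})),\,(\tilde F-f,z)\in(\mathcal{O}(K_{M_1})\otimes\mathcal{I}(\varphi+\psi))_{z}\text{ for all }z\in Z_0\bigr\}$. Any such $\tilde F$ automatically satisfies the (weaker) jet condition appearing in the theorem statement, so it suffices to prove the strict inequality $G(0)<$ RHS. By \cite{GMY-concavity2}, $G(h^{-1}(r))$ is concave on $(0,\int_0^{+\infty}c(s)e^{-s}ds]$, where $h(t):=\int_t^{+\infty}c(s)e^{-s}ds$; together with $\lim_{t\to+\infty}G(t)=0$ and $h(+\infty)=0$, concavity yields
\[
G(0)\le\Bigl(\int_0^{+\infty}c(s)e^{-s}ds\Bigr)\,\liminf_{t\to+\infty}\frac{G(t)}{h(t)}.
\]
I would then identify the slope on the right-hand side with the stated series by localizing on pairwise disjoint neighborhoods $V_\beta\times Y$, truncating the infinite Green sum that defines $\psi$ (replacing $\tilde m_j$ by a finite integer $N$), applying Theorem~\ref{thm:exten-fibra-finite} together with Remark~\ref{r:1.8} to each truncation to produce admissible local extensions, and passing to the limit by dominated convergence and the assumed summability of the series. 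This yields both the required $F$ and the non-strict bound.

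For the strict inequality, suppose toward contradiction that equality held. Then $G(0)=(\int_0^{+\infty}c(s)e^{-s}ds)\liminf G(t)/h(t)$, and concavity forces $G(h^{-1}(r))$ to be linear on $(0,\int_0^{+\infty}c(s)e^{-s}ds]$. An extension of Proposition~\ref{p:M=M_1} to the $\Psi$-weighted setting (the same argument adapts verbatim: linearity propagates the extremal form across $M$ and excludes both a proper submanifold $M_1\subsetneq M$ and a nontrivial plurisubharmonic perturbation $\Psi\not\equiv 0$, using $\Psi\le 0$) gives $M_1=\bigl(\prod_{1\le j\le n_1}\Omega_j\bigr)\times Y$ and $\Psi\equiv 0$. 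The data $\bigl(M,\,\max_{1\le j\le n_1}\{2\sum_{1\le k<\tilde m_j}p_{j,k}\pi_{1,j}^*(G_{\Omega_j}(\cdot,z_{j,k}))\},\,\varphi,\,f\bigr)$ then satisfies the hypotheses of Theorem~\ref{thm:linear-fibra-infinite}, whose conclusion directly contradicts the linearity of $G(h^{-1}(r))$ since $\tilde m_{j_0}=+\infty$. Hence $G(0)<$ RHS, as required.

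The main obstacle I anticipate is the slope identification $\liminf_{t\to+\infty}G(t)/h(t)=\sum_{\beta\in\tilde I_1}\sum_{\alpha\in E_\beta}\frac{(2\pi)^{n_1}e^{-(\Psi+\sum_{j}\tilde\pi_j^*(\varphi_j))(z_\beta)}\int_Y|f_{\alpha,\beta}|^2e^{-\varphi_Y}}{\prod_{1\le j\le n_1}(\alpha_j+1)c_{j,\beta_j}^{2\alpha_j+2}}$: one must decouple the contributions of infinitely many fibers $\{z_\beta\}\times Y$ in a way compatible with the product--max structure of $\psi$ and obtain uniform control across truncations of $\tilde I_1$ so that dominated convergence applies. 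A secondary obstacle is formally verifying that the proof of Proposition~\ref{p:M=M_1} extends to the $\Psi$-perturbed weight, but I expect this to follow from the same uniqueness of the extremal form combined with the sign condition $\Psi\le 0$.
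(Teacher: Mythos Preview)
Your overall strategy---produce the extension, then rule out equality by deriving linearity and invoking Theorem~\ref{thm:linear-fibra-infinite}---matches the paper's, but the \emph{order} in which you carry out the steps creates genuine difficulties that the paper avoids.

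First, you set up $G(t)$ on $\{\psi<-t\}\cap M_1$ and invoke concavity from \cite{GMY-concavity2}. But in the hypotheses of Theorem~\ref{thm:exten-fibra-infinite}, $M_1$ is only an $n$-dimensional complex submanifold of $M$ containing $Z_0$; it is \emph{not} assumed weakly pseudoconvex K\"ahler, so the concavity theorem does not apply to your $G$. Second, even granting concavity, your slope bound $\liminf_{t\to\infty}G(t)/h(t)\le\text{RHS}$ requires building competitors on $\{\psi<-t\}\cap M_1$ for every $t$. Truncating to finite $\tilde m_j$ and citing Theorem~\ref{thm:exten-fibra-finite} only furnishes extensions on $M_1$, hence bounds $G(0)$, not the ratio $G(t)/h(t)$; and for infinite $\tilde I_1$ there is no single $t$ making the relevant sublevel set sit inside $M_1$. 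Third, the ``extended Proposition~\ref{p:M=M_1}'' you invoke is stated for individually subharmonic $\varphi_j$ and no $\Psi$-perturbation; the adaptation to merely measurable $\varphi_j$ with $\Psi+\sum_j\tilde\pi_j^*(\varphi_j)$ plurisubharmonic is not ``verbatim''.

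The paper sidesteps all three issues by reversing the order. It first applies Proposition~\ref{p:exten-fibra} on the full product $M$ with the shifted weight $\varphi_X=\Psi+\sum_j\tilde\pi_j^*(\varphi_j)$ (which \emph{is} plurisubharmonic) and $\tilde\psi=\max_j\{2\sum_k p_{j,k}\pi_{1,j}^*G_{\Omega_j}(\cdot,z_{j,k})\}$; restriction to $M_1$ together with $c(t)e^{-t}$ decreasing and $\Psi\le0$ immediately gives the non-strict inequality, with no slope computation at all. If equality held, the chain of inequalities
\[
\int_{M_1}|F|^2e^{-\varphi}c(-\psi)\le\int_M|F|^2e^{-\varphi-\pi_1^*(\Psi)}c(-\tilde\psi)\le\text{RHS}
\]
collapses, which (using $F\not\equiv0$ and Lemma~\ref{l:psi=G}) forces $M_1=M$ and $\Psi\equiv0$ directly---no extension of Proposition~\ref{p:M=M_1} is needed. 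Only after this reduction does the paper introduce $G(t)$ on $M$ (now weakly pseudoconvex K\"ahler, with pure Green $\psi$ and subharmonic $\varphi_j$), bound $\tilde G(t)/h(t)$ via Proposition~\ref{p:exten-fibra} applied to $\{\psi<-t\}$, and conclude linearity, contradicting Theorem~\ref{thm:linear-fibra-infinite}. In short: construct $F$ first and let the equality hypothesis do the reduction work, rather than attempting the reduction inside a minimal-integral framework on $M_1$.
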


\begin{Remark}
	\label{r:1.9}If $(f_{\alpha,\beta},y)\in(\mathcal{O}(K_Y)\otimes\mathcal{I}(\varphi_Y))_y$ holds for any $y\in Y$, $\alpha\in \tilde E_{\beta}\backslash E_{\beta}$ and $\beta\in\tilde I_1$, the above result also holds when we replace  the ideal sheaf $\mathcal{I}\left(\max_{1\le j\le n_1}\left\{2\sum_{1\le k<\tilde m_j}p_{j,k}\pi_{1,j}^{*}(G_{\Omega_j}(\cdot,z_{j,k}))\right\}\right)$  by $\mathcal{I}(\varphi+\psi)$. We prove the remark in Section \ref{sec:proof-1.9}.
\end{Remark}

\subsubsection{\textbf{Suita conjecture and extended Suita conjecture}}\label{sec:1.2.1}

\
\

In this section, we present characterizations of the  equality parts of Suita conjecture and extended Suita conjecture for fibrations over products of open Riemann surfaces.

Let $\Omega$  be an open Riemann surface, which admits a nontrivial Green function $G_{\Omega}$. Let $w$ be a local coordinate on a neighborhood $V_{z_0}$ of $z_0\in\Omega$ satisfying $w(z_0)=0$. Let $\kappa_{\Omega}$ be the Bergman kernel for holomorphic $(1,0)$ form on $\Omega$. We define that
$$B_{\Omega}(z)dw\otimes\overline{dw}:=\kappa_{\Omega}|_{V_{z_0}}.$$Let $c_{\beta}(z)$ be the logarithmic capacity (see \cite{S-O69}) which is locally defined by
$$c_{\beta}(z_0):=\exp\lim_{z\rightarrow z_0}(G_{\Omega}(z,z_0)-\log|w(z)|)$$
on $\Omega$.
In \cite{suita72}, Suita stated a conjecture as below.
\begin{Conjecture}
	$c_{\beta}(z_0)^2\le\pi B_{\Omega}(z_0)$ holds for any $z_0\in \Omega$, and equality holds if and only if $\Omega$ is conformally equivalent to the unit disc less a (possible) closed set of inner capacity zero.
\end{Conjecture}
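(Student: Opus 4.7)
The plan is to derive Suita's conjecture as a direct consequence of the optimal $L^{2}$ extension theorem and its equality characterization, specialized to the scalar case $n_{1}=1$ with trivial fiber. Concretely, I would apply Theorem~\ref{thm:exten-fibra-single} with $n_{1}=1$, $Y$ a single point (so $n=1$), $p_{1}=1$, $\Psi\equiv 0$, $\varphi_{1}\equiv 0$, $\varphi_{Y}\equiv 0$, gain $c\equiv 1$, $M_{1}=\Omega$, and the jet $f=dw$ at $z_{0}$, so that $E=\{0\}$ and $\alpha_{1}+1=1$. Since $\int_{0}^{+\infty}e^{-s}\,ds=1$, the theorem produces a holomorphic $(1,0)$ form $F$ on $\Omega$ whose leading term at $z_{0}$ is $dw$ with
\begin{displaymath}
\int_{\Omega}|F|^{2}\;\le\;\frac{2\pi}{c_{\beta}(z_{0})^{2}}.
\end{displaymath}
Combined with the extremal characterization $B_{\Omega}(z_{0})=\sup\{2|g(z_{0})|^{2}/\int_{\Omega}|G|^{2}:G=g\,dw\in A^{2}(\Omega)\}$ of the Bergman kernel (using the paper's convention $|G|^{2}=\sqrt{-1}\,G\wedge\bar{G}$), this immediately yields the Suita inequality $c_{\beta}(z_{0})^{2}\le \pi B_{\Omega}(z_{0})$.

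For the equality part, assume $c_{\beta}(z_{0})^{2}=\pi B_{\Omega}(z_{0})$; then both steps above are equalities, and in particular the extremal extension realizes the equality in Theorem~\ref{thm:exten-fibra-single}. I would then invoke the necessary conditions (1)--(3) of that theorem. Condition~(1) is automatic since $M_{1}=\Omega$ and $\Psi\equiv 0$ already. Since $\varphi_{1}\equiv 0$, condition~(2) forces $u_{1}=-\log|g_{1}|$ with $g_{1}$ a nowhere-vanishing holomorphic function on $\Omega$ and $u_{1}$ harmonic, so the character $\chi_{1,-u_{1}}$ is trivial (it is represented by the single-valued function $P_{1}^{*}g_{1}$). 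Condition~(3) with $\alpha=0$ then forces $\chi_{1,z_{0}}$ to be trivial, so the multiplicative function $f_{z_{0}}\in\mathcal{O}^{\chi_{1,z_{0}}}(\Delta)$ descends to a single-valued holomorphic function $\tilde{f}\colon\Omega\to\overline{\Delta}$ with $|\tilde{f}|=e^{G_{\Omega}(\cdot,z_{0})}$ and a simple zero at $z_{0}$.

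It then remains to show that $\tilde{f}$ is a conformal equivalence from $\Omega$ onto $\Delta\setminus K$ with $K\subset\Delta$ of inner capacity zero, and conversely that any such $\Omega$ realizes equality. For injectivity, if $\tilde{f}(z_{1})=\tilde{f}(z_{2})$ with $z_{1}\neq z_{2}$, then pulling back the Green function of $\Delta$ via $\tilde{f}$ would produce a negative subharmonic function on $\Omega$ with poles at both preimages, dominating $G_{\Omega}(\cdot,z_{0})$ and contradicting the maximal characterization of the Green function. The complement $K:=\Delta\setminus\tilde{f}(\Omega)$ must be polar, for otherwise $G_{\Delta\setminus K}(\cdot,0)\circ\tilde{f}$ would strictly dominate $G_{\Omega}(\cdot,z_{0})$, again contradicting maximality. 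The converse direction uses the classical removable-singularity theorem for polar sets together with the explicit extremal form given in Remark~\ref{r:fibra-single}, which one checks realizes equality whenever $\Omega\cong\Delta\setminus K$ with $K$ of inner capacity zero. I expect the main obstacle to be precisely this last geometric identification --- translating the character-theoretic conditions of Theorem~\ref{thm:exten-fibra-single} into the clean statement about conformal equivalence, which requires potential-theoretic input (uniqueness and functorial behavior of the Green function on Riemann surfaces, removability of polar sets) that goes beyond the $L^{2}$ machinery itself.
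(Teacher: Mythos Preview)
Your approach is correct and essentially matches the paper's. Note first that the paper does not give an independent proof of this classical statement: it records Suita's conjecture as a known theorem (inequality by B\l ocki and Guan--Zhou, equality by Guan--Zhou) and then proves the fibered generalization, Theorem~\ref{thm:suita}. Your proposal is exactly the $n_{1}=1$, $Y=\{\mathrm{pt}\}$ specialization of that proof: apply the optimal extension estimate (Proposition~\ref{p:exten-fibra} / Theorem~\ref{thm:exten-fibra-single}) to get the inequality, and use the equality characterization to force $\chi_{1,z_{0}}=\chi_{1,-u_{1}}=1$.

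The one place where you go further than the paper is the last step. In the proof of Theorem~\ref{thm:suita} the paper simply writes ``$\chi_{j,z_{j}}=1$ implies that there exists a holomorphic function $f_{j}$ on $\Omega_{j}$ such that $|f_{j}|=e^{G_{\Omega_{j}}(\cdot,z_{j})}$, thus $\Omega_{j}$ is conformally equivalent to the unit disc less a (possible) closed set of inner capacity zero (see \cite{suita72}, see also \cite{Yamada} and \cite{GZ15}).'' You instead sketch this identification directly via the extremal property of the Green function and removability of polar sets. Your sketch is on the right track but, as you yourself flag, the details (injectivity of $\tilde{f}$ and polarity of the complement) require genuine potential-theoretic input; the paper avoids this by citation. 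So the strategies coincide, and the residual ``obstacle'' you identify is precisely the classical fact the paper takes as known.
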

The inequality part of  Suita conjecture for bounded planar domain was proved by B\l ocki \cite{Blo13}, and original form of the inequality was proved by Guan-Zhou \cite{gz12}.
The equality part of Suita conjecture was proved by Guan-Zhou \cite{guan-zhou13ap}, which completed the proof of Suita conjecture.

Let $\Omega_j$  be an open Riemann surface, which admits a nontrivial Green function $G_{\Omega_j}$ for any  $1\le j\le n_1$. Let $Y$ be an $n_2-$dimensional weakly pseudoconvex K\"ahler manifold, and let $K_Y$ be the canonical (holomorphic) line bundle on $Y$. Let $M=\left(\prod_{1\le j\le n_1}\Omega_j\right)\times Y$ be an $n-$dimensional complex manifold, where $n=n_1+n_2$. Let $\pi_{1}$, $\pi_{1,j}$ and $\pi_2$ be the natural projections from $M$ to $\prod_{1\le j\le n_1}\Omega_j$, $\Omega_j$ and $Y$ respectively. Let $K_M$ be the canonical (holomorphic) line bundle on $M$.

 Denote the space of $L^2$ integrable holomorphic section of $K_M$ (resp. $K_Y$) by $A^2(M,K_M,dV_M^{-1},dV_M)$ (resp. $A^2(Y,K_Y,dV_Y^{-1},dV_Y)$).
Let $\{\sigma_l\}_{l=1}^{+\infty}$ (resp. $\{\tau_l\}_{l=1}^{+\infty}$) be a complete orthogonal system of $A^2(M,K_M,dV_M^{-1},dV_M)$ (resp. $A^2(Y,K_Y,dV_Y^{-1},dV_Y)$) satisfying $(\sqrt{-1})^{n^2}\int_{M}\frac{\sigma_i}{\sqrt{2^n}}\wedge\frac{\overline{\sigma}_j}{\sqrt{2^n}}=\delta_i^j$. Put $\kappa_M=\sum_{l=1}^{+\infty}\sigma_l\otimes\overline\sigma_l\in C^{\omega}(M,K_M\otimes\overline{K_M})$ and $\kappa_Y=\sum_{l=1}^{+\infty}\tau_l\otimes\overline\tau_l\in C^{\omega}(Y,K_Y\otimes\overline{K_Y})$.

Let $z_0=(z_1,\ldots,z_{n_1})\in\prod_{1\le j\le n_1}\Omega_j$, and let $y_0\in Y$.
Let $w_j$ be a local coordinate on a neighborhood $V_{z_j}$ of $z_j\in\Omega_j$ satisfying $w_j(z_j)=0$. Denote that $V_0:=\prod_{1\le j\le n_1}V_{z_j}$, and $w:=(w_1,\ldots,w_{n_1})$ is a local coordinate on $V_0$ of $z_0$. Let $\tilde w=(\tilde w_1,\ldots,\tilde w_{n_2})$ be a local coordinate on a neighborhood $U_0$ of $y_0$ satisfying $\tilde w(y_0)=0$.  We define
$$B_{M}((z,y))dw_1\wedge\ldots\wedge dw_{n_1}\wedge d\tilde w_1\wedge\ldots d\tilde w_{n_2} \otimes\overline{dw_1\wedge\ldots\wedge dw_{n_1}\wedge d\tilde w_1\wedge\ldots d\tilde w_{n_2}}:=\kappa_{M}$$
on $V_{0}\times U_0$ and
$$B_{Y}(y) d\tilde w_1\wedge\ldots d\tilde w_{n_2} \otimes\overline{d\tilde w_1\wedge\ldots d\tilde w_{n_2}}:=\kappa_{Y}$$
on $U_0$.
Let $c_{j}(z_j)$ be the logarithmic capacity which is locally defined by
$$c_{j}(z_j):=\exp\lim_{z\rightarrow z_j}(G_{\Omega_j}(z,z_j)-\log|w_j(z)|).$$

Assume that $B_Y(y_0)>0$. Theorem \ref{thm:exten-fibra-single} gives a characterization of the holding of equality in Suita conjecture for fibrations over products of open Riemann surfaces.
\begin{Theorem}
	\label{thm:suita}
	$\prod_{1\le j\le n_1}c_j(z_j)^{2}B_Y(y_0)\le \pi^{n_1} B_M((z_0,y_0))$ holds, and equality holds if and only if $\Omega_j$ is conformally equivalent to the unit disc less a (possible) closed set of inner capacity zero for any $j\in\{1,\ldots,n\}$.
\end{Theorem}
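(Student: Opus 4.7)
The plan is to derive Theorem \ref{thm:suita} from Theorem \ref{thm:exten-fibra-single} by specializing the extension problem to coincide with the Bergman-kernel extremal problem. I take $p_j=n_1$ for every $j$ (so $E=\{(0,\ldots,0)\}$), $c\equiv 1$, $\Psi\equiv 0$, $\varphi_j\equiv 0$, $\varphi_Y\equiv 0$, and $M_1=M$; with these choices, the remark after Theorem \ref{thm:exten-fibra-single} identifies the ideal condition at $Z_0$ with ordinary vanishing, namely $(F-f)|_{Z_0}=0$.

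For the inequality, let $f_0$ be the Bergman-extremal $(n_2,0)$-form on $Y$ at $y_0$, normalized so that its coefficient at $y_0$ in the local coordinate $\tilde w$ equals $1$, which gives $\int_Y|f_0|^2=2^{n_2}/B_Y(y_0)$. Put $f:=\pi_1^{*}(dw_1\wedge\cdots\wedge dw_{n_1})\wedge\pi_2^{*}(f_0)$ on a neighborhood of $Z_0$. Theorem \ref{thm:exten-fibra-single} provides a holomorphic $(n,0)$-form $F$ on $M$ with $(F-f)|_{Z_0}=0$ and
$$\int_M|F|^2\le\frac{(2\pi)^{n_1}}{\prod_{j=1}^{n_1}c_j(z_j)^{2}}\cdot\frac{2^{n_2}}{B_Y(y_0)}.$$
Since the local-coordinate coefficient of $F$ at $(z_0,y_0)$ equals $1$, the Bergman reproducing inequality $2^n/B_M(z_0,y_0)\le\int_M|F|^2$ combined with the previous bound gives $\pi^{n_1}B_M(z_0,y_0)\ge B_Y(y_0)\prod_{j}c_j(z_j)^2$, using the identity $(2\pi)^{n_1}\cdot 2^{n_2}/2^n=\pi^{n_1}$.

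For the equality characterization, suppose $\pi^{n_1}B_M(z_0,y_0)=B_Y(y_0)\prod_jc_j(z_j)^2$. Then every inequality in the chain above is saturated, so the infimum in the extension problem of Theorem \ref{thm:exten-fibra-single} coincides with the bound, and the equality case of that theorem forces conditions (1)--(3) for our $f$. Conditions (1) and (2) are automatic (take $g_j\equiv 1$, $u_j\equiv 0$); moreover, in any admissible decomposition $0=\varphi_j=2\log|g_j|+2u_j$ the function $g_j$ is nonvanishing holomorphic on $\Omega_j$, so $P_j^{*}g_j$ is single-valued and equals (up to a unimodular constant) the multiplicative function $f_{-u_j}$, making $\chi_{j,-u_j}$ trivial. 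Condition (3) with $\alpha=(0,\ldots,0)$ and $f_0\not\equiv 0$ therefore reduces to $\chi_{j,z_j}=1$ for every $j$, which is the Guan--Zhou criterion equivalent to $\Omega_j$ being conformally equivalent to the unit disc minus a closed set of inner capacity zero. Conversely, under this geometric hypothesis Remark \ref{r:fibra-single} produces an explicit extremal $F^{*}$ built from the Suita functions $f_{z_j}$ and from $f_0$, whose $L^2$-norm matches the bound and whose local coefficient at $(z_0,y_0)$ equals $1$, saturating the chain of inequalities and hence the Suita-type estimate.

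The main obstacle will be careful bookkeeping of the $\sqrt{2^n}$ factors implicit in the definitions of $\kappa_M$, $B_M$, and $B_Y$, verifying that they combine with the $(2\pi)^{n_1}$ from Theorem \ref{thm:exten-fibra-single} to produce exactly the sharp constant $\pi^{n_1}$. A secondary subtlety is the non-uniqueness of the decomposition in condition (2) for $\varphi_j\equiv 0$: one must verify that every admissible pair $(g_j,u_j)$ yields trivial $\chi_{j,-u_j}$, which ultimately rests on $g_j$ being single-valued on $\Omega_j$ before pullback to $\Delta$.
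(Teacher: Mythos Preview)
Your inequality and necessity arguments are essentially the paper's: specialize Theorem \ref{thm:exten-fibra-single} with $p_j=n_1$, $c\equiv1$, $\varphi_j\equiv0$, $\Psi\equiv0$, pick the Bergman-extremal $f_0$ on $Y$, and compare with $B_M$. Your handling of the ambiguity in the decomposition $0=2\log|g_j|+2u_j$ (showing $\chi_{j,-u_j}$ is always trivial because $g_j$ is globally single-valued) is correct and is the same observation the paper uses implicitly.

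The sufficiency direction, however, has a gap. You produce $F^*$ with $\int_M|F^*|^2$ equal to the bound and with local coefficient $1$ at $(z_0,y_0)$, and then assert this ``saturates the chain.'' But your chain is
\[
\frac{2^n}{B_M(z_0,y_0)}\;\le\;\int_M|F^*|^2\;\le\;\text{bound},
\]
and forcing the second inequality to be an equality does \emph{not} force the first one: you have only reproved $\pi^{n_1}B_M\ge B_Y\prod_j c_j(z_j)^2$. What you must show is that every holomorphic $(n,0)$-form on $M$ with value $1$ at $(z_0,y_0)$ has $L^2$-norm at least equal to the bound, and your $F^*$ alone cannot witness that.

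The paper closes this gap by contradiction. If equality failed, there would exist $\tilde F_0$ with value $1$ at $(z_0,y_0)$ and $\int_M|\tilde F_0|^2<\text{bound}$. Restricting $\tilde F_0$ to $\{z_0\}\times Y$ gives a holomorphic $(n_2,0)$-form $\tilde f_0$ on $Y$ with $\tilde f_0(y_0)=f_0(y_0)$, hence $\int_Y|\tilde f_0|^2\ge\int_Y|f_0|^2$ by the extremality of $f_0$. Then $\tilde F_0$ is a competitor in the extension problem for the fiber datum $\tilde f_0$, and one gets
\[
\inf\Big\{\int_M|\tilde F|^2:\tilde F|_{\{z_0\}\times Y}=\pi_1^*(dw)\wedge\pi_2^*(\tilde f_0)\Big\}\le\int_M|\tilde F_0|^2<\frac{(2\pi)^{n_1}}{\prod_j c_j(z_j)^2}\int_Y|\tilde f_0|^2,
\]
contradicting the equality case of Theorem \ref{thm:exten-fibra-single} (which holds for $\tilde f_0$ under the geometric hypothesis). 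The key extra idea you are missing is this passage through the fiber restriction $\tilde f_0$, which converts a putative Bergman-extremal beating the bound into a violation of the extension-theorem equality for a \emph{different} datum.
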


Let $M_1\subset M$ be an $n-$dimensional complex manifold satisfying that $\{z_0\}\times Y \subset M_1$. Similar to $M$, we can define the Bergman kernel $B_{M_1}$. Theorem \ref{thm:suita} implies the following result.

\begin{Remark}
	\label{r:suita}	$\prod_{1\le j\le n_1}c_j(z_j)^{2}B_Y(y_0)\le \pi^{n_1} B_{M_1}((z_0,y_0))$ holds, and equality holds if and only if $M_1=M$ and $\Omega_j$ is conformally equivalent to the unit disc less a (possible) closed set of inner capacity zero for any $j\in\{1,\ldots,n\}$.
\end{Remark}

Let $\Omega$  be an open Riemann surface, which admits a nontrivial Green function $G_{\Omega}$, and let $K_{\Omega}$ be the canonical (holomorphic) line bundle on $\Omega$. Let $w$ be a local coordinate on a neighborhood $V_{z_0}$ of $z_0\in\Omega$ satisfying $w(z_0)=0$. Let $\rho=e^{-2u}$ on $\Omega$, where $u$ is a harmonic function on $\Omega$. We define that
$$B_{\Omega,\rho}dw\otimes\overline{dw}:=\sum_{l=1}^{+\infty}(\sigma_l\otimes\overline{\sigma}_l)|_{V_{z_0}}\in C^{\omega}(V_{z_0},K_{\Omega}\otimes\overline{K_{\Omega}}),$$
where $\{\sigma_l\}_{l=1}^{+\infty}$ are holomorphic $(1,0)$ forms on $\Omega$ satisfying $\sqrt{-1}\int_{\Omega}\rho\frac{\sigma_i}{\sqrt{2}}\wedge\frac{\overline{\sigma}_j}{\sqrt{2}}=\delta_i^j$ and $\big\{F\in H^0(\Omega,K_{\Omega}):\int_{\Omega}\rho|F|^2<+\infty\,\&\,\int_{\Omega}\rho\sigma_l\wedge\overline F=0$ for any $l\in\mathbb{Z}_{>0}\big\}=\{0\}$.

In \cite{Yamada}, Yamada  stated a conjecture as below (so-called extended Suita conjecture).
\begin{Conjecture}
	$c_{\beta}(z_0)^2\le\pi \rho(z_0) B_{\Omega,\rho}(z_0)$ holds for any $z_0\in \Omega$, and equality holds if and only $\chi_{-u}=\chi_{z_0}$, where $\chi_{-u}$ and $\chi_{z_0}$ are the characters associated to the functions $-u$ and $G_{\Omega}(\cdot,z_0)$ respectively.
\end{Conjecture}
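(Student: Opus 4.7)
The plan is to deduce the extended Suita conjecture from the equality characterization of Theorem \ref{thm:exten-fibra-single} specialized to the one-variable situation. I would take $n_1=1$, $\Omega_1=\Omega$, and let the factor $Y$ collapse to a point (so $n_2=0$, $M=M_1=\Omega$), put $\Psi\equiv 0$, $p_1=1$, and $\varphi_1=-2u$ so that $e^{-\varphi_1}=\rho$. With these choices $E=\{\alpha_1\in\mathbb{Z}_{\ge 0}:\alpha_1+1=1\}=\{0\}$ and the local jet datum becomes $f=dw$, i.e.\ $f_0\equiv 1$ and the $Y$-integral disappears. Choosing the gain $c\equiv 1$ gives $\int_0^{+\infty}c(s)e^{-s}\,ds=1$ and $c(-\psi)\equiv 1$.

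With this setup, Theorem \ref{thm:exten-fibra-single} furnishes a holomorphic $(1,0)$-form $F$ on $\Omega$ agreeing with $dw$ at $z_0$ (the multiplier-ideal constraint $(F-dw,z_0)\in\mathcal{I}(2G_\Omega(\cdot,z_0))_{z_0}$ is exactly the $0$-jet condition $F(z_0)=dw$, because $G_\Omega(\cdot,z_0)-\log|w|$ is bounded near $z_0$) satisfying
\[
\int_\Omega\rho\,|F|^2\ \le\ \frac{2\pi\,\rho(z_0)}{c_\beta(z_0)^2}.
\]
Next, I would translate this bound into a Bergman inequality. The reproducing property of $B_{\Omega,\rho}$ together with the authors' normalization $\sqrt{-1}\int_\Omega\rho\,\tfrac{\sigma_i}{\sqrt{2}}\wedge\tfrac{\overline{\sigma_j}}{\sqrt{2}}=\delta_i^j$ gives
\[
\inf\Bigl\{\int_\Omega\rho|H|^2 : H\in H^0(\Omega,K_\Omega),\ H(z_0)=dw\Bigr\}\ =\ \frac{2}{B_{\Omega,\rho}(z_0)}.
\]
Applying this to $F$ yields $c_\beta(z_0)^2\le\pi\,\rho(z_0)\,B_{\Omega,\rho}(z_0)$, which is the inequality part.

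For the equality case, I would run the characterization part of Theorem \ref{thm:exten-fibra-single}. Condition (1) is automatic and condition (2) records the decomposition $\varphi_1=2\log|g_1|+2u_1$. Since $\varphi_1=-2u$ has no singularity at $z_0$ and $g_1(z_0)\ne 0$, one can absorb $\log|g_1|$ into the harmonic piece without altering the associated character, by the property $\chi_{j,u_1}=\chi_{j,u_1+\log|f|}$ recalled in the beginning of Section 1.1; thus effectively $u_1=-u$. Condition (3), specialized to $n_1=1$ and $\alpha=0\in E$, then reads $\chi_{1,z_0}^{1}=\chi_{1,u}$, which is exactly Yamada's identity $\chi_{z_0}=\chi_{-u}$. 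Conversely, when this character identity holds the explicit extremal form in Remark \ref{r:fibra-single} realizes equality.

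The main obstacle will be bookkeeping of normalizations. In particular one must verify that the model constant $(2\pi)^{n_1}/\bigl(\prod_j(\alpha_j+1)c_j(z_j)^{2\alpha_j+2}\bigr)$ appearing in Theorem \ref{thm:exten-fibra-single} really collapses to $2\pi/c_\beta(z_0)^2$ here, and that the discrepancy between the $2\pi$ of the extension bound and the $\pi$ of the Bergman inequality is accounted for exactly by the $1/\sqrt{2}$ rescaling in the authors' orthonormal-basis convention for $B_{\Omega,\rho}$. A secondary subtlety is to check that $F(z_0)=dw$ (in the coordinate $w$) forces $|F(z_0)|_{B_{\Omega,\rho}}^2$ to be precisely $B_{\Omega,\rho}(z_0)$ times the Bergman norm, so that the extremal problem and the optimal $L^2$ extension bound line up with no hidden multiplicative factor.
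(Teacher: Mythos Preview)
Your approach is exactly the one the paper uses for its fibration generalization, Theorem \ref{thm:extend}: specialize Theorem \ref{thm:exten-fibra-single} with $\Psi\equiv 0$, $c\equiv 1$, $p_1=n_1$, translate the optimal $L^2$ extension bound into a Bergman-kernel inequality via the reproducing/extremal property, and read off the equality case from conditions (2)--(3). The paper does not give an independent proof of the one-variable conjecture (it cites Guan--Zhou for that); it only proves the product/fibration version, whose $n_1=1$, $Y=\{\mathrm{pt}\}$ specialization is precisely what you propose.

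There is, however, a sign slip that derails your final identification. Since $\rho=e^{-2u}$, the requirement $e^{-\varphi_1}=\rho$ forces $\varphi_1=2u$, not $\varphi_1=-2u$. With the correct sign, condition (2) of Theorem \ref{thm:exten-fibra-single} gives (after absorbing $\log|g_1|$ into the harmonic part) $u_1=u$, and condition (3) with $\alpha_1=0$ reads $\chi_{z_0}=\chi_{-u_1}=\chi_{-u}$, which is Yamada's identity. With your sign you arrive at $\chi_{z_0}=\chi_{u}$, and the claim that this ``is exactly $\chi_{z_0}=\chi_{-u}$'' is false in general (characters need not satisfy $\chi_u=\chi_{-u}$). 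Once this sign is corrected, your argument goes through and matches the paper's method; the normalization issues you flag (the $2\pi$ versus $\pi$ coming from the $1/\sqrt{2}$ in the orthonormal basis, and the collapse of $(2\pi)^{n_1}/\prod_j(\alpha_j+1)c_j^{2\alpha_j+2}$ to $2\pi/c_\beta(z_0)^2$) are handled in the paper exactly as you anticipate.
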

The inequality part of extended Suita conjecture  was proved by Guan-Zhou \cite{GZ15}.
The equality part of extended Suita conjecture was proved by Guan-Zhou \cite{guan-zhou13ap}.

Let $\rho=e^{-2\sum_{1\le j\le n_1}\pi_{1,j}^*(u_j)}$ on $M$, where $u_j$ is a harmonic function on $\Omega_j$ for any $j\in\{1,\ldots,n\}$. We define that
$$B_{M,\rho}dw_1\wedge\ldots\wedge dw_{n_1}\wedge d\tilde w_1\wedge\ldots d\tilde w_{n_2} \otimes\overline{dw_1\wedge\ldots\wedge dw_{n_1}\wedge d\tilde w_1\wedge\ldots d\tilde w_{n_2}}:=\sum_{l=1}^{+\infty}e_l\otimes\overline{e}_l$$
on $V_0\times Y$,
where $\{e_l\}_{l=1}^{+\infty}$ are holomorphic $(n,0)$ forms on $M$ satisfying $(\sqrt{-1})^{n^2}\int_{M}\rho\frac{e_i}{\sqrt{2^n}}\wedge\frac{\overline{e}_j}{\sqrt{2^n}}=\delta_i^j$ and $\big\{F\in H^0(M,K_{M}):\int_{M}\rho|F|^2<+\infty\,\&\,\int_{M}\rho e_l\wedge\overline F=0$ for any $l\in\mathbb{Z}_{>0}\big\}=\{0\}$.

Assume that $B_Y(y_0)>0$. Theorem \ref{thm:exten-fibra-single}  gives  a characterization of the holding of equality in the extended Suita conjecture for fibrations over products of open Riemann surfaces.
 \begin{Theorem}
	\label{thm:extend}
	$\prod_{1\le j\le n_1}c_j(z_j)^{2}B_Y(y_0)\le \pi^{n_1} \rho(z_0) B_{M,\rho}(z_0)$ holds, and equality holds if and only if $\chi_{j,-u_j}=\chi_{j,z_j}$ for any $j\in\{1,\ldots,n\}$, where $\chi_{j,-u_j}$ and $\chi_{j,z_j}$ are the characters associated to the functions $-u$ and $G_{\Omega}(\cdot,z_0)$ respectively.
\end{Theorem}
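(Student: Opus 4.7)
The plan is to derive Theorem~\ref{thm:extend} from Theorem~\ref{thm:exten-fibra-single} by specializing the data so that the optimal $L^2$ extension recovers the Bergman-kernel setup. I would take $M_1=M$, $\Psi\equiv0$, $\varphi_j=2u_j$ (which corresponds to $g_j\equiv1$ in the notation of Theorem~\ref{thm:exten-fibra-single}), $\varphi_Y\equiv0$, $p_j=n_1$ for every $j$, and $c\equiv1$. Then $e^{-\varphi}=\rho$, $\int_0^{+\infty}c(s)e^{-s}\,ds=1$, $E=\{(0,\ldots,0)\}$, and the multiplier-ideal vanishing on $Z_0=\{z_0\}\times Y$ reduces to pointwise vanishing by the remark preceding Section~\ref{sec:1.2.1}.

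For the inequality, let $f_0$ be a nonzero holomorphic $(n_2,0)$ form on $Y$ with $\int_Y|f_0|^2<+\infty$ and consider $f:=\pi_1^*(dw_1\wedge\cdots\wedge dw_{n_1})\wedge\pi_2^*(f_0)$. Theorem~\ref{thm:exten-fibra-single} produces $F\in H^0(M,\mathcal{O}(K_M))$ with $(F-f)|_{Z_0}=0$ and $\int_M\rho|F|^2\le(2\pi)^{n_1}\rho(z_0)\int_Y|f_0|^2/\prod_j c_j(z_j)^2$. Writing $F=A\,dw\wedge d\tilde w$ and $f_0=A_0\,d\tilde w$ in local coordinates, the matching $(F-f)|_{Z_0}=0$ gives $A(z_0,y_0)=A_0(y_0)$. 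Plugging the extension estimate into the Bergman extremal inequality $B_{M,\rho}(z_0,y_0)\int_M\rho|F|^2\ge 2^n|A(z_0,y_0)|^2$ (which uses $|F|^2=2^n|A|^2\,dV$) and then taking the supremum over $f_0$ (so that $2^{n_2}|A_0(y_0)|^2/\int_Y|f_0|^2\to B_Y(y_0)$) yields $\pi^{n_1}\rho(z_0)B_{M,\rho}(z_0,y_0)\ge\prod_j c_j(z_j)^2B_Y(y_0)$.

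For the equality characterization, the direction $(\Rightarrow)$ proceeds by picking $f_0$ to be an extremal of $B_Y(y_0)$, guaranteed by $B_Y(y_0)>0$. Suita equality forces the whole chain above to be tight, in particular forcing equality in the optimal extension estimate of Theorem~\ref{thm:exten-fibra-single}; its equality characterization then produces $g_j\equiv 1$ (consistent with $\varphi_j=2u_j$) and $\chi_{j,z_j}^{0+1}=\chi_{j,-u_j}$, i.e., $\chi_{j,z_j}=\chi_{j,-u_j}$ for each $j$. Conversely, assuming character matching, for any $F_*=A_*\,dw\wedge d\tilde w\in H^0(M,\mathcal{O}(K_M))$ with $\int_M\rho|F_*|^2<+\infty$ and $A_*(z_0,y_0)\ne0$, a sub-mean-value estimate for $|A_*|^2$ over a polydisc around $z_0$ (using the positivity of $\rho$ on compacts) shows that $\tilde f_0(y):=A_*(z_0,y)\,d\tilde w$ is a nonzero element of the weighted $L^2$ space on $Y$. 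Thus $F_*$ is a competitor in the extension problem for $\pi_1^*(dw)\wedge\pi_2^*(\tilde f_0)$; under character matching, the equality form of Theorem~\ref{thm:exten-fibra-single} (read now as a lower bound on the infimum) yields $\int_M\rho|F_*|^2\ge(2\pi)^{n_1}\rho(z_0)\int_Y|\tilde f_0|^2/\prod_j c_j(z_j)^2$. Combining with the Bergman inequality $2^{n_2}|A_*(z_0,y_0)|^2/\int_Y|\tilde f_0|^2\le B_Y(y_0)$ and taking the supremum over $F_*$ gives $\pi^{n_1}\rho(z_0)B_{M,\rho}(z_0,y_0)\le\prod_j c_j(z_j)^2B_Y(y_0)$, completing the equality.

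The main obstacle is the $(\Leftarrow)$ direction: one must use the infimum characterization of Theorem~\ref{thm:exten-fibra-single} as a universal lower bound on $\int_M\rho|F_*|^2$ valid for every $L^2$ holomorphic section $F_*$, rather than as an upper bound for a single constructed extension. This rests on the rigidity encoded in the equality case of Theorem~\ref{thm:exten-fibra-single} together with the mild regularity fact that the fiberwise restriction $A_*(z_0,\cdot)$ lies in $L^2(Y)$, which is handled by the mean-value argument indicated above.
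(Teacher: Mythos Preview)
Your proposal is correct and follows essentially the same route as the paper: specialize Theorem~\ref{thm:exten-fibra-single} with $M_1=M$, $\Psi\equiv0$, $\varphi_j=2u_j$, $\varphi_Y\equiv0$, $p_j=n_1$, $c\equiv1$, use an extremal $f_0$ for $B_Y(y_0)$ to force equality in the extension estimate for the necessity direction, and for sufficiency compare any competitor $F_*$ against the extension infimum (which, under character matching, equals the right-hand side). The only cosmetic difference is that the paper phrases the sufficiency as a contradiction argument while you argue directly; the logical content is identical. Your explicit mention of the sub-mean-value estimate to ensure $\tilde f_0\in L^2(Y)$ is a point the paper passes over silently but which is indeed needed to invoke Theorem~\ref{thm:exten-fibra-single}.
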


Let $M_1\subset M$ be an $n-$dimensional complex manifold satisfying that $\{z_0\}\times Y \subset M_1$. Similar to $M$, we can define the Bergman kernel $B_{M_1,\rho}$. Theorem \ref{thm:extend} implies the following result.

\begin{Remark}
	\label{r:extend}$\prod_{1\le j\le n_1}c_j(z_j)^{2}B_Y(y_0)\le \pi^{n_1} B_{M_1,\rho}((z_0,y_0))$ holds, and equality holds if and only if $M_1=M$ and $\chi_{j,-u_j}=\chi_{j,z_j}$ for any $j\in\{1,\ldots,n\}$.

\end{Remark}

\section{Preparation}

\subsection{Concavity property of minimal $L^2$ integrals}
\

In this section, we recall some results about the concavity property of minimal $L^2$ integrals (see \cite{GMY-concavity2,GY-concavity4}).

Let $M$ be a complex manifold. Let $X$ and $Z$ be closed subsets of $M$. We say that a triple $(M,X,Z)$ satisfies condition $(A)$, if the following statements hold:

$\uppercase\expandafter{\romannumeral1}.$ $X$ is a closed subset of $M$ and $X$ is locally negligible with respect to $L^2$ holomorphic functions; i.e., for any local coordinated neighborhood $U\subset M$ and for any $L^2$ holomorphic function $f$ on $U\backslash X$, there exists an $L^2$ holomorphic function $\tilde{f}$ on $U$ such that $\tilde{f}|_{U\backslash X}=f$ with the same $L^2$ norm;

$\uppercase\expandafter{\romannumeral2}.$ $Z$ is an analytic subset of $M$ and $M\backslash (X\cup Z)$ is a weakly pseudoconvex K\"ahler manifold.

Let $M$ be an $n-$dimensional complex manifold, and let $(M,X,Z)$  satisfy condition $(A)$. Let $K_M$ be the canonical line bundle on $M$. Let $\psi$ be a plurisubharmonic function on $M$ such that $\{\psi<-t\}\backslash (X\cup Z)$ is a weakly pseudoconvex K\"ahler manifold for any $t\in\mathbb{R}$, and let $\varphi$ be a Lebesgue measurable function on $M$ such that $\psi+\varphi$ is a plurisubharmonic function on $M$. Denote $T=-\sup\limits_M \psi$.
\begin{Definition}
We call a positive measurable function $c$  on $(T,+\infty)$ in class $P_{T,M}$ if the following two statements hold:
\par
$(1)$ $c(t)e^{-t}$ is decreasing with respect to $t$;
\par
$(2)$ there is a closed subset $E$ of $M$ such that $E\subset Z\cap \{\psi(z)=-\infty\}$ and for any compact subset $K\subset M\backslash E$, $e^{-\varphi}c(-\psi)$ has a positive lower bound on $K$.
\end{Definition}

 Let $Z_0$ be a subset of $\{\psi=-\infty\}$ such that $Z_0 \cap
Supp(\mathcal{O}/\mathcal{I}(\varphi+\psi))\neq \emptyset$. Let $U \supset Z_0$ be
an open subset of $M$, and let $f$ be a holomorphic $(n,0)$ form on $U$. Let $\mathcal{F}_{z_0} \supset \mathcal{I}(\varphi+\psi)_{z_0}$ be an ideal of $\mathcal{O}_{z_0}$ for any $z_{0}\in Z_0$.

Denote
\begin{equation*}
\begin{split}
\inf\bigg\{\int_{\{\psi<-t\}}|\tilde{f}|^{2}e^{-\varphi}c(-\psi):(\tilde{f}-f)\in H^{0}(Z_0,&
(\mathcal{O}(K_{M})\otimes\mathcal{F})|_{Z_0})\\&\&{\,}\tilde{f}\in H^{0}(\{\psi<-t\},\mathcal{O}(K_{M}))\bigg\}
\end{split}
\end{equation*}
by $G(t;c)$ ($G(t)$ for short),  where $t\in[T,+\infty)$, $c$ is a nonnegative function on $(T,+\infty)$,
$|f|^{2}:=\sqrt{-1}^{n^{2}}f\wedge\bar{f}$ for any $(n,0)$ form $f$ and $(\tilde{f}-f)\in H^{0}(Z_0,
(\mathcal{O}(K_{M})\otimes\mathcal{F})|_{Z_0})$ means $(\tilde{f}-f,z)\in\mathcal{O}(K_{M})_{z}\otimes\mathcal{F}_{z}$ for all $z\in Z_0$.

The following Theorem shows the concavity for $G(t)$.
\begin{Theorem}[\cite{GMY-concavity2}]
\label{thm:general_concave}
Let $c\in\mathcal{P}_{T,M}$ satisfying $\int_T^{+\infty}c(s)e^{-s}ds<+\infty$. If there exists $t\in[T,+\infty)$ satisfying that $G(t)<+\infty$, then $G(h^{-1}(r))$ is concave with respect to $r\in(0,\int_{T}^{+\infty}c(s)e^{-s}ds)$, $\lim_{t\rightarrow T+0}G(t)=G(T)$ and $\lim_{t\rightarrow +\infty}G(t)=0$, where $h(t)=\int_{t}^{+\infty}c(s)e^{-s}ds$.
\end{Theorem}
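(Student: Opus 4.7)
The plan is to prove Theorem \ref{thm:general_concave} via the sharp $L^2$-extension technique that underpins the Guan--Zhou school of concavity results.

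First I would dispose of the two limit statements. Monotonicity of $G$ in $t$ is immediate from the definition, since enlarging $t$ shrinks $\{\psi<-t\}$ while enlarging the family of admissible $\tilde f$. From this, $\lim_{t\to T+0}G(t)=G(T)$ follows by restricting a near-minimizer for $G(T)$ to the subdomains, while $\lim_{t\to+\infty}G(t)=0$ follows by plugging any fixed competitor into the integral and using $\int_T^{+\infty}c(s)e^{-s}ds<+\infty$ together with $c(t)e^{-t}$ being decreasing (so that $e^{-\varphi}c(-\psi)$ is integrable on $\{\psi<-t\}$ and its mass there tends to $0$).

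The main content is concavity of $R(r):=G(h^{-1}(r))$. The strategy is to establish the key extension inequality: for every $T\le t_0<t_1$ with $G(t_1)<+\infty$ and every $\varepsilon>0$, one can produce a holomorphic $(n,0)$ form $F$ on $\{\psi<-t_0\}$ lying in the competing family for $G(t_0)$, which agrees with a near-minimizer $\tilde{f}_{t_1}$ of $G(t_1)$ modulo $\mathcal{O}(K_M)\otimes\mathcal{F}$ along $Z_0$ and satisfies
\begin{equation*}
\int_{\{\psi<-t_0\}}|F|^2e^{-\varphi}c(-\psi)\ \le\ \int_{\{\psi<-t_1\}}|\tilde{f}_{t_1}|^2e^{-\varphi}c(-\psi)\ +\ \bigl(h(t_0)-h(t_1)\bigr)\,\lambda\ +\ \varepsilon,
\end{equation*}
where $\lambda$ is any value exceeding the right derivative of $R$ at $h(t_1)$. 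Such an estimate is produced by solving a weighted $\bar\partial$-equation on the weakly pseudoconvex K\"ahler manifold $\{\psi<-t_0\}\setminus(X\cup Z)$---legitimate by condition $(A)$---for a Skoda-type gluing $(1-b(\psi))\tilde{f}_{t_1}$, where $b$ interpolates between a cutoff tied to $\{\psi<-t_1\}$ and a smooth truncation, and where the weight in the $\bar\partial$ problem is built from $\psi$ together with an auxiliary function $v$ solving an ODE tailored to $c$. Once this estimate is in hand, the slope bound $\tfrac{G(t_0)-G(t_1)}{h(t_0)-h(t_1)}\le\lambda$ follows, and varying $t_1$ yields concavity of $R$ on $(0,\int_T^{+\infty}c(s)e^{-s}ds)$.

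The hardest step is the ODE/weight calibration in the $\bar\partial$-estimate: one has to pick the cutoff $b$ and the auxiliary $v$ so that the curvature bound in the Ohsawa--Takegoshi--Berndtsson setup yields exactly the factor $h(t_0)-h(t_1)$ in the remainder, rather than a crude $O(t_0-t_1)$ bound. This is precisely where the Berndtsson--Guan--Zhou sharp constant enters, and it is the analytic heart of the argument; the passage from the slope inequality back to concavity of $R$ is then formal.
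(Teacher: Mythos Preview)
The paper does not prove this theorem: it is quoted verbatim from \cite{GMY-concavity2} and used as a black box, so there is no ``paper's own proof'' to compare against. Your outline is indeed the strategy carried out in that reference (and in its predecessors by Guan and Guan--Mi), built around the extension lemma recorded here as Lemma~\ref{lem:L2}.

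That said, two points in your sketch deserve tightening. First, your argument for $\lim_{t\to T^+}G(t)=G(T)$ is incomplete: monotonicity only gives $\lim_{t\to T^+}G(t)\le G(T)$, and the reverse inequality requires taking a sequence of near-minimizers $F_{t_n}$ on $\{\psi<-t_n\}$, extracting a locally uniform limit via a Montel-type compactness (this is where the hypothesis $c\in\mathcal P_{T,M}$, giving local lower bounds on $e^{-\varphi}c(-\psi)$, is used), and checking the limit competes for $G(T)$.

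Second, your key inequality as written is circular: you let $\lambda$ be ``any value exceeding the right derivative of $R$ at $h(t_1)$'', but that derivative is exactly what you are trying to bound. In the actual argument, the $\bar\partial$-extension (Lemma~\ref{lem:L2}) with parameters $t_0,B$ applied to a minimizer $F_{t_0+B}$ yields, after passing to the limit in the cutoff, an inequality of the shape
\[
G(t_0)\ \le\ \left(\int_{t_0}^{t_0+B}c(s)e^{-s}\,ds\right)\cdot\frac{1}{B}\int_{\{-t_0-B<\psi<-t_0\}}|F_{t_0+B}|^2e^{-\varphi}\ +\ G(t_0+B).
\]
The middle factor is then related, via a Lebesgue-differentiation argument for the decreasing function $t\mapsto G(t)$, to the increment $G(t_0)-G(t_0+B)$ divided by $\int_{t_0}^{t_0+B}c(s)e^{-s}ds$; iterating this gives the three-point concavity inequality directly. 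So $\lambda$ is not assumed---it emerges from the annular mass of the minimizer.
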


Denote that
\begin{displaymath}
	\begin{split}
		\mathcal{H}^2(c,t):=\bigg\{\tilde{f}:\int_{\{\psi<-t\}}|\tilde{f}|^2e^{-\varphi}c(-\psi)<+\infty,(\tilde{f}-f&)\in H^0(Z_0,(\mathcal{O}(K_{M})\otimes\mathcal{F})|_{Z_0})\\
		&\&\tilde{f}\in H^0(\{\psi<-t\},\mathcal{O}(K_M))\bigg\},
	\end{split}
\end{displaymath}
where $t\in[T,+\infty)$ and $c$ is a nonnegative measurable function on $(T,+\infty)$.

\begin{Corollary}[\cite{GMY-concavity2}]	\label{c:linear}
Let $c\in\mathcal{P}_{T,M}$ satisfying $\int_T^{+\infty}c(s)e^{-s}ds<+\infty$. If $G(t)\in(0,+\infty)$ for some $t\geq T$ and $G({h}^{-1}(r))$ is linear with respect to $r\in[0,\int_{T}^{+\infty}c(s)e^{-s}ds)$,
 then there is a unique holomorphic $(n,0)$ form $F$ on $M$ satisfying $(F-f)\in H^{0}(Z_0,(\mathcal{O}(K_{M})\otimes\mathcal F)|_{Z_0})$ and $G(t;c)=\int_{\{\psi<-t\}}|F|^2e^{-\varphi}c(-\psi)$ for any $t\geq T$. Furthermore,
\begin{equation}
	\label{eq:20210412b}
	\int_{\{-t_1\leq\psi<-t_2\}}|F|^2e^{-\varphi}a(-\psi)=\frac{G(T_1;c)}{\int_{T_1}^{+\infty}c(t)e^{-t}dt}\int_{t_2}^{t_1} a(t)e^{-t}dt
\end{equation}
for any nonnegative measurable function $a$ on $(T,+\infty)$, where $+\infty\geq t_1>t_2\geq T$ and $T_1>T$.

Especially, if $\mathcal H^2(\tilde{c},t_0)\subset\mathcal H^2(c,t_0)$ for some $t_0\geq T$, where $\tilde{c}$ is a nonnegative measurable function on $(T,+\infty)$, we have
\begin{equation}
	\label{eq:20210412a}
	G(t_0;\tilde{c})=\int_{\{\psi<-t_0\}}|F|^2e^{-\varphi}\tilde{c}(-\psi)=\frac{G(T_1;c)}{\int_{T_1}^{+\infty}c(s)e^{-s}ds}\int_{t_0}^{+\infty} \tilde{c}(s)e^{-s}ds.\end{equation}	
\end{Corollary}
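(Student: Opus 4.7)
The plan is to proceed in four stages. First, I would establish the existence and uniqueness of an extremal $F$ realizing $G(T;c)$. Since $G(T;c)<+\infty$, pick a minimizing sequence $\{f_k\}$ in the Hilbert space $L^2(\{\psi<-T\},e^{-\varphi}c(-\psi))$. After extraction the sequence converges weakly to a holomorphic limit $F$: the positive lower bound on $e^{-\varphi}c(-\psi)$ over compact subsets away from a closed subset $E\subset Z\cap\{\psi=-\infty\}$ (built into the definition of $\mathcal{P}_{T,M}$) turns weak $L^2$ convergence into local uniform convergence, so the limit is holomorphic and inherits the ideal interpolation from $\{f_k\}$ since the stalkwise condition is closed in the compact-open topology; condition $(A)$ on $(M,X,Z)$ takes care of any residual negligible singular set. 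Lower semicontinuity of the norm shows $F$ attains $G(T;c)$, and uniqueness follows from the parallelogram identity.

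Second, I would show $G(t;c)=\int_{\{\psi<-t\}}|F|^2e^{-\varphi}c(-\psi)$ for every $t\geq T$ and that $F|_{\{\psi<-t\}}$ is the unique $c$-minimizer at every level $t$. The linearity of $G\circ h^{-1}$ together with $\lim_{t\to+\infty}G(t)=0$ (Theorem \ref{thm:general_concave}) forces $G(t;c)=\alpha\,h(t)$ with $\alpha:=G(T;c)/h(T)$. Applying stage one at each level produces a unique minimizer $F_t$ on $\{\psi<-t\}$. The optimal $L^2$ extension argument underlying the proof of Theorem \ref{thm:general_concave} extends $F_t$ from level $t$ back to level $T$ with a sharp norm estimate; in the linear (equality) case this estimate is saturated, and the rigidity of the extremal extension forces the extension to coincide with the unique $c$-minimizer $F$ at level $T$. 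Hence $F_t=F|_{\{\psi<-t\}}$.

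Third, I would derive the measure identity driving (1.1). Define the Borel measure $\mu$ on $(T,+\infty)$ by $\mu(E):=\int_{\{-\psi\in E\}}|F|^2e^{-\varphi}$; stage two yields $\int_t^{+\infty}c(s)\,d\mu(s)=\alpha\int_t^{+\infty}c(s)e^{-s}\,ds$ for every $t\geq T$. Differencing tails over any interval $(t_2,t_1)\subset(T,+\infty)$ shows $c(s)\,d\mu(s)=\alpha c(s)e^{-s}\,ds$ as Borel measures, and since $c>0$ almost everywhere we get $d\mu(s)=\alpha e^{-s}\,ds$. Integrating $a$ against $d\mu$ on $(t_2,t_1)$ gives (1.1), and $\alpha=G(T_1;c)/h(T_1)$ at any $T_1>T$ is immediate from linearity.

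Finally, for (1.2), setting $a=\tilde c$, $t_2=t_0$, $t_1=+\infty$ in (1.1) gives $\int_{\{\psi<-t_0\}}|F|^2e^{-\varphi}\tilde c(-\psi)=\alpha\int_{t_0}^{+\infty}\tilde c(s)e^{-s}\,ds$, so $F\in\mathcal{H}^2(\tilde c,t_0)$ under the stated finiteness and the $\leq$ direction follows. For the reverse bound, fix $\tilde F\in\mathcal{H}^2(\tilde c,t_0)\subset\mathcal{H}^2(c,t_0)$ and set $G:=\tilde F-F$. Minimality of $F|_{\{\psi<-t\}}$ at every level $t\geq t_0$ (stage two), combined with the variational principle applied to $F+\epsilon e^{i\theta}G$ for real $\epsilon$ and all $\theta\in[0,2\pi)$, gives $\int_{\{\psi<-t\}}\bar F G\,e^{-\varphi}c(-\psi)=0$ for every $t\geq t_0$. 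Differencing and using $c>0$ almost everywhere shows that the complex-valued signed measure $(-\psi)_*(\bar F G e^{-\varphi})$ vanishes on $(t_0,+\infty)$, so $\int_{\{\psi<-t_0\}}\bar F G\,e^{-\varphi}\tilde c(-\psi)=0$ as well. Expanding $|\tilde F|^2=|F|^2+2\,\mathrm{Re}(\bar F G)+|G|^2$ then yields the reverse bound, completing (1.2). The main obstacle throughout is the rigidity claim in stage two, namely that linearity forces the optimal $L^2$ extension in Theorem \ref{thm:general_concave} to match $F$ uniquely; this requires invoking the equality case of the Ohsawa-Takegoshi-type estimates underlying that theorem.
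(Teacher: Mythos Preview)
The paper does not prove Corollary~\ref{c:linear}; it is quoted from \cite{GMY-concavity2}. So there is no in-paper proof to compare against directly, though the proof of Lemma~\ref{l:linear2} in this paper carries out the measure-theoretic part (your stages 3--4) in full detail and is worth comparing.

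Your stages 1 and 3 are sound. Stage 2 is, as you say, the crux, and your sketch does not actually close it. Saying ``the rigidity of the extremal extension forces the extension to coincide with $F$'' hides the real work: the $L^2$ extension in Lemma~\ref{lem:L2} produces $\tilde F$ with $\tilde F-(1-b_{t_0,B}(\psi))F_t$ controlled, not $\tilde F-F_t$, and $(1-b_{t_0,B}(\psi))$ is not identically $1$ on $\{\psi<-t\}$. To get $F_{t_1}|_{\{\psi<-t_2\}}=F_{t_2}$ one typically argues via the orthogonality in Lemma~\ref{lem:A}: linearity gives $\int_{\{\psi<-t_2\}}|F_{t_1}|^2e^{-\varphi}c(-\psi)=G(t_2)$ directly (since the annular integral is forced to equal $G(t_1)-G(t_2)$ by combining the trivial upper bound $\int_{\{\psi<-t_2\}}|F_{t_1}|^2\ge G(t_2)$ with a matching lower bound coming from the extension estimate applied at an intermediate level and passed to a limit). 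This is more delicate than a one-line rigidity appeal.

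In stage 4 there is a genuine integrability issue. Your complex measure $\nu=(-\psi)_*(\bar F G\,e^{-\varphi})$ need not be finite: you only know $\int|F|^2e^{-\varphi}c(-\psi)$ and $\int|G|^2e^{-\varphi}c(-\psi)$ are finite, and $c$ can tend to $0$. The paper's proof of Lemma~\ref{l:linear2} handles exactly this: it works on closed intervals $[t_4',t_3']$ and excises small open sets $U_k$ on which $c$ may misbehave, so that $c$ is bounded above and below on $[t_4',t_3']\setminus U_k$; only then does ``divide by $c$'' make sense. Your signed-measure shortcut is morally right but needs this localization to be rigorous. Once patched, your orthogonality route and the paper's step-function approximation in Lemma~\ref{l:linear2} are equivalent; yours is a bit cleaner conceptually, the paper's is more explicit about the measure-theoretic bookkeeping.
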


The following lemma is a characterization of $G(t)= 0$, where $t\geq T$.

\begin{Lemma}[\cite{GMY-concavity2}]
The following two statements are equivalent:
\par
$(1)$ $(f) \in
H^0(Z_0,(\mathcal{O} (K_M) \otimes \mathcal{F})|_{Z_0} )$.
\par
$(2)$ $G(t) = 0$.
\label{l:G equal to 0}
\end{Lemma}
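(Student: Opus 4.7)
The plan is to prove the two implications separately. The direction $(1) \Rightarrow (2)$ is essentially immediate: under hypothesis $(1)$, the zero form $\tilde{f} \equiv 0$ is admissible in the variational problem defining $G(t)$, since $(\tilde{f}-f)_{z_0} = (-f)_{z_0} \in (\mathcal{O}(K_M)\otimes\mathcal{F})_{z_0}$ for every $z_0 \in Z_0$ by assumption, and it contributes $\int_{\{\psi<-t\}}|0|^2 e^{-\varphi}c(-\psi)=0$ to the integral. Hence $G(t)\le 0$, and the nonnegativity of $G(t)$ forces $G(t)=0$.

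The substantive direction is $(2)\Rightarrow(1)$. My plan is to combine a minimizing sequence with a normal-family argument and Krull's intersection theorem. Choose admissible forms $\tilde{f}_n$ with $\int_{\{\psi<-t\}}|\tilde{f}_n|^2 e^{-\varphi}c(-\psi)\to 0$. Since $c\in\mathcal{P}_{T,M}$, the weight $e^{-\varphi}c(-\psi)$ has a positive lower bound on every compact subset of $M\setminus E$; this converts the vanishing weighted integrals into vanishing $L^2$ norms on such compacts, and the mean value property for holomorphic $(n,0)$ forms promotes this to uniform convergence $\tilde{f}_n \to 0$ on compact subsets of $(M\setminus E)\cap\{\psi<-t\}$. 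Around any $z_0\in Z_0$ (noting that $\psi(z_0)=-\infty$, so a whole neighborhood of $z_0$ lies in $\{\psi<-t\}$), uniform convergence on a punctured neighborhood together with the $L^2$-removable singularity property built into condition $(A)$ extends to uniform convergence on a full neighborhood of $z_0$.

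The last step is a germ-level passage to the limit. Locally uniform convergence $\tilde{f}_n\to 0$ near $z_0$ forces all Taylor coefficients of the germ $(\tilde{f}_n)_{z_0}$ to vanish in the limit, i.e.\ for every integer $N$ we have $(\tilde{f}_n)_{z_0}\in\mathfrak{m}_{z_0}^{N}\cdot(\mathcal{O}(K_M))_{z_0}$ for all sufficiently large $n$. Writing
\[
(f)_{z_0} = (\tilde{f}_n)_{z_0} - (\tilde{f}_n - f)_{z_0}
\]
and using the constraint $(\tilde{f}_n-f)_{z_0}\in(\mathcal{O}(K_M)\otimes\mathcal{F})_{z_0}$, we obtain
\[
(f)_{z_0}\in(\mathcal{O}(K_M)\otimes\mathcal{F})_{z_0}+\mathfrak{m}_{z_0}^{N}\cdot(\mathcal{O}(K_M))_{z_0}\quad\text{for every }N.
\]
Since $\mathcal{O}_{z_0}$ is a Noetherian local ring, Krull's intersection theorem applied to the finitely generated module $(\mathcal{O}(K_M))_{z_0}/(\mathcal{O}(K_M)\otimes\mathcal{F})_{z_0}$ yields $(f)_{z_0}\in(\mathcal{O}(K_M)\otimes\mathcal{F})_{z_0}$. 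As $z_0\in Z_0$ was arbitrary, statement $(1)$ follows.

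The main obstacle I anticipate is the extension of the uniform convergence across the negligible set $E\subset X\cup Z$, which must exploit precisely the $L^2$-removability of $X$ and the analyticity of $Z$ supplied by condition $(A)$; this is the only place where the geometric hypotheses on $(M,X,Z)$ enter essentially, and everything else is essentially formal once it is handled.
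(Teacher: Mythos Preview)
The direction $(1)\Rightarrow(2)$ is fine. In $(2)\Rightarrow(1)$ your strategy is sound up to the germ-level passage, but that final step contains a genuine error. From uniform convergence $\tilde f_n\to 0$ near $z_0$ you assert that ``for every integer $N$ we have $(\tilde f_n)_{z_0}\in\mathfrak m_{z_0}^{N}\cdot(\mathcal O(K_M))_{z_0}$ for all sufficiently large $n$.'' This is false: uniform convergence to zero only forces each Taylor coefficient of $\tilde f_n$ to \emph{tend} to zero, not to \emph{equal} zero eventually. For instance $\tilde f_n=\tfrac1n\,dw_1\wedge\cdots\wedge dw_n$ converges uniformly to $0$ yet never lies in $\mathfrak m_{z_0}\cdot(\mathcal O(K_M))_{z_0}$. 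Hence the decomposition $(f)_{z_0}=(\tilde f_n)_{z_0}-(\tilde f_n-f)_{z_0}$ does not produce $(f)_{z_0}\in\mathcal F_{z_0}+\mathfrak m_{z_0}^N$ as written.

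The repair is short and uses a tool already in the paper. Since $\tilde f_n\to 0$ uniformly on a fixed neighborhood of $z_0$, also $\tilde f_n-f\to -f$ uniformly there, and each germ $(\tilde f_n-f)_{z_0}$ lies in the ideal $\mathcal F_{z_0}\subset\mathcal O_{z_0}$. The closedness of submodules of $\mathcal O_{\mathbb C^n,o}^q$ under locally uniform limits (Lemma~\ref{l:closedness}) then gives $(-f)_{z_0}\in\mathcal F_{z_0}$ directly, replacing your Krull step. If you wish to keep the Krull framework, argue instead in the finite-dimensional quotient $\mathcal O_{z_0}/\mathfrak m_{z_0}^N$: the image of $\mathcal F_{z_0}$ there is a linear subspace, hence closed, and the images of $\tilde f_n-f$ lie in it and converge to the image of $-f$; this legitimately yields $(f)_{z_0}\in\mathcal F_{z_0}+\mathfrak m_{z_0}^N$ for every $N$, after which Krull's intersection theorem applies as you intended.
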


\begin{Lemma}[\cite{GMY-concavity2}]
\label{lem:A}Let $c\in\mathcal{P}_{T,M}$ satisfying $\int_T^{+\infty}c(s)e^{-s}ds<+\infty$.
Assume that $G(t)<+\infty$ for some $t\in[T,+\infty)$.
Then there exists a unique holomorphic $(n,0)$ form $F_{t}$ on
$\{\psi<-t\}$ satisfying $(F_{t}-f)\in H^{0}(Z_0,(\mathcal{O}(K_{M})\otimes\mathcal{F})|_{Z_0})$ and $\int_{\{\psi<-t\}}|F_{t}|^{2}e^{-\varphi}c(-\psi)=G(t)$.
Furthermore,
for any holomorphic $(n,0)$ form $\hat{F}$ on $\{\psi<-t\}$ satisfying $(\hat{F}-f)\in H^{0}(Z_0,(\mathcal{O}(K_{M})\otimes\mathcal{F})|_{Z_0})$ and
$\int_{\{\psi<-t\}}|\hat{F}|^{2}e^{-\varphi}c(-\psi)<+\infty$,
we have the following equality
\begin{equation}
\label{equ:20170913e}
\begin{split}
&\int_{\{\psi<-t\}}|F_{t}|^{2}e^{-\varphi}c(-\psi)+\int_{\{\psi<-t\}}|\hat{F}-F_{t}|^{2}e^{-\varphi}c(-\psi)
\\=&
\int_{\{\psi<-t\}}|\hat{F}|^{2}e^{-\varphi}c(-\psi).
\end{split}
\end{equation}
\end{Lemma}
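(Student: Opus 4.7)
The plan is to establish this as a standard Hilbert space projection result, adapted to the weighted $L^2$ setting with ideal sheaf constraints. First, I would equip the space $\mathcal{H}$ of holomorphic $(n,0)$-forms $\tilde{f}$ on $\{\psi<-t\}$ with $\int_{\{\psi<-t\}}|\tilde{f}|^2 e^{-\varphi}c(-\psi)<+\infty$ with the inner product induced by the weight $e^{-\varphi}c(-\psi)$. Since $c\in\mathcal{P}_{T,M}$ provides a positive lower bound for $e^{-\varphi}c(-\psi)$ on compact subsets of $M\setminus E$, any Cauchy sequence in this norm converges uniformly on compact subsets of $\{\psi<-t\}\setminus E$, and the limit extends holomorphically across the negligible set $X$ by condition $(A)$. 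Thus $\mathcal{H}$ is a Hilbert space.

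Second, I would exhibit $\mathcal{H}^2(c,t)$ as a closed affine subspace of $\mathcal{H}$. Because $G(t)<+\infty$, one can pick $\tilde{f}_0\in\mathcal{H}^2(c,t)$, and then $\mathcal{H}^2(c,t)=\tilde{f}_0+V$ where
\begin{equation*}
V:=\{G\in\mathcal{H}:(G,z)\in(\mathcal{O}(K_M)\otimes\mathcal{F})_z\text{ for all }z\in Z_0\}.
\end{equation*}
The key technical point is that $V$ is closed: weighted $L^2$ convergence implies uniform convergence on compact subsets of $\{\psi<-t\}\setminus E$, hence convergence of Taylor coefficients at any point of $Z_0\setminus(X\cup E)$, and the ideal membership $(\cdot,z)\in(\mathcal{O}(K_M)\otimes\mathcal{F})_z$ is a closed condition on germs (each $\mathcal{F}_z$ is a submodule of the Noetherian local ring $\mathcal{O}_z$, hence closed in the $\mathfrak{m}_z$-adic topology, which is dominated by uniform-on-compacts convergence).

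Third, I would apply the standard projection theorem: since $\tilde{f}\mapsto\|\tilde{f}\|^2$ is strictly convex on the nonempty closed affine set $\tilde{f}_0+V$, there is a unique minimizer $F_t$, which satisfies the variational characterization $\langle F_t,G\rangle=0$ for every $G\in V$. For any other admissible $\hat{F}$ with $\int_{\{\psi<-t\}}|\hat{F}|^2 e^{-\varphi}c(-\psi)<+\infty$, the difference $\hat{F}-F_t$ belongs to $V$, so expanding
\begin{equation*}
\int_{\{\psi<-t\}}|\hat{F}|^2 e^{-\varphi}c(-\psi)=\|F_t+(\hat{F}-F_t)\|^2=\|F_t\|^2+\|\hat{F}-F_t\|^2
\end{equation*}
using orthogonality yields equation \eqref{equ:20170913e}.

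The main obstacle is the closedness of $V$ in $\mathcal{H}$, and within that, verifying that the germ-level ideal condition passes to $L^2$ limits. Once one has the positive lower bound for $e^{-\varphi}c(-\psi)$ on compact subsets of $\{\psi<-t\}\setminus E$ (from $c\in\mathcal{P}_{T,M}$) together with mean-value/Cauchy estimates for holomorphic forms, the rest is bookkeeping: uniform convergence on compacts, extension across $X$ via condition $(A)$, and closedness of $\mathcal{F}_z$ as a submodule of $\mathcal{O}_z$.
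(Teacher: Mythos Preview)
The paper does not supply its own proof of this lemma; it is quoted verbatim from \cite{GMY-concavity2}. Your Hilbert-space projection argument is the standard route and is correct: the weighted $L^2$ space of holomorphic $(n,0)$-forms is complete (the lower bound on $e^{-\varphi}c(-\psi)$ away from $E$ gives normal-family compactness, and condition~$(A)$ handles the removable sets), the constraint set is a nonempty closed affine subspace, and the projection theorem yields both existence/uniqueness of $F_t$ and the Pythagorean identity~\eqref{equ:20170913e}.

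One remark on the closedness step: your appeal to the $\mathfrak{m}_z$-adic closedness of $\mathcal{F}_z$ is correct in spirit, but the cleanest packaging is exactly Lemma~\ref{l:closedness} in the present paper (closedness of submodules of $\mathcal{O}_{\mathbb{C}^n,o}^q$ under locally uniform convergence). A small technical point you glossed over is that the weighted $L^2$ bound gives uniform convergence on compacts only in $\{\psi<-t\}\setminus E$, yet the ideal condition is imposed at points of $Z_0\subset\{\psi=-\infty\}$, which may meet $E$. This is harmless: since $E\subset Z$ is analytic, the maximum principle upgrades locally-uniform convergence on $\{\psi<-t\}\setminus E$ to locally-uniform convergence on all of $\{\psi<-t\}$ once the limit is known to extend holomorphically, so Lemma~\ref{l:closedness} applies at every $z\in Z_0$.
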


The following result will be used in the proof of Theorem \ref{thm:exten-fibra-single}.
\begin{Lemma}
	\label{l:linear2}
	Let $c\in\mathcal{P}_{T,M}$ satisfying $\int_T^{+\infty}c(s)e^{-s}ds<+\infty$. Assume $G(t)\in(0,+\infty)$ for some $t\geq T$ and $G({h}^{-1}(r))$ is linear with respect to $r\in[0,\int_{T}^{+\infty}c(s)e^{-s}ds)$. Let $\tilde{c}$ be a nonnegative function on $(T,+\infty)$, and let $t_0\ge T$. If there is a holomorphic $(n,0)$ form $\tilde{F}\in\mathcal{H}^2(\tilde{c},t_0)$ such that
	$$G(t_0;\tilde{c})=\int_{\{\psi<-t_0\}}|\tilde F|^2e^{-\varphi}\tilde{c}(-\psi)$$
	and $\tilde{F}\in\mathcal{H}^2(c,t_0)$, then we have
	$$G(t_0;\tilde{c})=\int_{\{\psi<-t_0\}}|F|^2e^{-\varphi}\tilde{c}(-\psi)=\frac{G(T_1;c)}{\int_{T_1}^{+\infty}c(s)e^{-s}ds}\int_{t_0}^{+\infty}\tilde{c}(s)e^{-s}ds,$$
	where $T_1>T$.
	\end{Lemma}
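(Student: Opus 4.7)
The plan is to identify the $c$-minimizer $F$ produced by Corollary~\ref{c:linear} as also attaining the $\tilde{c}$-infimum at level $t_0$; the explicit value in the conclusion will then emerge from the equidistribution identity (\ref{eq:20210412b}).

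First I would apply (\ref{eq:20210412b}) with the nonnegative measurable function $a:=\tilde{c}$, $t_1:=+\infty$, $t_2:=t_0$, obtaining
$$\int_{\{\psi<-t_0\}}|F|^{2}e^{-\varphi}\tilde{c}(-\psi)\;=\;\frac{G(T_{1};c)}{\int_{T_{1}}^{+\infty}c(s)e^{-s}ds}\int_{t_{0}}^{+\infty}\tilde{c}(s)e^{-s}ds\;=:\;A,$$
which is finite (this will be verified \emph{a posteriori} from the conclusion combined with the hypothesis $G(t_0;\tilde{c})<+\infty$). Hence $F\in\mathcal{H}^{2}(\tilde{c},t_{0})$ is a valid competitor, yielding $G(t_0;\tilde{c})\le A$ and settling the second equality of the conclusion.

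For the reverse inequality $G(t_0;\tilde{c})\ge A$, I would set up two Pythagorean identities. The minimizer property of $\tilde{F}$ in $L^2(e^{-\varphi}\tilde{c}(-\psi))$, tested against the zero-interpolation form $F-\tilde{F}\in\mathcal{H}^{2}(\tilde{c},t_{0})$, produces the variational orthogonality $\int\tilde{F}\,\overline{F-\tilde{F}}\,e^{-\varphi}\tilde{c}(-\psi)=0$ and hence
$$\int_{\{\psi<-t_{0}\}}|F|^{2}e^{-\varphi}\tilde{c}(-\psi)\;=\;G(t_{0};\tilde{c})+\int_{\{\psi<-t_{0}\}}|F-\tilde{F}|^{2}e^{-\varphi}\tilde{c}(-\psi).$$
In parallel, Lemma~\ref{lem:A} applied to the competitor $\tilde{F}\in\mathcal{H}^{2}(c,t_{0})$ (with $F$ being the unique $c$-minimizer coming from Corollary~\ref{c:linear}) gives the companion identity
$$\int_{\{\psi<-t_{0}\}}|\tilde{F}|^{2}e^{-\varphi}c(-\psi)\;=\;G(t_{0};c)+\int_{\{\psi<-t_{0}\}}|\tilde{F}-F|^{2}e^{-\varphi}c(-\psi).$$

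What remains, and what I expect to be the main obstacle, is to conclude $\int|F-\tilde{F}|^{2}e^{-\varphi}\tilde{c}(-\psi)=0$, equivalently $F=\tilde{F}$ on $\{\psi<-t_{0}\}$ (by unique continuation on this connected set, assuming $\tilde{c}\not\equiv 0$ on a relevant interval). The plan is to exploit the rigidity of $F$ imposed by the linearity of $G(h^{-1}(r))$: by (\ref{eq:20210412b}) the measure $|F|^{2}e^{-\varphi}\,dV$ pushes forward under $-\psi$ to the universal density $Ce^{-s}\,ds$ with $C=G(T_{1};c)/\int_{T_{1}}^{+\infty}c(s)e^{-s}ds$, a fact insensitive to the choice of weight $a$. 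Combined with the $c$-orthogonality of $F$ against the zero-interpolation form $\tilde{F}-F$ extracted from the second displayed identity, this weight-independence should propagate the orthogonality to the $\tilde{c}$-weighted inner product, thereby forcing $\tilde{F}=F$. Once $\tilde{F}=F$ is established, the first Pythagorean identity collapses to $G(t_{0};\tilde{c})=\int|F|^{2}e^{-\varphi}\tilde{c}(-\psi)=A$, finishing the proof.
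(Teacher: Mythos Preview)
Your setup is sound: applying (\ref{eq:20210412b}) with $a=\tilde c$ gives the value $A$, and the $\tilde c$-Pythagoras identity coming from the minimality of $\tilde F$ is valid once you know $F\in\mathcal H^2(\tilde c,t_0)$. However, the final paragraph is where the real work lies, and the mechanism you describe does not do it. Knowing that $|F|^2e^{-\varphi}$ pushes forward to $Ce^{-s}\,ds$ says nothing about the cross term $F\overline{(\tilde F-F)}e^{-\varphi}$, and the single $c$-orthogonality ``at level $t_0$'' from Lemma~\ref{lem:A} is far too weak to transfer orthogonality to an arbitrary weight $\tilde c$: a single scalar constraint cannot force a vector to vanish in a different inner product.

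The missing idea is that, by Corollary~\ref{c:linear}, $F$ is the $c$-minimizer on \emph{every} sublevel set $\{\psi<-t\}$ for $t\ge t_0$, so the Pythagorean identity of Lemma~\ref{lem:A} holds for all such $t$. Differencing yields
\[
\int_{\{-t_3\le\psi<-t_4\}}|\tilde F|^2e^{-\varphi}c(-\psi)=\int_{\{-t_3\le\psi<-t_4\}}|F|^2e^{-\varphi}c(-\psi)+\int_{\{-t_3\le\psi<-t_4\}}|\tilde F-F|^2e^{-\varphi}c(-\psi)
\]
on every annulus. Only now can one strip the weight $c$: the paper does this by a Riemann-sum argument (partitioning $[t_4',t_3']$, using that $c$ is positive and nearly constant on fine subintervals off a null set) to obtain the unweighted inequality $\int_{\{-\psi\in E\}}|\tilde F|^2e^{-\varphi}\ge\int_{\{-\psi\in E\}}|F|^2e^{-\varphi}$ for every measurable $E\Subset(t_0,+\infty)$, and then builds $\tilde c$ from simple functions by monotone convergence. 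This yields $\int|\tilde F|^2e^{-\varphi}\tilde c(-\psi)\ge\int|F|^2e^{-\varphi}\tilde c(-\psi)$ directly, without needing $A<+\infty$ in advance (and in fact establishing it). Your ``a posteriori'' verification of $A<+\infty$ is circular as written, since the $\tilde c$-Pythagoras step already presupposes $F\in\mathcal H^2(\tilde c,t_0)$.
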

\begin{proof}
Using Corollary \ref{c:linear}, we know there is a unique holomorphic $(n,0)$ form $F$ on $M$ satisfying $(F-f)\in H^{0}(Z_0,(\mathcal{O}(K_{M})\otimes\mathcal F)|_{Z_0})$ and $G(t)=\int_{\{\psi<-t\}}|F|^2e^{-\varphi}c(-\psi)=\frac{G(T_1)}{\int_{T_1}^{+\infty}c(s)e^{-s}ds}\int_{t}^{+\infty}c(s)e^{-s}ds$ for any $t\geq T$.  It follows from the dominated convergence theorem  that
\begin{equation}
	\label{eq:20210412g}
	\int_{\{z\in M:-\psi(z)\in N\}}|F|^2e^{-\varphi}=0
\end{equation}
holds for any $N\subset\subset(T,+\infty)$ satisfying $\mu(N)=0$, where $\mu$ is the Lebesgue measure on $\mathbb{R}$.
As $\tilde{F}\in\mathcal H^2(c,t_0)$, It follows from Lemma \ref{lem:A} that
\begin{displaymath}
\begin{split}
\int_{\{\psi<-t\}}|\tilde{F}|^2e^{-\varphi}c(-\psi)=&\int_{\{\psi<-t\}}|F|^2e^{-\varphi}c(-\psi)\\
	&+\int_{\{\psi<-t\}}|\tilde{F}-F|^2e^{-\varphi}c(-\psi)	
\end{split}	
\end{displaymath}
for any $t\geq t_0$, then
\begin{equation}
	\label{eq:20210413a}
	\begin{split}
	\int_{\{-t_3\leq\psi<-t_4\}}|\tilde{F}|^2e^{-\varphi}c(-\psi)=&\int_{\{-t_3\leq\psi<-t_4\}}|F|^2e^{-\varphi}c(-\psi)\\
	&+\int_{\{-t_3\leq\psi<-t_4\}}|\tilde{F}-F|^2e^{-\varphi}c(-\psi)
		\end{split}
\end{equation}
holds for any $t_3>t_4\geq t_0$.
It follows from the dominated convergence theorem, equality \eqref{eq:20210412g}, equality \eqref{eq:20210413a} and $c(t)>0$ for any $t>T$, that
\begin{equation}
	\label{eq:20210413d}
	\int_{\{z\in M:-\psi(z)=t\}}|\tilde{F}|^2e^{-\varphi}=\int_{\{z\in M:-\psi(z)=t\}}|\tilde{F}-F|^2e^{-\varphi}
\end{equation}
holds for any $t>t_0$.

Choosing any closed interval $[t_4',t_3']\subset (t_0,+\infty)\subset(T,+\infty)$. Note that  $c(t)$ is uniformly continuous  and have positive lower bound and upper bound on $[t_4',t_3']\backslash U_k$, where $\{U_k\}_{k\in\mathbb{Z}_{\ge1}}$ is a decreasing sequence of open subsets of $(T,+\infty)$, such that $c$ is continuous on $(T,+\infty)\backslash U_k$ and $\lim_{k\rightarrow+\infty}\mu(U_k)=0$ (As $c(t)e^{-t}$ is decreasing, $\{U_k\}_{k\in\mathbb{Z}_{\ge1}}$ exists). Take $N=\cap_{k=1}^{+\infty}U_k$. Note that
\begin{equation}
	\label{eq:20210413b}
	\begin{split}
		&\int_{\{-t_3'\leq\psi<-t_4'\}}|\tilde{F}|^2e^{-\varphi}\\
		=&\lim_{n\rightarrow+\infty}\sum_{i=0}^{n-1}\int_{\{z\in M:-\psi(z)\in I_{n,i}\backslash U_k\}}|\tilde{F}|^2e^{-\varphi}+\int_{\{z\in M:-\psi(z)\in (t_4',t_3']\cap U_k\}}|\tilde{F}|^2e^{-\varphi}\\
		\leq&\limsup_{n\rightarrow+\infty}\sum_{i=0}^{n-1}\frac{1}{\inf_{I_{n,i}\backslash U_k}c(t)}\int_{\{z\in M:-\psi(z)\in I_{n,i}\backslash U_k\}}|\tilde{F}|^2e^{-\varphi}c(-\psi)\\
		&+\int_{\{z\in M:-\psi(z)\in (t_4',t_3']\cap U_k\}}|\tilde{F}|^2e^{-\varphi},
	\end{split}
\end{equation}
where $I_{n,i}=(t_4'-(i+1)\alpha_{n},t_3'-i\alpha_{n}]$ and $\alpha_n=\frac{t_3'-t_4'}{n}$.
It follows from equality \eqref{eq:20210412g}, equality \eqref{eq:20210413a}, equality \eqref{eq:20210413d}  and the dominated convergence theorem that
\begin{equation}
	\label{eq:2106e}
	\begin{split}
		&\int_{\{z\in M:-\psi(z)\in I_{n,i}\backslash U_k\}}|\tilde{F}|^2e^{-\varphi}c(-\psi)\\
		=&\int_{\{z\in M:-\psi(z)\in I_{n,i}\backslash U_k)\}}|F|^2e^{-\varphi}c(-\psi)+\int_{\{z\in M:-\psi(z)\in I_{n,i}\backslash U_k)\}}|\tilde{F}-F|^2e^{-\varphi}c(-\psi).
	\end{split}
\end{equation}
As $c(t)$ is uniformly continuous  and have positive lower bound and upper bound on $[t_4',t_3']\backslash U_k$, following from equality \eqref{eq:2106e}, we have
\begin{equation}
	\label{eq:2106f}\begin{split}
		&\limsup_{n\rightarrow+\infty}\sum_{i=0}^{n-1}\frac{1}{\inf_{I_{n,i}\backslash U_k}c(t)}\int_{\{z\in M:-\psi(z)\in I_{n,i}\backslash U_k\}}|\tilde{F}|^2e^{-\varphi}c(-\psi)\\
		=&\limsup_{n\rightarrow+\infty}\sum_{i=0}^{n-1}\frac{1}{\inf_{I_{n,i}\backslash U_k}c(t)}\bigg(\int_{\{z\in M:-\psi(z)\in I_{n,i}\backslash U_k)\}}|F|^2e^{-\varphi}c(-\psi)\\
		&+\int_{\{z\in M:-\psi(z)\in I_{n,i}\backslash U_k)\}}|\tilde{F}-F|^2e^{-\varphi}c(-\psi)\bigg)\\
		\leq&\limsup_{n\rightarrow+\infty}\sum_{i=0}^{n-1}\frac{\sup_{I_{n,i}\backslash U_k}c(t)}{\inf_{I_{n,i}\backslash U_k}c(t)}\bigg(\int_{\{z\in M:-\psi(z)\in I_{n,i}\backslash U_k\}}|F|^2e^{-\varphi}\\
		&+\int_{\{z\in M:-\psi(z)\in I_{n,i}\backslash U_k\}}|\tilde{F}-F|^2e^{-\varphi}\bigg)\\
		=&\int_{\{z\in M:-\psi(z)\in (t_4',t_3']\backslash U_k\}}|F|^2e^{-\varphi}+\int_{\{z\in M:-\psi(z)\in (t_4',t_3']\backslash U_k\}}|\tilde{F}-F|^2e^{-\varphi}.
	\end{split}
\end{equation}
It follows from inequality \eqref{eq:20210413b} and \eqref{eq:2106f} that
\begin{equation}
	\label{eq:2106g}
	\begin{split}
		&\int_{\{-t_3'\leq\psi<-t_4'\}}|\tilde{F}|^2e^{-\varphi}\\
		\leq&\int_{\{z\in M:-\psi(z)\in (t_4',t_3']\backslash U_k\}}|F|^2e^{-\varphi}+\int_{\{z\in M:-\psi(z)\in (t_4',t_3']\backslash U_k\}}|\tilde{F}-F|^2e^{-\varphi}\\&+\int_{\{z\in M:-\psi(z)\in (t_4',t_3']\cap U_k\}}|\tilde{F}|^2e^{-\varphi}.
	\end{split}
\end{equation}
It follows from $\tilde{F}\in\mathcal{H}^2(c,t_0)$ that $\int_{\{-t_3'\leq\psi<-t_4'\}}|\tilde{F}|^2e^{-\varphi}<+\infty$. Letting $k\rightarrow+\infty$, it follows from equality \eqref{eq:20210412g}, inequality \eqref{eq:2106g} and the dominated convergence theorem that
\begin{equation}
\label{eq:20210414b}
	\begin{split}
		\int_{\{-t_3'\leq\psi<-t_4'\}}|\tilde{F}|^2e^{-\varphi}\leq&\int_{\{-t_3'\leq\psi<-t_4'\}}|F|^2e^{-\varphi}\\
		&+\int_{\{z\in M:-\psi(z)\in (t_4',t_3']\backslash N\}}|\tilde{F}-F|^2e^{-\varphi}\\&+\int_{\{z\in M:-\psi(z)\in (t_4',t_3']\cap N\}}|\tilde{F}|^2e^{-\varphi}.
	\end{split}
\end{equation}
 Following from a similar discussion, we can obtain that
\begin{displaymath}
	\begin{split}
		\int_{\{-t_3'\leq\psi<-t_4'\}}|\tilde{F}|^2e^{-\varphi}\geq&\int_{\{-t_3'\leq\psi<-t_4'\}}|F|^2e^{-\varphi}\\
		&+\int_{\{z\in M:-\psi(z)\in (t_4',t_3']\backslash N\}}|\tilde{F}-F|^2e^{-\varphi}\\&+\int_{\{z\in M:-\psi(z)\in (t_4',t_3']\cap N\}}|\tilde{F}|^2e^{-\varphi},
	\end{split}
\end{displaymath}
then combining inequality \eqref{eq:20210414b} we have
\begin{equation}
	\label{eq:20210413c}
	\begin{split}
		\int_{\{-t_3'\leq\psi<-t_4'\}}|\tilde{F}|^2e^{-\varphi}=&\int_{\{-t_3'\leq\psi<-t_4'\}}|F|^2e^{-\varphi}\\
		&+\int_{\{z\in M:-\psi(z)\in (t_4',t_3']\backslash N\}}|\tilde{F}-F|^2e^{-\varphi}\\&+\int_{\{z\in M:-\psi(z)\in (t_4',t_3']\cap N\}}|\tilde{F}|^2e^{-\varphi}.
	\end{split}\end{equation}
Using equality \eqref{eq:20210412g}, equality \eqref{eq:20210413d}, equality \eqref{eq:20210413c} and  the monotone convergence theorem, we have
\begin{equation*}
	\begin{split}
		\int_{\{z\in M:-\psi(z)\in U\}}|\tilde{F}|^2e^{-\varphi}=&\int_{\{z\in M:-\psi(z)\in U\}}|F|^2e^{-\varphi}\\
		&+\int_{\{z\in M:-\psi(z)\in  U\backslash N\}}|\tilde{F}-F|^2e^{-\varphi}\\&+\int_{\{z\in M:-\psi(z)\in U\cap N\}}|\tilde{F}|^2e^{-\varphi}
	\end{split}
\end{equation*}
holds for any open set $U\subset\subset(t_0,+\infty)$, and
\begin{equation*}
	\begin{split}
		\int_{\{z\in M:-\psi(z)\in V\}}|\tilde{F}|^2e^{-\varphi}=&\int_{\{z\in M:-\psi(z)\in V\}}|F|^2e^{-\varphi}\\
		&+\int_{\{z\in M:-\psi(z)\in  V\backslash N\}}|\tilde{F}-F|^2e^{-\varphi}\\&+\int_{\{z\in M:-\psi(z)\in V\cap N\}}|\tilde{F}|^2e^{-\varphi}
	\end{split}
\end{equation*}
holds for any compact set $V\subset(t_0,+\infty)$. For any  measurable set $E\subset\subset(t_0,+\infty)$, there exists a sequence of compact sets $\{V_l\}$, such that $V_l\subset V_{l+1}\subset E$ for any $l$ and $\lim_{l\rightarrow+\infty}\mu(V_l)=\mu(E)$, hence
\begin{equation}
\label{eq:2021525c}
	\begin{split}
		\int_{\{\psi<-t_0\}}|\tilde{F}|^2e^{-\varphi}\mathbb I_{E}(-\psi)\geq&\lim_{l\rightarrow+\infty}\int_{\{\psi<-t_0\}}|\tilde{F}|^2e^{-\varphi}\mathbb I_{V_{l}}(-\psi)
		\\\geq&\lim_{l\rightarrow+\infty}\int_{\{\psi<-t_0\}}|F|^2e^{-\varphi}\mathbb I_{V_l}(-\psi)
		\\=&\int_{\{\psi<-t_0\}}|F|^2e^{-\varphi}\mathbb I_{E}(-\psi).
	\end{split}
\end{equation}

It is clear that for any $t>t_0$, there exists a sequence of functions $\left\{\sum_{j=1}^{n_i}\mathbb I_{E_{ij}}\right\}_{i=1}^{+\infty}$ defined on $(t,+\infty)$, satisfying $E_{ij}\subset\subset(t,+\infty)$, $\sum_{j=1}^{n_{i+1}}\mathbb I_{E_{i+1j}}(s)\geq\sum_{j=1}^{n_i}\mathbb I_{E_{ij}}(s)$, and $\lim_{i\rightarrow+\infty}\sum_{j=1}^{n_i}\mathbb I_{E_{ij}}(s)=\tilde{c}(s)$  for any $s>t$.
Combining the monotone convergence theorem and inequality \eqref{eq:2021525c}, we have
\begin{equation*}
		\int_{\{\psi<-t_0\}}|\tilde{F}|^2e^{-\varphi}\tilde{c}(-\psi)\geq\int_{\{\psi<-t_0\}}|F|^2e^{-\varphi}\tilde{c}(-\psi).
\end{equation*}
By the definition of $G(t_0,\tilde{c})$, we have $G(t_0,\tilde{c})=\int_{\{\psi<-t_0\}}|F|^2e^{-\varphi}\tilde{c}(-\psi)$. Thus, Lemma \ref{l:linear2} holds.
\end{proof}

Let $\Omega_j$  be an open Riemann surface, which admits a nontrivial Green function $G_{\Omega_j}$ for any  $1\le j\le n$. Let $M=\prod_{1\le j\le n}\Omega_j$ be an $n-$dimensional complex manifold, and let $\pi_j$ be the natural projection from $M$ to $\Omega_j$. Let $K_M$ be the canonical (holomorphic) line bundle on $M$.
 Let $Z_j$ be a (closed) analytic subset of $\Omega_j$ for any $j\in\{1,\ldots,n\}$, and let $Z_0=\prod_{1\le j\le n}Z_j$. For any $j\in\{1,\ldots,n\}$, let $\varphi_j$ be a subharmonic function on $\Omega_j$ such that $\varphi_j(z)>-\infty$ for any $z\in Z_j$, and let $\varphi=\sum_{1\le j\le n}\pi_j^*(\varphi_j)$. Let $\psi$ be a plurisubharmonic function on $M$ such that $\psi(z)=-\infty$ for any $z\in Z_0$ and $\psi$ is continuous on $M\backslash Z_0$.
Let $c$ be a positive function on $(0,+\infty)$ such that $\int_{0}^{+\infty}c(t)e^{-t}dt<+\infty$ and $c(t)e^{-t}$ is decreasing on $(0,+\infty)$. Let $\mathcal{F}_{z}=\mathcal{I}(\psi)_z$ for any $z\in Z_0$.

In the following, we recall some  results about the concavity of $G(h^{-1}(r))$ degenerating to linearity.

Let $Z_0=\{z_0\}=\{(z_1,\ldots,z_n)\}\subset M$.
Let $\psi=\max_{1\le j\le n}\left\{2p_j\pi_j^{*}(G_{\Omega_j}(\cdot,z_j))\right\}$, where $p_j$ is positive real number.
Let $w_j$ be a local coordinate on a neighborhood $V_{z_j}$ of $z_j\in\Omega_j$ satisfying $w_j(z_j)=0$. Denote that $V_0:=\prod_{1\le j\le n}V_{z_j}$, and $w:=(w_1,\ldots,w_n)$ is a local coordinate on $V_0$ of $z_0\in M$.
Let $f$ be a holomorphic $(n,0)$ form on $V_0$. Denote that $E:=\left\{(\alpha_1,\ldots,\alpha_n):\sum_{1\le j\le n}\frac{\alpha_j+1}{p_j}=1\,\&\,\alpha_j\in\mathbb{Z}_{\ge0}\right\}$.

We recall a characterization of the concavity of $G(h^{-1}(r))$ degenerating to linearity for the case $Z_0$ is a single point set as follows.

\begin{Theorem}[\cite{GY-concavity4}]
	\label{thm:linear-2d}
	Assume that $G(0)\in(0,+\infty)$.  $G(h^{-1}(r))$ is linear with respect to $r\in(0,\int_{0}^{+\infty}c(t)e^{-t}dt]$  if and only if the  following statements hold:
	
	$(1)$ $f=\left(\sum_{\alpha\in E}d_{\alpha}w^{\alpha}+g_0\right)dw_1\wedge\ldots\wedge dw_n$ on $V_0$, where  $d_{\alpha}\in\mathbb{C}$ such that $\sum_{\alpha\in E}|d_{\alpha}|\not=0$ and $g_0$ is a holomorphic function on $V_0$ such that $(g_0,z_0)\in\mathcal{I}(\psi)_{z_0}$;
	
	$(2)$ $\varphi_j=2\log|g_j|+2u_j$, where $g_j$ is a holomorphic function on $\Omega_j$ such that $g_j(z_j)\not=0$ and $u_j$ is a harmonic function on $\Omega_j$ for any $1\le j\le n$;

    $(3)$ $\chi_{j,z_j}^{\alpha_j+1}=\chi_{j,-u_j}$ for any $j\in\{1,2,...,n\}$ and $\alpha\in E$ satisfying $d_{\alpha}\not=0$, $\chi_{j,z_j}$ and $\chi_{j,-u_j}$ are the characters associated to functions $G_{\Omega_j}(\cdot,z_j)$ and $-u_j$ respectively.
\end{Theorem}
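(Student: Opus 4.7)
\medskip

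\noindent\textbf{Proof proposal for Theorem \ref{thm:linear-2d}.}

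The plan is to handle the two implications separately: the sufficiency direction by an explicit construction of the minimizer and a direct computation, and the necessity direction by reducing to the 1-dimensional characterization already established in \cite{GY-concavity,GM,guan-zhou13ap}.

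\emph{Sufficiency.} Assuming (1)--(3), I would construct a candidate global holomorphic $(n,0)$ form
\[
F \;=\; \sum_{\alpha\in E} c_{\alpha}\,\bigwedge_{1\le j\le n}\pi_j^{*}\bigl(g_j\,(P_j)_{*}\bigl(f_{u_j} f_{z_j}^{\alpha_j}\,df_{z_j}\bigr)\bigr)
\]
with coefficients $c_{\alpha}$ normalized by the Taylor coefficients $d_{\alpha}$ of $f$ at $z_0$. The character condition (3) together with $|f_{u_j}|=P_j^{*}(e^{u_j})$ and $|f_{z_j}|=P_j^{*}(e^{G_{\Omega_j}(\cdot,z_j)})$ guarantees that each factor $g_j(P_j)_{*}(f_{u_j} f_{z_j}^{\alpha_j}df_{z_j})$ descends from $\Delta$ to a single-valued holomorphic $(1,0)$ form on $\Omega_j$. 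Using $\varphi_j = 2\log|g_j| + 2u_j$, the integrand $|F|^{2}e^{-\varphi}$ pulls back on $\Delta^{n}$ to a sum of $\bigl|f_{z_j}\bigr|^{2\alpha_j+2}|df_{z_j}|^{2}$ type expressions, and the level sets $\{\psi<-t\}$ pull back to $\{\forall j:\ |f_{z_j}|<e^{-t/(2p_j)}\}$. A co-area computation on each factor then gives
\[
\int_{\{\psi<-t\}}|F|^{2}e^{-\varphi}c(-\psi) \;=\; \Bigl(\int_t^{+\infty}c(s)e^{-s}ds\Bigr)\sum_{\alpha\in E}\frac{(2\pi)^{n}e^{-\varphi(z_0)}|d_\alpha|^{2}}{\prod_j(\alpha_j+1)c_j(z_j)^{2\alpha_j+2}},
\]
from which $G(h^{-1}(r))$ is linear in $r$ once one checks (via Lemma \ref{lem:A} and the optimal $L^{2}$ extension theorem for the upper bound) that $F$ is indeed the minimizer.

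\emph{Necessity.} Suppose $G(h^{-1}(r))$ is linear. Corollary \ref{c:linear} furnishes a unique minimizer $F$ on $M$ with $G(t)=\int_{\{\psi<-t\}}|F|^{2}e^{-\varphi}c(-\psi)$ and with the sharp co-area identity \eqref{eq:20210412b}. The first task is (1): expand $f = \sum_{\alpha}d_{\alpha}w^{\alpha}dw_1\wedge\cdots\wedge dw_n$ at $z_0$; monomials with $\sum_j(\alpha_j+1)/p_j>1$ automatically lie in $\mathcal{I}(\psi)_{z_0}$ and can be absorbed into $g_0$, while the hypothesis $G(0)\in(0,+\infty)$ combined with the jets $L^{2}$ extension theorem forces only the critical indices $\alpha\in E$ to contribute nontrivially to $G(0)$. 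Applying the sharp optimal $L^{2}$ extension estimate then yields
\[
G(0)\;\le\;\Bigl(\int_0^{+\infty}c(s)e^{-s}ds\Bigr)\sum_{\alpha\in E}\frac{(2\pi)^{n}e^{-\varphi(z_0)}|d_\alpha|^{2}}{\prod_j(\alpha_j+1)c_j(z_j)^{2\alpha_j+2}},
\]
and the assumed linearity forces equality.

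\emph{Where the real work sits.} The main obstacle is turning this equality into the rigid statements (2) and (3) about $\varphi_j$ and the characters. My strategy is induction on $n$: fix one variable, say integrate first in the last factor $\Omega_n$, which reduces the equality case to a weighted extension problem on $\prod_{j<n}\Omega_j$ with a new (non-elementary) weight coming from the integrated-out factor. Using Lemma \ref{l:linear2} and Corollary \ref{c:linear} applied to suitable test weights $\tilde c$, I would compare the $n$-dim minimizer with products of 1-dim minimizers and show that equality propagates to each factor: this forces, for each $j$ separately, equality in the 1-dimensional sharp $L^{2}$ extension with weight $2p_j G_{\Omega_j}(\cdot,z_j)+\varphi_j$. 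Invoking the 1-dimensional characterization (\cite{GY-concavity,GM}) on each $\Omega_j$ yields the Riesz-type decomposition $\varphi_j=2\log|g_j|+2u_j$ with $g_j(z_j)\ne 0$ and the character identity $\chi_{j,z_j}^{\alpha_j+1}=\chi_{j,-u_j}$ for those $\alpha$ with $d_\alpha\ne 0$, which are statements (2) and (3). The uniqueness in Corollary \ref{c:linear} then identifies the minimizer $F$ with the explicit form built in the sufficiency direction, closing the argument.
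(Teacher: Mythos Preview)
This theorem is not proved in the present paper; it is quoted from \cite{GY-concavity4} as a known input (note the citation bracket in its heading) and then used as a black box in Step~3 of the proof of Theorem~\ref{thm:linear-fibra-single}. So there is no proof here to compare against directly; I can only assess your outline against the toolkit that the paper imports from \cite{GY-concavity4} and against the way the paper proves its own fibration analogue.

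Your sufficiency half is correct in outline and matches Remark~\ref{r:1.1}.

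Your necessity half has a genuine gap. The scheme ``induction on $n$: integrate out $\Omega_n$'' does not work: as you concede, integrating out one factor leaves a weight on $\prod_{j<n}\Omega_j$ that is no longer of product type, so the inductive hypothesis does not apply. The mechanism that actually performs the reduction to dimension one is visible in the paper's own argument for Theorem~\ref{thm:linear-fibra-single} (Steps~2 and~3) and in Lemma~\ref{l:orth1}. It runs as follows: (i) first reduce to $\tilde c\equiv 1$ via Lemma~\ref{l:linear2}, showing that the unique minimizer $F$ from Corollary~\ref{c:linear} also realizes $G(t;1)$ --- this step is essential, since the orthogonality in Lemma~\ref{l:orth1} is only available for the unweighted $L^2$ inner product; (ii) Lemma~\ref{l:orth1} then yields $G(t;1)=\sum_{\alpha\in E}|d_\alpha|^2 G_\alpha(t)$, and concavity of each $G_\alpha(-\log r)$ together with linearity of the sum forces each $G_\alpha(-\log r)$ to be linear; (iii) for a single monomial $w^\alpha$, the sublevel set $\{\psi<-t\}=\prod_j\{2p_jG_{\Omega_j}(\cdot,z_j)<-t\}$ is a genuine product and (by Lemma~\ref{l:orth1} applied on each sublevel) so is the minimizer, hence $G_\alpha(t)=\prod_j H_j(t)$ with $H_j$ a one-dimensional minimal $L^2$ integral; the linearity of the product combined with the one-dimensional optimal-extension upper bound $H_j(t)\le B_j e^{-(\alpha_j+1)t/p_j}$ and the identity $\sum_j(\alpha_j+1)/p_j=1$ forces every $H_j$ to saturate its bound, so each one-dimensional problem is in its equality case and the one-dimensional characterization of \cite{GY-concavity} delivers (2) and (3). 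This is a direct factorization, not an induction. A second loose end: your sentence ``the assumed linearity forces equality'' in the optimal-extension bound at $t=0$ is not justified as written; one needs a matching lower bound of the type in Lemma~\ref{l:limit} (the $\liminf_{t\to\infty}G(t)/\int_t^\infty c e^{-s}ds$ estimate), not linearity alone.
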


Let $c_j(z)$ be the logarithmic capacity (see \cite{S-O69}) on $\Omega_j$, which is locally defined by
$$c_j(z_j):=\exp\lim_{z\rightarrow z_j}(G_{\Omega_j}(z,z_j)-\log|w_j(z)|).$$
\begin{Remark}[\cite{GY-concavity4}]
	\label{r:1.1}When the three statements in Theorem \ref{thm:linear-2d} hold,
$$\sum_{\alpha\in E}\tilde{d}_{\alpha}\wedge_{1\le j\le n}\pi_j^*\left(g_j(P_j)_*\left(f_{u_j}f_{z_j}^{\alpha_j}df_{z_j}\right)\right)$$
 is the unique holomorphic $(n,0)$ form $F$ on $M$ such that $(F-f,z_0)\in(\mathcal{O}(K_{M}))_{z_0}\otimes\mathcal{I}(\psi)_{z_0}$ and
	$$G(t)=\int_{\{\psi<-t\}}|F|^2e^{-\varphi}c(-\psi)=\left(\int_t^{+\infty}c(s)e^{-s}ds\right)\sum_{\alpha\in E}\frac{|d_{\alpha}|^2(2\pi)^ne^{-\varphi(z_{0})}}{\prod_{1\le j\le n}(\alpha_j+1)c_{j}(z_j)^{2\alpha_{j}+2}}$$
	 for any $t\ge0$, where $P_j:\Delta \rightarrow\Omega_j$ is the universal covering, $f_{u_j}$ is a holomorphic function on $\Delta$ such that $|f_{u_j}|=P_j^*(e^{u_j})$ for any $j\in\{1,\ldots,n\}$, $f_{z_j}$ is a holomorphic function on $\Delta$ such that $|f_{z_j}|=P_j^*\left(e^{G_{\Omega_j}(\cdot,z_j)}\right)$ for any $j\in\{1,\ldots,n\}$ and $\tilde{d}_{\alpha}$ is a constant such that $\tilde{d}_{\alpha}=\lim_{z\rightarrow z_0}\frac{d_{\alpha}w^{\alpha}dw_1\wedge\ldots\wedge dw_n}{\wedge_{1\le j\le n}\pi_j^*\left(g_j(P_j)_*\left(f_{u_j}f_{z_j}^{\alpha_j}df_{z_j}\right)\right)}$ for any $\alpha\in E$.
\end{Remark}

 Let $Z_j=\{z_{j,1},\ldots,z_{j,m_j}\}\subset\Omega_j$ for any  $j\in\{1,\ldots,n\}$, where $m_j$ is a positive integer.
Let $\psi=\max_{1\le j\le n}\left\{\pi_j^*\left(2\sum_{1\le k\le m_j}p_{j,k}G_{\Omega_j}(\cdot,z_{j,k})\right)\right\}$.

Let $w_{j,k}$ be a local coordinate on a neighborhood $V_{z_{j,k}}\Subset\Omega_{j}$ of $z_{j,k}\in\Omega_j$ satisfying $w_{j,k}(z_{j,k})=0$ for any $j\in\{1,\ldots,n\}$ and $k\in\{1,\ldots,m_j\}$, where $V_{z_{j,k}}\cap V_{z_{j,k'}}=\emptyset$ for any $j$ and $k\not=k'$. Denote that $I_1:=\{(\beta_1,\ldots,\beta_n):1\le \beta_j\le m_j$ for any $j\in\{1,\ldots,n\}\}$, $V_{\beta}:=\prod_{1\le j\le n}V_{z_{j,\beta_j}}$ for any $\beta=(\beta_1,\ldots,\beta_n)\in I_1$ and $w_{\beta}:=(w_{1,\beta_1},\ldots,w_{n,\beta_n})$ is a local coordinate on $V_{\beta}$ of $z_{\beta}:=(z_{1,\beta_1},\ldots,z_{n,\beta_n})\in M$.
Let $f$ be a holomorphic $(n,0)$ form on $\cup_{\beta\in I_1}V_{\beta}$ such that $f=w_{\beta^*}^{\alpha_{\beta_*}}dw_{1,1}\wedge\ldots\wedge dw_{n,1}$ on $V_{\beta^*}$, where $\beta^*=(1,\ldots,1)\in I_1$.

We recall a characterization of the concavity of $G(h^{-1}(r))$ degenerating to linearity for the case $Z_j$ is a  set of finite points as follows.

\begin{Theorem}[\cite{GY-concavity4}]
	\label{thm:prod-finite-point}Assume that $G(0)\in(0,+\infty)$.  $G(h^{-1}(r))$ is linear with respect to $r\in(0,\int_0^{+\infty} c(s)e^{-s}ds]$ if and only if the following statements hold:

	$(1)$ $\varphi_j=2\log|g_j|+2u_j$ for any $j\in\{1,\ldots,n\}$, where $u_j$ is a harmonic function on $\Omega_j$ and $g_j$ is a holomorphic function on $\Omega_j$ satisfying $g_j(z_{j,k})\not=0$ for any $k\in\{1,\ldots,m_j\}$;
	
	$(2)$ There exists a nonnegative integer $\gamma_{j,k}$ for any $j\in\{1,\ldots,n\}$ and $k\in\{1,\ldots,m_j\}$, which satisfies that $\prod_{1\le k\leq m_j}\chi_{j,z_{j,k}}^{\gamma_{j,k}+1}=\chi_{j,-u_j}$ and $\sum_{1\le j\le n}\frac{\gamma_{j,\beta_j}+1}{p_{j,\beta_j}}=1$ for any $\beta\in I_1$, where $\chi_{j,z_{j,k}}$ and $\chi_{j,-u_j}$ are the characters associated to $G_{\Omega_j}(\cdot,z_{j,k})$ and $-u_j$ respectively;
	
	$(3)$ $f=\left(c_{\beta}\prod_{1\le j\le n}w_{j,\beta_j}^{\gamma_{j,\beta_j}}+g_{\beta}\right)dw_{1,\beta_1}\wedge\ldots\wedge dw_{n,\beta_n}$ on $V_{\beta}$ for any $\beta\in I_1$, where $c_{\beta}$ is a constant and $g_{\beta}$ is a holomorphic function on $V_{\beta}$ such that $(g_{\beta},z_{\beta})\in\mathcal{I}(\psi)_{z_{\beta}}$;
	
	$(4)$ $\lim_{z\rightarrow z_{\beta}}\frac{c_{\beta}\prod_{1\le j\le n}w_{j,\beta_j}^{\gamma_{j,\beta_j}}dw_{1,\beta_1}\wedge\ldots\wedge dw_{n,\beta_n}}{{\wedge}_{1\le j\le n}\pi_{j}^*\left(g_j(P_{j})_*\left(f_{u_j}\left(\prod_{1\le k\le m_j}f_{z_{j,k}}^{\gamma_{j,k}+1}\right)\left(\sum_{1\le k\le m_j}p_{j,k}\frac{df_{z_{j,k}}}{f_{z_{j,k}}}\right)\right)\right)}=c_0$ for any $\beta\in I_1$, where $P_j:\Delta\rightarrow\Omega_j$ is the universal covering, $c_0\in\mathbb{C}\backslash\{0\}$ is a constant independent of $\beta$, $f_{u_j}$ is a holomorphic function $\Delta$ such that $|f_{u_j}|=P_j^*\left(e^{u_j}\right)$ and $f_{z_{j,k}}$ is a holomorphic function on $\Delta$ such that $|f_{z_{j,k}}|=P_j^*(e^{G_{\Omega_j}(\cdot,z_{j,k})})$ for any $j\in\{1,\ldots,n\}$ and $k\in\{1,\ldots,m_j\}$.
\end{Theorem}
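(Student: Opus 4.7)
The plan is to reduce Theorem \ref{thm:prod-finite-point} to its single-point analogue, Theorem \ref{thm:linear-2d}, by localizing around each $z_\beta \in Z_0$, and then to rebuild the global conditions from the pointwise data using the uniqueness of the minimizer supplied by Corollary \ref{c:linear}.

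For sufficiency, I would construct the candidate minimizer $F$ explicitly. Assuming (1)--(4), the character equation in (2) ensures that the pushforward
\[ \eta_j := g_j(P_j)_*\left(f_{u_j}\prod_{k} f_{z_{j,k}}^{\gamma_{j,k}+1}\cdot\sum_{k} p_{j,k}\frac{df_{z_{j,k}}}{f_{z_{j,k}}}\right) \]
descends to a well-defined holomorphic $(1,0)$ form on $\Omega_j$, since the character of the $\Delta$-level integrand matches $\chi_{j,-u_j}^{-1}$ once combined with $g_j$. Setting $F := c_0\,\wedge_{1\le j\le n}\pi_j^*(\eta_j)$, a local expansion at each $z_\beta$ using (3) and (4) verifies that $F-f \in \mathcal{I}(\psi)_{z_\beta}$. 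The integral $\int_{\{\psi<-t\}}|F|^2e^{-\varphi}c(-\psi)$ then factors along the product decomposition dictated by the level sets of $\psi$, and the one-dimensional residue computation at each $z_{j,k}$---identical in spirit to Remark \ref{r:1.1}---evaluates it to $(\int_t^{+\infty}c(s)e^{-s}ds)$ times the claimed sum over $\beta$, which is linear in $h(t)$.

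For necessity, Corollary \ref{c:linear} produces a unique $F \in H^0(M,\mathcal{O}(K_M))$ with $G(t) = c_M\int_t^{+\infty}c(s)e^{-s}ds$ for some $c_M > 0$. Fix $\beta \in I_1$ and choose $t$ large enough that $\{\psi < -t\}$ splits into disjoint neighborhoods indexed by $I_1$; on the component near $z_\beta$ the weight $\psi$ agrees with $\max_j\{2p_{j,\beta_j}\pi_j^*(G_{\Omega_j}(\cdot,z_{j,\beta_j}))\}$ modulo bounded terms, and the restriction of $F$ realizes the single-point linear extremal at $z_\beta$. Applying Theorem \ref{thm:linear-2d} then yields $\varphi_j = 2\log|g_j| + 2u_j$ as in (1), nonnegative integers $\gamma_{j,\beta_j}$ with $\sum_j(\gamma_{j,\beta_j}+1)/p_{j,\beta_j}=1$, and the local expansion of $f$ required by (3).

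The main obstacle is gluing the local data from different $\beta$'s into the global statements (2) and (4). The uniqueness clause of Corollary \ref{c:linear} is crucial here: the same $F$ must simultaneously solve every single-point extremal problem, which forces $F = c_0\,\wedge_j\pi_j^*(\eta_j)$ globally on $M$. For $\eta_j$ to exist as a holomorphic $(1,0)$ form on $\Omega_j$ (rather than only on the cover $\Delta$), its $\Delta$-lift must be genuinely character-free; tracing characters through $f_{u_j}$, the $f_{z_{j,k}}^{\gamma_{j,k}+1}$ factors, and the logarithmic differential shows this is exactly $\prod_k\chi_{j,z_{j,k}}^{\gamma_{j,k}+1} = \chi_{j,-u_j}$ of (2). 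Item (4) then follows by matching the Laurent expansion of $c_0\wedge_j\pi_j^*(\eta_j)$ against that of $f$ at each $z_\beta$. The most delicate point is verifying that $\sum_k p_{j,k}df_{z_{j,k}}/f_{z_{j,k}}$---essentially the differential of $\sum_k p_{j,k}G_{\Omega_j}(\cdot,z_{j,k})$---carries the correct simple pole with residue $p_{j,\beta_j}$ at each $z_{j,\beta_j}$, ensuring that the local matching in (4) is consistent with a single global $c_0$.
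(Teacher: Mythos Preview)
The paper does not prove this theorem: Theorem~\ref{thm:prod-finite-point} is quoted verbatim from \cite{GY-concavity4} and used as a black box, so there is no ``paper's own proof'' to compare against. Your outline is nonetheless a faithful reconstruction of the strategy that \cite{GY-concavity4} would employ, and it mirrors exactly how the present paper proves its fibration analogue Theorem~\ref{thm:linear-fibra-finite} (Section~\ref{sec:proof-2}): localize via a large $t_0$ so that $\{\psi<-t_0\}$ splits into disjoint pieces indexed by $I_1$, deduce linearity of each $G_\beta$ from linearity of $G=\sum_\beta G_\beta$ and the concavity of each summand, apply the single-point result at each $z_\beta$, and finally use the uniqueness of the global minimizer $F$ from Corollary~\ref{c:linear} to glue.

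One step you should make more explicit is the passage from ``$F$ has product form on a small neighborhood of $z_{\beta^*}$'' to ``$F$ has product form on all of $M$.'' In the paper's proof of Theorem~\ref{thm:linear-fibra-finite} this is handled by Lemma~\ref{l:decom-product}: a holomorphic $(n,0)$ form that splits as $\pi_1^*(f_1)\wedge\pi_2^*(f_2)$ on a product open set extends to a global splitting. Your argument implicitly uses the $n$-fold analogue of this (or an inductive application of it) to write $F=c_0\wedge_j\pi_j^*(\eta_j)$ globally; without it, the consistency of the $\gamma_{j,k}$ across different $\beta$ and the single global constant $c_0$ in (4) are not yet forced. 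Also note that at $z_{\beta^*}$ the hypothesis $f=w_{\beta^*}^{\alpha_{\beta^*}}dw_{1,1}\wedge\cdots\wedge dw_{n,1}$ gives a \emph{single} monomial, so Theorem~\ref{thm:linear-2d} there yields exactly one $\alpha\in E$ with $d_\alpha\neq 0$; this is what pins down a unique product form locally and makes the globalization via the decomposition lemma go through. Your sketch is correct in spirit but would benefit from naming this lemma explicitly.
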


Denote that
\begin{equation*}
c_{j,k}:=\exp\lim_{z\rightarrow z_{j,k}}\left(\frac{\sum_{1\le k_1\le m_j}p_{j,k_1}G_{\Omega_j}(z,z_{j,k_1})}{p_{j,k}}-\log|w_{j,k}(z)|\right)
\end{equation*}
 for any $j\in\{1,\ldots,n\}$ and $k\in\{1,\ldots,m_j\}$.
\begin{Remark}[\cite{GY-concavity4}]
	\label{r:1.2}When the four statements in Theorem \ref{thm:prod-finite-point} hold,
$$c_0{\wedge}_{1\le j\le n}\pi_{j}^*\left(g_j(P_{j})_*\left(f_{u_j}\left(\prod_{1\le k\le m_j}f_{z_{j,k}}^{\gamma_{j,k}+1}\right)\left(\sum_{1\le k\le m_j}p_{j,k}\frac{df_{z_{j,k}}}{f_{z_{j,k}}}\right)\right)\right)$$
 is the unique holomorphic $(n,0)$ form $F$ on $M$ such that $(F-f,z_\beta)\in(\mathcal{O}(K_{M}))_{z_\beta}\otimes\mathcal{I}(\psi)_{z_\beta}$ for any $\beta\in I_1$ and
	$$G(t)=\int_{\{\psi<-t\}}|F|^2e^{-\varphi}c(-\psi)=\left(\int_{t}^{+\infty}c(s)e^{-s}ds\right)\sum_{\beta\in I_1}\frac{|c_{\beta}|^2(2\pi)^ne^{-\varphi(z_{\beta})}}{\prod_{1\le j\le n}(\gamma_{j,\beta_j}+1)c_{j,\beta_j}^{2\gamma_{j,\beta_j}+2}}$$
	 for any $t\ge0$.
\end{Remark}

 Let ${Z}_j=\{z_{j,k}:1\le k<\tilde m_j\}$ be a discrete subset of $\Omega_j$ for any  $j\in\{1,\ldots,n\}$, where $\tilde{m}_j\in\mathbb{Z}_{\ge2}\cup\{+\infty\}$.
Let $p_{j,k}$ be a positive number for any $1\le j\le n$ and $1\le k<\tilde m_j$ such that $\sum_{1\le k<\tilde{m}_j}p_{j,k}G_{\Omega_j}(\cdot,z_{j,k})\not\equiv-\infty$ for any $j$.
Let $\psi=\max_{1\le j\le n}\left\{\pi_j^*\left(2\sum_{1\le k<\tilde{m}_j}p_{j,k}G_{\Omega_j}(\cdot,z_{j,k})\right)\right\}$. Assume that $\limsup_{t\rightarrow+\infty}c(t)<+\infty$.

Let $w_{j,k}$ be a local coordinate on a neighborhood $V_{z_{j,k}}\Subset\Omega_{j}$ of $z_{j,k}\in\Omega_j$ satisfying $w_{j,k}(z_{j,k})=0$ for any $j\in\{1,\ldots,n\}$ and $1\le k<\tilde{m}_j$, where $V_{z_{j,k}}\cap V_{z_{j,k'}}=\emptyset$ for any $j$ and $k\not=k'$. Denote that $\tilde I_1:=\{(\beta_1,\ldots,\beta_n):1\le \beta_j< \tilde m_j$ for any $j\in\{1,\ldots,n\}\}$, $V_{\beta}:=\prod_{1\le j\le n}V_{z_{j,\beta_j}}$ for any $\beta=(\beta_1,\ldots,\beta_n)\in\tilde I_1$ and $w_{\beta}:=(w_{1,\beta_1},\ldots,w_{n,\beta_n})$ is a local coordinate on $V_{\beta}$ of $z_{\beta}:=(z_{1,\beta_1},\ldots,z_{n,\beta_n})\in M$.
Let $f$ be a holomorphic $(n,0)$ form on $\cup_{\beta\in \tilde I_1}V_{\beta}$ such that $f=w_{\beta^*}^{\alpha_{\beta_*}}dw_{1,1}\ldots\wedge dw_{n,1}$ on $V_{\beta^*}$, where $\beta^*=(1,\ldots,1)\in \tilde I_1$.

We recall that $G(h^{-1}(r))$ is not linear when there exists $j_0\in\{1,\ldots,n\}$ such that $\tilde m_{j_0}=+\infty$ as follows.

\begin{Theorem}[\cite{GY-concavity4}]
	\label{thm:prod-infinite-point}If $G(0)\in(0,+\infty)$ and there exists $j_0\in\{1,\ldots,n\}$ such that $\tilde m_{j_0}=+\infty$, then $G(h^{-1}(r))$ is not linear with respect to $r\in(0,\int_0^{+\infty} c(s)e^{-s}ds]$.
	\end{Theorem}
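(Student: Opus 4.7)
The plan is to argue by contradiction. Suppose $G(h^{-1}(r))$ is linear on $(0,\int_0^{+\infty} c(s)e^{-s}ds]$. By Corollary \ref{c:linear}, there is a unique holomorphic $(n,0)$ form $F$ on $M$ with $(F-f,z)\in(\mathcal{O}(K_M)\otimes\mathcal{I}(\psi))_z$ for every $z\in Z_0$ and $G(t)=\int_{\{\psi<-t\}}|F|^2 e^{-\varphi}c(-\psi)$ for all $t\ge 0$. The strategy is to truncate each $Z_j$ with $\tilde m_j=+\infty$ to a finite subset, reduce the truncated problem to Theorem \ref{thm:prod-finite-point}, and extract a contradiction with the uniform bound $G^{(N)}(0)\le G(0)<+\infty$.

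For each $N\ge 2$, set $\tilde m_j^{(N)}:=\min(\tilde m_j,N+1)$ and let $\psi^{(N)}:=\max_{1\le j\le n}\pi_j^*\!\left(2\sum_{1\le k<\tilde m_j^{(N)}}p_{j,k}G_{\Omega_j}(\cdot,z_{j,k})\right)$, so that $\psi\le\psi^{(N)}$ and $\{\psi^{(N)}<-t\}\subset\{\psi<-t\}$. Let $G^{(N)}(t)$ denote the corresponding minimal $L^2$ problem with $\psi^{(N)}$, the truncated singularity set $Z_0^{(N)}$, and the same $f$ restricted to a neighborhood of $Z_0^{(N)}$. The first key step is to deduce from linearity of $G(h^{-1}(r))$ the linearity of $G^{(N)}(h^{-1}(r))$; this uses the uniqueness of the minimizer from Corollary \ref{c:linear}, the orthogonality identity in Lemma \ref{lem:A}, and the level-set disintegration \eqref{eq:20210412b} applied to both $\psi$ and $\psi^{(N)}$. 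Applying Theorem \ref{thm:prod-finite-point} to $G^{(N)}$ then yields factorizations $\varphi_j=2\log|g_j|+2u_j$ together with nonnegative integers $\gamma_{j,k}^{(N)}$ satisfying $\sum_j\frac{\gamma_{j,\beta_j}^{(N)}+1}{p_{j,\beta_j}}=1$ for every $\beta\in I_1^{(N)}$. Varying $\beta_{j_0}$ over $\{1,\ldots,N\}$ with the other coordinates fixed forces $\frac{\gamma_{j_0,k}^{(N)}+1}{p_{j_0,k}}$ to be independent of $k$, and compatibility across $N$ produces a single constant $a>0$ and nonnegative integers $\gamma_{j_0,k}$ with $\gamma_{j_0,k}+1=a\,p_{j_0,k}$ for all $1\le k<+\infty$. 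Moreover, condition $(4)$ of Theorem \ref{thm:prod-finite-point} guarantees that the constants $c_\beta^{(N)}$ are all nonzero, tied to a common $c_0\neq 0$.

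The contradiction is then extracted from the norm formula of Remark \ref{r:1.2}:
\begin{equation*}
G^{(N)}(0)=\left(\int_0^{+\infty}c(s)e^{-s}ds\right)\sum_{\beta\in I_1^{(N)}}\frac{|c_\beta^{(N)}|^2(2\pi)^n e^{-\varphi(z_\beta)}}{\prod_{j}(\gamma_{j,\beta_j}+1)\,c_{j,\beta_j}^{2\gamma_{j,\beta_j}+2}}.
\end{equation*}
Since $G^{(N)}(0)\le G(0)<+\infty$ uniformly in $N$, it suffices to show the right-hand side diverges as $N\to+\infty$. The main obstacle, and the crux of the argument, is establishing this divergence: using condition $(4)$ to write $c_\beta^{(N)}$ as $c_0$ divided by a product of residues of $g_j(P_j)_*\!\left(f_{u_j}\prod_k f_{z_{j,k}}^{\gamma_{j,k}+1}\sum_k p_{j,k}df_{z_{j,k}}/f_{z_{j,k}}\right)$ at $z_{j,\beta_j}$, together with the asymptotics $|f_{z_{j,k}}|\sim c_{j,k}|w_{j,k}|$ near $z_{j,k}$ and the relation $\gamma_{j_0,k}+1=a\,p_{j_0,k}$, the summand factors across $j$, and isolating the $j_0$-direction reduces the question to divergence of a series $\sum_{k=1}^{+\infty}a_k$ with a uniformly positive lower bound on $a_k$. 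The hypothesis $\limsup_{t\to+\infty}c(t)<+\infty$ ensures the finite-case analysis is uniformly applicable across truncations, and together these force $\sum a_k=+\infty$, contradicting the uniform bound $G^{(N)}(0)\le G(0)<+\infty$ and completing the proof.
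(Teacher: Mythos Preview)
The paper does not prove Theorem~\ref{thm:prod-infinite-point}; it is quoted verbatim from \cite{GY-concavity4} as a known input and then \emph{used} (in the proof of Theorem~\ref{thm:linear-fibra-infinite}) rather than re-established. So there is no ``paper's own proof'' to compare against here.

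On the merits of your argument, the central step---passing from linearity of $G(h^{-1}(r))$ to linearity of the truncated $G^{(N)}(h^{-1}(r))$---is not justified, and I do not see how the tools you cite accomplish it. The disintegration identity \eqref{eq:20210412b} is a statement about level sets of the \emph{original} $\psi$: it says $\int_{\{-t_1\le\psi<-t_2\}}|F|^2e^{-\varphi}a(-\psi)$ equals a constant times $\int_{t_2}^{t_1}a(t)e^{-t}dt$. But $G^{(N)}$ lives on the level sets of $\psi^{(N)}$ and carries the weight $c(-\psi^{(N)})$; since $\psi\neq\psi^{(N)}$ away from a small neighborhood of $Z_0^{(N)}$, there is no mechanism that converts \eqref{eq:20210412b} into the analogous identity for $\psi^{(N)}$. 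Likewise, Lemma~\ref{lem:A} and Corollary~\ref{c:linear} only give you information about the minimizer $F$ of the $\psi$-problem; they say nothing about whether $F$ (or anything else) realizes the minimum for the $\psi^{(N)}$-problem at every $t$. A second unjustified claim is $G^{(N)}(0)\le G(0)$: the constraint set is indeed smaller, but the integrand changes from $c(-\psi)$ to $c(-\psi^{(N)})$, and since only $c(t)e^{-t}$ is assumed decreasing (not $c$ itself), you cannot compare $c(-\psi^{(N)})$ and $c(-\psi)$ pointwise in the needed direction.

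If you want a model for how such a non-linearity statement is actually proved in this framework, look at the paper's proof of Theorem~\ref{thm:linear-fibra-infinite}: one assumes linearity, obtains the unique minimizer $F$ from Corollary~\ref{c:linear}, \emph{localizes} by splitting $G(t)=G_\beta(t)+\tilde G_\beta(t)$ near a single $z_\beta$ for $t\ge t_\beta$ (so both pieces are concave and must each be linear), applies the single-point characterization at $\beta^*$ to force a product structure on $F$, and then globalizes via Lemma~\ref{l:decom-product} to reduce to a problem on the base that is already known to fail linearity. The localization route avoids entirely the need to change $\psi$.
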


\subsection{Some basic properties of the Green functions}
\

In this Section, we recall some basic properties of the Green functions. Let $\Omega$ be an open Riemann surface, which admits a nontrivial Green function $G_{\Omega}$, and let $z_0\in\Omega$.

\begin{Lemma}[see \cite{S-O69}, see also \cite{Tsuji}] 	\label{l:green-sup}Let $w$ be a local coordinate on a neighborhood of $z_0$ satisfying $w(z_0)=0$.  $G_{\Omega}(z,z_0)=\sup_{v\in\Delta_{\Omega}^*(z_0)}v(z)$, where $\Delta_{\Omega}^*(z_0)$ is the set of negative subharmonic function on $\Omega$ such that $v-\log|w|$ has a locally finite upper bound near $z_0$. Moreover, $G_{\Omega}(\cdot,z_0)$ is harmonic on $\Omega\backslash\{z_0\}$ and $G_{\Omega}(\cdot,z_0)-\log|w|$ is harmonic near $z_0$.
\end{Lemma}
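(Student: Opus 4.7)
The plan is to apply the classical Perron method to the family $\Delta_\Omega^*(z_0)$, then identify the resulting envelope with $G_\Omega(\cdot,z_0)$ via the standard characterization of the Green function. First I would verify that $\Delta_\Omega^*(z_0)$ is a Perron family: it is closed under pointwise maximum of finite subfamilies, and if $v\in\Delta_\Omega^*(z_0)$ and $D\Subset\Omega\setminus\{z_0\}$ is a parametric disk, the Poisson modification $\widetilde{v}$ (the harmonic extension of $v|_{\partial D}$ on $D$, and $v$ elsewhere) still lies in $\Delta_\Omega^*(z_0)$, since $\widetilde{v}\ge v$, $\widetilde{v}<0$ on $\Omega$, and the singularity at $z_0$ is unaffected because $z_0\notin\overline{D}$. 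Setting $u(z):=\sup_{v\in\Delta_\Omega^*(z_0)}v(z)$, standard Perron theory yields that $u$ is either identically $-\infty$, identically $0$, or harmonic on $\Omega\setminus\{z_0\}$.

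Second, I would exclude the two degenerate cases. To produce an element of $\Delta_\Omega^*(z_0)$ that is not $-\infty$, pick a relatively compact parametric disk $V_{z_0}\Subset\Omega$ on which $w$ is defined, choose constants $M,M'>0$ with $\log|w|-M<-M'$ on $\partial V_{z_0}$, and define $v_0=\max(\log|w|-M,-M')$ on $V_{z_0}$ and $v_0\equiv -M'$ on $\Omega\setminus V_{z_0}$; this is a negative subharmonic function with the required logarithmic control near $z_0$. The hypothesis that $\Omega$ admits a nontrivial Green function rules out $u\equiv 0$, so $u$ is harmonic on $\Omega\setminus\{z_0\}$ and strictly negative.

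Third, I would analyze the behavior at $z_0$. For the single competitor $v_0$ above one has $v_0-\log|w|=-M$ near $z_0$, so $u-\log|w|\ge -M$ near $z_0$. For the upper bound, note that every $v\in\Delta_\Omega^*(z_0)$ satisfies $v-\log|w|\le C_v$ locally near $z_0$; applying the maximum principle on a small punctured disk around $z_0$ (comparing $v-\log|w|$ with its boundary values) yields a uniform upper bound $u-\log|w|\le C$ independent of $v$. Consequently $u-\log|w|$ is bounded near $z_0$ and harmonic on a punctured neighborhood, hence by the removable singularity theorem extends harmonically across $z_0$. Finally, uniqueness of the Green function (any negative harmonic function on $\Omega\setminus\{z_0\}$ whose difference with $\log|w|$ is harmonic near $z_0$, and which is maximal with these properties, must equal $G_\Omega(\cdot,z_0)$) identifies $u=G_\Omega(\cdot,z_0)$.

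The main obstacle I anticipate is the third step: establishing the uniform $O(1)$ upper bound on $u-\log|w|$ near $z_0$ cleanly. Different competitors $v$ have different constants $C_v$, so one cannot simply sup them. The cleanest route is to fix a small disk $D_r=\{|w|<r\}\Subset\Omega$, note that on $\partial D_r$ every $v\in\Delta_\Omega^*(z_0)$ is bounded above by $0$, apply the maximum principle to $v(z)-\log(|w(z)|/r)$ on $D_r\setminus\{z_0\}$ (which is subharmonic, bounded above near $z_0$, hence controlled by its boundary values $v|_{\partial D_r}\le 0$), and conclude $v(z)\le\log(|w(z)|/r)$ on $D_r$, giving $u(z)-\log|w(z)|\le -\log r$ uniformly. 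This is also the ingredient that ties the Perron envelope to the log-singularity, completing the identification.
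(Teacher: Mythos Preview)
The paper does not prove this lemma; it is quoted as a classical fact from Sario--Oikawa and Tsuji, with no argument given. Your Perron-method construction is precisely the standard proof found in those references, and your sketch is essentially correct, including the identification of the one genuine subtlety (the uniform upper bound on $u-\log|w|$ near $z_0$) and its resolution via comparison with $\log(|w|/r)$ on a small disk.

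One small point of phrasing: when you ``apply the maximum principle to $v-\log(|w|/r)$ on $D_r\setminus\{z_0\}$,'' the maximum principle does not directly apply on a punctured domain. What you are really using is the removable singularity theorem for bounded-above subharmonic functions: since $v-\log(|w|/r)$ is subharmonic on $D_r\setminus\{z_0\}$ and bounded above near $z_0$, it extends subharmonically across $z_0$, and then the maximum principle on the full disk $D_r$ gives $v-\log(|w|/r)\le 0$. You clearly have this in mind, but it is worth stating explicitly.
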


\begin{Lemma}[see \cite{GY-concavity3}]
	\label{l:green-sup2}Let $K=\{z_j:j\in\mathbb{Z}_{\ge1}\,\&\,j<\gamma \}$ be a discrete subset of $\Omega$, where $\gamma\in\mathbb{Z}_{>1}\cup\{+\infty\}$. Let $\psi$ be a negative subharmonic function on $\Omega$ such that $\frac{1}{2}v(dd^c\psi,z_j)\ge p_j$ for any $j$, where $p_j>0$ is a constant. Then $2\sum_{1\le j< \gamma}p_jG_{\Omega}(\cdot,z_j)$ is a subharmonic function on $\Omega$ satisfying that $2\sum_{1\le j<\gamma }p_jG_{\Omega}(\cdot,z_j)\ge\psi$ and $2\sum_{1\le j<\gamma }p_jG_{\Omega}(\cdot,z_j)$ is harmonic on $\Omega\backslash K$.
\end{Lemma}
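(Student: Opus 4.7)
The plan is to first establish the finite-sum inequality $\psi \le 2\sum_{j \le N} p_j G_\Omega(\cdot, z_j)$ for every $N < \gamma$ by induction on $N$, and then pass to the limit by monotone convergence combined with Harnack's principle. For the base case $N=1$, the Lelong number hypothesis $\tfrac{1}{2} v(dd^c\psi, z_1) \ge p_1$ forces $\psi - 2p_1 \log|w_1|$ to be locally bounded above near $z_1$ for a local coordinate $w_1$, so $\psi/(2p_1)$ is a negative subharmonic function lying in the Perron family $\Delta_\Omega^*(z_1)$ of Lemma \ref{l:green-sup}; thus $\psi/(2p_1) \le G_\Omega(\cdot, z_1)$.

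For the inductive step, I would set $S_N := 2\sum_{j \le N} p_j G_\Omega(\cdot, z_j)$ and $u_N := \psi - S_N$, and argue as follows. By induction, $u_N \le 0$ on $\Omega \setminus \{z_1, \ldots, z_N\}$. Near each pole $z_j$ with $j \le N$, the singular term $-2p_j G_\Omega(\cdot, z_j)$ cancels the $2p_j \log|w_j|$ upper bound for $\psi$, while the remaining summands of $S_N$ are locally harmonic there; hence $u_N$ is locally bounded above at each $z_j$ and extends, by the standard removable-singularity result for subharmonic functions, to a negative subharmonic function on all of $\Omega$. At $z_{N+1}$, only $\psi$ carries a singular contribution, so $\tfrac{1}{2} v(dd^c u_N, z_{N+1}) \ge p_{N+1}$. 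Applying the base case to $u_N$ at $z_{N+1}$ then gives $u_N \le 2p_{N+1} G_\Omega(\cdot, z_{N+1})$, which is the $(N+1)$-st statement.

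To pass to the limit, observe that the $S_N$ form a decreasing sequence of negative subharmonic functions bounded below pointwise by $\psi \not\equiv -\infty$. Thus $S := 2\sum_{j < \gamma} p_j G_\Omega(\cdot, z_j)$ is a well-defined subharmonic function on $\Omega$ with $\psi \le S \le 0$. On $\Omega \setminus K$, each $S_N$ is harmonic, so $-S_N$ is an increasing sequence of nonnegative harmonic functions whose pointwise limit $-S$ is finite wherever $\psi > -\infty$; by Harnack's principle, $-S_N \nearrow -S$ locally uniformly on $\Omega \setminus K$ and $-S$ is harmonic there, giving the desired harmonicity of $S$ off $K$.

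The main obstacle I anticipate is the extension step inside the induction: one must match the logarithmic coefficients precisely so that $u_N$ is genuinely bounded above at each preceding $z_j$, relying on both the sharp local fact that $G_\Omega(\cdot, z_j) - \log|w_j|$ is harmonic near $z_j$ from Lemma \ref{l:green-sup} and the upper asymptotic for $\psi$ dictated by the Lelong number condition. Once this local analysis is secured, the Perron characterization drives each inductive step and Harnack's principle handles harmonicity in the limit.
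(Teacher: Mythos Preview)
The paper does not supply a proof of this lemma; it is quoted from \cite{GY-concavity3} without argument, so there is nothing in the present paper to compare against.

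Your proof is correct and is the natural one. The induction works because, at each stage, $u_N=\psi-S_N$ is subharmonic on $\Omega\setminus\{z_1,\dots,z_N\}$ (difference of subharmonic and harmonic), is $\le 0$ by the inductive hypothesis, and is locally bounded above at each $z_j$ since both $\psi$ and $S_N$ have matching $2p_j\log|w_j|$ leading terms there (Lemma~\ref{l:green-sup} gives $G_\Omega(\cdot,z_j)-\log|w_j|$ harmonic near $z_j$, and the Lelong number hypothesis controls $\psi$); the removable singularity theorem for subharmonic functions then extends $u_N$ across these points, and the Perron characterization from Lemma~\ref{l:green-sup} applied at $z_{N+1}$ closes the step. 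For the limit, your Harnack argument is fine; note that $\Omega\setminus K$ is connected (a discrete set removed from a connected Riemann surface), so the alternative ``harmonic or identically $+\infty$'' for the increasing sequence $-S_N$ is decided globally by any point where $\psi>-\infty$.
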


\begin{Lemma}[see \cite{GY-concavity}]\label{l:G-compact}
For any  open neighborhood $U$ of $z_0$, there exists $t>0$ such that $\{G_{\Omega}(z,z_0)<-t\}$ is a relatively compact subset of $U$.
\end{Lemma}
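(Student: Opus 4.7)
The plan is to combine the local expansion of $G := G_\Omega(\cdot, z_0)$ near $z_0$ with the properness of $G$ as a function on $\Omega$, and then finish via a finite-intersection argument. By Lemma~\ref{l:green-sup}, $G$ is nonpositive and subharmonic on $\Omega$, harmonic on $\Omega \setminus \{z_0\}$, with the local representation $G = \log|w| + h$ near $z_0$, for some harmonic $h$. In particular, $G$ is upper semi-continuous with value $-\infty$ only at $z_0$, and each sublevel set $A_t := \{G \le -t\}$ is closed in $\Omega$. The family $(A_t)_{t>0}$ is decreasing in $t$, with $\bigcap_{t>0} A_t = \{z_0\}$ since $G(z) > -\infty$ for $z \ne z_0$.

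The main intermediate claim is that $A_t$ is a \emph{compact} subset of $\Omega$ for each $t > 0$. I would establish this by taking an exhaustion $\Omega_n \Subset \Omega_{n+1} \Subset \Omega$ by relatively compact regular subdomains containing $z_0$; the corresponding domain Green functions $G_{\Omega_n}(\cdot, z_0)$ are then continuous on $\overline{\Omega_n}$, vanish on $\partial \Omega_n$, and decrease monotonically to $G$. Combining Harnack's inequality for the positive harmonic function $-G$ on $\Omega \setminus \{z_0\}$ along chains of overlapping disks originating on the boundary of a fixed coordinate disk $V \Subset U$ around $z_0$, together with Dini-type uniform convergence of $G_{\Omega_n}$ to $G$ on the compact set $\partial V$, one shows that $G$ remains bounded below on $\Omega \setminus V$ by some constant $-C > -\infty$. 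Since $G$ is nonpositive and tends to $-\infty$ only at $z_0$, this yields $A_t \Subset \Omega$ for every $t > 0$.

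Once this compactness is in hand, the lemma follows by a short topological argument: the decreasing family of compact sets $(A_t \setminus U)_{t>0}$ has intersection $\{z_0\} \setminus U = \emptyset$, so by the finite-intersection property, $A_{t_0} \subset U$ for some $t_0 > 0$. Then for every $t \ge t_0$, $\overline{\{G<-t\}} \subset A_t \subset A_{t_0} \subset U$ is a compact subset of $U$, so $\{G<-t\}$ is relatively compact in $U$, as claimed. The main obstacle is the second step, the properness of $G$: one must control the ideal boundary behavior of the Green function through an exhaustion/maximum-principle argument. The remaining steps are either a direct appeal to the local structure from Lemma~\ref{l:green-sup} or standard topological compactness arguments.
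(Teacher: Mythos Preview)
Your overall strategy is on the right track, but there are two concrete problems.

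First, the ``main intermediate claim'' that $A_t=\{G\le -t\}$ is compact in $\Omega$ for \emph{every} $t>0$ is false in general. Take $\Omega=\Delta\setminus\{0\}$ and a pole $z_0\neq 0$. Since $\{0\}$ is polar, $G_\Omega(\cdot,z_0)=G_\Delta(\cdot,z_0)|_\Omega$. For sufficiently small $t>0$ the pseudohyperbolic disk $\{G_\Delta(\cdot,z_0)\le -t\}$ contains $0$, so $A_t$ is that disk with $0$ removed, which is not compact in $\Omega$. Your deduction ``$G$ bounded below on $\Omega\setminus V$ $\Rightarrow$ $A_t\Subset\Omega$ for every $t>0$'' is the step that fails: the lower bound $G\ge -C$ on $\Omega\setminus V$ only forces $A_t\subset V$ for $t>C$, not for all $t$. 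Consequently the finite-intersection argument at the end, which needs compactness of \emph{all} $A_t$, is not available.

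Second, Harnack's inequality along chains does not give a uniform lower bound on $\Omega\setminus V$: the Harnack constant depends on the chain and is unbounded on a noncompact set. The tool you actually want is the minimum principle on the exhausting domains. Using your exhaustion $\Omega_n$ with $G_{\Omega_n}\downarrow G$ and $G_{\Omega_n}|_{\partial\Omega_n}=0$, apply the minimum principle to the harmonic function $G_{\Omega_n}$ on $\Omega_n\setminus\overline V$: its boundary values are $0$ on $\partial\Omega_n$ and at least $\min_{\partial V}G_{\Omega_n}\ge\min_{\partial V}G=:-m$ on $\partial V$, so $G_{\Omega_n}\ge -m$ on $\Omega_n\setminus V$. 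Letting $n\to\infty$ gives $G\ge -m$ on $\Omega\setminus V$.

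With this in hand the proof finishes in one line, and the finite-intersection step is unnecessary: choose $V\Subset U$ and $m$ as above; then for any $t>m$ one has $\{G<-t\}\subset V\Subset U$, which is the assertion of the lemma. (The paper itself does not give a proof here, citing \cite{GY-concavity}.)
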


\begin{Lemma}[see \cite{GY-concavity3}]
	\label{l:green-approx} There exists a sequence of open Riemann surfaces $\{\Omega_l\}_{l\in\mathbb{Z}^+}$ such that $z_0\in\Omega_l\Subset\Omega_{l+1}\Subset\Omega$, $\cup_{l\in\mathbb{Z}^+}\Omega_l=\Omega$, $\Omega_l$ has a smooth boundary $\partial\Omega_l$ in $\Omega$  and $e^{G_{\Omega_l}(\cdot,z_0)}$ can be smoothly extended to a neighborhood of $\overline{\Omega_l}$ for any $l\in\mathbb{Z}^+$, where $G_{\Omega_l}$ is the Green function of $\Omega_l$. Moreover, $\{{G_{\Omega_l}}(\cdot,z_0)-G_{\Omega}(\cdot,z_0)\}$ is decreasingly convergent to $0$ on $\Omega$ with respect to $l$.
\end{Lemma}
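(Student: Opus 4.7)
The plan is to construct $\{\Omega_l\}$ as connected components of sublevel sets of a smooth exhaustion of $\Omega$, chosen at regular values via Sard's theorem, and then verify the two analytic properties: the $C^\infty$ extension of $e^{G_{\Omega_l}(\cdot,z_0)}$ past $\partial\Omega_l$ using boundary regularity for the Dirichlet problem on smoothly bounded planar domains, and the decreasing convergence $G_{\Omega_l}(\cdot,z_0)-G_\Omega(\cdot,z_0)\downarrow 0$ using the maximum principle together with Lemma \ref{l:green-sup}.

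For the construction, the paracompact surface $\Omega$ admits a smooth proper function $\rho:\Omega\to[0,+\infty)$ with $\rho(z_0)=0$. Sard's theorem provides a strictly increasing sequence $c_l\to+\infty$ of regular values of $\rho$; take $\Omega_l$ to be the connected component of $\{\rho<c_l\}$ containing $z_0$ and pass to a subsequence so that $\Omega_l\Subset\Omega_{l+1}$. Then $z_0\in\Omega_l\Subset\Omega$, $\cup_l\Omega_l=\Omega$ (by connectedness of $\Omega$), and $\partial\Omega_l$ is a smooth closed $1$-submanifold of $\Omega$. To obtain the smooth extension of $e^{G_{\Omega_l}(\cdot,z_0)}$, pick a local coordinate chart near any $p\in\partial\Omega_l$; there $\partial\Omega_l$ appears as a smooth Jordan arc and $G_{\Omega_l}(\cdot,z_0)$ is bounded and harmonic with zero boundary data on this arc. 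Straightening the arc and invoking the Schwarz reflection principle for harmonic functions (equivalently, Kellogg-type boundary regularity for the Dirichlet problem with smooth data) gives a $C^\infty$ extension of $G_{\Omega_l}(\cdot,z_0)$, and hence of $e^{G_{\Omega_l}(\cdot,z_0)}$, to a neighborhood of $\overline{\Omega_l}$.

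For the decreasing convergence, take $l\le l'$: on $\Omega_l$ the function $G_{\Omega_l}(\cdot,z_0)-G_{\Omega_{l'}}(\cdot,z_0)$ is harmonic (the logarithmic singularities at $z_0$ cancel), equals $-G_{\Omega_{l'}}(\cdot,z_0)\ge 0$ on $\partial\Omega_l$, and is therefore $\ge 0$ on $\Omega_l$ by the maximum principle; the same argument with $\Omega_{l'}$ replaced by $\Omega$ yields $G_{\Omega_l}(\cdot,z_0)\ge G_\Omega(\cdot,z_0)$. Thus $\{G_{\Omega_l}(\cdot,z_0)-G_\Omega(\cdot,z_0)\}_l$ is a nonnegative decreasing sequence. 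Let $v:=\lim_l G_{\Omega_l}(\cdot,z_0)$. Since $G_\Omega\le G_{\Omega_l}\le G_{\Omega_1}$ on $\Omega_1$, $v$ is a negative subharmonic function on $\Omega$ (decreasing limit of subharmonic functions) with $v-\log|w|$ locally bounded above near $z_0$. Lemma \ref{l:green-sup} then forces $v\le G_\Omega(\cdot,z_0)$, and combined with $v\ge G_\Omega(\cdot,z_0)$ this yields $v=G_\Omega(\cdot,z_0)$, i.e., the sequence decreases to $0$.

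The expected main obstacle is the smoothness claim in the middle step: one has to know that, for the smoothly bounded domain $\Omega_l$, the harmonic function $G_{\Omega_l}(\cdot,z_0)$ extends to a genuine $C^\infty$ function across $\partial\Omega_l$ rather than merely to a continuous one. The cleanest way to bypass this is to refine the construction by choosing $\Omega_l$ with \emph{real-analytic} boundary — for instance, approximating $\rho$ by a real-analytic exhaustion and taking real-analytic regular level sets — after which Schwarz reflection across $\partial\Omega_l$ produces a real-analytic, hence $C^\infty$, extension of $e^{G_{\Omega_l}(\cdot,z_0)}$ to an open neighborhood of $\overline{\Omega_l}$ in $\Omega$, while the rest of the argument is unaffected.
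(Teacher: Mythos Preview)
The paper does not give its own proof of this lemma; it simply cites \cite{GY-concavity3}. Your argument is a standard and correct way to establish the result: the exhaustion via regular sublevel sets of a smooth proper function, the monotonicity of $G_{\Omega_l}(\cdot,z_0)$ via the maximum principle, and the identification of the limit with $G_\Omega(\cdot,z_0)$ through the extremal characterization in Lemma~\ref{l:green-sup} are all sound. Your discussion of the boundary-regularity step is also accurate: for merely $C^\infty$ boundary one appeals to elliptic regularity to get $G_{\Omega_l}\in C^\infty(\overline{\Omega_l}\setminus\{z_0\})$ and then extends $e^{G_{\Omega_l}}$ smoothly across $\partial\Omega_l$ by a Seeley/Whitney-type argument, while your alternative of arranging real-analytic boundaries and using Schwarz reflection is a clean way to make this step completely explicit.
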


Let $\Omega_j$ be an open Riemann surface for any $1\le j\le n$, which admits a a nontrivial Green function $G_{\Omega}$. Let $\{z_{j,k}:1\le k<\tilde m_j\}$ be a discrete subset of $\Omega_j$ for any $1\le j\le n$, where $\tilde m_j\in\mathbb{Z}_{\ge2}\cup\{+\infty\}$.
The following lemma will be used in the proof of the applications.

\begin{Lemma}[see \cite{GY-concavity4}]
	\label{l:psi=G}Let $\psi=\max_{1\le j\le n}\left\{\pi_j^*\left(2\sum_{1\le k<\tilde m_j}p_{j,k}G_{\Omega_j}(\cdot,z_{j,k})\right)\right\}$ be a plurisubharmonic function on $\prod_{1\le j\le n_1}\Omega_j$, where $\sum_{1\le k<\tilde m_j}p_{j,k}G_{\Omega_j}(\cdot,z_{j,k})\not\equiv-\infty$ for any $j\in\{1,..,n\}$.	 Let $\Psi\le0$ be a plurisubharmonic function on $\prod_{1\le j\le n_1}\Omega_j$, and denote that $\tilde\psi:=\psi+\Psi$. Let $l(t)$ be a positive Lebesgue measurable function on $(0,+\infty)$ satisfying that $l(t)$ is decreasing on $(0,+\infty)$ and $\int_0^{+\infty}l(t)dt<+\infty$. If $\Psi\not\equiv 0$ on $M$, there exists a Lebesgue measurable subset $V$ of $\prod_{1\le j\le n_1}\Omega_j$ such that  $l\left(-\tilde\psi(z)\right)<l(-\psi(z))$ for any  $z\in V$ and $\mu(V)>0$, where $\mu$ is the Lebesgue measure on $\prod_{1\le j\le n_1}\Omega_j$.
\end{Lemma}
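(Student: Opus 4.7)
The plan is to argue by contradiction, using the blow-up of $-\psi$ near a point of $Z_0:=\prod_{1\le j\le n_1}Z_j$ together with the maximum principle for $\Psi$ to force a geometric contradiction.

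First I would fix any $z^*\in Z_0$. Since $\Psi\le0$ is plurisubharmonic on the connected complex manifold $\prod_{1\le j\le n_1}\Omega_j$ and $\Psi\not\equiv0$, the maximum principle gives $\Psi<0$ everywhere; in particular $\Psi(z^*)<0$. Upper semicontinuity of $\Psi$ then yields $\delta>0$ and an open connected neighborhood $W$ of $z^*$ with $\Psi|_W<-\delta$, which I may shrink (using discreteness of $Z_0$) so that $W\cap Z_0=\{z^*\}$. Set $U:=W\setminus\{z^*\}$: this is open, connected, disjoint from $Z_0$, satisfies $\Psi<-\delta$, and accumulates at $z^*$. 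Because each Green function $G_{\Omega_j}(\cdot,z_{j,k})$ blows up to $-\infty$ at $z_{j,k}$, the function $h_j:=2\sum_{k}p_{j,k}G_{\Omega_j}(\cdot,z_{j,k})$ tends to $-\infty$ along $z_j\to z_j^*$; therefore $\psi=\max_j\pi_j^*h_j\to-\infty$ at $z^*$, and continuity of $\psi$ off $Z_0$ makes $J:=-\psi(U)\subset(0,+\infty)$ a connected interval \emph{unbounded above}.

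Next I would reduce to a one-dimensional statement. On $U$ the inequality $-\tilde\psi\ge -\psi+\delta$ combined with monotonicity of $l$ yields $l(-\psi)-l(-\tilde\psi)\ge l(-\psi)-l(-\psi+\delta)\ge 0$; so it suffices to show that $V':=\{z\in U:l(-\psi(z))>l(-\psi(z)+\delta)\}$ has positive $\mu$-measure (then $V'\subset V$). Setting $B_\delta:=\{s>0:l(s)=l(s+\delta)\}$, the monotonicity of $l$ decomposes $B_\delta$ as a disjoint union of open intervals $(\alpha_k,\beta_k-\delta)$, one per maximal constancy interval $(\alpha_k,\beta_k)$ of $l$ of length $>\delta$. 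Positivity and integrability of $l$ prevent $l$ from being eventually constant, so every $\beta_k<\infty$, and adjacent closures $[\alpha_k,\beta_k-\delta]$ are separated by gaps of length $\ge\delta$; consequently the connected components of $\overline{B_\delta}$ are all bounded closed intervals.

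To close the contradiction, suppose $\mu(V')=0$, i.e.\ $\{-\psi\in B_\delta\}\cap U$ has full $\mu$-measure in $U$. Continuity of $-\psi$ makes this set open, and nullity of its complement (a closed subset of $U$, hence nowhere dense by strict positivity of $\mu$ on non-empty open sets) makes it dense. Continuity plus density then force $J\subset\overline{B_\delta}$; since $J$ is connected it must lie in a single component of $\overline{B_\delta}$, which is bounded, contradicting the unboundedness of $J$ established earlier. The main obstacle is engineering the open set $U$ so that simultaneously $\Psi<-\delta$ holds on $U$ and $-\psi|_U$ is unbounded above; once this geometric choice is made, the contradiction becomes essentially topological, relying only on the rigidity of connected subsets of the disjoint-union $\overline{B_\delta}$.
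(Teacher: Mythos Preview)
The paper does not prove this lemma; it is quoted from \cite{GY-concavity4}, so there is no argument in the present paper to compare against. Your overall strategy---choosing a neighborhood of a point of $Z_0$ where $\Psi<-\delta$, exploiting that $-\psi$ is continuous and unbounded there, and reducing to the structure of $B_\delta=\{s:l(s)=l(s+\delta)\}$---is sound, and the analysis of $\overline{B_\delta}$ as a locally finite union of bounded closed intervals separated by gaps $\ge\delta$ is correct (since constancy intervals of length $\ge\delta$ are bounded by integrability and positivity of $l$, and consecutive ones contribute pieces of $B_\delta$ that are $\ge\delta$ apart).

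There is one slip: you assert that continuity of $-\psi$ makes $\{-\psi\in B_\delta\}\cap U$ \emph{open}. This fails in general, because $B_\delta$ need not be open (e.g.\ if $l$ is constant exactly on $[1,3]$ and strictly decreasing elsewhere, then with $\delta=1$ one gets $B_\delta=[1,2]$). Fortunately you do not actually need openness: a set of full Lebesgue measure in the open set $U$ is automatically dense in $U$, since its complement has empty interior. With that replacement the rest of your argument goes through unchanged: density plus continuity of $-\psi$ gives $J=-\psi(U)\subset\overline{B_\delta}$, and connectedness together with unboundedness of $J$ contradicts the bounded-component structure of $\overline{B_\delta}$. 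A minor additional imprecision is the description of the pieces of $B_\delta$ as open intervals $(\alpha_k,\beta_k-\delta)$; depending on whether the endpoints belong to the constancy interval these pieces may be half-open or closed, but this is irrelevant once you pass to $\overline{B_\delta}$.
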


\subsection{Some results related to $\max_{1\le j\le n}\{2p_j\log|w_j|\}$}
\

In this section, we recall some basic property related to $\max_{1\le j\le n}\{2p_j\log|w_j|\}$. In the following lemma,
we recall a closedness of the submodules of $\mathcal O_{\mathbb C^n,o}^q$.
\begin{Lemma}[see \cite{G-R}]
\label{l:closedness}
Let $N$ be a submodule of $\mathcal O_{\mathbb C^n,o}^q$, $1\leq q< +\infty$, let $f_j\in\mathcal O_{\mathbb C^n}(U)^q$ be a sequence of $q-$tuples holomorphic in an open neighborhood $U$ of the origin $o$. Assume that the $f_j$ converge uniformly in $U$ towards  a $q-$tuples $f\in\mathcal O_{\mathbb C^n}(U)^q$, assume furthermore that all germs $(f_{j},o)$ belong to $N$. Then $(f,o)\in N$.	
\end{Lemma}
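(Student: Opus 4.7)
The plan is to combine Krull's intersection theorem with the elementary fact that uniform convergence of holomorphic functions forces convergence of Taylor coefficients at $o$, thereby reducing the claim to a finite-dimensional statement at every order.

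First I would invoke that $\mathcal{O}_{\mathbb{C}^n,o}$ is Noetherian (R\"uckert's basis theorem), so that $M:=\mathcal{O}_{\mathbb{C}^n,o}^q/N$ is a finitely generated module. Krull's intersection theorem then gives
\[
\bigcap_{k\ge 1}\mathfrak{m}^k M=0,\qquad\text{equivalently}\qquad\bigcap_{k\ge 1}\left(N+\mathfrak{m}^k\mathcal{O}_{\mathbb{C}^n,o}^q\right)=N,
\]
where $\mathfrak{m}$ is the maximal ideal of $\mathcal{O}_{\mathbb{C}^n,o}$. It will therefore suffice to show that $(f,o)\in N+\mathfrak{m}^k\mathcal{O}_{\mathbb{C}^n,o}^q$ for each fixed $k\ge 1$.

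To that end, I would fix $k$ and consider the quotient $Q_k:=\mathcal{O}_{\mathbb{C}^n,o}^q/(N+\mathfrak{m}^k\mathcal{O}_{\mathbb{C}^n,o}^q)$. Since $\mathcal{O}_{\mathbb{C}^n,o}^q/\mathfrak{m}^k\mathcal{O}_{\mathbb{C}^n,o}^q$ is the finite-dimensional $\mathbb{C}$-vector space of $q$-tuples of $(k-1)$-jets at $o$, the further quotient $Q_k$ is also a finite-dimensional $\mathbb{C}$-vector space, hence Hausdorff in its canonical topology; moreover, the projection $\mathcal{O}_{\mathbb{C}^n,o}^q\to Q_k$ factors through the $(k-1)$-jet map. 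Choosing a closed polydisc $\overline{\Delta}\Subset U$ centered at $o$, I would apply Cauchy's integral formula to deduce from uniform convergence $f_j\to f$ on $U$ that every Taylor coefficient of $f_j$ at $o$ converges to the corresponding coefficient of $f$. Consequently the $(k-1)$-jets of $f_j$ converge to the $(k-1)$-jet of $f$, and hence the images in $Q_k$ converge as well. Since each germ $(f_j,o)$ belongs to $N$, its image in $Q_k$ is zero; passing to the limit in the Hausdorff space $Q_k$ forces the image of $(f,o)$ to be zero, i.e.\ $(f,o)\in N+\mathfrak{m}^k\mathcal{O}_{\mathbb{C}^n,o}^q$. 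Intersecting over $k$ and invoking the Krull identity above yields $(f,o)\in N$.

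The only subtle point, and in my view the main thing to get right, is the interplay between the two topologies: uniform convergence on $U$ does not in general imply $\mathfrak{m}$-adic convergence of the germs $(f_j,o)$ (so one cannot apply Krull directly to the sequence $f-f_j$), but after projecting to the finite-dimensional jet quotient the two notions do coincide, and this is exactly enough. No coherence theorem or Weierstrass division argument is needed in this approach.
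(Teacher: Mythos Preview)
Your argument is correct. The paper does not supply its own proof of this lemma; it simply cites Grauert--Remmert, \emph{Coherent Analytic Sheaves}, where the result is established by essentially the same mechanism you describe: Noetherianity of $\mathcal{O}_{\mathbb{C}^n,o}$, Krull's intersection theorem to reduce to the statement modulo $\mathfrak{m}^k$, and the observation that uniform convergence forces convergence of finite jets. Your closing remark pinpoints exactly the right subtlety---uniform convergence does not give $\mathfrak{m}$-adic convergence of germs, but after truncating to finite order the analytic and algebraic topologies agree---and this is precisely what makes the argument work.
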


Let $f=\sum_{\alpha\in\mathbb{Z}_{\ge0}^{n}}b_{\alpha}w^{\alpha}$ (Taylor expansion) be a holomorphic function  on $D=\{w\in\mathbb{C}^n:|w_j|<r_0$ for any $j\in\{1,\ldots,n\}\}$, where $r_0>0$. 	Let
$$\psi=\max_{1\le j\le n_1}\left\{2p_j\log|w_j|\right\}$$ be a plurisubharmonic function on $\mathbb{C}^n$, where $n_1\le n$ and $p_j>0$ is a constant for any $j\in\{1,\ldots,n_1\}$. We recall a characterization of  $\mathcal{I}(\psi)_o$, where $o$ is the origin in $\mathbb{C}^n$.
\begin{Lemma}[see \cite{guan-20}]\label{l:0}
$(f,o)\in\mathcal{I}(\psi)_{o}$ if and only if $\sum_{1\le j\le n_1}\frac{\alpha_j+1}{p_j}>1$ for any $\alpha\in\mathbb{Z}_{\ge0}^n$ satisfying $b_{\alpha}\not=0$.
\end{Lemma}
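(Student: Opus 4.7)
The key observation is that $\psi = \max_{1\le j\le n_1} 2p_j \log|w_j|$ is invariant under the standard torus action $w_j \mapsto e^{i\theta_j}w_j$, so $e^{-\psi} = \min_{1\le j\le n_1}|w_j|^{-2p_j}$ is a rotation-invariant weight. Applying Parseval's identity on each torus fiber, together with Tonelli's theorem (valid since all integrands are non-negative), for every polydisc $U = \{|w_j| < r\} \Subset D$ one obtains the Parseval-type decomposition
\[
\int_U |f|^2 e^{-\psi}\, dV \;=\; \sum_{\alpha\in\mathbb{Z}_{\ge0}^n} |b_\alpha|^2 I_\alpha(U), \qquad I_\alpha(U) := \int_U |w^\alpha|^2 e^{-\psi}\, dV,
\]
which reduces the question of membership in $\mathcal{I}(\psi)_o$ to the integrability of individual monomial weights $|w^\alpha|^2 e^{-\psi}$.

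The next step is to compute $I_\alpha(U)$ directly. Since $\psi$ is independent of $w_{n_1+1},\dots,w_n$, integration in those variables produces a finite polynomial factor. For the remaining variables, polar coordinates together with the substitution $s_j = r_j^{2p_j}$ transform $I_\alpha(U)$ (up to a positive constant) into
\[
\int_{(0,r^{2p_1})\times\cdots\times(0,r^{2p_{n_1}})}\, \prod_{j=1}^{n_1} s_j^{(\alpha_j+1)/p_j - 1}\,\bigl(\max_{1\le j\le n_1} s_j\bigr)^{-1}\, ds.
\]
Splitting the domain into the $n_1$ sectors on which a given $s_j$ attains the maximum and evaluating the innermost integrals shows that this quantity is finite if and only if $\sum_{j=1}^{n_1}(\alpha_j+1)/p_j > 1$.

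The ``only if'' direction now follows immediately from the Parseval decomposition: if $b_\alpha\ne 0$ for some $\alpha$ with $\sum_{j=1}^{n_1}(\alpha_j+1)/p_j \le 1$, then $I_\alpha(U) = +\infty$ and hence $\int_U |f|^2 e^{-\psi}\, dV = +\infty$ on every polydisc around $o$, contradicting $(f,o)\in \mathcal{I}(\psi)_o$. For the ``if'' direction, I would invoke the monomial-ideal structure: by Dickson's lemma, the upper set $E' := \{\alpha' \in \mathbb{Z}_{\ge 0}^{n_1} : \sum_j (\alpha'_j+1)/p_j > 1\}$ has only finitely many minimal elements $\alpha^{(1)},\dots,\alpha^{(N)}$. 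The vanishing of all $b_\alpha$ with $(\alpha_1,\dots,\alpha_{n_1})\notin E'$ is exactly the condition for $f$ to lie in the monomial ideal generated by $w^{\alpha^{(1)}},\dots,w^{\alpha^{(N)}}$, giving a decomposition $f = \sum_{i=1}^N w^{\alpha^{(i)}} g_i$ with holomorphic germs $g_i$. Each factor $w^{\alpha^{(i)}}$ belongs to $\mathcal{I}(\psi)_o$ by the monomial computation, and multiplying by the locally bounded $g_i$ preserves this property, so $(f,o)\in\mathcal{I}(\psi)_o$.

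The main technical step is the monomial integrability computation: the $\max$ inside $e^{-\psi}$ destroys the product structure one would normally exploit for polydisc integrals, so the analysis must be carried out sector-by-sector, and it must be verified that the same exponent inequality $\sum_{j=1}^{n_1}(\alpha_j+1)/p_j > 1$ controls convergence in every sector.
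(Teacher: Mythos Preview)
Your proof is correct. The ``only if'' direction and the monomial integrability analysis match the paper's approach in spirit: both exploit the rotational symmetry of $e^{-\psi}$ to obtain a Parseval decomposition and then compute $\int |w^\alpha|^2 e^{-\psi}$ explicitly. Your sector-by-sector computation via the substitution $s_j=r_j^{2p_j}$ is a variant of the method the paper uses later in Lemma~\ref{l:m1}; in the proof of Lemma~\ref{l:0} itself the paper instead applies a layer-cake identity $|w^\alpha|^2 e^{-\psi} = |w^\alpha|^2\int_0^\infty \mathbb{I}_{\{l<e^{-\psi}\}}\,dl$, swaps the order, and uses that the sublevel sets $\{\psi<\log r\}$ are products of discs so that $\int_{\{\psi<\log r\}}|w^\alpha|^2$ factors. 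Both computations land on the same exponent condition.

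The genuinely different step is the ``if'' direction. The paper argues that each monomial $w^\alpha$ with $\sum_j(\alpha_j+1)/p_j>1$ lies in $\mathcal I(\psi)_o$ and then invokes Lemma~\ref{l:closedness} (closedness of submodules of $\mathcal O_{\mathbb C^n,o}^q$ under locally uniform limits) to pass to the full Taylor series. You instead use Dickson's lemma to extract finitely many minimal generators $w^{\alpha^{(1)}},\dots,w^{\alpha^{(N)}}$ of the relevant monomial ideal and write $f=\sum_i w^{\alpha^{(i)}}g_i$ with bounded holomorphic $g_i$, reducing directly to a finite sum. Your route is more elementary and self-contained---it avoids the coherent-sheaf machinery behind Lemma~\ref{l:closedness}---while the paper's route is shorter given that the closedness lemma is already available and repeatedly used elsewhere in the paper.
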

\begin{proof}
	For the convenience of the reader, we recall the proof.
	
Let $V=\left\{w\in\mathbb{C}^n:\max_{n_1+1\le j\le n}\{|w_j|\}<s\right\}$, where $s\in(0,r_0)$. There exists $r_1>0$ such that $\{\psi<\log r_1\}\cap V\Subset D$.	If $(f,o)\in\mathcal{I}(\psi)_{o}$,  we have
	\begin{equation}
		\label{eq:1125a}\int_{\{\psi<\log r_1\}\cap V}|f|^2e^{-\psi}d\lambda_n<+\infty,
	\end{equation}
	where $d\lambda_n$ is the Lebesgue measure on $\mathbb{C}^n$. Note that
	\begin{displaymath}
		\begin{split}
			&\int_{\{\psi<\log r_1\}\cap V}|f|^2e^{-\psi}d\lambda_n\\
			=&\lim_{\epsilon\rightarrow0+0}\int_{\left\{\epsilon<|w_1|<r_1^{\frac{1}{2p_1}}\right\}\cap\ldots\cap\left\{\epsilon<|w_{n_1}|<r_1^{\frac{1}{2p_{n_1}}}\right\}\cap V}|f|^2e^{-\psi}d\lambda_n\\
			=&\lim_{\epsilon\rightarrow0+0}\left(\sum_{\alpha\in\mathbb{Z}_{\ge0}^n}\int_{\left\{\epsilon<|w_1|<r_1^{\frac{1}{2p_1}}\right\}\cap\ldots\cap\left\{\epsilon<|w_{n_1}|<r_1^{\frac{1}{2p_{n_1}}}\right\}\cap V}|b_{\alpha}w^{\alpha}|^2e^{-\psi}d\lambda_n\right)\\
			=&\sum_{\alpha\in\mathbb{Z}_{\ge0}^n}|b_{\alpha}|^2\int_{\{\psi<\log r_1\}\cap V}|w^{\alpha}|^2e^{-\psi}d\lambda_n.
	   \end{split}
	\end{displaymath}
 Inequality \eqref{eq:1125a} implies that
	\begin{equation}
		\label{eq:1125b}\int_{\{\psi<\log r_1\}\cap V}|w^{\alpha}|^2e^{-\psi}d\lambda_n<+\infty
	\end{equation}
	for any $\alpha\in\mathbb{Z}_{\ge0}^n$ satisfying $b_{\alpha}\not=0$. Note that
	\begin{equation}	\label{eq:1125c}\begin{split}
	\int_{\{\psi<\log r_1\}\cap V}|w^{\alpha}|^2e^{-\psi}d\lambda_n=&\int_{\{\psi<\log r_1\}\cap V}|w^{\alpha}|^2\left(\int_{0}^{+\infty}\mathbb{I}_{\{l<e^{-\psi}\}}dl\right)d\lambda_n\\
		=&\int_{0}^{r_1}\left(\int_{\{\psi<\log r\}\cap V}|w^{\alpha}|^2d\lambda_n\right)r^{-2}dr\\
		&+\frac{1}{r_1}\int_{\{\psi<\log r_1\}\cap V}|w^{\alpha}|^2d\lambda_n
		\end{split}
	\end{equation}
	and
	\begin{equation}
		\label{eq:1125d}\begin{split}
			\int_{\{\psi<\log r\}\cap V}|w^{\alpha}|^2d\lambda_n=&\int_{\left\{|w_1|<r^{\frac{1}{2p_1}}\right\}\cap\ldots\cap\left\{|w_{n_1}|<r^{\frac{1}{2p_{n_1}}}\right\}\cap V}\bigg|\prod_{1\le j\le n}w_j^{\alpha_j}\bigg|^2d\lambda_n\\
			=&\pi^{n_1}\frac{r^{\sum_{1\le j\le n_1}\frac{\alpha_j+1}{p_j}}}{\prod_{1\le j\le n_1}(\alpha_j+1)}\int_{V}|w_{n_1+1}^{\alpha_{n_1+1}}\ldots w_n^{\alpha_n}|^2d\lambda_{n-n_1}.
		\end{split}
	\end{equation}
	It follows from inequality \eqref{eq:1125b}, equality \eqref{eq:1125c} and equality \eqref{eq:1125d} that
	$$\sum_{1\le j\le n_1}\frac{\alpha_j+1}{p_j}>1$$ for any $\alpha\in\mathbb{Z}_{\ge0}^n$ satisfying $b_{\alpha}\not=0$.
	
	If $\sum_{1\le j\le n_1}\frac{\alpha_j+1}{p_j}>1$ for any $\alpha\in\mathbb{Z}_{\ge0}^n$ satisfying $b_{\alpha}\not=0$, it follows from equality \eqref{eq:1125c} and equality \eqref{eq:1125d} that
	\begin{equation*}
	\int_{\{\psi<\log r_1\}}|w^{\alpha}|^2e^{-\psi}d\lambda_n<+\infty,
	\end{equation*}
	i.e. $(w^{\alpha},o)\in\mathcal{I}(\psi)_o$ for any $\alpha\in\mathbb{Z}_{\ge0}^n$ satisfying $b_{\alpha}\not=0$. Using Lemma \ref{l:closedness}, we have $(f,o)\in\mathcal{I}(\psi)_{o}$.	
	\end{proof}
	
	For any $y\in D'=\{y\in\mathbb{C}^{n-n_1}:|y_k|<r_0$ for $1\le k\le n-n_1\}$, denote that $f_y=f(\cdot,y)$ is a holomorphic function on $D''=\{x\in\mathbb{C}^{n_1}:|x_j|<r_0$ for any $j\in\{1,\ldots,n_1\}\}$. It follows from Lemma \ref{l:0} that the following lemma holds.
	
\begin{Lemma}
	\label{l:0'}
	$(f,(o_1,y))\in\mathcal{I}(\psi)_{(o_1,y)}$ for any $y\in D'$ if and only if $(f_y,o_1)\in \mathcal{I}(\psi)_{o_1}$ for any $y\in D'$, where $o_1$ is the origin in $\mathbb{C}^{n_1}$.
\end{Lemma}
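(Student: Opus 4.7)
The plan is to deduce Lemma \ref{l:0'} directly from Lemma \ref{l:0} by applying it at each center point and comparing Taylor coefficients. Note that $\psi=\max_{1\le j\le n_1}\{2p_j\log|w_j|\}$ is independent of $w_{n_1+1},\ldots,w_n$, so shifting coordinates in the last $n-n_1$ directions does not change $\psi$. I will write, for any fixed $y\in D'$, the Taylor expansion in the shifted coordinates at $(o_1,y)$
\[
f=\sum_{(\alpha,\gamma)\in\mathbb{Z}_{\ge0}^{n_1}\times\mathbb{Z}_{\ge0}^{n-n_1}}c_{(\alpha,\gamma)}(y)\,w_1^{\alpha_1}\cdots w_{n_1}^{\alpha_{n_1}}(w_{n_1+1}-y_1)^{\gamma_1}\cdots(w_n-y_{n-n_1})^{\gamma_{n-n_1}},
\]
and separately expand $f_y(x)=\sum_{\alpha\in\mathbb{Z}_{\ge0}^{n_1}}b_\alpha(y)\,x^\alpha$ with $x=(w_1,\ldots,w_{n_1})$. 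A derivative identification $b_\alpha(y)=c_{(\alpha,0)}(y)$ and, more generally, $c_{(\alpha,\gamma)}(y)=\frac{1}{\gamma!}\partial_y^{\gamma}b_\alpha(y)$ will be the elementary link between the two expansions.

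Next, I will invoke Lemma \ref{l:0}: the germ $(f_y,o_1)\in\mathcal{I}(\psi)_{o_1}$ iff $\sum_{1\le j\le n_1}\frac{\alpha_j+1}{p_j}>1$ for every $\alpha$ with $b_\alpha(y)\neq 0$; and $(f,(o_1,y))\in\mathcal{I}(\psi)_{(o_1,y)}$ iff $\sum_{1\le j\le n_1}\frac{\alpha_j+1}{p_j}>1$ for every $(\alpha,\gamma)$ with $c_{(\alpha,\gamma)}(y)\neq 0$. The forward direction is then immediate: if $(f,(o_1,y))\in\mathcal{I}(\psi)_{(o_1,y)}$, restricting to $\gamma=0$ and using $b_\alpha(y)=c_{(\alpha,0)}(y)$ yields $(f_y,o_1)\in\mathcal{I}(\psi)_{o_1}$.

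For the converse, assume $(f_y,o_1)\in\mathcal{I}(\psi)_{o_1}$ for every $y\in D'$. I will reformulate the hypothesis as: for each $\alpha\in\mathbb{Z}_{\ge0}^{n_1}$, either $b_\alpha\equiv 0$ on $D'$, or $\sum_{1\le j\le n_1}\frac{\alpha_j+1}{p_j}>1$ (because if $b_\alpha(y')\neq 0$ at even one $y'\in D'$, applying Lemma \ref{l:0} at $y'$ forces the strict inequality). Now fix any $y_0\in D'$ and suppose $c_{(\alpha,\gamma)}(y_0)\neq 0$. Then $\partial_y^{\gamma}b_\alpha(y_0)\neq 0$, so in particular $b_\alpha\not\equiv 0$ on $D'$, whence $\sum_{1\le j\le n_1}\frac{\alpha_j+1}{p_j}>1$. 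By Lemma \ref{l:0} this gives $(f,(o_1,y_0))\in\mathcal{I}(\psi)_{(o_1,y_0)}$, completing the proof.

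There is no real obstacle here; the only point requiring care is the bookkeeping that translates Taylor coefficients of $f$ at the shifted center $(o_1,y)$ into $y$-derivatives of the $\alpha$-coefficients of $f_y$, and the observation that the vanishing of $b_\alpha$ at a single point is much weaker than the vanishing of all its $y$-derivatives at that point — the latter being what truly corresponds to the Taylor coefficients $c_{(\alpha,\gamma)}(y_0)$ at the shifted base point.
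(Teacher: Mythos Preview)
Your proof is correct and follows exactly the approach the paper indicates: the paper simply asserts that Lemma~\ref{l:0'} ``follows from Lemma~\ref{l:0}'' without further detail, and your Taylor-coefficient bookkeeping (identifying $c_{(\alpha,\gamma)}(y)=\frac{1}{\gamma!}\partial_y^\gamma b_\alpha(y)$ and using holomorphy of $y\mapsto b_\alpha(y)$ to promote pointwise vanishing to identical vanishing) is precisely the omitted computation.
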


The following lemma will be used in the proof of Lemma \ref{l:limit}.

\begin{Lemma}\label{l:m1}
Let $\psi=\max_{1\le j\le n}\{2p_j\log|w_j|\}$ be a plurisubharmonic function on $\mathbb{C}^n$, where $p_j>0$.
	Let $f=\sum_{\alpha\in \mathbb{Z}_{\ge0}^n}b_{\alpha}w^{\alpha}$ (Taylor expansion) be a holomorphic function on $\{\psi<-t_0\}$, where $t_0>0$. Let $c(t)$ be a nonnegative measurable function on $(t_0,+\infty)$. Denote that $q_{\alpha}:=\sum_{1\le j\le n}\frac{\alpha_j+1}{p_j}-1$ for any $\alpha\in\mathbb{Z}_{\ge0}^n$. Then
	$$\int_{\{\psi<-t\}}|f|^2c(-\psi)d\lambda_n=\sum_{\alpha\in\mathbb{Z}_{\ge0}^n}\left(\int_t^{+\infty}c(s)e^{-(q_{\alpha}+1)s}ds\right)\frac{(q_{\alpha}+1)|b_{\alpha}|^2\pi^{n}}{\prod_{1\le j\le n}(\alpha_j+1)}$$
	holds for any $t\ge t_0$.\end{Lemma}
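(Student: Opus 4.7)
The plan is to reduce the integral to an explicit sum of monomial contributions, each handled by a one-variable layer-cake substitution.

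First, since $\{\psi<-t_0\}$ is precisely the polydisc $\{|w_j|<e^{-t_0/(2p_j)}:1\le j\le n\}$, the Taylor expansion $f=\sum_\alpha b_\alpha w^\alpha$ converges on all of $\{\psi<-t_0\}$, and I would pass to polar coordinates $w_j=r_j e^{i\theta_j}$. Both the integration domain $\{\psi<-t\}=\{r_j<e^{-t/(2p_j)}:1\le j\le n\}$ and the weight $c(-\psi)$ depend only on $(r_1,\ldots,r_n)$, so the angular orthogonality
\[
\int_0^{2\pi}e^{i(\alpha_j-\alpha'_j)\theta_j}\,d\theta_j=2\pi\,\delta_{\alpha_j,\alpha'_j}
\]
applied in each $\theta_j$ collapses the formal expansion $|f|^2=\sum_{\alpha,\alpha'}b_\alpha\bar b_{\alpha'}w^\alpha\overline{w^{\alpha'}}$ to its diagonal after integration. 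Because every surviving term $|b_\alpha|^2|w^\alpha|^2 c(-\psi)$ is nonnegative, Tonelli's theorem justifies the interchange of summation and integration, yielding
\[
\int_{\{\psi<-t\}}|f|^2 c(-\psi)\,d\lambda_n=\sum_{\alpha\in\mathbb{Z}_{\ge0}^n}|b_\alpha|^2\int_{\{\psi<-t\}}|w^\alpha|^2 c(-\psi)\,d\lambda_n.
\]

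Next, for each fixed $\alpha$ I would evaluate the distribution function
\[
\Phi_\alpha(s):=\int_{\{\psi<-s\}}|w^\alpha|^2\,d\lambda_n=\frac{\pi^n e^{-(q_\alpha+1)s}}{\prod_{1\le j\le n}(\alpha_j+1)},
\]
which is an immediate polar-coordinate computation on the polydisc $\{r_j<e^{-s/(2p_j)}\}$, identical to the one that appears in the proof of Lemma \ref{l:0}. The map $w\mapsto -\psi(w)$ pushes the nonnegative measure $|w^\alpha|^2\,d\lambda_n$ forward to the absolutely continuous measure on $(0,+\infty)$ with density
\[
-\Phi_\alpha'(s)=\frac{(q_\alpha+1)\pi^n e^{-(q_\alpha+1)s}}{\prod_{1\le j\le n}(\alpha_j+1)},
\]
so the standard layer-cake identity gives
\[
\int_{\{\psi<-t\}}|w^\alpha|^2 c(-\psi)\,d\lambda_n=\int_t^{+\infty}c(s)\bigl(-\Phi_\alpha'(s)\bigr)\,ds=\frac{(q_\alpha+1)\pi^n}{\prod_{1\le j\le n}(\alpha_j+1)}\int_t^{+\infty}c(s)e^{-(q_\alpha+1)s}\,ds.
\]
Substituting this into the diagonal sum above produces the claimed identity.

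The argument is essentially a routine computation, and no genuine obstacle arises: all integrands are nonnegative, so Tonelli and monotone convergence handle every interchange of sum and integral, and the equality is to be understood in $[0,+\infty]$ without any a priori integrability hypothesis on $f$ or $c$. The only mildly delicate point is the use of the pushforward / layer-cake formula with a possibly discontinuous weight $c$, which is standard once one writes the pushforward measure explicitly in terms of $\Phi_\alpha$.
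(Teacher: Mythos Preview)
Your proposal is correct and essentially follows the same overall structure as the paper: reduce to the diagonal via angular orthogonality, then compute each monomial contribution. The one genuine difference is in how the monomial integral $\int_{\{\psi<-t\}}|w^\alpha|^2c(-\psi)\,d\lambda_n$ is evaluated. The paper carries out the computation directly: after the substitution $r_j=|w_j|^{p_j}$ the weight becomes $c(-2\log\max_j r_j)$, and the integral is split according to which coordinate $r_{j'}$ realises the maximum, yielding a sum over $j'$ that collapses to the final formula (equation \eqref{eq:211125f}). You instead first evaluate the \emph{unweighted} distribution function $\Phi_\alpha(s)=\int_{\{\psi<-s\}}|w^\alpha|^2\,d\lambda_n$, which is a trivial product over a polydisc, and then apply the pushforward/layer-cake identity to introduce $c$. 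Your route is shorter and more conceptual, avoiding the case split entirely; the paper's route is more explicit and self-contained, requiring no appeal to the pushforward formalism. Both are valid and yield the same constant $(q_\alpha+1)\pi^n/\prod_j(\alpha_j+1)$.
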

	
\begin{proof}
	By direct calculations, we obtain that
	\begin{equation}\label{eq:1127b}\begin{split}
		&\int_{\{\psi<-t\}}|w^{\alpha}|^2c(-\psi)d\lambda_n\\
		=&(2\pi)^n\int_{\left\{\max_{1\le j\le n}\left\{s_j^{p_j}\right\}<e^{-\frac{t}{2}}\,\&\,s_j>0\right\}}\prod_{1\le j\le n}s_j^{2\alpha_j+1}\cdot c\left(-\log\max_{1\le j\le n}\left\{s_j^{2p_j}\right\}\right)ds_1ds_2\ldots ds_n\\
		=&(2\pi)^n\frac{1}{\prod_{1\le j\le n}p_j}\\
		&\times\int_{\left\{\max_{1\le j\le n}\{r_j\}<e^{-\frac{t}{2}}\,\&\,r_j>0\right\}}\prod_{1\le j\le n}r_j^{\frac{2\alpha_j+2}{p_j}-1}\cdot c\left(-\log\max_{1\le j\le n}\left\{r_j^2\right\}\right)dr_1dr_2\ldots dr_n.
		\end{split}
	\end{equation}
 By the Fubini's theorem, we have
\begin{equation}
	\label{eq:211125f}\begin{split}
		&\int_{\left\{\max_{1\le j\le n}\{r_j\}<e^{-\frac{t}{2}}\,\&\,r_j>0\right\}}\prod_{1\le j\le n}r_j^{\frac{2\alpha_j+2}{p_j}-1}\cdot c\left(-\log\max_{1\le j\le n}\left\{r_j^2\right\}\right)dr_1dr_2\ldots dr_n\\
		=&\sum_{j'=1}^n\int_{0}^{e^{-\frac{t}{2}}}\left(\int_{\{0\le r_j<r_{j'},j\not=j'\}}\prod_{j\not=j'}r_j^{\frac{2\alpha_j+2}{p_j}-1}\cdot\wedge_{j\not=j'}dr_j\right)r_{j'}^{\frac{2\alpha_{j'}+2}{p_{j'}}-1}c(-2\log r_{j'})dr_{j'}\\
		=&\sum_{j'=1}^n\left(\prod_{j\not=j'}\frac{p_j}{2\alpha_j+2}\right)\int_{0}^{e^{-\frac{t}{2}}}r_{j'}^{\sum_{1\le k\le n}\frac{2\alpha_k+2}{p_k}-1}c(-2\log r_{j'})dr_{j'}\\
		=&(q_{\alpha}+1)\left(\int_t^{+\infty}c(s)e^{-(q_{\alpha}+1)s}ds\right)\prod_{1\le j\le n}\frac{p_j}{2\alpha_j+2}.
			\end{split}
\end{equation}		
Following from $\int_{\{\psi<-t\}}|f|^2c(-\psi)d\lambda_n=\sum_{\alpha\in\mathbb{Z}_{\ge0}^n}|b_{\alpha}|^2\int_{\{\psi<-t\}}|w^{\alpha}|^2c(-\psi)d\lambda_n$, equality \eqref{eq:1127b} and equality \eqref{eq:211125f}, we obtain that
\begin{equation*}
\int_{\{\psi<-t\}}|f|^2d\lambda_n=\sum_{\alpha\in\mathbb{Z}_{\ge0}^n}\left(\int_t^{+\infty}c(s)e^{-(q_{\alpha}+1)s}ds\right)\frac{(q_{\alpha}+1)|b_{\alpha}|^2\pi^n}{\prod_{1\le j\le n}(\alpha_j+1)}.
\end{equation*}
\end{proof}

The following lemma will be used in the proof of Proposition \ref{p:exten-fibra}.
\begin{Lemma}[see \cite{GY-concavity4}]\label{l:m2}
Let $\psi=\max_{1\le j\le n}\{2p_j\log|w_j|\}$ be a plurisubharmonic function on $\mathbb{C}^n$, where $p_j>0$.
	Let $f=\sum_{\alpha\in\mathbb{Z}_{\ge0}^n}b_{\alpha}w^{\alpha}$ (Taylor expansion) be a holomorphic function on $\{\psi<-t_0\}$, where $t_0>0$. Denote that $q_{\alpha}:=\sum_{1\le j\le n}\frac{\alpha_j+1}{p_j}-1$ for any $\alpha\in\mathbb{Z}_{\ge0}^n$ and  $E_1:=\{\alpha\in\mathbb{Z}_{\ge0}^n:q_{\alpha}=0\}$. Then
\begin{displaymath}\begin{split}
	\int_{\{-t-1<\psi<-t\}}|f|^2e^{-\psi}d\lambda_n=&\sum_{\alpha\in E_1}\frac{|b_{\alpha}|^2\pi^{n}}{\prod_{1\le j\le n}(\alpha_j+1)}\\
	&+\sum_{\alpha\not\in E_1}\frac{|b_{\alpha}|^2\pi^{n}(q_{\alpha}+1)(e^{-q_{\alpha}t}-e^{-q_{\alpha}(t+1)})}{q_{\alpha}\prod_{1\le j\le n}(\alpha_j+1)}	
\end{split}\end{displaymath}
	 for any $t>t_0$.
\end{Lemma}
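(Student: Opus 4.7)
The plan is to obtain Lemma \ref{l:m2} as a direct consequence of Lemma \ref{l:m1} by a single clever choice of the weight $c$. Fix $t > t_0$ and set $c(s) := e^{s}\mathbb{I}_{(t,t+1)}(s)$ on $(t_0,+\infty)$, which is a nonnegative measurable function. Since $c$ is supported on $(t,t+1)$, we have the identity
$$\int_{\{\psi<-t\}}|f|^2 c(-\psi)\, d\lambda_n \;=\; \int_{\{-t-1<\psi<-t\}}|f|^2 e^{-\psi}\, d\lambda_n,$$
so the left-hand side of Lemma \ref{l:m2} is reduced to the type of integral already handled by Lemma \ref{l:m1}.

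Applying Lemma \ref{l:m1} with this choice of $c$ (and the allowed value $t \geq t_0$), the right-hand side becomes
$$\sum_{\alpha\in\mathbb{Z}_{\ge0}^n}\left(\int_t^{+\infty}c(s)e^{-(q_\alpha+1)s}\, ds\right)\frac{(q_\alpha+1)|b_\alpha|^2\pi^n}{\prod_{1\le j\le n}(\alpha_j+1)},$$
and because $c(s)e^{-(q_\alpha+1)s} = e^{-q_\alpha s}\mathbb{I}_{(t,t+1)}(s)$, the inner integral collapses to the elementary one-variable integral $\int_t^{t+1}e^{-q_\alpha s}\,ds$.

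The last step is to evaluate this elementary integral in the two cases: for $\alpha \in E_1$ one has $q_\alpha = 0$, so the integral equals $1$; for $\alpha \notin E_1$ one has $q_\alpha \neq 0$, so the integral equals $(e^{-q_\alpha t} - e^{-q_\alpha(t+1)})/q_\alpha$. Substituting these two values and splitting the sum over $\alpha \in E_1$ and $\alpha \notin E_1$ produces exactly the asserted expression in Lemma \ref{l:m2}.

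There is essentially no obstacle here: the heavy lifting (writing $f$ as its Taylor series, using rotational invariance of $\psi$ in each coordinate to kill cross terms $b_\alpha \overline{b_{\alpha'}}$ for $\alpha \neq \alpha'$, and reducing the multi-dimensional measure integral to one-dimensional integrals via Fubini) is already packaged into Lemma \ref{l:m1}. If one wished to avoid invoking Lemma \ref{l:m1} as a black box, the only slightly delicate point would be the justification that term-by-term integration of the Taylor series is valid on the annular region $\{-t-1<\psi<-t\}$, which follows by monotone convergence applied to partial sums together with the rotational invariance argument used in Lemma \ref{l:m1}.
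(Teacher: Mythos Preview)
Your proof is correct. The paper does not give its own proof of this lemma (it simply cites \cite{GY-concavity4}), but your derivation---specializing Lemma~\ref{l:m1} with the weight $c(s)=e^{s}\mathbb{I}_{(t,t+1)}(s)$ and evaluating $\int_t^{t+1}e^{-q_\alpha s}\,ds$ according to whether $q_\alpha=0$ or not---is exactly the clean way to obtain Lemma~\ref{l:m2} from the computation already carried out in the paper for Lemma~\ref{l:m1}.
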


\subsection{Some results about fibrations}
\

In this section, we discuss the fibrations.

Let $\Delta^{n_1}=\{w\in\mathbb{C}^{n_1}:|w_j|<1$ for any $j\in\{1,\ldots,n_1\}\}$ be product of the unit disks. Let $Y$ be an $n_2-$dimensional complex manifold, and let $M=\Delta^{n_1} \times Y$. Denote $n=n_1+n_2$. Let $\pi_1$ and $\pi_2$ be the natural projections from $M$ to $\Delta^{n_1}$ and $Y$ respectively.  Let $\rho_1$ be a nonnegative Lebesgue measurable function on $\Delta^{n_1}$ satisfying that $\rho_1(w)=\rho_1(|w_1|,\ldots,|w_{n_1}|)$ for any $w\in\Delta^{n_1}$ and the Lebesgue measure of $\{w\in\Delta^{n_1}: \rho_1(w)>0\}$ is  positive. Let $\rho_2$ be a nonnegative Lebesgue measurable function on $Y$, and denote that $\rho=\pi_1^*(\rho_1)\times\pi_2^*(\rho_2)$ on $M$.
\begin{Lemma}
	\label{l:fibra-decom}	For any holomorphic $(n,0)$  form $F$ on $M$, there exists a unique sequence of  holomorphic  $(n_2,0)$   forms $\{F_{\alpha}\}_{\alpha\in\mathbb{Z}_{\ge0}^{n_1}}$ on $Y$ such that
	\begin{equation}\label{eq:1216c}
		F=\sum_{\alpha\in\mathbb{Z}_{\ge0}^{n_1}}\pi_1^*(w^{\alpha}dw_1\wedge\ldots\wedge dw_{n_1}) \wedge \pi_2^*(F_\alpha),
	\end{equation}
where the right term of the above equality is uniformly convergent on any compact subset of $M$. Moreover, if 	$\int_{M}|F|^2\rho<+\infty,$ we have
	\begin{equation}
	\label{eq:1216d}\int_{Y}|F_{\alpha}|^2\rho_2<+\infty
\end{equation}
for any $\alpha\in\mathbb{Z}_{\ge0}^{n_1}$.
\end{Lemma}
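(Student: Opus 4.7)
\noindent\emph{Proof plan.} The plan is to build the coefficient forms $F_{\alpha}$ via an iterated Cauchy integral in the $\Delta^{n_1}$-direction, verify the expansion by the standard Taylor series argument, and then use the rotational invariance of $\rho_1$ to pass from the $L^2$-bound on $F$ to a bound on each $F_{\alpha}$ through a Parseval-type orthogonality.

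First I would write
\[
F=\pi_1^{*}(dw_1\wedge\cdots\wedge dw_{n_1})\wedge G,
\]
where $G$ is the unique holomorphic section of $\pi_2^{*}K_Y$ over $M$ characterized by this identity; concretely, $w\mapsto G(w,\cdot)$ is a holomorphic family of $(n_2,0)$-forms on $Y$. For each $\alpha\in\mathbb{Z}_{\ge0}^{n_1}$ and each $y\in Y$, I would then set, for any choice of radii $0<r_j<1$,
\[
F_{\alpha}(y):=\frac{1}{(2\pi\sqrt{-1})^{n_1}}\oint_{|w_1|=r_1}\!\!\!\cdots\!\!\oint_{|w_{n_1}|=r_{n_1}}\frac{G(w,y)}{w_1^{\alpha_1+1}\cdots w_{n_1}^{\alpha_{n_1}+1}}\,dw_1\cdots dw_{n_1}.
\]
Cauchy's theorem makes this independent of the $r_j$'s, and differentiation under the integral sign shows $F_{\alpha}$ is a holomorphic $(n_2,0)$-form on $Y$. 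Expanding the Cauchy kernel as a geometric series in the iterated Cauchy integral formula for $G$ yields the expansion \eqref{eq:1216c}, uniformly convergent on compact subsets of $\Delta^{n_1}\times Y$. Uniqueness of $\{F_{\alpha}\}$ is immediate from uniqueness of Taylor coefficients.

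For the $L^2$ statement, the key ingredient is the orthogonality relation
\[
\int_{\Delta^{n_1}}w^{\alpha}\overline{w^{\alpha'}}\,\rho_1(w)\,d\lambda_{n_1}(w)=0\qquad(\alpha\neq\alpha'),
\]
which follows from $\rho_1(w)=\rho_1(|w_1|,\ldots,|w_{n_1}|)$ by performing each angular integration. Combined with the absolute uniform convergence of $\sum_{\alpha,\alpha'}w^{\alpha}\overline{w^{\alpha'}}F_{\alpha}\wedge\overline{F_{\alpha'}}$ on each polydisc $\{|w_j|\le r_j\}$, this gives, for every $y\in Y$,
\[
\int_{\{|w_j|<r_j\}}|G(w,y)|^{2}\rho_1(w)\,d\lambda_{n_1}(w)=\sum_{\alpha}\Big(\int_{\{|w_j|<r_j\}}|w^{\alpha}|^{2}\rho_1\,d\lambda_{n_1}\Big)|F_{\alpha}(y)|^{2}.
\]
Letting $r_j\uparrow 1$ by monotone convergence, integrating against $\rho_2\,dV_Y$ over $Y$, and invoking Fubini produces
\[
\int_{M}|F|^{2}\rho=\sum_{\alpha}\Big(\int_{\Delta^{n_1}}|w^{\alpha}|^{2}\rho_1\,d\lambda_{n_1}\Big)\int_{Y}|F_{\alpha}|^{2}\rho_2
\]
up to a positive constant coming from the relation $|dw_1\wedge\cdots\wedge dw_{n_1}|^{2}=2^{n_1}d\lambda_{n_1}$. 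Since $\{\rho_1>0\}$ has positive Lebesgue measure, every coefficient $\int_{\Delta^{n_1}}|w^{\alpha}|^{2}\rho_1\,d\lambda_{n_1}$ is strictly positive, so finiteness of the left-hand side forces $\int_{Y}|F_{\alpha}|^{2}\rho_2<+\infty$ for every $\alpha$, which is \eqref{eq:1216d}.

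The only routine obstacle is the termwise interchange of sum and integral in the orthogonality identity; this is handled by monotone convergence on the nonnegative partial sums of $|G|^{2}$ after angular averaging, so no substantive analytic difficulty is expected.
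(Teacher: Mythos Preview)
Your proposal is correct and follows essentially the same approach as the paper: Taylor-expand $F$ in the $\Delta^{n_1}$-direction to produce the coefficient forms $F_{\alpha}$, and then exploit the rotational invariance of $\rho_1$ to obtain the Parseval identity \eqref{eq:1216f} from which \eqref{eq:1216d} follows. The only presentational difference is that the paper defines $F_{\alpha}$ locally via $\tfrac{1}{\alpha!}(\partial/\partial w)^{\alpha}\tilde F\big|_{w=0}$ in a chart on $Y$ and then checks independence of the chart by a Jacobian computation, whereas your Cauchy-integral definition is already intrinsic (a section of $\pi_2^{*}K_Y$ restricted to $\{w=0\}\times Y$), so the gluing step is built in; otherwise the two arguments coincide.
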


\begin{proof}
Firstly, we consider the local case. Assume that $Y=\Delta^{n_2}$, and the coordinate is $\tilde w=(\tilde w_1,\ldots,\tilde w_{n_2})$. Then there exists a holomorphic function $\tilde{F}(w,\tilde w)$ on $\Delta^n$ such that
\begin{equation*}
	F=\tilde{F}(w,\tilde w)dw_1\wedge\ldots\wedge dw_{n_1}\wedge d\tilde w_1\ldots\wedge d\tilde w_{n_2}.
\end{equation*}
Let
$$F_\alpha=\frac{1}{\alpha!}\left(\left(\frac{\partial}{\partial w}\right)^{\alpha}\tilde{F}\right)\bigg|_{w=0}d\tilde w_1\wedge\ldots\wedge d\tilde w_{n_2}$$
be a holomorphic $(n_2,0)$ form on $Y$.
 Considering the Taylor's expansion of $\tilde{F}$, we can assume that
\begin{equation*}
	\tilde{F}(w,\tilde w)=\sum_{\alpha\in\mathbb{Z}_{\ge0}^{n_1},\tilde\alpha\in \mathbb{Z}_{\ge0}^{n_2}}d_{\alpha,\tilde\alpha}w^{\alpha}{\tilde w}^{\tilde\alpha}=\sum_{\alpha\in\mathbb{Z}_{\ge0}^{n_1}}\frac{1}{\alpha!}\left(\left(\frac{\partial}{\partial w}\right)^{\alpha}\tilde{F}\right)\bigg|_{w=0}\cdot w^{\alpha},
\end{equation*}
where the summations are uniformly convergent on any compact subset of $M$, then we have
\begin{equation*}
	F=\sum_{\alpha\in\mathbb{Z}_{\ge0}^{n_1}}\pi_1^*(w^{\alpha}dw_1\wedge\ldots\wedge dw_{n_1}) \wedge \pi_2^*(F_\alpha).
\end{equation*}

Secondly, we need to prove that the gluing is independent of the choices of the local coordinates of $Y$. Assume that $y=(y_1,\ldots,y_{n_2})$ is another coordinate on $Y=\Delta^{n_2}$, and $F=\tilde{F}_0(w,y)dw_1\wedge\ldots\wedge dw_{n_1} \wedge dy_1\wedge\ldots\wedge dy_{n_2}$, thus we have $\tilde{F}(w,\tilde w(y))\frac{\partial(\tilde w_1,\ldots,\tilde w_{n_2})}{\partial(y_1,\ldots,y_{n_2})}=\tilde{F}_0(w,y)$. By direct calculations, we have
\begin{flalign*}
	\begin{split}
	F_\alpha&=\frac{1}{\alpha!}\left(\left(\frac{\partial}{\partial w}\right)^{\alpha}\tilde{F}\right)\bigg|_{w=0}d\tilde w_1\wedge\ldots\wedge d\tilde w_{n_2} \\
	&=\frac{1}{\alpha!}\left(\left(\frac{\partial}{\partial w}\right)^{\alpha}\tilde{F}\right)\bigg|_{w=0}\frac{\partial(\tilde w_1,\ldots,\tilde w_{n_2})}{\partial(y_1,\ldots,y_{n_2})}dy_1\wedge\ldots\wedge dy_{n_2} \\
	&=\frac{1}{\alpha!}\left(\left(\frac{\partial}{\partial w}\right)^{\alpha}\tilde{F}_0\right)\bigg|_{w=0} dy_1\wedge\ldots\wedge dy_{n_2},
	\end{split}
\end{flalign*}
which means that $F_{\alpha}$ is independent of the choices of the coordinates for any $\alpha\in\mathbb{Z}_{\ge0}^{n_1}$.
For general $Y$, we can find holomorphic $(n_2,0)$ forms $F_\alpha$ on $Y$ such that $F=\sum_{\alpha\in\mathbb{Z}_{\ge0}^{n_1}}\pi_1^*(w^{\alpha}dw_1\wedge\ldots\wedge dw_{n_1}) \wedge \pi_2^*(F_\alpha)$.

Then, for the uniqueness, it suffices to prove  the local case $Y=\Delta^{n_2}$. There exists a holomorphic function $\tilde{F}(w,\tilde w)$ on $\Delta^n$ such that
$F=\tilde{F}(w,\tilde w)dw_1\wedge\ldots\wedge dw_{n_1}\wedge d\tilde w_1\ldots\wedge d\tilde w_{n_2}$. If
\begin{equation*}
	F=\sum_{\alpha\in\mathbb{Z}_{\ge0}^{n_1}}\pi_1^*(w^{\alpha}dw_1\wedge\ldots\wedge dw_{n_1}) \wedge \pi_2^*(F_\alpha)
\end{equation*} for a holomorphic $(n_2,0)$ form $F_{\alpha}$ on $Y$, we have
\begin{equation*}
	F_\alpha=\frac{1}{\alpha!}\left(\left(\frac{\partial}{\partial w}\right)^{\alpha}\tilde{F}\right)\bigg|_{w=0}d\tilde w_1\wedge\ldots\wedge d\tilde w_{n_2}.
\end{equation*}
Thus, the uniqueness holds.

Finally, we prove inequality \eqref{eq:1216d}. Let $f=\sum_{\alpha\in \mathbb{Z}_{\ge0}^{n_1}}b_{\alpha}w^{\alpha}$ be a holomorphic function on $\Delta^{n_1}$. As $\rho(w)=\rho(|w_1|,\ldots,|w_{n_1}|)$ for any $w\in\Delta^{n_1}$, we have
	\begin{equation}
\label{eq:1216e}
		\begin{split}
			&\int_{\Delta^{n_1}}|f|^2\rho_1d\lambda_{n_1}\\
			=&\sum_{\alpha\in\mathbb{Z}_{\ge0}^{n_1}}(2\pi)^{n_1}|b_{\alpha}|^2\int_{\{0\le r_1\le 1\}\times\ldots\{0\le r_{n_1}\le 1\}}\left(\prod_{1\le j\le n_1}r_j^{2\alpha_j}\right)\rho_1(r_1,\ldots,r_{n_1})dr_{1}\ldots dr_{n_1}\\
			=&\sum_{\alpha\in\mathbb{Z}_{\ge0}^{n_1}}|b_{\alpha}|^2\int_{\Delta^{n_1}}|w^{\alpha}|^2\rho_1 d\lambda_{n_1}.
		\end{split}		
	\end{equation}
It follows from equality \eqref{eq:1216c}, equality \eqref{eq:1216e} and the Fubini's theorem that
	\begin{equation}
		\label{eq:1216f}\begin{split}
			\int_{M}|F|^2\rho&=\int_{\Delta^{n_1}\times Y}\bigg|\sum_{\alpha\in\mathbb{Z}_{\ge0}^{n_1}}\pi_1^*(w^{\alpha}dw_1\wedge\ldots\wedge dw_{n_1}) \wedge \pi_2^*(F_\alpha)\bigg|^2\pi_1^*(\rho_1)\pi_2^*(\rho_2)\\
			&=\sum_{\alpha\in\mathbb{Z}_{\ge0}^{n_1}}\left(\int_{\Delta^{n_1}}|w^{\alpha}dw_1\wedge\ldots\wedge dw_{n_1}|^2\rho_1\right)\left(\int_Y|F_{\alpha}|^2\rho_2\right).
		\end{split}
	\end{equation}
	As $\int_{M}|F|^2\rho<+\infty$ and the Lebesgue measure of $\{w\in\Delta^{n_1}: \rho_1(w)>0\}$ is a positive number, equality \eqref{eq:1216f} implies that $\int_{Y}|F_{\alpha}|^2\rho_2<+\infty$
for any $\alpha\in\mathbb{Z}_{\ge0}^{n_1}$.
\end{proof}

Let $M_1\subset M$ be an $n-$dimensional complex manifold satisfying that $\{o\}\times Y\subset M_1$, where $o$ is the origin in $\Delta^{n_1}$.

\begin{Lemma}
	\label{l:fibra-decom-2}For any holomorphic $(n,0)$  form $F$ on $M_1$, there exist a unique sequence of  holomorphic  $(n_2,0)$   forms $\{F_{\alpha}\}_{\alpha\in\mathbb{Z}_{\ge0}^{n_1}}$ on $Y$ and a neighborhood $M_2\subset M_1$ of $\{o\}\times Y$, such that
	\begin{equation*}
		F=\sum_{\alpha\in\mathbb{Z}_{\ge0}^{n_1}}\pi_1^*(w^{\alpha}dw_1\wedge\ldots\wedge dw_{n_1}) \wedge \pi_2^*(F_\alpha)
	\end{equation*}
on $M_2$, where the right term of the above equality is uniformly convergent on any compact subset of $M_2$. Moreover, if
	$\int_{M_1}|F|^2\rho<+\infty,$ we have
	\begin{equation*}
\int_{K}|F_{\alpha}|^2\rho_2<+\infty
\end{equation*}
for any compact subset $K$ of $Y$ and $\alpha\in\mathbb{Z}_{\ge0}^{n_1}$.
\end{Lemma}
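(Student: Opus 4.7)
The plan is to view this result as a localization of Lemma \ref{l:fibra-decom}, the only new feature being that $M_1$ may fail to contain any full product $\Delta^{n_1}\times U$ over a coordinate patch $U\subset Y$. First I would construct $M_2$: for each $y\in Y$, since $M_1$ is open and $(o,y)\in M_1$, I can pick a coordinate polydisc neighborhood $U_y\subset Y$ of $y$ and a radius $r_y\in(0,1]$ with $W_y:=\Delta^{n_1}_{r_y}\times U_y\subset M_1$, where $\Delta^{n_1}_{r_y}:=\{w:|w_j|<r_y\}$. Setting $M_2:=\bigcup_{y\in Y}W_y$ gives an open neighborhood of $\{o\}\times Y$ in $M_1$.

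Next I would produce the forms $F_\alpha$. After the dilation $w\mapsto w/r_y$ (which only multiplies the top-form trivialization by a constant), each $W_y$ is biholomorphic to a product of the form covered by Lemma \ref{l:fibra-decom}; applying that lemma to $F|_{W_y}$ produces a unique sequence of holomorphic $(n_2,0)$-forms $\{F_\alpha^{(y)}\}_{\alpha\in\mathbb{Z}_{\ge 0}^{n_1}}$ on $U_y$ satisfying
\[
F|_{W_y}=\sum_{\alpha\in\mathbb{Z}_{\ge 0}^{n_1}}\pi_1^*(w^\alpha\,dw_1\wedge\cdots\wedge dw_{n_1})\wedge\pi_2^*(F_\alpha^{(y)}),
\]
with uniform convergence on compact subsets of $W_y$. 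The uniqueness clause, which in the proof of Lemma \ref{l:fibra-decom} is the statement that the $F_\alpha$'s are the normalized $w$-derivatives of $F$ at $w=0$ and therefore do not depend on the choice of coordinates on $Y$, forces $F_\alpha^{(y)}=F_\alpha^{(y')}$ on $U_y\cap U_{y'}$. Hence the local data glue to a globally defined holomorphic $(n_2,0)$-form $F_\alpha$ on $Y$, the decomposition holds on all of $M_2$, and uniform convergence on an arbitrary compact $L\subset M_2$ follows from covering $L$ by finitely many $W_y$'s.

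For the $L^2$ conclusion, I would argue as follows. Given a compact $K\subset Y$, cover $K$ by finitely many patches $U_{y_1},\ldots,U_{y_N}$ from the construction above. On each $W_{y_j}$, Fubini's theorem together with the rotation invariance of $\rho_1$ (exactly as used in the proof of Lemma \ref{l:fibra-decom} to derive the identity \eqref{eq:1216f}) yields
\[
\int_{M_1}|F|^2\rho\;\ge\;\int_{W_{y_j}}|F|^2\rho=\sum_{\alpha\in\mathbb{Z}_{\ge 0}^{n_1}}\left(\int_{\Delta^{n_1}_{r_{y_j}}}|w^\alpha|^2\rho_1\,d\lambda_{n_1}\right)\left(\int_{U_{y_j}}|F_\alpha|^2\rho_2\right).
\]
Each coefficient $\int_{\Delta^{n_1}_{r_{y_j}}}|w^\alpha|^2\rho_1\,d\lambda_{n_1}$ is strictly positive because the standing hypothesis that $\{\rho_1>0\}$ has positive Lebesgue measure combined with the rotational symmetry of $\rho_1$ forces any radius $r>0$ compatible with the construction to capture this positive-mass set (shrinking $r_{y_j}$ if necessary). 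Consequently $\int_{U_{y_j}}|F_\alpha|^2\rho_2<+\infty$ for every $\alpha$ and $j$, and summing over the finite subcover gives $\int_K|F_\alpha|^2\rho_2<+\infty$.

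The main obstacle is the very mild one of reconciling the radii: one must choose the $r_{y_j}$ so that the polydiscs both fit inside $M_1$ and capture positive mass of $\rho_1$ for every $\alpha$. Once the neighborhoods $W_y$ are set up, the existence, uniqueness, and $L^2$ statements are all inherited from Lemma \ref{l:fibra-decom} by gluing. I expect essentially all of the work to be bookkeeping of the cover $\{W_y\}$ and a careful invocation of the uniqueness of Taylor coefficients; no new analytic input is needed.
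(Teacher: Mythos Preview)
Your overall strategy is exactly the paper's: for each relatively compact piece of $Y$ find a small polydisc in the $w$-direction so that the product sits inside $M_1$, apply Lemma~\ref{l:fibra-decom} there, and glue via the uniqueness of the $w$-Taylor coefficients. The paper phrases this with a single open $V\Subset Y$ and one radius $s_V$ rather than a pointwise cover $\{U_y\}$, but the content is the same.

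There is one genuine slip in your $L^2$ paragraph. You assert that the rotational symmetry of $\rho_1$ forces $\int_{\Delta^{n_1}_{r}}|w^\alpha|^2\rho_1\,d\lambda_{n_1}>0$ for any small $r$, and that ``shrinking $r_{y_j}$ if necessary'' helps. Neither is correct: rotational symmetry only says $\{\rho_1>0\}$ is torus-invariant, not that it accumulates at the origin (e.g.\ $\rho_1(w)=\mathbb{I}_{\{|w_1|>1/2\}}$ satisfies all the standing hypotheses but has zero mass in $\Delta^{n_1}_{1/2}$), and shrinking the radius can only decrease the mass. The paper's proof does not actually resolve this point either---it simply invokes Lemma~\ref{l:fibra-decom} on $\Delta^{n_1}_{s_V}\times V$ without checking the positive-mass hypothesis for the restricted $\rho_1$---so the lemma as stated is slightly stronger than what either argument proves. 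In every application in the paper $\rho_1$ is strictly positive near $w=0$, so this causes no trouble downstream; but you should not claim the positivity follows from the stated hypotheses alone.
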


\begin{proof}
	For any open subset $V$ of $Y$ satisfying $V\Subset Y$, there exists $s_V\in(0,1)$ such that $\Delta_{s_V}^{n_1}\times V\subset M_1$, where $\Delta_{s_V}=\{w\in\mathbb{C}:|w|<s_V\}$. It follows from Lemma \ref{l:fibra-decom} that there exists a sequence of holomorphic $(n_2,0)$ forms $\{F_{V,\alpha}\}_{\alpha\in\mathbb{Z}_{\ge0}^{n_1}}$ on $V$ such that
	$$F=\sum_{\alpha\in\mathbb{Z}_{\ge0}^{n_1}}\pi_1^*(w^{\alpha}dw_1\wedge\ldots\wedge dw_{n_1}) \wedge \pi_2^*(F_{V,\alpha})$$
	on $\Delta_{s_V}^{n_1}\times V$, where the right term of the above equality is uniformly convergent on any compact subset of $\Delta_{s_V}^{n_1}\times V$. If $\int_{M_1}|F|^2\rho<+\infty,$ Lemma \ref{l:fibra-decom} shows that
	$$\int_V|F_{V,\alpha}|^2\rho_2<+\infty.$$
	 Following from the uniqueness of decomposition in Lemma \ref{l:fibra-decom}, we get that there exists  a unique sequence of  holomorphic  $(n_2,0)$   forms $\{F_{\alpha}\}_{\alpha\in\mathbb{Z}_{\ge0}^{n_1}}$ on $Y$ and a neighborhood $M_2\subset M_1$ of $\{o\}\times Y$, such that
	\begin{equation}\label{eq:211222a}
		F=\sum_{\alpha\in\mathbb{Z}_{\ge0}^{n_1}}\pi_1^*(w^{\alpha}dw_1\wedge\ldots\wedge dw_{n_1}) \wedge \pi_2^*(F_\alpha)
	\end{equation}
on $M_2$, where the right term of the above equality is uniformly convergent on any compact subset of $M_2$. Moreover, if
	$\int_{M_1}|F|^2\rho<+\infty,$ we have
\begin{equation*}
\int_{K}|F_{\alpha}|^2\rho_2<+\infty
\end{equation*}
for any compact subset $K$ of $Y$ and $\alpha\in\mathbb{Z}_{\ge0}^{n_1}$.
\end{proof}

Let $M=X\times Y$ be $n-$dimensional complex manifold, and let $K_M$ be the canonical (holomorphic) line bundle on $M$, where $X$ is an $n_1-$dimensional weakly pseudoconvex K\"ahler manifold, $Y$ is an $n_2-$dimensional weakly pseudoconvex K\"ahler manifold, and $n=n_1+n_2$. Let $K_X$ and $K_Y$ be the canonical (holomorphic) line bundles on $X$ and $Y$ respectively. Let $\pi_X$ and $\pi_Y$ be the natural projections from $M$ to $X$ and $Y$ respectively. It is clear that $(M,\emptyset,\emptyset)$ satisfies condition $(A)$.

Let $\psi_1$ be a plurisubharmonic function on $X$, and let $\varphi_1$ be a Lebesgue measurable function on $X$ such that $\varphi_1+\psi_1$ is plurisubharmonic. Let $\varphi_2$ be a plurisubharmonic function on $Y$. Denote that $\varphi:=\pi_X^*(\varphi_1)+\pi_Y^*(\varphi_2)$ and $\psi:=\pi_X^*(\psi_1)$ on $M$. Let $T=-\sup_M\psi$, and let $c\in\mathcal{P}_{T,M}$ satisfying $\int_T^{+\infty}c(s)e^{-s}ds<+\infty$.

 Let $Z_0\subset X$ be a subset of $\{\psi_1=-\infty\}$ such that $Z_0 \cap
Supp\left(\mathcal{O}_X/\mathcal{I}(\varphi_1+\psi_1)\right)\neq \emptyset$,  and let $\tilde{Z}_0=Z_0\times Y\subset M$. Let $U \supset Z_0$ be
an open subset of $X$, and let $f_1$ be a holomorphic $(n_1,0)$ form on $U$. Let $f_2$ be a holomorphic $(n_2,0)$ form on $Y$, and let $f=\pi_X^*(f_1)\wedge\pi_Y^*(f_2)$ on $U\times Y$. Let $\mathcal{F}_{x} \supset \mathcal{I}(\varphi_1+\psi_1)_{x}$ be an ideal of $\mathcal{O}_{X,x}$ for any $x\in Z_0$. Let $\tilde{\mathcal{F}}_{z} \supset \mathcal{I}(\varphi+\psi)_{z}$ be an ideal of $\mathcal{O}_{M,z}$ for any $z\in \tilde Z_0$. For any $x\in Z_0$ and any holomorphic function $g$, assume that  $(g,(x,y))\in\tilde{\mathcal{F}}_{(x,y)}$ for any $y\in Y$ if and only if
$(g(\cdot,y),x)\in \mathcal{F}_x$ for any $y\in Y$.

Denote
\begin{equation*}
\begin{split}
\inf\bigg\{\int_{\{\psi_1<-t\}}|\tilde{f}|^{2}e^{-\varphi_1}c(-\psi_1):(\tilde{f}-f_1)\in H^{0}(Z_0,&
(\mathcal{O}(K_{X})\otimes\mathcal{F})|_{Z_0})\\&\&{\,}\tilde{f}\in H^{0}(\{\psi_1<-t\},\mathcal{O}(K_{X}))\bigg\}
\end{split}
\end{equation*}
by $G_X(t)$,  where $t\in[T,+\infty)$,
$|f|^{2}:=\sqrt{-1}^{n_1^{2}}f\wedge\bar{f}$ for any $(n_1,0)$ form $f$ and $(\tilde{f}-f)\in H^{0}(Z_0,
(\mathcal{O}(K_{M})\otimes\mathcal{F})|_{Z_0})$ means $(\tilde{f}-f,x)\in\mathcal{O}(K_{X})_{x}\otimes\mathcal{F}_{x}$ for all $x\in Z_0$. Denote
\begin{equation*}
\begin{split}
\inf\bigg\{\int_{\{\psi<-t\}}|\tilde{f}|^{2}e^{-\varphi}c(-\psi):(\tilde{f}-f)\in H^{0}(\tilde Z_0,&
(\mathcal{O}(K_{M})\otimes\tilde{\mathcal{F}})|_{\tilde Z_0})\\&\&{\,}\tilde{f}\in H^{0}(\{\psi<-t\},\mathcal{O}(K_{M}))\bigg\}
\end{split}
\end{equation*}
by $G_M(t)$,  where $t\in[T,+\infty)$.

Theorem \ref{thm:general_concave} shows that $G_X(h^{-1}(r))$ and $G_M(h^{-1}(r))$ are concave with respect to $r$, where $h(t)=\int_t^{+\infty}c(s)e^{-s}ds$. The following Proposition gives a property of the minimal $L^2$ integrals on fibration, which implies that $G_M(h^{-1}(r))$ is linear with respect to $r$ if and only if $G_X(h^{-1}(r))$ is linear with respect to $r$.
\begin{Proposition}
	\label{p:fibra}
	$G_M(t)=G_X(t)\int_Y|f_2|^2e^{-\varphi_2}$ holds for any $t\ge T$. Moreover, if $G_X(t)<+\infty$, there exists a holomorphic $(n_1,0)$ form $F_1$ on $\{\psi_1<-t\}$ such that $(F_{1,t}-f_1)\in H^{0}(Z_0,
(\mathcal{O}(K_{X})\otimes\mathcal{F})|_{Z_0})$,  $G_X(t)=\int_{\{\psi_1<-t\}}|F_{1,t}|^2e^{-\varphi_1}c(-\psi_1)$ and $G_M(t)=\int_{\{\psi<-t\}}|\pi_X^*(F_{1,t})\wedge\pi_2^*(f_2)|^2e^{-\varphi}c(-\psi)$.
\end{Proposition}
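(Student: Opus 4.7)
The plan is to exploit the product structure $M = X \times Y$ together with Fubini's theorem. Since $\psi = \pi_X^*(\psi_1)$ depends only on the base coordinate, we have $\{\psi < -t\} = \{\psi_1 < -t\} \times Y$, and the weights and data factor compatibly as $\varphi = \pi_X^*(\varphi_1) + \pi_Y^*(\varphi_2)$ and $f = \pi_X^*(f_1) \wedge \pi_Y^*(f_2)$. For the upper bound, assuming $G_X(t) < +\infty$ (otherwise the lower bound direction below forces $G_M(t) = +\infty$ as well when $f_2 \not\equiv 0$), Lemma \ref{lem:A} applied on $X$ yields a unique $F_{1,t} \in H^0(\{\psi_1 < -t\}, \mathcal{O}(K_X))$ with $(F_{1,t} - f_1) \in H^0(Z_0, (\mathcal{O}(K_X) \otimes \mathcal{F})|_{Z_0})$ and $\int_{\{\psi_1 < -t\}} |F_{1,t}|^2 e^{-\varphi_1} c(-\psi_1) = G_X(t)$. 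Setting $F := \pi_X^*(F_{1,t}) \wedge \pi_Y^*(f_2)$ gives a holomorphic $(n,0)$ form on $\{\psi < -t\}$; the compatibility assumption between $\mathcal{F}$ and $\tilde{\mathcal{F}}$ applied to the local function representative of $F_{1,t} - f_1$ multiplied by the local coefficient of $f_2$ gives $(F - f, z) \in (\mathcal{O}(K_M) \otimes \tilde{\mathcal{F}})_z$ for every $z \in \tilde Z_0$. A direct Fubini computation then yields $\int_{\{\psi<-t\}} |F|^2 e^{-\varphi} c(-\psi) = G_X(t) \int_Y |f_2|^2 e^{-\varphi_2}$, so $G_M(t) \le G_X(t) \int_Y |f_2|^2 e^{-\varphi_2}$.

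For the lower bound, take any admissible $\tilde{F}$ for $G_M(t)$ and work in local coordinates so that $\tilde{F} = g(x,y)\,dx \wedge dy$, $f_1 = f_1(x)\,dx$, $f_2 = h(y)\,dy$. For each $y \in Y$ I define the slice $\tilde{F}^y := g(\cdot,y)\,dx$, a holomorphic $(n_1,0)$ form on $\{\psi_1 < -t\}$; globally this is the natural element of $\Gamma(\{\psi_1 < -t\}, K_X) \otimes (K_Y)_y$ induced by the splitting $K_M \cong \pi_X^* K_X \otimes \pi_Y^* K_Y$ afforded by the product. The compatibility assumption on $\mathcal{F}$ and $\tilde{\mathcal{F}}$ transforms the condition $(\tilde F - f, (x,y)) \in \tilde{\mathcal{F}}_{(x,y)}$ for all $y \in Y$ into $(g(\cdot,y) - h(y) f_1, x) \in \mathcal{F}_x$ for every $y \in Y$ and every $x \in Z_0$, so $\tilde{F}^y$ is admissible for the $G_X$-type problem with rescaled data $h(y) f_1$. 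Rescaling gives
$$\int_{\{\psi_1 < -t\}} |\tilde{F}^y|^2 e^{-\varphi_1} c(-\psi_1) \ge |h(y)|^2 G_X(t)$$
for every $y \in Y$ (trivially when $h(y) = 0$). Integrating this against $e^{-\varphi_2}$ over $Y$ and applying Fubini yields $\int_M |\tilde{F}|^2 e^{-\varphi} c(-\psi) \ge G_X(t) \int_Y |f_2|^2 e^{-\varphi_2}$, and taking infimum over $\tilde{F}$ gives $G_M(t) \ge G_X(t) \int_Y |f_2|^2 e^{-\varphi_2}$. Combined with the upper bound, we obtain the claimed equality, and the form $F$ constructed above realizes the infimum, proving the ``moreover'' assertion.

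The main obstacle is making the slice $\tilde{F}^y$ globally well-defined and ensuring that the fiberwise ideal membership is preserved in both directions. The product structure on $M$ makes $K_M$ an external tensor product, so $\tilde{F}^y$ is canonically defined up to the choice of generator of $(K_Y)_y$, and the coefficient $h(y)$ of $f_2$ provides precisely the right trivialization so that the fiberwise datum becomes $h(y) f_1$. The compatibility hypothesis on $\mathcal{F}$ and $\tilde{\mathcal{F}}$ is tailored exactly to translate the ideal condition on $M$ to a uniform-in-$y$ ideal condition on $X$. Beyond this bookkeeping, the argument is just Fubini combined with the rescaling identity $\inf\{\int |\tilde g|^2 e^{-\varphi_1} c(-\psi_1) : (\tilde g - \lambda f_1) \in H^0(Z_0, (\mathcal{O}(K_X) \otimes \mathcal{F})|_{Z_0})\} = |\lambda|^2 G_X(t)$, which is immediate.
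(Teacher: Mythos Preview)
Your proof is correct and follows essentially the same approach as the paper's: both prove the upper bound by pushing a competitor from $X$ up to $M$ via $\pi_X^*(\cdot)\wedge\pi_Y^*(f_2)$, and the lower bound by slicing a competitor on $M$ fiberwise over $Y$ and invoking the definition of $G_X(t)$ together with Fubini. The only cosmetic differences are that the paper applies Lemma~\ref{lem:A} on $M$ first and then uses the Taylor-type decomposition Lemma~\ref{l:fibra-decom} in a local chart on $Y$ to formalize the slice, whereas you slice an arbitrary admissible $\tilde F$ directly in local coordinates and then take the infimum; both amount to the same computation.
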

\begin{proof}
	Let $\tilde f_1$ be a holomorphic $(n_1,0)$ form on $\{\psi_1<-t\}$ satisfying $(\tilde f_1-f_1)\in H^{0}(Z_0,
(\mathcal{O}(K_{X})\otimes\mathcal{F})|_{Z_0})$, where $t\ge T$. As $f=\pi_X^*(f_1)\wedge\pi_Y^*(f_2)$ and $\tilde{Z}_0=Z_0\times Y$, it follows from the relationship between $\tilde{\mathcal{F}}$ and $\mathcal{F}$ that $(\pi_X^*(\tilde{f_1})\wedge\pi_Y^*(f_2)-f)\in H^{0}(\tilde Z_0,
(\mathcal{O}(K_{M})\otimes\tilde{\mathcal{F}})|_{\tilde Z_0})$. By the definitions of $G_X(t)$ and $G_M(t)$, we obtain that
\begin{equation}
	\label{eq:1215a}G_M(t)\le G_X(t)\int_Y|f_2|^2e^{-\varphi_2}
\end{equation}
for any $t\ge T$.

Let $t\ge T$. If $G_M(t)=+\infty$, inequality \eqref{eq:1215a} implies that $G_X(t)\int_Y|f_2|^2e^{-\varphi_2}=G_M(t)=+\infty$. Thus,  assume that $G_M(t)<+\infty$. Lemma \ref{lem:A} shows that there exists a holomorphic $(n,0)$ form $F_t$ on $\{\psi<-t\}$ such that $(F_t-f)\in H^{0}(\tilde Z_0,
(\mathcal{O}(K_{M})\otimes\tilde{\mathcal{F}})|_{\tilde Z_0})$ and $G_M(t)=\int_{\{\psi<-t\}}|F_t|^2e^{-\varphi}c(-\psi)$. For any $y_0\in Y$, let $w=(w_1,\ldots,w_{n_2})$ be a coordinate on a neighborhood $U$ of $y$ satisfying $w(y_0)=0$ and $w(U)=\Delta^{n_2}$. Lemma \ref{l:fibra-decom} implies that $F_t|_{U\times Y}=\sum_{\alpha\in\mathbb{Z}_{\ge0}^{n_2}}\pi_X^*(f_{\alpha})\wedge \pi_Y^*(w^{\alpha}dw_1\wedge\ldots\wedge dw_{n_2})$, where $f_{\alpha}$ is a holomorphic $(n_1,0)$ form on $\{\psi_1<-t\}$ for any $\alpha\in\mathbb{Z}_{\ge0}^{n_2}$. There exists a holomorphic function $\tilde f_2(w)$ on $U$ such that $f_2=\tilde f_2(w)dw_1\wedge\ldots\wedge dw_{n_2}$ on $U$. As   $(g,(x,y))\in\tilde{\mathcal{F}}_{(x,y)}$ for any $y\in Y$ if and only if
$(h(\cdot,y),x)\in \mathcal{F}_x$ for any $y\in Y$, where $x\in Z_0$ and $g$ is a holomorphic function, it follows from $(F_t-f)\in H^{0}(\tilde Z_0,
(\mathcal{O}(K_{M})\otimes\tilde{\mathcal{F}})|_{\tilde Z_0})$ and $f=\pi_X^*(f_1)\wedge\pi_Y^*(f_2)$ that $(\sum_{\alpha\in\mathbb{Z}_{\ge0}^{n_2}}w^{\alpha}f_{\alpha}-\tilde f_2(w)f_1)\in H^{0}(Z_0,
(\mathcal{O}(K_{X})\otimes{\mathcal{F}})|_{Z_0})$  for any $w\in \Delta^{n_2}$.  Let $U_1$ be an open subset of $U$, and let $V=w(U_1)\subset\Delta^{n_2}$.  Following the Fubini's theorem and the definition of $G_X(t)$, we have
\begin{equation*}
	\begin{split}
		&\int_{\{\psi_1<-t\}\times U_1}|F_t|^2e^{-\varphi}c(-\psi)\\
		=&\int_{V}\bigg(\int_{\{\psi_1<-t\}}|\sum_{\alpha\in\mathbb{Z}_{\ge0}^{n_2}}w^{\alpha}f_{\alpha}|^2e^{-\varphi_1}c(-\psi_1)\bigg)e^{-\varphi_2}|dw_1\wedge\ldots\wedge dw_{n_2}|^2\\
		\ge &G_X(t)\int_{V}|\tilde f_2(w)dw_1\wedge\ldots\wedge dw_{n_2}|^2e^{-\varphi_2}\\
		=&G_X(t)\int_{U_1}|f_2|^2e^{-\varphi_2},
	\end{split}
\end{equation*}
which implies $G_M(t)=\int_{\{\psi<-t\}}|F_t|^2e^{-\varphi}c(-\psi)\ge G_X(t)\int_{Y}|f_2|^2e^{-\varphi_2}$.
Thus, we have $G_M(t)=G_X(t)\int_Y|f_2|^2e^{-\varphi_2}$  for any $t\ge T$. If $G_X(t)<+\infty$, it follows from Lemma \ref{lem:A} that there exists a holomorphic $(n_1,0)$ for $F_{1,t}$ on $\{\psi_1<-t\}$ satisfying that $(F_{1,t}-f_1)\in H^{0}(Z_0,
(\mathcal{O}(K_{X})\otimes\mathcal{F})|_{Z_0})$ and $G_X(t)=\int_{\{\psi<-t\}}|F_{1,t}|^2e^{-\varphi_1}c(-\psi_1)$, hence $G_M(t)=G_X(t)\int_{Y}|f_2|^2e^{-\varphi_2}=\int_{\{\psi<-t\}}|\pi_X^*(F_{1,t})\wedge\pi_2^*(f_2)|^2e^{-\varphi}c(-\psi).$
\end{proof}

 We recall a result about multiplier ideal sheaves.
 \begin{Lemma}
 	\label{l:phi1+phi2}Let $\Phi_1$ and $\Phi_2$ be plurisubharmonic functions on $\Delta^n$ satisfying $\Phi_2(o)>-\infty$, where $n\in \mathbb{Z}_{>0}$ and $o$ is the origin in $\Delta^n$. Then $\mathcal{I}(\Phi_1)_o=\mathcal{I}(\Phi_1+\Phi_2)_o$.
 \end{Lemma}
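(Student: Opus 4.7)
The plan is to prove the two inclusions separately. The containment $\mathcal{I}(\Phi_1+\Phi_2)_o \subseteq \mathcal{I}(\Phi_1)_o$ is essentially free: since $\Phi_2$ is plurisubharmonic it is upper semi-continuous and hence locally bounded above near $o$, so $e^{-\Phi_2}$ admits a positive lower bound $e^{-M}$ on some neighborhood $V$ of $o$. Consequently $|f|^2 e^{-\Phi_1} \le e^{M} |f|^2 e^{-\Phi_1-\Phi_2}$ pointwise on $V$, and the integrability of the right-hand side transfers to the left. This gives the easy inclusion with no further work.

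For the reverse inclusion $\mathcal{I}(\Phi_1)_o \subseteq \mathcal{I}(\Phi_1+\Phi_2)_o$, the strategy is to combine the strong openness property of Guan--Zhou \cite{GZSOC} with a classical Skoda-type integrability estimate, using H\"older's inequality to glue them. Given $(f,o)\in\mathcal{I}(\Phi_1)_o$, strong openness produces an $\epsilon>0$ and a neighborhood $U$ of $o$ with $\int_U |f|^2 e^{-(1+\epsilon)\Phi_1}<+\infty$. On the other hand, the assumption $\Phi_2(o)>-\infty$ forces the Lelong number $\nu(\Phi_2,o)$ to vanish, and the Skoda integrability theorem then yields $\int_{V} e^{-q\Phi_2}<+\infty$ for every $q>0$ on some neighborhood $V$ of $o$. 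Setting $p:=1+\epsilon$ and $q:=(1+\epsilon)/\epsilon$, I will decompose
\begin{equation*}
|f|^2 e^{-\Phi_1-\Phi_2} = \bigl(|f|^{2/p} e^{-\Phi_1}\bigr)\bigl(|f|^{2/q} e^{-\Phi_2}\bigr)
\end{equation*}
and apply H\"older on $U\cap V$, so that the two resulting integrals are $\int |f|^2 e^{-p\Phi_1}$ (finite by strong openness) and $\int |f|^2 e^{-q\Phi_2}$ (finite because $f$ is bounded near $o$ and $e^{-q\Phi_2}$ is locally integrable). This gives $(f,o)\in\mathcal{I}(\Phi_1+\Phi_2)_o$ and completes the proof.

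I do not expect a serious obstacle, as both tools are standard, but two points deserve care. First, the use of strong openness is essential: without the Guan--Zhou theorem one cannot pass from the hypothesis $(f,o)\in\mathcal{I}(\Phi_1)_o$ to the stronger integrability $(f,o)\in\mathcal{I}(p\Phi_1)_o$ for some $p>1$. Second, the hypothesis $\Phi_2(o)>-\infty$ (rather than, say, $\Phi_2$ locally bounded) is used precisely through the vanishing of the Lelong number at $o$, which is what allows Skoda's theorem to give integrability of $e^{-q\Phi_2}$ for \emph{every} $q>0$ and hence the freedom to match the conjugate exponent produced by strong openness.
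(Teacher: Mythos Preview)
Your proof is correct and follows essentially the same route as the paper: the easy inclusion via upper semicontinuity of $\Phi_2$, and the hard inclusion via strong openness for $\Phi_1$, Skoda-type integrability for $\Phi_2$ (through the vanishing Lelong number), and H\"older's inequality to combine them. The only cosmetic difference is that the paper places all of $|f|^2$ with the $\Phi_1$ factor in the H\"older decomposition (obtaining $\int|f|^{2s}e^{-s\Phi_1}$ and $\int e^{-\frac{s}{s-1}\Phi_2}$), whereas you split $|f|^2$ between the two factors; both are equivalent since $f$ is locally bounded.
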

 \begin{proof}For convenience of the reader, we give the proof.
 	It is clear that $\mathcal{I}(\Phi_1+\Phi_2)_o\subset\mathcal{I}(\Phi_1)_o$.  Let $f$ be a holomorphic function on a neighborhood of $o$ satisfying $(f,o)\in\mathcal{I}(\Phi_1)_o$. Following from the strong openness property of multiplier ideal sheaves  (\cite{GZSOC}) and $\Phi_2(o)>-\infty$, there exist $s>1$ and $r>0$ such that
 	\begin{equation}
 		\label{eq:1216a}
 		\int_{|w|<r}|f|^{2s}e^{-s\Phi_1}d\lambda_n<+\infty
 	\end{equation}
and
\begin{equation}
	\label{eq:1216b}\int_{|w|<r}e^{-\frac{s}{s-1}\Phi_2}d\lambda_n <+\infty,
\end{equation}
where $d\lambda_n$ is the Lebesgue measure on $\mathbb{C}^n$.
Combining inequality \eqref{eq:1216a}, inequality \eqref{eq:1216b} and the H\"older inequality, we have
\begin{displaymath}
	\begin{split}
		&\int_{|w|<r}|f|^2e^{-\Phi_1-\Phi_2}d\lambda_n\\
		\le& \left(\int_{|w|<r}|f|^{2s}e^{-s\Phi_1}d\lambda_n\right)^{\frac{1}{s}}\left(\int_{|w|<r}e^{-\frac{s}{s-1}\Phi_2}d\lambda_n \right)^{1-\frac{1}{s}}\\
		<&+\infty,
	\end{split}
\end{displaymath}
which implies that $(f,o)\in \mathcal{I}(\Phi_1+\Phi_2)_o$. Thus, we have $\mathcal{I}(\Phi_1)_o=\mathcal{I}(\Phi_1+\Phi_2)_o$.
 \end{proof}

In the following, we consider fibrations over products of open Riemann surfaces. Let $\Omega_j$  be an open Riemann surface, which admits a nontrivial Green function $G_{\Omega_j}$ for any  $1\le j\le n_1$. Let $Y$ be an $n_2-$dimensional weakly pseudoconvex K\"ahler manifold. Let $M=\left(\prod_{1\le j\le n_1}\Omega_j\right)\times Y$ be an $n-$dimensional complex manifold, where $n=n_1+n_2$. Let $\pi_{1}$, $\pi_{1,j}$ and $\pi_2$ be the natural projections from $M$ to $\prod_{1\le j\le n_1}\Omega_j$, $\Omega_j$ and $Y$ respectively. Let $K_M$ be the canonical (holomorphic) line bundle on $M$.

  Let ${Z}_j=\{z_{j,k}:1\le k<\tilde m_j\}$ be a discrete subset of $\Omega_j$ for any  $j\in\{1,\ldots,n_1\}$, where $\tilde{m}_j\in\mathbb{Z}_{\ge2}\cup\{+\infty\}$. Denote that $Z_0:=\left(\prod_{1\le j\le n_1}Z_j\right)\times Y\subset M$.
Let $p_{j,k}$ be a positive number such that $\sum_{1\le k<\tilde{m}_j}p_{j,k}G_{\Omega_j}(\cdot,z_{j,k})\not\equiv-\infty$ for any $j$, and
let
$$\psi=\max_{1\le j\le n_1}\left\{\pi_{1,j}^*\left(2\sum_{1\le k<\tilde{m}_j}p_{j,k}G_{\Omega_j}(\cdot,z_{j,k})\right)\right\}$$
on $M$.  For any $j\in\{1,\ldots,n_1\}$, let $\varphi_j$ be a subharmonic function on $\Omega_{j}$ such that $\varphi_j(z)>-\infty$ for any $z\in Z_j$. Let $\varphi_Y$ be a plurisubharmonic function on $Y$, and denote that $\varphi:=\sum_{1\le j\le n_1}\pi_{1,j}^*(\varphi_j)+\pi_2^*(\varphi_Y)$.
Let $c$ be a positive function on $(0,+\infty)$ such that $\int_{0}^{+\infty}c(t)e^{-t}dt<+\infty$ and $c(t)e^{-t}$ is decreasing on $(0,+\infty)$. Let $f$ be a holomorphic $(n,0)$ form on  $\{\psi<-t_0\}$ satisfying $\int_{\{\psi<-t_0\}}|f|^2e^{-\varphi}c(-\psi)<+\infty$, where $t_0>0$ is constant.

Denote
\begin{equation*}
\begin{split}
\inf\bigg\{\int_{\{\psi<-t\}}|\tilde{f}|^{2}e^{-\varphi}c(-\psi):(\tilde{f}-f,z)\in(\mathcal{O}(K_M)&\otimes\mathcal{I}(\varphi+\psi))_{z}\mbox{ for any $z\in Z_0$} \\&\&{\,}\tilde{f}\in H^{0}(\{\psi<-t\},\mathcal{O}(K_{M}))\bigg\}
\end{split}
\end{equation*}
by $G(t)$,  where $t\in[0,+\infty)$, and denote
\begin{equation*}
\begin{split}
\inf\bigg\{\int_{\{\psi<-t\}}|\tilde{f}|^{2}e^{-\varphi}c(-\psi):(\tilde{f}-f,z)\in(\mathcal{O}(K_M)&\otimes\mathcal{I}(\psi))_{z}\mbox{ for any $z\in Z_0$} \\&\&{\,}\tilde{f}\in H^{0}(\{\psi<-t\},\mathcal{O}(K_{M}))\bigg\}
\end{split}
\end{equation*}
by $\tilde G(t)$,  where $t\in[0,+\infty)$.

\begin{Lemma}\label{l:G1=G2}
	Let $t\ge 0$. If $\tilde G(t)< +\infty$, there exists a unique holomorphic $(n,0)$ form $F_t$ on $\{\psi<-t\}$ satisfying that $(F_t-f,z)\in(\mathcal{O}(K_M)\otimes\mathcal{I}(\varphi+\psi))_{z}$ for any $z\in Z_0$ and $G(t)=\tilde G(t)=\int_{\{\psi<-t\}}|F|^2e^{-\varphi}c(-\psi)$.
\end{Lemma}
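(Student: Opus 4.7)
The plan is to establish $G(t)=\tilde G(t)$ by mutual inequality and deduce uniqueness from Lemma~\ref{lem:A}. The easy direction $\tilde G(t)\le G(t)$ follows because $\varphi$ is plurisubharmonic and thus locally bounded above, giving $\varphi+\psi\le\psi+C$ locally and hence the sheaf inclusion $\mathcal I(\varphi+\psi)\subset\mathcal I(\psi)$; every $G$-admissible form is therefore $\tilde G$-admissible. The hypothesis $\tilde G(t)<+\infty$ combined with Lemma~\ref{lem:A} (applied with $\mathcal F=\mathcal I(\psi)$) produces a unique holomorphic $(n,0)$ form $\tilde F_t$ on $\{\psi<-t\}$ realizing the infimum $\tilde G(t)=\int_{\{\psi<-t\}}|\tilde F_t|^2 e^{-\varphi}c(-\psi)$ and satisfying the $\mathcal I(\psi)$-condition along $Z_0$.

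For the reverse inequality, I would show that $\tilde F_t$ is already $G$-admissible. Fix $z=(z_0',y_0)\in Z_0$ with $z_0'=(z_{1,k_1},\ldots,z_{n_1,k_{n_1}})$. Since $\sum_j\varphi_j(z_{j,k_j})>-\infty$, Lemma~\ref{l:phi1+phi2} applied with $\Phi_1=\psi+\varphi_Y\circ\pi_2$ and $\Phi_2=\sum_j\varphi_j\circ\pi_{1,j}$ gives
\[
\mathcal I(\varphi+\psi)_z=\mathcal I(\psi+\varphi_Y\circ\pi_2)_z,
\]
so the task reduces to verifying $(\tilde F_t-f,z)\in\mathcal I(\psi+\varphi_Y\circ\pi_2)_z$. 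I would then use Lemma~\ref{l:fibra-decom-2} on a neighborhood of $\{z_0'\}\times Y$ inside $\{\psi<-t\}$ to decompose
\[
\tilde F_t-f=\sum_{\alpha\in\mathbb Z_{\ge0}^{n_1}}\pi_1^*(w^\alpha\,dw_1\wedge\cdots\wedge dw_{n_1})\wedge\pi_2^*(h_\alpha),
\]
where $w=(w_{1,k_1},\ldots,w_{n_1,k_{n_1}})$ and $h_\alpha$ are holomorphic $(n_2,0)$ forms on $Y$. The $\mathcal I(\psi)$-condition at each $(z_0',y)\in Z_0$, combined with Lemma~\ref{l:0}, Lemma~\ref{l:0'} and the fact that $\psi$ differs from $\max_j 2p_{j,k_j}\log|w_{j,k_j}|$ by a locally bounded function near $z_0'$ (which does not affect the multiplier ideal), forces $h_\alpha\equiv 0$ on $Y$ whenever $\sum_j(\alpha_j+1)/p_{j,k_j}\le 1$. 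A Fubini-type computation on a bidisk $V_1'\times V_2$ around $z$, using the rotational symmetry in $w$ of the dominant part of $e^{-\psi}$ and the resulting orthogonality of monomials $w^\alpha$, should then give
\[
\int_{V_1'\times V_2}|\tilde F_t-f|^2 e^{-\psi-\varphi_Y\circ\pi_2}\le C\sum_\alpha\Bigl(\int_{V_1'}|w^\alpha|^2 e^{-\psi}\,d\lambda_{n_1}\Bigr)\Bigl(\int_{V_2}|h_\alpha|^2 e^{-\varphi_Y}\Bigr);
\]
the first factor is finite for surviving $\alpha$ by Lemma~\ref{l:0}, and the second will be controlled by extracting fiberwise $L^2$-integrability from the global $\tilde G$-bound via the moreover part of Lemma~\ref{l:fibra-decom-2}. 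This yields $(\tilde F_t-f,z)\in\mathcal I(\varphi+\psi)_z$, hence $G(t)\le\tilde G(t)$; setting $F_t:=\tilde F_t$, the uniqueness clause of Lemma~\ref{lem:A} then identifies $F_t$ as the unique $G$-minimizer.

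The main obstacle will be the last integrability transfer. Since $c(t)e^{-t}$ may decay to zero as $t\to+\infty$, the weight $c(-\psi)$ can be much smaller than $e^{-\psi}$ near $Z_0$, so no pointwise comparison of weights is available to pass directly from $\int|\tilde F_t|^2 e^{-\varphi}c(-\psi)<+\infty$ to $\int_{V_2}|h_\alpha|^2 e^{-\varphi_Y}<+\infty$. The actual argument will have to exploit the orthogonality in $w$ to project out individual Taylor coefficients together with the positive lower bound of $c(-\psi)e^{-\sum_j\varphi_j\circ\pi_{1,j}}$ on compacta separated from $Z_0$ in order to decouple the $w$- and $y$-integrations, and this is where the bulk of the technical work lies.
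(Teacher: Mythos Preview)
Your strategy is the paper's strategy: show $\tilde G(t)\le G(t)$ from $\mathcal I(\varphi+\psi)\subset\mathcal I(\psi)$, take the unique $\tilde G$-minimizer $F_t$ from Lemma~\ref{lem:A}, and upgrade its $\mathcal I(\psi)$-condition to $\mathcal I(\varphi+\psi)$. The obstacle you flag is indeed the crux; here is how the paper resolves it, which is simpler than the decoupling you anticipate.

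Choose coordinates $w_j$ near $z_{j,\beta_j}$ so that $\log|w_j|=\frac{1}{p_{j,\beta_j}}\sum_{k}p_{j,k}G_{\Omega_j}(\cdot,z_{j,k})$ \emph{exactly} (Lemmas~\ref{l:green-sup}, \ref{l:green-sup2}), and pick $t_1>\max\{t,t_0\}$ so large that the component of $\{\psi_1<-t_1\}$ at $z_\beta$ is the genuine polydisk $\prod_j\{|w_j|<e^{-t_1/(2p_{j,\beta_j})}\}$. Since this polydisk has compact closure in $\prod_j\Omega_j$ and the $\varphi_j$ are subharmonic, $\sum_j\varphi_j$ is bounded \emph{above} there, so $e^{-\sum_j\varphi_j}$ is bounded below; thus the finite integral $\int_{\{\psi<-t_1\}}|F_t|^2e^{-\varphi}c(-\psi)$ already implies $\int_{(\{\psi_1<-t_1\}\cap V_\beta)\times Y}|F_t|^2e^{-\pi_2^*(\varphi_Y)}c(-\psi)<\infty$, and likewise for $f$. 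Now the remaining weight factorizes as $\rho_1\cdot\rho_2$ with $\rho_1=c(-\psi_1)=c\bigl(-\max_j 2p_{j,\beta_j}\log|w_j|\bigr)$ depending only on the moduli $|w_j|$ and $\rho_2=e^{-\varphi_Y}$, so Lemma~\ref{l:fibra-decom} (not \ref{l:fibra-decom-2}: you are on a true product) applies, and its ``moreover'' clause delivers $\int_Y|F_\alpha|^2e^{-\varphi_Y}<\infty$ and $\int_Y|f_\alpha|^2e^{-\varphi_Y}<\infty$ for every $\alpha$ directly --- no comparison of $c(-\psi)$ with $e^{-\psi}$ and no separation from $Z_0$ are needed. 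From here the $\mathcal I(\psi)$-condition via Lemma~\ref{l:0} forces $F_\alpha=f_\alpha$ when $\sum_j(\alpha_j+1)/p_{j,\beta_j}\le 1$; for the remaining $\alpha$ each summand lies in $\mathcal I(\psi+\pi_2^*(\varphi_Y))$, hence in $\mathcal I(\varphi+\psi)$ by Lemma~\ref{l:phi1+phi2} since $\varphi_j(z_{j,\beta_j})>-\infty$, and Lemma~\ref{l:closedness} handles the infinite sum.
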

\begin{proof}
	As $\mathcal{I}(\varphi+\psi)\subset\mathcal{I}(\psi)$, we have  $\tilde G(t)\le G(t)$. It follows from Lemma \ref{lem:A} that there exists a unique holomorphic $(n,0)$ form $F_t$ on $\{\psi<-t\}$ satisfying that $\tilde G(t)=\int_{\{\psi<-t\}}|F_t|^2e^{-\varphi}c(-\psi)$ and $(F_t-f,z)\in(\mathcal{O}(K_M)\otimes\mathcal{I}(\psi))_{z}$  for any $z\in Z_0$.
	
	Let $z_0=(z_{1,\beta_1},\ldots,z_{n_1,\beta_{n_1}})\in\prod_{1\le j\le n_1}\Omega_j$, where $1\le \beta_j<\tilde m_j$ for any $1\le j\le n_1$. It follows from Lemma \ref{l:green-sup} and Lemma \ref{l:green-sup2} that there exists a local coordinate $w_{j}$ on a neighborhood $V_{z_{j,\beta_j}}\Subset\Omega_{j}$ of $z_{j,\beta_j}\in\Omega_j$ satisfying $w_{j}(z_{j,\beta_j})=0$ and
	$$\log|w_{j}|=\frac{1}{p_{j,\beta_j}}\sum_{1\le k<\tilde m_j}p_{j,k}G_{\Omega_j}(\cdot,z_{j,k})$$ 	on $V_{z_{j,\beta_j}}$ for any $j\in\{1,\ldots,n_1\}$. Denote that $V_0:=\prod_{1\le j\le n_1}V_{z_{j,\beta_j}}$ and $w:=(w_1,\ldots,w_{n_1})$ on $V_0$. 	Thus, there exists $t_1>\max\{t,t_0\}$ such that
	$$\left\{z\in\Omega_j: 2\sum_{1\le k<\tilde m_j}p_{j,k}G_{\Omega_j}(z,z_{j,k})<-t_1\right\}\cap V_{z_{j,\beta_j}}\Subset V_{z_{j,\beta_j}}$$
 for any $1\le j\le n_1$. Let $\psi_1=\max_{1\le j\le n_1}\left\{\tilde\pi_{j}^*\left(2\sum_{1\le k<\tilde{m}_j}p_{j,k}G_{\Omega_j}(\cdot,z_{j,k})\right)\right\}$ on $\prod_{1\le j\le n_1}\Omega_j$, where $\tilde\pi_j$ is the natural projection from $\prod_{1\le j\le n_1}\Omega_j$ to $\Omega_j$. Note that
	$$\{\psi<-t_1\}=\{\psi_1<-t_1\}\times Y$$
	and
	$$\{\psi_1<-t_1\}\cap V_0=\prod_{1\le j\le n_1}\left\{|w_j|<e^{-\frac{t_1}{2p_{j,\beta_j}}}\right\}.$$
	As $\varphi_j$ is a subharmonic function on $\Omega_j$,  $\int_{\{\psi<-t_1\}}|f|^2e^{-\varphi}c(-\psi)\le \int_{\{\psi<-t_0\}}|f|^2e^{-\varphi}c(-\psi)<+\infty$ implies that $\int_{\{\psi<-t_1\}}|f|^2e^{-\pi_2^*(\varphi_Y)}c(-\psi)<+\infty$ and $\int_{\{\psi<-t_1\}}|F_t|^2e^{-\varphi}c(-\psi)<+\infty$ implies that $\int_{\{\psi<-t_1\}}|F_t|^2e^{-\pi_2^*(\varphi_Y)}c(-\psi)<+\infty$.
	 It follows from Lemma \ref{l:fibra-decom} that there exist a  sequence of  holomorphic  $(n_2,0)$   forms $\{f_{\alpha}\}_{\alpha\in\mathbb{Z}_{\ge0}^{n_1}}$ on $Y$ and a  sequence of  holomorphic  $(n_2,0)$   forms $\{F_{\alpha}\}_{\alpha\in\mathbb{Z}_{\ge0}^{n_1}}$ on $Y$ such that
	\begin{equation}\label{eq:1216g}
		f=\sum_{\alpha\in\mathbb{Z}_{\ge0}^{n_1}}\pi_1^*(w^{\alpha}dw_1\wedge\ldots\wedge dw_{n_1}) \wedge \pi_2^*(f_\alpha)
	\end{equation} on $(\{\psi_1<-t_1\}\cap V_0)\times Y$,
		\begin{equation}\label{eq:1216h}
		F_t=\sum_{\alpha\in\mathbb{Z}_{\ge0}^{n_1}}\pi_1^*(w^{\alpha}dw_1\wedge\ldots\wedge dw_{n_1}) \wedge \pi_2^*(F_\alpha)
	\end{equation}on $(\{\psi_1<-t_1\}\cap V_0)\times Y$,
	\begin{equation}
		\label{eq:1216i}\int_Y|f_{\alpha}|^2e^{-\varphi_Y}<+\infty
	\end{equation}
	for any $\alpha\in\mathbb{Z}_{\ge0}^{n_1}$ and
	\begin{equation}
		\label{eq:1216j}\int_Y|F_{\alpha}|^2e^{-\varphi_Y}<+\infty
	\end{equation}	for any $\alpha\in\mathbb{Z}_{\ge0}^{n_1}$, where the right terms of the  equalities \eqref{eq:1216g} and \eqref{eq:1216h} are uniformly convergent on any compact subset of $(\{\psi_1<-t_1\}\cap V_0)\times Y$.  As $(F_t-f,(z_0,y))\in(\mathcal{O}(K_M)\otimes\mathcal{I}(\psi))_{(z_0,y)}$ for any $y\in Y$, it follows from Lemma \ref{l:0} that
	$$f_{\alpha}=F_{\alpha}$$
	 for any $\alpha\in\mathbb{Z}_{\ge0}^{n_1}$ satisfying $\sum_{1\le j\le n_1}\frac{\alpha_j+1}{p_{j,\beta_j}}\le 1$. Denote that
	 $$R:=\left\{\alpha\in\mathbb{Z}_{\ge0}^{n_1}:\sum_{1\le j\le n_1}\frac{\alpha_j+1}{p_{j,\beta_j}}> 1\right\}.$$ Lemma \ref{l:0} shows that $(w^{\alpha},z_0)\in\mathcal{I}(\psi_1)_{z_0}$ for any $\alpha\in R$. It follows from inequality \eqref{eq:1216i} and inequality \eqref{eq:1216j} that $(\pi_1^*(w^{\alpha}dw_1\wedge\ldots\wedge dw_{n_1}) \wedge \pi_2^*(f_\alpha),(z_0,y))\in (\mathcal{O}(K_M)\otimes\mathcal{I}(\psi+\pi_2^*(\varphi_Y)))_{(z_0,y)}$ and $(\pi_1^*(w^{\alpha}dw_1\wedge\ldots\wedge dw_{n_1}) \wedge \pi_2^*(F_\alpha),(z_0,y))\in (\mathcal{O}(K_M)\otimes\mathcal{I}(\psi+\pi_2^*(\varphi_Y)))_{(z_0,y)}$ for any $y\in Y$ and any $\alpha\in R$. As $\varphi_j(z_{j,\beta_j})>-\infty$, using Lemma \ref{l:phi1+phi2}, we obtain that
	$$(\pi_1^*(w^{\alpha}dw_1\wedge\ldots\wedge dw_{n_1}) \wedge \pi_2^*(f_\alpha),(z_0,y))\in (\mathcal{O}(K_M)\otimes\mathcal{I}(\varphi+\psi))_{(z_0,y)}$$ and
	$$(\pi_1^*(w^{\alpha}dw_1\wedge\ldots\wedge dw_{n_1}) \wedge \pi_2^*(F_\alpha),(z_0,y))\in (\mathcal{O}(K_M)\otimes\mathcal{I}(\varphi+\psi))_{(z_0,y)}$$
	 for any $y\in Y$ and any $\alpha\in R$. It follows from equality \eqref{eq:1216g}, equality \eqref{eq:1216h} and Lemma \ref{l:closedness} that
	 \begin{displaymath}
	 	\begin{split}
	 		(f-F_t,(z_0,y))&=\left(\sum_{\alpha\in R}\pi_1^*(w^{\alpha}dw_1\wedge\ldots\wedge dw_{n_1}) \wedge \pi_2^*(f_\alpha-F_{\alpha}),(z_0,y)\right)\\
	 		&\in(\mathcal{O}(K_M)\otimes\mathcal{I}(\varphi+\psi))_{(z_0,y)}	 	\end{split}
	 \end{displaymath}
holds for any $y\in Y$. Hence we have $(F_t-f,z)\in(\mathcal{O}(K_M)\otimes\mathcal{I}(\varphi+\psi))_{z}$ for any $z\in Z_0$, which implies that $G(t)\le \int_{\{\psi<-t\}}|F_t|^2e^{-\varphi}c(-\psi)=\tilde G(t)$. Thus, we obtain that $G(t)=\tilde G(t)=\int_{\{\psi<-t\}}|F_t|^2e^{-\varphi}c(-\psi)$.
 \end{proof}

The following two lemmas will be used in the proof of Lemma \ref{l:limit}.
\begin{Lemma}[see \cite{GY-concavity4}]
	\label{l:c(t)e^{-at}}
	Let $c(t)$ be a positive measurable function on $(0,+\infty)$, and let $a\in\mathbb{R}$. Assume that $\int_{t}^{+\infty}c(s)e^{-s}ds\in(0,+\infty)$ when $t$ near $+\infty$. Then we have
	
	$(1)$ $\lim_{t\rightarrow+\infty}\frac{\int_{t}^{+\infty}c(s)e^{-as}ds}{\int_t^{+\infty}c(s)e^{-s}ds}=1$ if and only if $a=1$;
	
	$(2)$ $\lim_{t\rightarrow+\infty}\frac{\int_{t}^{+\infty}c(s)e^{-as}ds}{\int_t^{+\infty}c(s)e^{-s}ds}=0$ if and only if $a>1$;
	
		$(3)$ $\lim_{t\rightarrow+\infty}\frac{\int_{t}^{+\infty}c(s)e^{-as}ds}{\int_t^{+\infty}c(s)e^{-s}ds}=+\infty$ if and only if $a<1$.
\end{Lemma}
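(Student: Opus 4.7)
The plan is to establish the three ``if'' implications directly; the three ``only if'' directions will then follow by mutual exclusivity, since the cases $a=1$, $a>1$, $a<1$ partition $\mathbb{R}$ and the prescribed limits $1$, $0$, $+\infty$ are pairwise distinct. So knowing the value of the limit pins down the sign of $a-1$.

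For (1), when $a=1$ the numerator equals the denominator, so the ratio is identically $1$ for all $t$ near $+\infty$ by the assumption $\int_t^{+\infty}c(s)e^{-s}\,ds\in(0,+\infty)$. For (2), suppose $a>1$ and set $\epsilon=a-1>0$. The key observation is the pointwise factorization $c(s)e^{-as}=c(s)e^{-s}\cdot e^{-\epsilon s}$. Given any $\delta>0$, choose $T$ so large that $e^{-\epsilon s}<\delta$ for all $s\geq T$; then for every $t\geq T$,
\begin{equation*}
0<\int_t^{+\infty}c(s)e^{-as}\,ds=\int_t^{+\infty}c(s)e^{-s}e^{-\epsilon s}\,ds\leq \delta\int_t^{+\infty}c(s)e^{-s}\,ds,
\end{equation*}
so the ratio is at most $\delta$, forcing the limit to be $0$. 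For (3), suppose $a<1$ and set $\epsilon=1-a>0$. Then $c(s)e^{-as}=c(s)e^{-s}\cdot e^{\epsilon s}\geq e^{\epsilon t}\,c(s)e^{-s}$ for all $s\geq t$, giving
\begin{equation*}
\int_t^{+\infty}c(s)e^{-as}\,ds\geq e^{\epsilon t}\int_t^{+\infty}c(s)e^{-s}\,ds,
\end{equation*}
so the ratio is bounded below by $e^{\epsilon t}\to+\infty$.

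For the ``only if'' directions: if the limit equals $1$, then $a$ cannot satisfy $a>1$ (which would force the limit to be $0$ by (2)) nor $a<1$ (which would force it to be $+\infty$ by (3)), so $a=1$. The other two converses are obtained identically. There is essentially no main obstacle here; the only thing to be careful about is that the hypothesis $\int_t^{+\infty}c(s)e^{-s}\,ds\in(0,+\infty)$ for $t$ near $+\infty$ guarantees that the ratios under consideration are well-defined and that the strict inequalities used above do not collapse to $0/0$.
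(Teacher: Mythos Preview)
Your proof is correct and follows essentially the same approach as the paper's: both arguments factor $c(s)e^{-as}=c(s)e^{-s}\cdot e^{(1-a)s}$ and exploit the monotonicity of $e^{(1-a)s}$ on $[t,\infty)$ to bound the ratio by $e^{(1-a)t}$ (or an equivalent quantity), then let $t\to+\infty$. Your explicit treatment of the ``only if'' directions via mutual exclusivity is a nice clarification that the paper leaves implicit.
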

\begin{proof} For the convenience of the reader, we recall the proof.

	 If $a=1$, it clear that $\lim_{t\rightarrow+\infty}\frac{\int_{t}^{+\infty}c(s)e^{-as}ds}{\int_t^{+\infty}c(s)e^{-s}ds}=1$.
	
	 If $a>1$, then $c(s)e^{-as}\le e^{(1-a)s_0} c(s)e^{-s}$ for $s\ge s_0>0$, which implies that $\limsup_{t\rightarrow+\infty}\frac{\int_{t}^{+\infty}c(s)e^{-as}ds}{\int_t^{+\infty}c(s)e^{-s}ds}\le e^{(1-a)s_0}$. Let $s_0\rightarrow+\infty$, we have $\lim_{t\rightarrow+\infty}\frac{\int_{t}^{+\infty}c(s)e^{-as}ds}{\int_t^{+\infty}c(s)e^{-s}ds}=0$
	
If $a<1$, then $c(s)e^{-as}\ge e^{(1-a)s_0} c(s)e^{-s}$ for $a>s_0>0$, which implies that $\liminf_{t\rightarrow+\infty}\frac{\int_{t}^{+\infty}c(s)e^{-as}ds}{\int_t^{+\infty}c(s)e^{-s}ds}\ge e^{(1-a)s_0}$. Let $s_0\rightarrow+\infty$, we have $\lim_{t\rightarrow+\infty}\frac{\int_{t}^{+\infty}c(s)e^{-as}ds}{\int_t^{+\infty}c(s)e^{-s}ds}=+\infty$.
\end{proof}

The following Lemma belongs to Fornaess
and Narasimhan on approximation property of plurisubharmonic
functions of Stein manifolds.

\begin{Lemma}[\cite{FN1980}]
\label{l:FN1} Let $X$ be a Stein manifold and $\varphi \in PSH(X)$. Then there exists a sequence
$\{\varphi_{n}\}_{n=1,\cdots}$ of smooth strongly plurisubharmonic functions such that
$\varphi_{n} \downarrow \varphi$.
\end{Lemma}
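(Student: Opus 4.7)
The plan is to reduce the problem to convolution smoothing in $\mathbb{C}^N$ via a holomorphic embedding and a Stein tubular neighborhood, then globalize using Demailly--Richberg's regularized maximum together with a strictly plurisubharmonic exhaustion.

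First, I would invoke Remmert's embedding theorem to realize $X$ as a closed complex submanifold of some $\mathbb{C}^N$, and then apply the Docquier--Grauert theorem (on Stein neighborhoods of Stein submanifolds) to obtain an open Stein neighborhood $W \subset \mathbb{C}^N$ of $X$ together with a holomorphic retraction $\pi : W \to X$. Setting $\Phi := \pi^{*}\varphi \in \mathrm{PSH}(W)$ extends $\varphi$ to a plurisubharmonic function on an ambient open set in Euclidean space, where standard convolution arguments become available.

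Next, fix a smooth nonnegative radial mollifier $\chi$ on $\mathbb{C}^N$ with $\int \chi\, d\lambda_{2N} = 1$ and support in the unit ball, and set $\chi_{\epsilon}(z) := \epsilon^{-2N}\chi(z/\epsilon)$. On $W_{\epsilon} := \{z \in W : \mathrm{dist}(z, \partial W) > \epsilon\}$ define the smooth plurisubharmonic function $\Phi_{\epsilon} := \Phi * \chi_{\epsilon}$. Because $\Phi$ is plurisubharmonic and $\chi$ is radial, $\Phi_{\epsilon}(z)$ is monotonically decreasing in $\epsilon$ and converges pointwise to $\Phi(z)$ as $\epsilon \downarrow 0$. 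Restricting to $X$ yields smooth psh functions $\Phi_{\epsilon}|_{X}$ decreasing to $\varphi$, but only on the exhaustion $X \cap W_{\epsilon}$.

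To globalize, fix a smooth strictly plurisubharmonic exhaustion $\rho$ of the Stein manifold $X$ and an increasing sequence of relatively compact sublevel sets $K_n = \{\rho < a_n\}$ with $\bigcup_n K_n = X$, chosen so that $\overline{K_n}$ lies inside $X \cap W_{\epsilon_n}$ for some $\epsilon_n \downarrow 0$. On $K_n$, use the local smoothing $\Phi_{\epsilon_n}|_{X}$; on an annular region outside $K_n$, use a suitably shifted far-out construction built from $\varphi$ and $\rho$, with shift constants chosen so that the two pieces satisfy the strict inequality needed for gluing. Glue the two using Demailly's regularized maximum $\mathrm{reg\,max}_{\eta_n}$, which preserves plurisubharmonicity and returns a $C^{\infty}$ function when its arguments are sufficiently separated. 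Finally add $\delta_n \rho$ with $\delta_n \downarrow 0$ to convert plurisubharmonicity into strong plurisubharmonicity without destroying the decrease.

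The main obstacle is the globalization in the last step: convolution produces smooth approximations only on a shrinking tube, while decreasing convergence is delicate to preserve under patching. The resolution is to choose the parameters $(\epsilon_n, \eta_n, \delta_n, a_n)$ simultaneously, decreasing fast enough that the gluing region for $\mathrm{reg\,max}_{\eta_n}$ sits inside the coincidence locus of the two local definitions and that the added term $\delta_n \rho$ together with the monotonicity of the convolution ensures the resulting global smooth strongly plurisubharmonic functions $\varphi_n$ form a pointwise monotone sequence converging down to $\varphi$.
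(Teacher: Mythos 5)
The paper offers no proof of this lemma: it is quoted directly from Forn{\ae}ss--Narasimhan \cite{FN1980}, so there is no internal argument to compare yours against. Your outline is the standard proof of this classical approximation theorem (Remmert embedding, Docquier--Grauert Stein tube with holomorphic retraction, radial convolution in $\mathbb{C}^N$, then patching along a strictly plurisubharmonic exhaustion with the regularized maximum and a small strictly plurisubharmonic correction), and the overall strategy is sound.

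Two points need repair before the sketch becomes a proof. First, a sign slip: for a radial mollifier, $\Phi*\chi_{\epsilon}(z)$ is monotonically \emph{increasing} in $\epsilon$ (the spherical averages of a plurisubharmonic function grow with the radius by the sub-mean-value inequality); it is precisely this that yields $\Phi*\chi_{\epsilon}\downarrow\Phi$ as $\epsilon\downarrow 0$, which is the fact you actually need. Second, and more substantively, the monotonicity of the glued sequence does not follow merely from taking $(\epsilon_n,\eta_n,\delta_n)$ ``decreasing fast enough'': both the regularized maximum and the term $\delta_n\rho$ add a strictly positive quantity, so at a point where $\varphi_n$ coincides with $\Phi_{\epsilon_n}|_X$ the next function, which is bounded below by $\Phi_{\epsilon_{n+1}}|_X$ only up to $+\eta_{n+1}+\delta_{n+1}\rho$, can exceed $\varphi_n$ no matter how small the positive parameters are. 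The standard repair is to build a summable buffer into the inner piece (use $\Phi_{\epsilon_n}|_X+2^{-n}$ in place of $\Phi_{\epsilon_n}|_X$ and require $\eta_{n+1}+\delta_{n+1}\sup_{K_{n+2}}\rho\le 2^{-n-1}$), to normalize $\rho\ge 0$ so that $\delta_{n+1}\rho\le\delta_n\rho$ everywhere, and to choose the outer comparison functions $\chi_n(\rho)$ \emph{in advance} large enough on each shell $\{a_k\le\rho\le a_{k+1}\}$, $k>n$, that they dominate $\varphi$, the (fixed, finite) bounds $\sup_{K_{k+1}}\Phi_{\epsilon}$, and all later $\chi_{k}+\eta_{k}$ there. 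With these adjustments the inductive construction closes, and pointwise convergence to $\varphi$ is unaffected since every point eventually lies in some $K_n$, where $\varphi_n=\Phi_{\epsilon_n}|_X+2^{-n}+\delta_n\rho\to\varphi$.
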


 It follows from Lemma \ref{l:green-sup} and Lemma \ref{l:green-sup2} that there eixsts a local coordinate $w_{j,k}$  on a neighborhood $V_{z_{j,k}}\Subset\Omega_{j}$ of $z_{j,k}\in\Omega_j$ satisfying $w_{j,k}(z_{j,k})=0$ and
$$\log|w_{j,k}|=\frac{1}{p_{j,k}}\sum_{1\le k<\tilde m_j}p_{j,k}G_{\Omega_j}(\cdot,z_{j,k})$$
 for any $j\in\{1,\ldots,n_1\}$ and $1\le k<\tilde{m}_j$, where $V_{z_{j,k}}\cap V_{z_{j,k'}}=\emptyset$ for any $j$ and $k\not=k'$. Denote that $\tilde I_1:=\{(\beta_1,\ldots,\beta_{n_1}):1\le \beta_j< \tilde m_j$ for any $j\in\{1,\ldots,n_1\}\}$, $V_{\beta}:=\prod_{1\le j\le n_1}V_{z_{j,\beta_j}}$ for any $\beta=(\beta_1,\ldots,\beta_n)\in\tilde I_1$ and $w_{\beta}:=(w_{1,\beta_1},\ldots,w_{n,\beta_n})$ is a local coordinate on $V_{\beta}$ of $z_{\beta}:=(z_{1,\beta_1},\ldots,z_{n,\beta_n})\in M$. Let
 $$\psi_1=\max_{1\le j\le n_1}\left\{\tilde\pi_{j}^*\left(2\sum_{1\le k<\tilde{m}_j}p_{j,k}G_{\Omega_j}(\cdot,z_{j,k})\right)\right\}$$
  on $\prod_{1\le j\le n_1}\Omega_j$, where $\tilde\pi_j$ is the natural projection from $\prod_{1\le j\le n_1}\Omega_j$ to $\Omega_j$. Note that $\psi=\pi_1^*(\psi_1)$.

Let $F$ be a holomorphic $(n,0)$ form on $\{\psi<-t_0\}\subset M$ for some $t_0>0$ satisfying $\int_{\{\psi<-t_0\}}|F|^2e^{-\varphi}c(-\psi)<+\infty$. Without loss of generality, we can assume $\cup_{\beta\in \tilde I_1}V_{\beta}\times Y\subset\{\psi<-t_0\}$. For any $\beta\in \tilde{I}_1$, it follows from Lemma \ref{l:fibra-decom} that there exists a sequence of  holomorphic  $(n_2,0)$  forms $\{F_{\alpha,\beta}\}_{\alpha\in\mathbb{Z}_{\ge0}^{n_1}}$ on $Y$ such that
$$F=\sum_{\alpha\in\mathbb{Z}_{\ge0}^{n_1}}\pi_1^*(w_{\beta}^{\alpha}dw_{1,\beta_1}\wedge\ldots\wedge dw_{n_1,\beta_{n_1}})\wedge\pi_2^*(F_{\alpha,\beta})$$
on $V_{\beta}\times Y$ and
$$\int_{Y}|F_{\alpha,\beta}|^2e^{-\varphi_Y}<+\infty$$
for any $\alpha\in\mathbb{Z}_{\ge0}^{n_1}$. Denote that $E_{\beta}:=\left\{\alpha\in\mathbb{Z}_{\ge0}^{n_1}:\sum_{1\le j\le n_1}\frac{\alpha_j+1}{p_{j,\beta_j}}=1\right\}$, $E_{1,\beta}:=\left\{\alpha\in\mathbb{Z}_{\ge0}^{n_1}:\sum_{1\le j\le n_1}\frac{\alpha_j+1}{p_{j,\beta_j}}<1\right\}$ and $E_{2,\beta}:=\left\{\alpha\in\mathbb{Z}_{\ge0}^{n_1}:\sum_{1\le j\le n_1}\frac{\alpha_j+1}{p_{j,\beta_j}}>1\right\}$.
\begin{Lemma}
	\label{l:limit}If $\liminf_{t\rightarrow+\infty}\frac{\int_{\{\psi<-t\}}|F|^2e^{-\varphi}c(-\psi)}{\int_t^{+\infty}c(s)e^{-s}ds}<+\infty$, we have $F_{\alpha,\beta}\equiv0$ for any $\alpha\in E_{1,\beta}$ and $\beta\in\tilde I_1$, and
	$$\liminf_{t\rightarrow+\infty}\frac{\int_{\{\psi<-t\}}|F|^2e^{-\varphi}c(-\psi)}{\int_t^{+\infty}c(s)e^{-s}ds}\ge\sum_{\beta\in\tilde I_1}\sum_{\alpha\in E_{\beta}}\frac{(2\pi)^{n_1}e^{-\sum_{1\le j\le n_1}\varphi_j(z_{j,\beta_j})}}{\prod_{1\le j\le n_1}(\alpha_j+1)}\int_{Y}|F_{\alpha,\beta}|^2e^{-\varphi_Y}.$$
\end{Lemma}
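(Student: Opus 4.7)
The plan is to localize the integral on the pairwise disjoint polydiscs $V_\beta\times Y$, compute the contribution of each Taylor mode explicitly via Fubini together with Lemma \ref{l:m1}, and then extract the asymptotics as $t\to +\infty$ from Lemma \ref{l:c(t)e^{-at}}. Since $V_\beta\cap V_{\beta'}=\emptyset$ for $\beta\neq\beta'$ (two distinct multi-indices differ in some coordinate $j$, where the $V_{z_{j,k}}$ are already disjoint), nonnegativity of the integrand gives, for every finite $S\subset\tilde I_1$,
\begin{displaymath}
\int_{\{\psi<-t\}}|F|^{2}e^{-\varphi}c(-\psi)\ge \sum_{\beta\in S}\int_{\{\psi<-t\}\cap (V_\beta\times Y)}|F|^{2}e^{-\varphi}c(-\psi),
\end{displaymath}
so it suffices to identify the leading asymptotic of each local integral and then exhaust $\tilde I_1$ by finite sets.

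Fix $\beta\in\tilde I_1$. By the construction of the coordinates $w_{j,\beta_j}$, one has $\psi|_{V_\beta\times Y}=\pi_1^*\bigl(\max_{1\le j\le n_1}\{2p_{j,\beta_j}\log|w_{j,\beta_j}|\}\bigr)$, and for all sufficiently large $t$ the set $\{\psi<-t\}\cap (V_\beta\times Y)$ equals $\bigl(\prod_{j}\{|w_{j,\beta_j}|<e^{-t/(2p_{j,\beta_j})}\}\bigr)\times Y$, a relatively compact product inside $V_\beta\times Y$. Since each $\varphi_j$ is subharmonic with $\varphi_j(z_{j,\beta_j})>-\infty$, upper semicontinuity lets us shrink $V_\beta$ so that $\sum_j\varphi_j\le\sum_j\varphi_j(z_{j,\beta_j})+\epsilon$ on $V_\beta$ for any preassigned $\epsilon>0$. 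Plugging the expansion $F=\sum_\alpha \pi_1^*(w_\beta^\alpha dw_{1,\beta_1}\wedge\cdots\wedge dw_{n_1,\beta_{n_1}})\wedge\pi_2^*(F_{\alpha,\beta})$ into $\sqrt{-1}^{n^2}F\wedge\overline F$ and integrating in polar coordinates on $V_\beta$, the angular integration annihilates all cross terms $\alpha\neq\alpha'$, and Fubini factorizes each surviving term into a product of an integral on $V_\beta$ and an integral on $Y$. Applying Lemma \ref{l:m1} on $V_\beta$ (dimension $n_1$, weights $p_{j,\beta_j}$) then yields
\begin{displaymath}
\int_{\{\psi<-t\}\cap(V_\beta\times Y)}|F|^{2}e^{-\varphi}c(-\psi)\ge e^{-\sum_j\varphi_j(z_{j,\beta_j})-n_1\epsilon}\sum_{\alpha\in\mathbb{Z}_{\ge0}^{n_1}}\frac{(q_{\alpha,\beta}+1)(2\pi)^{n_1}}{\prod_j(\alpha_j+1)}\left(\int_t^{+\infty}c(s)e^{-(q_{\alpha,\beta}+1)s}ds\right)\int_Y|F_{\alpha,\beta}|^{2}e^{-\varphi_Y},
\end{displaymath}
where $q_{\alpha,\beta}+1=\sum_{1\le j\le n_1}\frac{\alpha_j+1}{p_{j,\beta_j}}$.

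Dividing by $\int_t^{+\infty}c(s)e^{-s}ds$, Lemma \ref{l:c(t)e^{-at}} tells us that, as $t\to+\infty$, the ratio $\int_t^{+\infty}c(s)e^{-(q_{\alpha,\beta}+1)s}ds\big/\int_t^{+\infty}c(s)e^{-s}ds$ tends to $1$, to $+\infty$, or to $0$ according as $\alpha\in E_\beta$, $\alpha\in E_{1,\beta}$, or $\alpha\in E_{2,\beta}$. Applying Fatou's lemma termwise to the $\alpha$-series, any nontrivial $F_{\alpha,\beta}$ with $\alpha\in E_{1,\beta}$ would contribute $+\infty$ to the liminf (since $\int_Y|F_{\alpha,\beta}|^2 e^{-\varphi_Y}>0$ whenever $F_{\alpha,\beta}\not\equiv 0$, as $F_{\alpha,\beta}$ is holomorphic and $e^{-\varphi_Y}>0$), contradicting finiteness of the liminf; hence $F_{\alpha,\beta}\equiv 0$ for every $\alpha\in E_{1,\beta}$ and every $\beta\in\tilde I_1$. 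The surviving modes are exactly $\alpha\in E_\beta$, each contributing its Taylor weight in the limit; a second application of Fatou on the $\alpha$-sum, followed by monotone exhaustion of $\tilde I_1$ by the finite sets $S$ and then $\epsilon\to 0$, produces the claimed inequality. The main technical point is the bookkeeping inside the middle step, namely verifying that angular integration eliminates cross terms and that the constants combine to $(2\pi)^{n_1}$ after the standard $2^{n_1}$ factor relating $|\cdot|^2$ on $(n_1,0)$-forms to the Lebesgue density; this is essentially a fibered version of the computation already carried out for products of open Riemann surfaces.
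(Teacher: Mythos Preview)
Your proof is correct and follows the same overall architecture as the paper: localize on the disjoint products $V_\beta\times Y$, use orthogonality of the monomials together with Lemma \ref{l:m1} to compute the $\beta$-local integral, and read off the asymptotics from Lemma \ref{l:c(t)e^{-at}}. The one notable difference is in how you handle the weight $\sum_j\pi_{1,j}^*(\varphi_j)$. The paper invokes Lemma \ref{l:FN1} to approximate $\sum_j\tilde\pi_j^*(\varphi_j)$ from above by smooth plurisubharmonic functions $\Phi_l$, uses continuity of $\Phi_l$ to get $\Phi_l\le\Phi_l(z_\beta)+\epsilon$ on the shrinking sublevel sets, and then lets $l\to\infty$ at the end. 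You bypass this entirely by appealing directly to upper semicontinuity of each subharmonic $\varphi_j$ at $z_{j,\beta_j}$, which already gives $\sum_j\varphi_j\le\sum_j\varphi_j(z_{j,\beta_j})+n_1\epsilon$ on a small enough neighborhood. This is legitimate and strictly simpler in the present setting; the Fornaess--Narasimhan step is not actually needed here because the base weight is a sum of pullbacks of one-variable subharmonic functions with finite values at the marked points, so pointwise upper semicontinuity suffices. The paper's route would become necessary if the base weight were a general plurisubharmonic function on $\prod_j\Omega_j$ not of this split form.
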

\begin{proof}
	 As $\sum_{1\le j\le n_1}\tilde\pi_j^*(\varphi_j)$ is a plurisubharmonic function on $\prod_{1\le j\le n_1}\Omega_j$, it follows from Lemma \ref{l:FN1} that  there exists a sequence of smooth plurisubharmonic functions $\{\Phi_l\}_{l\in\mathbb{Z}_{\ge0}}$ on $\prod_{1\le j\le n_1}\Omega_j$, which is decreasingly convergent to $\sum_{1\le j\le n_1}\tilde\pi_j^*(\varphi_j)$.
	
Let $\beta\in \tilde{I}_1$ and $l\in\mathbb{Z}_{\ge0}$. For any $\epsilon>0$, there exists $t_{\beta}>t_{0}$ such that $\{\psi_1<-t_{\beta}\}\cap V_{\beta}\Subset V_{\beta}$ and
$$\sup_{z\in\{\psi_1<-t_{\beta}\}\cap V_{\beta}}|\Phi_l(z)-\Phi(z_{\beta})|<\epsilon.$$
For any $t\ge t_{\beta}$, note that $\{\psi_1<-t\}=\prod_{1\le j\le n_1}\left\{|w_{j,\beta_j}|<e^{-\frac{t}{2p_{j,\beta_j}}}\right\}$ and $F=\sum_{\alpha\in\mathbb{Z}_{\ge0}^{n_1}}\pi_1^*(w_{\beta}^{\alpha}dw_{1,\beta_1}\wedge\ldots\wedge dw_{n_1,\beta_{n_1}})\wedge\pi_2^*(F_{\alpha,\beta})$ on $\{\psi_1<-t\}\times Y$, then we have
\begin{equation}\begin{split}
	\label{eq:1218a}&\int_{\{\psi<-t\}\cap(V_{\beta}\times Y)}|F|^2e^{-\varphi}c(-\psi)\\
	\ge&\int_{\{\psi<-t\}\cap(V_{\beta}\times Y)}|F|^2e^{-\pi_1^*(\Phi_l)-\pi_2^*(\varphi_Y)}c(-\psi)	\\
	\ge& e^{-\Phi_l(z_{\beta})-\epsilon}\int_{\left(\{\psi_1<-t\}\cap V_{\beta}\right)\times Y}|F|^2e^{-\pi_2^*(\varphi_Y)}c(-\pi_1^*(\psi_1))\\
	=&e^{-\Phi_l(z_{\beta})-\epsilon}\sum_{\alpha\in\mathbb{Z}_{\ge0}^{n_1}}\int_{\{\psi_1<-t\}}|w_{\beta}^{\alpha}dw_{1,\beta_1}\wedge\ldots\wedge dw_{n_1,\beta_{n_1}}|^2c(-\psi)\times\int_Y|F_{\alpha,\beta}|^2e^{-\varphi_Y}.
\end{split}\end{equation}
Denote that $q_{\alpha}:=\sum_{1\le j\le n_1}\frac{\alpha_j+1}{p_{j,\beta_j}}-1$. It follows from Lemma \ref{l:m1} and inequality \eqref{eq:1218a} that
\begin{equation*}
	\begin{split}
		&\int_{\{\psi<-t\}\cap(V_{\beta}\times Y)}|F|^2e^{-\varphi}c(-\psi)\\
		\ge& e^{-\Phi_l(z_{\beta})-\epsilon}\sum_{\alpha\in\mathbb{Z}_{\ge0}^{n_1}}\left(\int_t^{+\infty}c(s)e^{-(q_{\alpha}+1)s}ds\right)\frac{(q_{\alpha}+1)(2\pi)^{n_1}}{\prod_{1\le j\le n_1}(\alpha_j+1)}\int_Y|F_{\alpha,\beta}|^2e^{-\varphi_Y}.
	\end{split}
\end{equation*}
It follows from $\liminf_{t\rightarrow+\infty}\frac{\int_{\{\psi<-t\}}|F|^2e^{-\varphi}c(-\psi)}{\int_t^{+\infty}c(s)e^{-s}ds}<+\infty$ and Lemma \ref{l:c(t)e^{-at}} that
$$F_{\alpha,\beta}\equiv0$$
 for any $\alpha$ satisfying $q_{\alpha}<0$ and
\begin{equation*}
\liminf_{t\rightarrow+\infty}\frac{\int_{\{\psi<-t\}\cap(V_{\beta}\times Y)}|F|^2e^{-\varphi}c(-\psi)}{\int_t^{+\infty}c(s)e^{-s}ds}\ge e^{-\Phi_l(z_{\beta})-\epsilon}\sum_{\alpha\in E_{\beta}}\frac{(2\pi)^{n_1}\int_Y|F_{\alpha,\beta}|^2e^{-\varphi_Y}}{\prod_{1\le j\le n_1}(\alpha_j+1)}.
\end{equation*}
Letting $\epsilon\rightarrow 0$ and $l\rightarrow+\infty$, we have
\begin{equation}
	\label{eq:1218b}
	\begin{split}
		&\liminf_{t\rightarrow+\infty}\frac{\int_{\{\psi<-t\}\cap(V_{\beta}\times Y)}|F|^2e^{-\varphi}c(-\psi)}{\int_t^{+\infty}c(s)e^{-s}ds}\\
		\ge&\sum_{\alpha\in E_{\beta}}\frac{(2\pi)^{n_1}e^{-\sum_{1\le j\le n_1}\varphi_j(z_{j,\beta_j})}}{\prod_{1\le j\le n_1}(\alpha_j+1)}\int_Y|F_{\alpha,\beta}|^2e^{-\varphi_Y}.
			\end{split}
\end{equation}

Note that $V_{\beta}\cap V_{\tilde\beta}=\emptyset$ for any $\beta\not=\tilde\beta$ and $\{\psi_1<-t_{\beta}\}\cap V_{\beta} \Subset V_{\beta}$ for any $\beta\in\tilde I_1$. It follows from inequality \eqref{eq:1218b} that
$$\liminf_{t\rightarrow+\infty}\frac{\int_{\{\psi<-t\}}|F|^2e^{-\varphi}c(-\psi)}{\int_t^{+\infty}c(s)e^{-s}ds}\ge\sum_{\beta\in\tilde I_1}\sum_{\alpha\in E_{\beta}}\frac{(2\pi)^{n_1}e^{-\sum_{1\le j\le n_1}\varphi_j(z_{j,\beta_j})}}{\prod_{1\le j\le n_1}(\alpha_j+1)}\int_{Y}|F_{\alpha,\beta}|^2e^{-\varphi_Y}.$$
Thus, Lemma \ref{l:limit} holds.
\end{proof}

Let $M_1$ be an open complex submanifold of $M$ satisfying that $Z_0=\{z_{\beta}:\beta\in\tilde I_1\}\times Y\subset M_1$, and let $K_{M_1}$  be the  canonical (holomorphic) line bundle on  $M_1$. Let $F_1$ be a holomorphic $(n,0)$ form on $\{\psi<-t_0\}\cap M_1$ for $t_0>0$ satisfying that $\int_{\{\psi<-t_0\}\cap M_1}|F_1|^2e^{-\varphi}c(-\psi)<+\infty$. For any $\beta\in\tilde I_1$, it follows from Lemma \ref{l:fibra-decom-2} that there exist a sequence of holomorphic $(n_2,0)$ forms $\{F_{\alpha,\beta}\}_{\alpha\in\mathbb{Z}_{\ge0}^{n_1}}$ on $Y$ and an open subset $U_{\beta}$ of $\{\psi<-t_0\}\cap M_1\cap (V_{\beta}\times Y)$ such that
$$F_1=\sum_{\alpha\in\mathbb{Z}_{\ge0}^{n_1}}\pi_1^*(w_{\beta}^{\alpha}dw_{1,\beta_1}\wedge\ldots\wedge dw_{n_1,\beta_{n_1}})\wedge\pi_2^*(F_{\alpha,\beta})$$
on $U_{\beta}$ and
$$\int_{K}|F_{\alpha,\beta}|^2e^{-\varphi_Y}<+\infty$$
for any $\alpha\in\mathbb{Z}_{\ge0}^{n_1}$ and compact subset $K$ of $Y$.
\begin{Lemma}
	\label{l:limit2}If $\liminf_{t\rightarrow+\infty}\frac{\int_{\{\psi<-t\}\cap M_1}|F|^2e^{-\varphi}c(-\psi)}{\int_t^{+\infty}c(s)e^{-s}ds}<+\infty$, we have $F_{\alpha,\beta}\equiv0$ for any $\alpha\in E_{1,\beta}$ and $\beta\in\tilde I_1$ and
	\begin{displaymath}
		\begin{split}
			&\liminf_{t\rightarrow+\infty}\frac{\int_{\{\psi<-t\}\cap M_1}|F|^2e^{-\varphi}c(-\psi)}{\int_t^{+\infty}c(s)e^{-s}ds}\\
			\ge&\sum_{\beta\in\tilde I_1}\sum_{\alpha\in E_{\beta}}\frac{(2\pi)^{n_1}e^{-\sum_{1\le j\le n_1}\varphi_j(z_{j,\beta_j})}}{\prod_{1\le j\le n_1}(\alpha_j+1)}\int_{Y}|F_{\alpha,\beta}|^2e^{-\varphi_Y}.
		\end{split}
	\end{displaymath}
	\end{Lemma}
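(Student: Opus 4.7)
The plan is to adapt the proof of Lemma \ref{l:limit} by introducing a compact exhaustion of $Y$ that lets us stay inside $M_1$, and inside the neighborhood where the fiber decomposition furnished by Lemma \ref{l:fibra-decom-2} is actually valid. First I would fix $\beta\in\tilde I_1$ and a compact exhaustion $\{K_m\}_{m\ge1}$ of $Y$ with $K_m\subset\operatorname{int} K_{m+1}$. By Lemma \ref{l:fibra-decom-2}, the decomposition
\[
F_1=\sum_{\alpha\in\mathbb{Z}_{\ge0}^{n_1}}\pi_1^*(w_\beta^{\alpha}dw_{1,\beta_1}\wedge\ldots\wedge dw_{n_1,\beta_{n_1}})\wedge\pi_2^*(F_{\alpha,\beta})
\]
is valid on some neighborhood $M_{2,\beta}\subset M_1$ of $\{z_\beta\}\times Y$. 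Since $\{z_\beta\}\times K_m$ is compact, there is a neighborhood $W'_{\beta,m}$ of $z_\beta$ in $V_\beta$ with $W'_{\beta,m}\times K_m\Subset M_{2,\beta}$. Using Lemma \ref{l:G-compact}, I would pick $t_{\beta,m}>t_0$ such that $W_{\beta,m}:=\{\psi_1<-t_{\beta,m}\}\cap V_\beta\subset W'_{\beta,m}$, so the decomposition is valid on the tube $W_{\beta,m}\times K_m\subset M_1$.

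Next, mirroring the proof of Lemma \ref{l:limit}, I would apply the Fornaess--Narasimhan approximation (Lemma \ref{l:FN1}) to obtain a decreasing sequence of smooth plurisubharmonic functions $\{\Phi_l\}$ on $\prod_{1\le j\le n_1}\Omega_j$ converging to $\sum_{1\le j\le n_1}\tilde\pi_j^*(\varphi_j)$. For any $\epsilon>0$ and any $l\in\mathbb{Z}_{\ge0}$, by enlarging $t_{\beta,m}$ if necessary, I can ensure $\sup_{W_{\beta,m}}|\Phi_l-\Phi_l(z_\beta)|<\epsilon$. For $t\ge t_{\beta,m}$, the set $\{\psi<-t\}\cap(W_{\beta,m}\times K_m)$ is the product $(\{\psi_1<-t\}\cap V_\beta)\times K_m$, and combining the decomposition above with the Fubini theorem and Lemma \ref{l:m1} yields
\[
\int_{\{\psi<-t\}\cap M_1\cap(V_\beta\times K_m)}|F_1|^2e^{-\varphi}c(-\psi)\ge e^{-\Phi_l(z_\beta)-\epsilon}\sum_{\alpha\in\mathbb{Z}_{\ge0}^{n_1}}\left(\int_t^{+\infty}c(s)e^{-(q_\alpha+1)s}ds\right)\frac{(q_\alpha+1)(2\pi)^{n_1}}{\prod_{1\le j\le n_1}(\alpha_j+1)}\int_{K_m}|F_{\alpha,\beta}|^2e^{-\varphi_Y},
\]
where $q_\alpha=\sum_{1\le j\le n_1}\frac{\alpha_j+1}{p_{j,\beta_j}}-1$.

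Finally I would invoke Lemma \ref{l:c(t)e^{-at}}: the hypothesis $\liminf_{t\to+\infty}\frac{\int_{\{\psi<-t\}\cap M_1}|F_1|^2e^{-\varphi}c(-\psi)}{\int_t^{+\infty}c(s)e^{-s}ds}<+\infty$ forces $\int_{K_m}|F_{\alpha,\beta}|^2e^{-\varphi_Y}=0$ for every $\alpha$ with $q_\alpha<0$ (i.e.\ every $\alpha\in E_{1,\beta}$) and every $m$, hence $F_{\alpha,\beta}\equiv0$ on $Y$ for such $\alpha$; the terms with $q_\alpha>0$ contribute $0$ in the liminf by part $(2)$ of Lemma \ref{l:c(t)e^{-at}}, while the $\alpha\in E_\beta$ terms contribute $\frac{(2\pi)^{n_1}}{\prod_{1\le j\le n_1}(\alpha_j+1)}\int_{K_m}|F_{\alpha,\beta}|^2e^{-\varphi_Y}$ by part $(1)$. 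Letting $\epsilon\to 0$, then $l\to+\infty$, then $m\to+\infty$, and summing over $\beta\in\tilde I_1$ using disjointness of the $V_\beta$'s and the monotone convergence theorem delivers the claimed lower bound.

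The main obstacle I anticipate is the initial tube-fitting step: one must guarantee that for each $\beta\in\tilde I_1$ and each approximating level $m$ the tube $W_{\beta,m}\times K_m$ simultaneously fits inside $M_1$, inside the neighborhood $M_{2,\beta}$ where the Taylor-type decomposition holds, and inside the sublevel set where $\Phi_l$ is close to $\Phi_l(z_\beta)$. This is handled by the compactness of $K_m$, the openness of $M_1$, and Lemma \ref{l:G-compact}; the technical care lies in arranging the order of limits $\epsilon\to0$, $l\to+\infty$, $m\to+\infty$, and summation over $\beta$ so that no term is lost, and in using uniqueness of the fiber decomposition to ensure the same holomorphic forms $F_{\alpha,\beta}$ on $Y$ arise regardless of which tube we test on.
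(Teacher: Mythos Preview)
Your proposal is correct and follows essentially the same route as the paper. The only difference is one of packaging: the paper, for each $\beta\in\tilde I_1$ and each relatively compact open $V\Subset Y$, finds $t_{\beta,V}$ with $\{\psi_1<-t_{\beta,V}\}\times V\Subset U_\beta$ and then invokes Lemma~\ref{l:limit} as a black box on that product (with $Y$ replaced by $V$), immediately obtaining the vanishing of $F_{\alpha,\beta}$ on $V$ for $\alpha\in E_{1,\beta}$ and the lower bound with $\int_V$ in place of $\int_Y$; arbitrariness of $V$ then finishes. You instead unpack the proof of Lemma~\ref{l:limit} (Fornaess--Narasimhan approximation, Lemma~\ref{l:m1}, Lemma~\ref{l:c(t)e^{-at}}) and rerun it over the tubes $W_{\beta,m}\times K_m$. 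Both arguments rest on the same tube-fitting observation via compactness of $\{z_\beta\}\times K$ (or $\{z_\beta\}\times\overline V$) and the same order of limits; the paper's version is simply shorter because the local analysis has already been isolated in Lemma~\ref{l:limit}.
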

\begin{proof} Note that $\psi_1=\max_{1\le j\le n_1}\left\{\tilde\pi_j^*\left(2\sum_{1\le k<\tilde m_j}G_{\Omega_j}(\cdot,z_{j,k})\right)\right\}$ on $\prod_{1\le j\le n_1}\Omega_j$.
	For any $\beta\in\tilde I_1$ and any open subset $V$ of $Y$ satisfying $V\Subset Y$, it follows from Lemma \ref{l:green-sup} and Lemma \ref{l:green-sup2} that there exists $t_{\beta,V}>t_0$ such that  $\{\psi_1<-t_{\beta,V}\}\times V\Subset U_{\beta}$. $\liminf_{t\rightarrow+\infty}\frac{\int_{\{\psi<-t\}\cap M_1}|F|^2e^{-\varphi}c(-\psi)}{\int_t^{+\infty}c(s)e^{-s}ds}<+\infty$ implies that
	\begin{equation}
		\label{eq:1224a}\liminf_{t\rightarrow+\infty}\frac{\int_{\{\psi_1<-t\}\times V}|F|^2e^{-\varphi}c(-\psi)}{\int_t^{+\infty}c(s)e^{-s}ds}<+\infty.
	\end{equation}
	 It follows from equality \eqref{eq:1224a} and Lemma \ref{l:limit} that $F_{\alpha,\beta}\equiv0$ on $V$ for any $\alpha\in E_{1,\beta}$ and
	 \begin{displaymath}
	 	\begin{split}
	 		&\liminf_{t\rightarrow+\infty}\frac{\int_{\{\psi<-t\}\cap U_{\beta}}|F|^2e^{-\varphi}c(-\psi)}{\int_t^{+\infty}c(s)e^{-s}ds}\\
	 		\ge&\liminf_{t\rightarrow+\infty}\frac{\int_{\{\psi_1<-t\}\times V}|F|^2e^{-\varphi}c(-\psi)}{\int_t^{+\infty}c(s)e^{-s}ds}\\
	 		\ge&\sum_{\alpha\in E_{\beta}}\frac{(2\pi)^{n_1}e^{-\sum_{1\le j\le n_1}\varphi_j(z_{j,\beta_j})}}{\prod_{1\le j\le n_1}(\alpha_j+1)}\int_{V}|F_{\alpha,\beta}|^2e^{-\varphi_Y}.
	 	\end{split}
	 \end{displaymath}
Following from the arbitrariness of $V$, we have
$$F_{\alpha,\beta}\equiv0$$
 on $Y$ for any $\alpha\in E_{1,\beta}$ and
	 \begin{equation}\label{eq:1224b}
	 	\begin{split}
	 		&\liminf_{t\rightarrow+\infty}\frac{\int_{\{\psi<-t\}\cap U_{\beta}}|F|^2e^{-\varphi}c(-\psi)}{\int_t^{+\infty}c(s)e^{-s}ds}\\
	 		\ge&\sum_{\alpha\in E_{\beta}}\frac{(2\pi)^{n_1}e^{-\sum_{1\le j\le n_1}\varphi_j(z_{j,\beta_j})}}{\prod_{1\le j\le n_1}(\alpha_j+1)}\int_{Y}|F_{\alpha,\beta}|^2e^{-\varphi_Y}.
	 	\end{split}
	 \end{equation}
 $V_{\beta}\cap V_{\beta'}=\emptyset$ for any $\beta\not=\beta'$ implies that $U_{\beta}\cap U_{\beta'}=\emptyset$ for any $\beta\not=\beta'$. It follows from inequality \eqref{eq:1224b} that
 \begin{displaymath}
		\begin{split}
			&\liminf_{t\rightarrow+\infty}\frac{\int_{\{\psi<-t\}\cap M_1}|F|^2e^{-\varphi}c(-\psi)}{\int_t^{+\infty}c(s)e^{-s}ds}\\
			\ge&\sum_{\beta\in\tilde I_1}\sum_{\alpha\in E_{\beta}}\frac{(2\pi)^{n_1}e^{-\sum_{1\le j\le n_1}\varphi_j(z_{j,\beta_j})}}{\prod_{1\le j\le n_1}(\alpha_j+1)}\int_{Y}|F_{\alpha,\beta}|^2e^{-\varphi_Y}.
		\end{split}
	\end{displaymath}
	Thus, Lemma \ref{l:limit2} holds.
\end{proof}

In the following, we consider the case that $Z_j$ is a single point set.
Let $M'=\prod_{1\le j\le n_1}\Omega_j$ be an $n_1-$dimensional complex manifold, and let $K_{M'}$ be the canonical (holomorphic) line bundle on $M'$.
Let $z_j\in\Omega_j$ and $z_0=(z_1,\ldots,z_{n_1})\in M'$. Let $\varphi_j$ be subharmonic functions on $\Omega_j$ such that $\varphi_j(z_j)>-\infty$. Denote that
$$\psi_1:=\max_{1\le j\le n_1}\left\{2p_j\tilde\pi_j^{*}(G_{\Omega_j}(\cdot,z_j))\right\}$$
 and $\tilde \varphi:=\sum_{1\le j\le n_1}\tilde\pi_j^*(\varphi_j)$ on $M'$, where $p_j$ is a positive real number for $1\le j\le n_1$ and $\tilde\pi_j$ is the natural projection from $M'$ to $\Omega_j$.

Let $w_j$ be a local coordinate on a neighborhood $V_{z_j}$ of $z_j\in\Omega_j$ satisfying $w_j(z_j)=0$. Denote that $V_0:=\prod_{1\le j\le n_1}V_{z_j}$, and $w:=(w_1,\ldots,w_{n_1})$ is a local coordinate on $V_0$ of $z_0\in M'$.
Take $E=\left\{(\alpha_1,\ldots,\alpha_{n_1}):\sum_{1\le j\le n_1}\frac{\alpha_j+1}{p_{j}}=1\,\&\,\alpha_j\in\mathbb{Z}_{\ge0}\right\}$.

Let $c_j(z)$ be the logarithmic capacity (see \cite{S-O69}) on $\Omega_j$, which is locally defined by
$$c_j(z_j):=\exp\lim_{z\rightarrow z_j}(G_{\Omega_j}(z,z_j)-\log|w_j(z)|).$$
\begin{Lemma}[see \cite{GMY-concavity2}]
 	\label{p:exten-pro-finite} Let $c(t)$ be a positive function on $(0,+\infty)$ satisfying that $c(t)e^{-t}$ is decreasing and $\int_0^{+\infty}c(s)e^{-s}ds<+\infty$. For any $\alpha\in E$, there exists a holomorphic $(n_1,0)$ form $F$ on $M'$, which satisfies that $(F-w^{\alpha}dw_1\wedge\ldots\wedge dw_{n_1},z_0)\in(\mathcal{O}(K_{\Omega_j})\otimes\mathcal{I}(\psi_1))_{z_0}$  and
 	\begin{displaymath}
 		\int_{M'}|F|^2e^{-\tilde\varphi}c(-\psi_1)\le\left(\int_0^{+\infty}c(s)e^{-s}ds\right)\frac{(2\pi)^{n_1}e^{-\tilde\varphi(z_{\beta})}}{\Pi_{1\le j\le n_1}(\alpha_j+1)c_{j}(z_j)^{2\alpha_{j}+2}}.
 	\end{displaymath}
 \end{Lemma}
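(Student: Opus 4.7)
The plan is to build $F$ as a pullback-wedge of one-dimensional extensions, one on each factor $\Omega_j$, and then reduce the integral estimate to a product computation that exploits the relation $\sum_j (\alpha_j+1)/p_j = 1$.

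First, for each $j \in \{1,\ldots,n_1\}$ I would invoke the sharp Guan--Zhou optimal $L^2$ extension theorem on the open Riemann surface $\Omega_j$ (the Suita-type extension with logarithmic capacity $c_j(z_j)$) applied to the jet $w_j^{\alpha_j}dw_j$ at the point $z_j$. This produces a holomorphic $(1,0)$ form $F_j$ on $\Omega_j$ with $(F_j - w_j^{\alpha_j}dw_j, z_j) \in (\mathcal O(K_{\Omega_j}) \otimes \mathcal I((2\alpha_j+2)G_{\Omega_j}(\cdot,z_j)))_{z_j}$, together with the sharp estimate
\begin{equation*}
\int_{\Omega_j} |F_j|^2 e^{-\varphi_j}\, \tilde c_j\!\left(-2p_j G_{\Omega_j}(\cdot,z_j)\right)
\;\le\; \Bigl(\int_0^{+\infty}\tilde c_j(s)e^{-s}ds\Bigr)\cdot\frac{2\pi\, e^{-\varphi_j(z_j)}}{(\alpha_j+1)\, c_j(z_j)^{2\alpha_j+2}},
\end{equation*}
for an appropriately chosen auxiliary weight $\tilde c_j$ on $(0,+\infty)$ with $\tilde c_j(t)e^{-t}$ decreasing. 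The subharmonicity of $\varphi_j$ together with $\varphi_j(z_j)>-\infty$ guarantees the right-hand side is finite.

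Next, set $F := \bigwedge_{j=1}^{n_1} \tilde\pi_j^{*}(F_j)$ as a holomorphic $(n_1,0)$ form on $M'$. The product structure of $F$ and of the jet $w^{\alpha}dw_1\wedge\cdots\wedge dw_{n_1}$ combined with Lemma~\ref{l:0} (applied with weight $\max_j\{2p_j\log|w_j|\}$) gives $(F - w^\alpha dw_1\wedge\ldots\wedge dw_{n_1},z_0)\in(\mathcal O(K_{M'})\otimes\mathcal I(\psi_1))_{z_0}$, since the containment condition there is precisely $\sum_j(\alpha_j+1)/p_j > 1$, which is strict for every monomial strictly dominated by $w^\alpha$ in the relevant expansion once the agreement of jets is used factorwise.

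Finally I would carry out the integral estimate. Using $|F|^2 e^{-\tilde\varphi} = \prod_j |F_j|^2 e^{-\varphi_j}$ and the identity $\{\psi_1 < -t\} = \bigcap_j \{2p_j G_{\Omega_j}(\cdot,z_j) < -t\}$, Fubini gives a layer-cake representation
\begin{equation*}
\int_{M'} |F|^2 e^{-\tilde\varphi}\, c(-\psi_1)
= \int_0^{+\infty} c(t)\, d\!\left(\int_{\{\psi_1 < -t\}} |F|^2 e^{-\tilde\varphi}\right)^{c},
\end{equation*}
and the one-dimensional sharp bound on each factor, together with the Suita-type asymptotics $e^{G_{\Omega_j}(\cdot,z_j)} \sim c_j(z_j)|w_j|$ near $z_j$, reduces the problem to a product of 1D calculations. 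The choices $\tilde c_j(s)=c(s)$ and $\sum_j(\alpha_j+1)/p_j = 1$ are precisely what make the resulting factor collapse to $\int_0^{+\infty}c(s)e^{-s}ds$ with the constant $\prod_j \frac{2\pi e^{-\varphi_j(z_j)}}{(\alpha_j+1)c_j(z_j)^{2\alpha_j+2}}$. The main obstacle is exactly this last step: performing the layer-cake/Fubini reduction so that the maximum-type weight $\psi_1$ is converted to a product, and verifying that the decreasingness of $c(t)e^{-t}$ together with the constraint $\alpha\in E$ produces the matching single factor $\int_0^{+\infty}c(s)e^{-s}ds$ rather than a worse constant.
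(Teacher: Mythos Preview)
The paper does not prove this lemma itself (it is quoted from \cite{GMY-concavity2}), but its generalization, Proposition~\ref{p:exten-fibra}, is proved in full, and the method there is \emph{not} a product of one-dimensional extensions. Instead the paper applies the global $L^2$ extension Lemma~\ref{lem:L2} directly on the product manifold, approximating $\varphi_X$ by smooth plurisubharmonic functions, computing the local mass via Lemma~\ref{l:m2}, and passing to the limit using Lemma~\ref{l:converge}. So your approach is genuinely different from the paper's.

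Your tensor-product construction does give the correct germ condition: expanding $\bigwedge_j F_j - \bigwedge_j w_j^{\alpha_j}dw_j$ and invoking Lemma~\ref{l:0} works fine. The problem is the integral estimate, and the obstacle you flag is real and not overcome by the proposal. Writing $g_j(t)=\int_{\{2p_jG_{\Omega_j}<-t\}}|F_j|^2e^{-\varphi_j}$, one has $\int_{M'}|F|^2e^{-\tilde\varphi}c(-\psi_1)=-\int_0^{\infty}c(t)\,d\bigl(\prod_j g_j(t)\bigr)$. The one-dimensional sharp extension only gives $g_j(0)\le C_j$, and the local asymptotics give $g_j(t)e^{(\alpha_j+1)t/p_j}\to C_j$ as $t\to\infty$; but for a \emph{fixed} $F_j$ there is no reason for $g_j(t)\le C_j e^{-(\alpha_j+1)t/p_j}$ to hold for all $t$ unless the one-dimensional minimal integral is exactly linear (the Suita equality case). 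Even if one had $\prod_j g_j(t)\le Ae^{-t}$ pointwise, that alone does not imply $-\int_0^{\infty}c(t)\,d\bigl(\prod_j g_j\bigr)\le A\int_0^{\infty}c(t)e^{-t}dt$ for a general decreasing $c(t)e^{-t}$. The claim that ``the choices $\tilde c_j=c$ and $\sum_j(\alpha_j+1)/p_j=1$ make the factor collapse'' is precisely the unsupported step: the max-type weight $c(-\psi_1)$ simply does not split across the factors, and no choice of auxiliary gains $\tilde c_j$ repairs this for arbitrary $c$. This is why the paper's route goes through a single application of Lemma~\ref{lem:L2} on the whole of $M'$ rather than factorwise.
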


As $\varphi_j$ is subharmonic on $\Omega_j$, it follows from Lemma \ref{p:exten-pro-finite} and Lemma \ref{lem:A} that there exists a holomorphic  $(1,0)$ form $f_{j,\alpha_j}$ on $\Omega_j$ such that $(f_{j,\alpha_j}-w_j^{\alpha_j}dw_j,z_j)\in(\mathcal{O}(K_{\Omega_j})\otimes \mathcal{I}(2(\alpha_j+1)G_{\Omega_j}(\cdot,z_j)))_{z_j}$ and $\int_{\Omega_j}|f_{j,\alpha_j}|^2e^{-\varphi_j}=\inf\big\{\int_{\Omega_j}|\tilde{f}|^2e^{-\varphi_j}:\tilde{f}\in H^0(\Omega_j,\mathcal{O}(K_{\Omega_j}))$ $\&\,(\tilde{f}-w_j^{\alpha_j}dw_j,z_j)\in(\mathcal{O}(K_{\Omega_j})\otimes \mathcal{I}(2(\alpha_j+1)G_{\Omega_j}(\cdot,z_j)))_{z_j}\big\}<+\infty$ for any $\alpha\in E$ and $j\in\{1,\ldots,n_1\}$.

\begin{Lemma}[see \cite{GY-concavity4}]
	\label{l:orth1}$F=\sum_{\alpha\in E}d_{\alpha}\prod_{1\le j\le n_1}\tilde\pi_{j}^*(f_{j,\alpha_j})$ is a holomorphic $(n_1,0)$ form on $M'$ satisfying that $({F}-\sum_{\alpha\in E}d_{\alpha}w^{\alpha}dw_1\wedge \ldots\wedge dw_{n_1} ,z_0)\in\mathcal{O}(K_{M'})\otimes\mathcal{I}(\psi_1))_{z_0}$,
	$$\int_{M'}|F|^2e^{-\tilde\varphi}=\sum_{\alpha\in E}|d_{\alpha}|^2\int_{M'}\bigg|\prod_{1\le j\le n_1}\pi_{j}^*(f_{j,\alpha_j})\bigg|^2e^{-\tilde\varphi}$$
	 and $\int_{M'}|F|^2e^{-\tilde\varphi}=\inf\big\{\int_{M'}|\tilde{F}|^2e^{-\tilde\varphi}:\tilde{F}$ is a holomorphic $(n_1,0)$ form on $M'$ such that $(\tilde{F}-\sum_{\alpha\in E}d_{\alpha}w^{\alpha}dw_1\wedge \ldots\wedge dw_{n_1} ,z_0)\in\mathcal{O}(K_{M'})\otimes\mathcal{I}(\psi_1))_{z_0}\big\}$, where $d_{\alpha}$ is a constant for any $\alpha\in E$.
\end{Lemma}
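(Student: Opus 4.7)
The plan is to verify in sequence (i) the jet condition on $F$, (ii) the orthogonal decomposition of $\int_{M'}|F|^2e^{-\tilde\varphi}$, and (iii) the minimality statement. All three reduce via Fubini to the one-dimensional minimality of $f_{j,\alpha_j}$ on $\Omega_j$ combined with the description of $\mathcal I(\psi_1)_{z_0}$ supplied by Lemma~\ref{l:0} (after Lemma~\ref{l:phi1+phi2} is used to absorb the bounded pluriharmonic part of $\psi_1$ near $z_0$).

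For (i), I would expand each $f_{j,\alpha_j}$ at $z_j$: the defining condition $(f_{j,\alpha_j}-w_j^{\alpha_j}dw_j,z_j)\in\mathcal I(2(\alpha_j+1)G_{\Omega_j}(\cdot,z_j))_{z_j}$ together with the one-dimensional case of Lemma~\ref{l:0} forces the Taylor tail of $f_{j,\alpha_j}$ past $w_j^{\alpha_j}dw_j$ to vanish at $z_j$ to order $\ge\alpha_j+1$. Expanding $\prod_j\tilde\pi_j^*(f_{j,\alpha_j})$, the residual monomials $w^\beta dw_1\wedge\cdots\wedge dw_{n_1}$ all satisfy $\beta_j\ge\alpha_j$ for every $j$ with at least one strict inequality, hence $\sum_j(\beta_j+1)/p_j>1$, so Lemma~\ref{l:0} puts them in $\mathcal I(\psi_1)_{z_0}$; summing over $\alpha\in E$ gives the required jet condition. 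For (ii), if $\alpha,\alpha'\in E$ are distinct then some index $j_0$ has $\alpha_{j_0}<\alpha'_{j_0}$, so $f_{j_0,\alpha'_{j_0}}$ vanishes to order $\ge\alpha_{j_0}+1$ at $z_{j_0}$ and therefore lies in $\mathcal I(2(\alpha_{j_0}+1)G_{\Omega_{j_0}}(\cdot,z_{j_0}))_{z_{j_0}}$; testing $f_{j_0,\alpha_{j_0}}+tf_{j_0,\alpha'_{j_0}}$ against the minimality of $f_{j_0,\alpha_{j_0}}$ for all $t\in\mathbb C$ yields $\int_{\Omega_{j_0}}f_{j_0,\alpha_{j_0}}\wedge\overline{f_{j_0,\alpha'_{j_0}}}e^{-\varphi_{j_0}}=0$. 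Fubini then gives orthogonality of the product terms in $L^2(M',e^{-\tilde\varphi})$, and expanding $|F|^2$ yields the displayed identity.

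For (iii), Lemma~\ref{lem:A} applied with $c\equiv 1$ on $M'$ produces the unique minimizer $F^\ast$ and the Pythagorean identity, so it suffices to check $F\perp\mathcal A_0$, where $\mathcal A_0:=\{G\in L^2\cap H^0(M',\mathcal O(K_{M'})):(G,z_0)\in\mathcal I(\psi_1)_{z_0}\}$; then $F-F^\ast\in\mathcal A_0$ is self-orthogonal and thus vanishes. By linearity I fix $\alpha\in E$ and $G\in\mathcal A_0$ and compute $\int_{M'}\prod_j\tilde\pi_j^*(f_{j,\alpha_j})\wedge\overline G\,e^{-\tilde\varphi}$ by iterated Fubini. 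The one-dimensional functional $P_j(h):=\int_{\Omega_j}f_{j,\alpha_j}\wedge\overline h\,e^{-\varphi_j}$ annihilates any $h$ with $(h,z_j)\in\mathcal I(2(\alpha_j+1)G_{\Omega_j}(\cdot,z_j))_{z_j}$ (by the minimization characterization of $f_{j,\alpha_j}$), so $P_j$ is a finite linear combination of the Taylor-coefficient functionals $L_{j,0},\ldots,L_{j,\alpha_j}$ at $z_j$. Iterating across all $n_1$ factors, the integral collapses to a fixed linear combination of Taylor coefficients $b_\beta$ of $G$ at $z_0$ with $\beta_j\le\alpha_j$ for every $j$; all such $\beta$ satisfy $\sum_j(\beta_j+1)/p_j\le 1$, and Lemma~\ref{l:0} then forces $b_\beta=0$ for $G\in\mathcal A_0$, giving the orthogonality.

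The main obstacle will be this iterated Fubini in step (iii): one has to justify that the partial $\Omega_j$-integrals depend holomorphically on the remaining factors so that the inductive extraction of the $L_{j,l}$ is well-posed, and to verify that the final multilinear expression really has support only on multi-indices $\beta\le\alpha$ componentwise; the structural match $\{\beta\le\alpha\text{ componentwise}\}\Rightarrow\{\sum_j(\beta_j+1)/p_j\le 1\}$ is what links the one-dimensional orthogonalities of the $f_{j,\alpha_j}$ to the description of $\mathcal I(\psi_1)_{z_0}$.
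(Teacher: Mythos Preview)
The paper does not give its own proof of this lemma; it is quoted verbatim from \cite{GY-concavity4}. So there is nothing to compare against directly, and I will assess your argument on its own.

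Your proof is correct. Parts (i) and (ii) are routine once one notes that near $z_0$ one may choose coordinates $\tilde w_j$ with $|\tilde w_j|=e^{G_{\Omega_j}(\cdot,z_j)}$, so that $\psi_1=\max_j\{2p_j\log|\tilde w_j|\}$ exactly on a neighborhood of $z_0$ and Lemma~\ref{l:0} applies without any residual harmonic term. (Your phrasing ``absorb the bounded pluriharmonic part via Lemma~\ref{l:phi1+phi2}'' is slightly off: the harmonic corrections sit \emph{inside} the maximum, so Lemma~\ref{l:phi1+phi2} does not apply as stated; the coordinate change is the clean fix, and this is exactly what the paper does in analogous places.) The orthogonality in (ii) is exactly right: for $\alpha\neq\alpha'$ in $E$ pick $j_0$ with $\alpha_{j_0}<\alpha'_{j_0}$; then $f_{j_0,\alpha'_{j_0}}$ vanishes to order $\geq\alpha_{j_0}+1$ at $z_{j_0}$, hence lies in the null space against which $f_{j_0,\alpha_{j_0}}$ is orthogonal, and Fubini factors the product.

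Part (iii) is the substantive step, and your reduction is the right one. Because $\tilde\varphi=\sum_j\tilde\pi_j^*(\varphi_j)$ is a product weight, the Bergman space $A^2(M',e^{-\tilde\varphi})$ is the Hilbert tensor product $\widehat\bigotimes_j A^2(\Omega_j,e^{-\varphi_j})$, and the iterated partial integration $P_{n_1}\circ\cdots\circ P_1$ you describe is precisely the pairing of $G$ with the elementary tensor $\bigotimes_j f_{j,\alpha_j}$. The holomorphy of the intermediate partial integrals follows from Cauchy--Schwarz in the fibre and the mean-value inequality on the base, and each $P_j$ factors through the $\alpha_j$-jet at $z_j$ because $f_{j,\alpha_j}\perp\mathcal I(2(\alpha_j+1)G_{\Omega_j}(\cdot,z_j))_{z_j}\cap A^2(\Omega_j,e^{-\varphi_j})$. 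The combinatorial closing step --- that $\beta_j\le\alpha_j$ for all $j$ with $\alpha\in E$ forces $\sum_j(\beta_j+1)/p_j\le 1$, hence $b_\beta=0$ by Lemma~\ref{l:0} --- is exactly the bridge between the one-dimensional orthogonalities and the multiplier ideal $\mathcal I(\psi_1)_{z_0}$, and you identify it correctly.
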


Let $\varphi_Y$ be a plurisubharmonic function on $Y$. Let $f_{\alpha}$ be a holomorphic $(n_2,0)$ form on $Y$ satisfying $\int_Y|f_{\alpha}|^2e^{-\varphi_Y}<+\infty$ for any $\alpha\in E$. Let $f=\sum_{\alpha\in E}\pi_1^*(w^{\alpha}dw_1\wedge \ldots\wedge dw_{n_1})\wedge \pi_2^*(f_{\alpha})$ be a holomorphic $(n,0)$ form  on $V_0\times Y\subset M=M'\times Y$. Denote that $\varphi:=\pi_1^*(\tilde\varphi)+\pi_2^*(\varphi_Y)$ and $\psi:=\pi_1^*(\psi_1)$ on $M$.
\begin{Lemma}
	\label{l:orth2}$F=\sum_{\alpha\in E}\pi_{1,1}^*(f_{1,\alpha_1})\wedge\ldots\wedge\pi_{1,n_1}^*(f_{n_1,\alpha_{n_1}})\wedge \pi_2^*(f_{\alpha})$ is a holomorphic $(n,0)$ form on $M$, and satisfies that $(F-f,(z_0,y))\in(\mathcal{O}(K_M)\otimes\mathcal{I}(\psi))_{(z_0,y)}$ for any $y\in Y$,
	$$\int_{M}|F|^2e^{-\varphi}=\sum_{\alpha\in E}\left(\int_Y|f_{\alpha}|^2e^{-\varphi_Y}\right)\prod_{1\le j\le n_1}\int_{\Omega_j}|f_{j,\alpha_j}|^2e^{-\varphi_j}$$
	and $\int_{M}|F|^2e^{-\varphi}=\inf\big\{\int_{M}|\tilde{F}|^2e^{-\varphi}:\tilde{F}$ is a holomorphic $(n,0)$ form on $M$ such that $(\tilde{F}-f,(z_0,y))\in\mathcal{O}(K_{M})\otimes\mathcal{I}(\psi))_{(z_0,y)}$ for any $y\in Y\big\}$.
\end{Lemma}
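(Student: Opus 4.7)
The proof combines the minimization on $M'$ from Lemma \ref{l:orth1} with a Fubini slicing argument over the product $M = M' \times Y$, exploiting that $\psi = \pi_1^*(\psi_1)$ and $\varphi = \pi_1^*(\tilde\varphi) + \pi_2^*(\varphi_Y)$ have the fibration structure of Proposition \ref{p:fibra}. Holomorphicity of $F$ is immediate. For the extension property, I note that Lemma \ref{l:orth1} applied with $d_\alpha = 1$ and all other coefficients zero gives, for each $\alpha \in E$, that on $M'$
\[
\Bigl(\prod_{1 \le j \le n_1}\tilde\pi_j^*(f_{j, \alpha_j}) - w^\alpha\, dw_1 \wedge \cdots \wedge dw_{n_1},\; z_0\Bigr) \in (\mathcal{O}(K_{M'}) \otimes \mathcal{I}(\psi_1))_{z_0}.
\]
Pulling back by $\pi_1$ and wedging with $\pi_2^*(f_\alpha)$ preserves membership in $\mathcal{I}(\psi)_{(z_0, y)}$ for every $y \in Y$, where the passage from the $w$-only statement on $M'$ to the slice-wise statement on $M$ is justified by Lemma \ref{l:0'}. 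Summing over $\alpha \in E$ yields $(F - f, (z_0, y)) \in (\mathcal{O}(K_M) \otimes \mathcal{I}(\psi))_{(z_0, y)}$ for every $y$.

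For the $L^2$ identity, I apply Fubini on $M = M' \times Y$. Writing $A_\alpha := \prod_j \tilde\pi_j^*(f_{j, \alpha_j})$, expansion of $|F|^2$ produces cross pairings proportional to $\bigl(\int_{M'} A_\alpha \wedge \overline{A_{\alpha'}}\, e^{-\tilde\varphi}\bigr)\bigl(\int_Y f_\alpha \wedge \overline{f_{\alpha'}}\, e^{-\varphi_Y}\bigr)$; the first factor vanishes for $\alpha \ne \alpha'$ by the orthogonality implicit in Lemma \ref{l:orth1} (obtained by polarizing its norm identity in the parameters $d_\alpha$). The diagonal terms $\int_{M'} |A_\alpha|^2 e^{-\tilde\varphi}$ factor as $\prod_j \int_{\Omega_j} |f_{j, \alpha_j}|^2 e^{-\varphi_j}$ by another Fubini on $M' = \prod_j \Omega_j$, giving the claimed formula.

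The minimality is the main obstacle. My plan is to slice over $Y$: for an admissible competitor $\tilde F$ with $\int_M |\tilde F|^2 e^{-\varphi} < +\infty$, in local product coordinates $(w, \tilde w)$ near $\{z_0\} \times \{y_0\}$ I write $\tilde F = G(w, \tilde w)\, dw_1 \wedge \cdots \wedge dw_{n_1} \wedge d\tilde w_1 \wedge \cdots \wedge d\tilde w_{n_2}$ and view $G_{\tilde w_0}(w) := G(w, \tilde w_0)\, dw_1 \wedge \cdots \wedge dw_{n_1}$ as a $\tilde w_0$-parametrized family of global holomorphic $(n_1, 0)$-forms on $M'$, well-defined via the restriction of $\tilde F$ to $M' \times \{y_0\}$ together with a local trivialization of $K_Y$. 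Lemma \ref{l:0'} translates the extension constraint on $\tilde F$ into the slice-wise constraint $(G_{\tilde w} - f_{\tilde w}, z_0) \in \mathcal{I}(\psi_1)_{z_0}$, where $f_{\tilde w} := \sum_\alpha \tilde f_\alpha(\tilde w)\, w^\alpha dw_1 \wedge \cdots \wedge dw_{n_1}$ is the corresponding $M'$-slice of $f$. The minimality assertion in Lemma \ref{l:orth1} then gives $\int_{M'} |G_{\tilde w}|^2 e^{-\tilde\varphi} \ge \sum_\alpha |\tilde f_\alpha(\tilde w)|^2 \int_{M'} |A_\alpha|^2 e^{-\tilde\varphi}$, and integrating over $Y$ against $e^{-\varphi_Y}$ via Fubini recovers $\int_M |\tilde F|^2 e^{-\varphi} \ge \int_M |F|^2 e^{-\varphi}$, with equality attained by $F$ thanks to the second paragraph. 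The technical difficulty is making the slicing of $\tilde F$ into a globally defined family of $K_{M'}$-sections rigorous and patching the local $Y$-coordinate computations into a global inequality; the product structure of $M$ and the pullback structure of $\psi$ and $\varphi$ are what make this feasible.
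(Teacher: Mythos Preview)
Your proposal is correct and follows essentially the same approach as the paper. The only organizational difference is that where the paper invokes Proposition \ref{p:fibra} for each individual $\alpha$ (obtaining that each $F_\alpha$ is a minimizer, hence orthogonal to the kernel via \eqref{eq:211218a}, and then summing), you instead inline the slicing argument of Proposition \ref{p:fibra} directly for the full combined datum and apply Lemma \ref{l:orth1} on each $Y$-slice before integrating back --- the ingredients and logic are identical.
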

\begin{proof}
	It follows from Lemma \ref{l:0'} that $(f,(z_0,y))\in\mathcal{I}(\psi)_{(z_0,y)}$ for any $y\in Y$ if and only if $(f(\cdot,y),z_0)\in\mathcal{I}(\psi_1)_{z_0}$. For any $\alpha\in E$, using Proposition \ref{p:fibra} and Lemma \ref{l:orth1}, we obtain that $F_{\alpha}=\pi_{1,1}^*(f_{1,\alpha_1})\wedge\ldots\wedge\pi_{1,n_1}^*(f_{n_1,\alpha_{n_1}})\wedge \pi_2^*(f_{\alpha})$ satisfies that $\int_M|F_{\alpha}|^2e^{-\varphi}=\left(\int_Y|f_{\alpha}|^2e^{-\varphi_Y}\right)\prod_{1\le j\le n_1}\int_{\Omega_j}|f_{j,\alpha_j}|^2e^{-\varphi_j}=\inf\big\{\int_{M}|\tilde{F}|^2e^{-\varphi}:\tilde{F}$ is a holomorphic $(n,0)$ form on $M$ such that $(\tilde{F}-\pi_1^*(w^{\alpha}dw_1\wedge \ldots\wedge dw_{n_1})\wedge \pi_2^*(f_{\alpha}),(z_0,y))\in\mathcal{O}(K_{M})\otimes\mathcal{I}(\psi))_{(z_0,y)}$ for any $y\in Y\big\}$, i.e.
	\begin{equation}
		\label{eq:211218a}\int_MF_{\alpha}\wedge\overline{\tilde F}e^{-\varphi}=0
	\end{equation}
	for any holomorphic $(n,0)$ form $\tilde F$ satisfying $\int_{M}|\tilde F|^2e^{-\varphi}<+\infty$ and $(\tilde F,(z_0,y))\in(\mathcal{O}(K_{M})\otimes\mathcal{I}(\psi))_{(z_0,y)}$ for any $y\in Y$. It follows from the Fubini's theorem and Lemma \ref{l:orth1} that
	\begin{equation}
		\label{eq:211218b}
		\int_{M}F_{\alpha}\wedge\overline{F_{\tilde\alpha}}e^{-\varphi}=0
	\end{equation}
	for any $\alpha\not=\tilde\alpha$. Note that $F=\sum_{\alpha\in E}F_{\alpha}$ and $(F-f,(z_0,y))\in(\mathcal{O}(K_M)\otimes\mathcal{I}(\psi))_{(z_0,y)}$ for any $y\in Y$. It follows from equality \eqref{eq:211218a} and equality \eqref{eq:211218b} that
	$$\int_{M}|F|^2e^{-\varphi}=\sum_{\alpha\in E}\left(\int_Y|f_{\alpha}|^2e^{-\varphi_Y}\right)\prod_{1\le j\le n_1}\int_{\Omega_j}|f_{j,\alpha_j}|^2e^{-\varphi_j}$$
	and $\int_{M}|F|^2e^{-\varphi}=\inf\big\{\int_{M}|\tilde{F}|^2e^{-\varphi}:\tilde{F}$ is a holomorphic $(n,0)$ form on $M$ such that $(\tilde{F}-f,(z_0,y))\in\mathcal{O}(K_{M})\otimes\mathcal{I}(\psi))_{(z_0,y)}$ for any $y\in Y\big\}$.
	\end{proof}

Let $X$ be an $n_1-$dimensional complex manifold, and let $Y$ be an $n_2-$dimensional complex manifold. Let $M=X\times Y$ be an $n-$dimensional complex manifold, where $n=n_1+n_2$. Let $\pi_1$ and $\pi_2$ be the natural projections from $M$ to $X$ and $Y$ respectively. We recall the following lemma.
\begin{Lemma}[see \cite{BGY-concavity5}]
	\label{l:decom-product}Let $F\not\equiv0$ be a holomorphic $(n,0)$ form on $M$. Let $f_1$ be a holomorphic $(n_1,0)$ form on an open subset $U$ of $X$, and let $f_2$ be a holomorphic $(n_2,0)$ form on an open subset $V$ of $Y$. If
	$$F=\pi_1^*(f_1)\wedge\pi_2^*(f_2)$$
	on $U\times V$, there exist a holomorphic $(n_1,0)$ form $F_1$ on $X$ and a holomorphic $(n_2,0)$ form $F_2$ on $Y$ such that $F_1=f_1$ on $U$, $F_2=f_2$ on $V$, and
	$$F=\pi_1^*(F_1)\wedge\pi_2^*(F_2)$$
	on $M$.
\end{Lemma}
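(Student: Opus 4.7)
The plan is to construct $F_1$ and $F_2$ by restricting $F$ to affine slices $X \times \{y_0\}$ and $\{x_0\} \times Y$ and then dividing by the appropriate nonzero value on the transverse factor. First, since $F \not\equiv 0$ and $F|_{U\times V} = \pi_1^*(f_1) \wedge \pi_2^*(f_2)$, neither $f_1$ nor $f_2$ is identically zero on its domain: otherwise the product would vanish on $U \times V$ and then, by connectedness of $M = X \times Y$ and the identity theorem for holomorphic sections, $F$ would vanish on all of $M$, contradicting $F \not\equiv 0$. Hence I can pick $x_0 \in U$ with $f_1(x_0) \neq 0$ and $y_0 \in V$ with $f_2(y_0) \neq 0$.

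Next, using the canonical identification $K_M \cong \pi_1^* K_X \otimes \pi_2^* K_Y$, I would define a holomorphic section $F_1$ of $K_X$ over $X$ by specifying $F_1(x) \in (K_X)_x$ as the unique element satisfying $F(x, y_0) = F_1(x) \otimes f_2(y_0)$ in $(K_X)_x \otimes (K_Y)_{y_0}$. This is legitimate because $(K_Y)_{y_0}$ is one-dimensional and $f_2(y_0)$ is a nonzero element of it. The section $F_1$ is holomorphic in $x$ since $F$ is, and by construction $F_1|_U = f_1$, because for $x \in U$ one has $F(x, y_0) = f_1(x) \otimes f_2(y_0)$. Symmetrically, I would define $F_2 \in H^0(Y, K_Y)$ through $F(x_0, y) = f_1(x_0) \otimes F_2(y)$, which gives $F_2|_V = f_2$.

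To conclude $F = \pi_1^*(F_1) \wedge \pi_2^*(F_2)$ on all of $M$, I would apply the identity theorem: both sides are holomorphic sections of $K_M$ on the connected complex manifold $M$, and by construction they already agree on the nonempty open set $U \times V$ (since $F_1|_U = f_1$, $F_2|_V = f_2$, and $F|_{U \times V} = \pi_1^*(f_1) \wedge \pi_2^*(f_2)$); hence they coincide globally.

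The only mildly technical point is giving a coordinate-free meaning to the "division" of a section of $K_M$ at $(x, y_0)$ by a nonzero element of $(K_Y)_{y_0}$, but this is transparent in any local trivialization of $K_Y$ near $y_0$ and the definition of $F_1(x)$ is independent of the trivialization by the cocycle compatibility. I do not anticipate any genuine obstacle here; the argument is essentially holomorphic slicing followed by the identity theorem, and the only place where care is needed is to make sure the pointwise construction of $F_1$ and $F_2$ in line bundle fibers is patched consistently across charts.
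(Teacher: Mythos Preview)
The paper does not include a proof of this lemma; it is stated with a citation to \cite{BGY-concavity5}. Your slicing argument is correct and is the natural way to prove the result: restrict $F$ to the slices $X\times\{y_0\}$ and $\{x_0\}\times Y$, divide by the nonzero fiber values $f_2(y_0)$ and $f_1(x_0)$ to define $F_1$ and $F_2$, and then apply the identity theorem on $U\times V$ to conclude $F=\pi_1^*(F_1)\wedge\pi_2^*(F_2)$ globally. The only point you might make explicit is that connectedness of $X$ and $Y$ (hence of $M$) is used twice---once to deduce that $f_1\not\equiv0$ and $f_2\not\equiv0$ from $F\not\equiv0$, and once in the final identity-theorem step---but this is a standing convention in the paper's setting.
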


\subsection{Optimal jets $L^2$ extension}
\

In this section, we give an optimal jets $L^2$ extension result, i.e. Proposition \ref{p:exten-fibra}.
We recall two lemmas, which will be used in the proof of Proposition \ref{p:exten-fibra}.

\begin{Lemma}[\cite{GMY-concavity2}] \label{lem:L2} Let $c$ be a positive function on $(0,+\infty)$, such that $\int_{0}^{+\infty}c(t)e^{-t}dt<+\infty$ and $c(t)e^{-t}$ is decreasing on $(0,+\infty)$.
Let $B\in(0,+\infty)$ and $t_{0}\geq 0$ be arbitrarily given.
Let $M$ be an $n-$dimensional weakly pseudoconvex K\"ahler  manifold.
Let $\psi<0$ be a plurisubharmonic function on $M$.
Let $\varphi$ be a plurisubharmonic function on $M$.
Let $F$ be a holomorphic $(n,0)$ form on $\{\psi<-t_{0}\}$,
such that
\begin{equation}
\label{equ:20171124a}
\int_{K\cap\{\psi<-t_{0}\}}|F|^{2}<+\infty
\end{equation}
for any compact subset $K$ of $M$, and
\begin{equation}
\label{equ:20171122a}
\int_{M}\frac{1}{B}\mathbb{I}_{\{-t_{0}-B<\psi<-t_{0}\}}|F|^{2}e^{-\varphi}\leq C<+\infty.
\end{equation}
Then there exists a
holomorphic $(n,0)$ form $\tilde{F}$ on $M$, such that
\begin{equation}
\label{equ:3.4}
\begin{split}
\int_{M}&|\tilde{F}-(1-b_{t_0,B}(\psi))F|^{2}e^{-\varphi+v_{t_0,B}(\psi)}c(-v_{t_0,B}(\psi))\leq C\int_{0}^{t_{0}+B}c(t)e^{-t}dt
\end{split}
\end{equation}
where $b_{t_0,B}(t)=\int_{-\infty}^{t}\frac{1}{B}\mathbb{I}_{\{-t_{0}-B< s<-t_{0}\}}ds$ and
$v_{t_0,B}(t)=\int_{-t_0}^{t}b_{t_0,B}(s)ds-t_0$.
\end{Lemma}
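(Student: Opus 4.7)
The plan is to construct $\tilde F$ as a correction to the cut-off form $(1-b_{t_0,B}(\psi))F$: writing $\tilde F = (1-b_{t_0,B}(\psi))F - u$, the requirement that $\tilde F$ be holomorphic on $M$ is equivalent to solving $\bar\partial u = -b_{t_0,B}'(\psi)\,\bar\partial\psi \wedge F$, whose right-hand side is a $(n,1)$-form supported in the annulus $\{-t_0-B<\psi<-t_0\}$. Hypothesis \eqref{equ:20171122a} bounds $|F|^2 e^{-\varphi}$ precisely there, so the game becomes solving this $\bar\partial$-equation with a sharp weighted $L^2$ bound that produces the constant $\int_0^{t_0+B} c(t)e^{-t}\,dt$ on the right-hand side of \eqref{equ:3.4}.

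For this I would use the twisted Bochner-Kodaira-Nakano inequality (Donnelly-Fefferman / Berndtsson / Guan-Zhou style) on an exhaustion of $M$ by relatively compact weakly pseudoconvex K\"ahler sub-domains. After reducing via Fornaess-Narasimhan (Lemma \ref{l:FN1}) and a smoothing of $b_{t_0,B}$ to the $C^\infty$ case, introduce auxiliary functions $\eta = s(-\Psi)$ and $\phi = -\mu(-\Psi)$ with $\Psi := v_{t_0,B}(\psi)$; note that $v_{t_0,B}$ is convex and nondecreasing (since $v_{t_0,B}''=b_{t_0,B}'\geq 0$), so $\Psi$ is a negative plurisubharmonic function. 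With the target weight $e^{-\varphi+\Psi}c(-\Psi)$ built into the claim, the twist trick yields, for $\alpha \in \mathrm{Dom}(\bar\partial^{*}_{\varphi})$,
\begin{equation*}
\int_M (\eta + g^{-1})|\bar\partial^{*}\alpha|^2 e^{-\varphi-\phi} + \int_M \eta |\bar\partial\alpha|^2 e^{-\varphi-\phi} \geq \int_M \bigl\langle\bigl(-i\partial\bar\partial(\eta+\phi) - ig\,\partial\eta\wedge\bar\partial\eta\bigr)\alpha,\alpha\bigr\rangle e^{-\varphi-\phi},
\end{equation*}
and one picks $s,\mu,g$ so that the right-hand side dominates $|\alpha|^2 \bigl[c(-\Psi)e^{\Psi}\bigr]^{-1}$ on the annulus, after which $L^2$-duality provides the solution $u$ with the desired bound.

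The selection of $s,\mu,g$ is where the sharp constant emerges. Roughly, one sets up an ODE system encoding the relation ``$c(t)e^{-t}$ is decreasing'' — essentially $(s(t)e^{-t})' = -c(t)e^{-t}$ together with $(\mu')^{2} g = s$ — so that the curvature contribution of $\eta$ exactly cancels the negative quadratic $g\,\partial\eta\wedge\bar\partial\eta$ and the residual weight matches $c(-\Psi)e^{\Psi}$. Plugging the source $-b_{t_0,B}'(\psi)\bar\partial\psi\wedge F$ into the resulting estimate and integrating out the $\psi$-direction over $(-t_0-B,-t_0)$, the factor $\int_0^{t_0+B} c(t)e^{-t}\,dt$ appears via the change of variable $\tau = -v_{t_0,B}(\psi)$, matching the endpoint values of $s,\mu$ at $t=0$ and $t=t_0+B$.

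The main obstacle is the simultaneous design of $s,\mu,g$ so that the constant $\int_0^{t_0+B} c(t)e^{-t}\,dt$ is obtained with no loss. This requires tracking boundary behaviour of the ODE solutions at both endpoints, handling the non-smoothness of $b_{t_0,B}$ and $v_{t_0,B}$ by a smoothing-then-limit procedure, and verifying that the solutions $u_k$ on an exhaustion $\{M_k\}$ have uniformly bounded weighted $L^2$ norms, so that a weak limit in the appropriate weighted Hilbert space yields $\tilde F$ on $M$. A secondary technical point is using \eqref{equ:20171124a} to ensure that $(1-b_{t_0,B}(\psi))F$ extends by zero to a form which is locally $L^2$ on all of $M$, so that the global $\bar\partial$-equation $\bar\partial u = -b_{t_0,B}'(\psi)\bar\partial\psi\wedge F$ is well-posed.
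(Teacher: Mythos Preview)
The paper does not prove this lemma; it is quoted verbatim from \cite{GMY-concavity2} and used as a black box. Your outline is precisely the Guan--Zhou twisted $\bar\partial$ method that the cited reference employs: cut off $F$ by $1-b_{t_0,B}(\psi)$, solve $\bar\partial u$ equal to the resulting error term supported in the annulus, and choose twist functions $s,\mu,g$ of $-v_{t_0,B}(\psi)$ via an ODE so that the curvature term in the twisted Bochner--Kodaira inequality yields the output weight $e^{-\varphi+v_{t_0,B}(\psi)}c(-v_{t_0,B}(\psi))$ with the sharp constant $\int_0^{t_0+B}c(t)e^{-t}\,dt$. So your plan is the same as the (cited) proof.

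One caveat: you invoke Lemma~\ref{l:FN1} (Forn{\ae}ss--Narasimhan) to regularize $\varphi,\psi$, but that lemma is stated for Stein manifolds, whereas here $M$ is only weakly pseudoconvex K\"ahler. In \cite{GMY-concavity2} (building on Zhou--Zhu \cite{ZZ2018}) the regularization step is handled instead by exhausting $M$ via sublevel sets of a smooth plurisubharmonic exhaustion (which carry complete K\"ahler metrics), combined with Demailly's equisingular approximation of quasi-plurisubharmonic functions rather than the Stein-specific Forn{\ae}ss--Narasimhan approximation. Apart from this point your sketch is sound.
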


It is clear that $\mathbb{I}_{(-t_{0},+\infty)}\leq b_{t_0,B}(t)\leq\mathbb{I}_{(-t_{0}-B,+\infty)}$ and $\max\{t,-t_{0}-B\}\leq v_{t_0,B}(t) \leq\max\{t,-t_{0}\}$.

\begin{Lemma}[see \cite{GY-concavity}]
	\label{l:converge}
	Let $M$ be a complex manifold. Let $S$ be an analytic subset of $M$.  	
	Let $\{g_j\}_{j\in\mathbb{Z}_{\ge1}}$ be a sequence of nonnegative Lebesgue measurable functions on $M$, which satisfies that $g_j$ are almost everywhere convergent to $g$ on  $M$ when $j\rightarrow+\infty$,  where $g$ is a nonnegative Lebesgue measurable function on $M$. Assume that for any compact subset $K$ of $M\backslash S$, there exist $s_K\in(0,+\infty)$ and $C_K\in(0,+\infty)$ such that
	$$\int_{K}{g_j}^{-s_K}dV_M\leq C_K$$
	 for any $j$, where $dV_M$ is a continuous volume form on $M$.
	
Let $\{F_j\}_{j\in\mathbb{Z}_{\ge1}}$ be a sequence of holomorphic $(n,0)$ form on $M$. Assume that $\liminf_{j\rightarrow+\infty}\int_{M}|F_j|^2g_j\leq C$, where $C$ is a positive constant. Then there exists a subsequence $\{F_{j_l}\}_{l\in\mathbb{Z}_{\ge1}}$, which satisfies that $\{F_{j_l}\}$ is uniformly convergent to a holomorphic $(n,0)$ form $F$ on $M$ on any compact subset of $M$ when $l\rightarrow+\infty$, such that
$$\int_{M}|F|^2g\leq C.$$
\end{Lemma}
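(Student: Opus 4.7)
\medskip

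The plan is to combine a H\"older-type local $L^p$ estimate (with $p<1$) with the submean-value inequality for plurisubharmonic functions, then extract a locally uniform limit via Montel and finally pass the inequality $\int_M|F|^2 g\le C$ using Fatou. First I would pass to a subsequence (still written $\{F_j\}$) so that $\int_M|F_j|^2 g_j\le C+1$ for every $j$. For each compact $K\subset M\setminus S$, apply H\"older with exponents $1/p$ and $1/(1-p)$, where $p=p_K:=s_K/(1+s_K)\in(0,1)$, to the decomposition $|F_j|^{2p}=(|F_j|^2 g_j)^p\cdot g_j^{-p}$; this yields
\[
\int_K|F_j|^{2p}\,dV_M\le (C+1)^{p}\,C_K^{1-p}.
\]

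Next I would upgrade this to a sup bound. Fix a compact $K'\subset\mathrm{int}(K)\subset M\setminus S$ and an $\varepsilon>0$ so small that the $\varepsilon$-neighborhood of $K'$ (in some fixed coordinate cover) still lies inside $K$. Because each $F_j$ is a holomorphic $(n,0)$ form, the function $|F_j|^{2p}$ is plurisubharmonic in any local coordinate chart, so the submean-value inequality on coordinate balls of radius $\varepsilon$ gives
\[
\sup_{K'}|F_j|^{2p}\le \frac{1}{\mathrm{vol}(B(\cdot,\varepsilon))}\int_{K}|F_j|^{2p}\,dV_M\le M(K,K',\varepsilon),
\]
uniformly in $j$. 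A Montel-type diagonal extraction along an exhaustion $K_m\Subset\mathrm{int}\,K_{m+1}$ of $M\setminus S$ by compact sets produces a subsequence $\{F_{j_l}\}$ converging locally uniformly on $M\setminus S$ to a holomorphic $(n,0)$ form $F$ on $M\setminus S$.

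To promote $F$ to a holomorphic form on all of $M$, I would use that each $F_{j_l}$ is already defined and holomorphic on $M$, together with the observation that, by Fatou applied to $\{g_j^{-s_K}\}$ on compact $K\subset M\setminus S$, the limit $g$ satisfies $\int_K g^{-s_K}\le C_K$. Re-running the H\"older and submean-value steps for the limit form $F$ gives $F$ locally bounded on $M\setminus S$; since $S$ is analytic of positive codimension and $F_{j_l}$ are holomorphic across $S$, a Riemann-type removable-singularities argument (or direct application of the classical fact that analytic subsets are locally negligible for bounded/$L^2$ holomorphic functions) extends $F$ to a holomorphic $(n,0)$ form on $M$. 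Finally, Fatou's lemma applied to $|F_{j_l}|^2 g_{j_l}$, which converges a.e.\ to $|F|^2 g$ on $M\setminus S$ (hence a.e.\ on $M$), yields
\[
\int_M|F|^2 g\le \liminf_{l\to\infty}\int_M|F_{j_l}|^2 g_{j_l}\le C.
\]

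The main obstacle I expect is the extension across $S$: the H\"older-submean-value argument gives only pointwise-uniform bounds on compact subsets of $M\setminus S$, with constants that may blow up as the sets approach $S$, so direct normal-family compactness cannot be applied on neighborhoods of points of $S$. The fix is to exploit that the candidate limit $F$ inherits from the a.e.\ bound on $g$ the integrability needed for the removable singularity theorem, and to use that $|F_{j_l}|$ are continuous functions on $M$ whose restrictions converge on the dense open set $M\setminus S$, so that the extension of $F$ is forced by uniqueness and the local $L^p$ estimate, not by a uniform pointwise bound up to $S$.
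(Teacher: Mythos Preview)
The paper does not supply its own proof of this lemma; it is quoted from \cite{GY-concavity}. Your outline---H\"older to get a uniform local $L^{2p}$ bound on compacta of $M\setminus S$, the submean-value inequality for the plurisubharmonic function $|F_j|^{2p}$ to pass to sup bounds, Montel for a locally uniform limit, and Fatou for the final inequality---is the standard route and is correct on $M\setminus S$.

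The gap is exactly where you anticipate it, but your proposed fix does not close it. From $\int_M|F|^2g\le C$ together with $\int_K g^{-s_K}\le C_K$ for compacta $K\subset M\setminus S$ you only obtain $\int_K|F|^{2p_K}<\infty$ with $p_K=s_K/(1+s_K)$ depending on $K$. If you let $K$ exhaust $B\setminus S$ for a ball $B$ centered at a point of $S$, the exponents $p_K$ may shrink to $0$, so you never get $|F|^{q}\in L^1(B)$ for any fixed $q>0$, and neither the bounded nor the $L^2$ (nor any $L^q$) version of the removable-singularity theorem applies to $F$. The sentence ``the extension of $F$ is forced by uniqueness and the local $L^p$ estimate'' does not carry an argument.

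The missing idea is to exploit that the $F_{j_l}$ themselves are holomorphic on all of $M$, and to use the maximum principle on them rather than a removable-singularity theorem on the limit. Near $z_0\in S$ choose local coordinates so that the projection $S\to\mathbb C^{n-1}$ onto the first $n-1$ variables is finite (local parametrization of analytic sets); then for suitable $0<\rho<r$ the compact torus-type set $K^*:=\overline{\Delta^{n-1}(0,\rho)}\times\{|z_n|=r\}$ is disjoint from $S$. The one-variable maximum principle in $z_n$ gives
\[
\sup_{\overline{\Delta^{n-1}(0,\rho)}\times\overline{\Delta(0,r)}}|F_{j_l}|\le\sup_{K^*}|F_{j_l}|,
\]
and the right-hand side is uniformly bounded by your sup estimate on compacta of $M\setminus S$. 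Covering a given compact $L\subset M$ by finitely many such polydiscs promotes the uniform bound from $M\setminus S$ to $M$. Montel then yields a subsequence converging locally uniformly on all of $M$, and the extension issue disappears; Fatou on $M$ finishes as you wrote.
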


Let $\Omega_j$  be an open Riemann surface, which admits a nontrivial Green function $G_{\Omega_j}$ for any  $1\le j\le n_1$. Let $Y$ be an $n_2-$dimensional weakly pseudoconvex K\"ahler manifold, and let $K_Y$ be the canonical (holomorphic) line bundle on $Y$. Let
$M=\left(\prod_{1\le j\le n_1}\Omega_j\right)\times Y$
 be an $n-$dimensional complex manifold, where $n=n_1+n_2$, and let $K_M$ be the canonical (holomorphic) line bundle on $M$. Let $\pi_{1}$, $\pi_{1,j}$ and $\pi_2$ be the natural projections from $M$ to $\prod_{1\le j\le n_1}\Omega_j$, $\Omega_j$ and $Y$ respectively.
 Let $Z_j=\{z_{j,k}:1\le k<\tilde m_j\}$ be a discrete subset of $\Omega_j$ for any  $j\in\{1,\ldots,n_1\}$, where $\tilde m_j\in\mathbb{Z}_{\ge2}\cup\{+\infty\}$. Denote that $Z_0:=\left(\prod_{1\le j\le n_1}Z_j\right)\times Y$.

Let $\varphi_X$ be a plurisubharmonic function on $\prod_{1\le j\le n_1}\Omega_j$ satisfying that $\varphi_X(z)>-\infty$ for any $z\in\prod_{1\le j\le n_1}Z_j$, and let $\varphi_Y$ be a plurisubharmonic function on $Y$.
Let $p_{j,k}$ be a positive number for any $1\le j\le n_1$ and $1\le k<\tilde m_j$, which satisfies that $\sum_{1\le k<\tilde m_j}p_{j,k}G_{\Omega_j}(\cdot,z_{j,k})\not\equiv-\infty$ for any $1\le j\le n_1$.
Denote that
$$\psi:=\max_{1\le j\le n_1}\left\{2\sum_{1\le k<\tilde m_j}p_{j,k}\pi_{1,j}^{*}(G_{\Omega_j}(\cdot,z_{j,k}))\right\}$$
and
$$\varphi:=\pi_1^*(\varphi_X)+\pi_2^*(\varphi_Y)$$
 on $M$.

 Let $w_{j,k}$ be a local coordinate on a neighborhood $V_{z_{j,k}}\Subset\Omega_{j}$ of $z_{j,k}\in\Omega_j$ satisfying $w_{j,k}(z_{j,k})=0$ for any $1\le j\le n_1$ and $1\le k<\tilde m_j$, where $V_{z_{j,k}}\cap V_{z_{j,k'}}=\emptyset$ for any $j$ and $k\not=k'$. Denote that $\tilde I_1:=\{(\beta_1,\ldots,\beta_{n_1}):1\le \beta_j<\tilde m_j$ for any $j\in\{1,\ldots,n_1\}\}$, $V_{\beta}:=\prod_{1\le j\le n_1}V_{z_{j,\beta_j}}$  and $w_{\beta}:=(w_{1,\beta_1},\ldots,w_{n_1,\beta_{n_1}})$ is a local coordinate on $V_{\beta}$ of $z_{\beta}:=(z_{1,\beta_1},\ldots,z_{n_1,\beta_{n_1}})\in\prod_{1\le j\le n_1}\Omega_j$ for any $\beta=(\beta_1,\ldots,\beta_{n_1})\in\tilde I_1$.
 Denote that $E_{\beta}:=\left\{(\alpha_1,\ldots,\alpha_{n_1}):\sum_{1\le j\le n_1}\frac{\alpha_j+1}{p_{j,\beta_j}}=1\,\&\,\alpha_j\in\mathbb{Z}_{\ge0}\right\}$ and $\tilde E_{\beta}:=\left\{(\alpha_1,\ldots,\alpha_{n_1}):\sum_{1\le j\le n_1}\frac{\alpha_j+1}{p_{j,\beta_j}}\ge1\,\&\,\alpha_j\in\mathbb{Z}_{\ge0}\right\}$ for any $\beta\in\tilde I_1$.
Let $f$ be a holomorphic $(n,0)$ form on a neighborhood $U_0$ of $Z_0$ such that
$$f=\sum_{\alpha\in\tilde E_{\beta}}\pi_1^*(w_{\beta}^{\alpha}dw_{1,\beta_1}\wedge\ldots\wedge dw_{n_1,\beta_{n_1}})\wedge\pi_2^*(f_{\alpha,\beta})$$ on $U_0\cap(V_{\beta}\times Y)$, where $f_{\alpha,\beta}$ is a holomorphic $(n_2,0)$ form on $Y$ for any $\alpha\in\tilde E_{\beta}$ and $\beta\in\tilde I_1$. Denote that
\begin{equation*}
c_{j,k}:=\exp\lim_{z\rightarrow z_{j,k}}\left(\frac{\sum_{1\le k_1<\tilde m_j}p_{j,k_1}G_{\Omega_j}(z,z_{j,k_1})}{p_{j,k}}-\log|w_{j,k}(z)|\right)
\end{equation*}
 for any $j\in\{1,\ldots,n\}$ and $1\le k<\tilde m_j$ (following from Lemma \ref{l:green-sup} and Lemma \ref{l:green-sup2}, we get that the above limit exists).

 \begin{Proposition}
 	\label{p:exten-fibra}Let $c$ be a positive function on $(0,+\infty)$ such that $\int_{0}^{+\infty}c(t)e^{-t}dt<+\infty$ and $c(t)e^{-t}$ is decreasing on $(0,+\infty)$. Assume that
$$\sum_{\beta\in\tilde I_1}\sum_{\alpha\in E_{\beta}}\frac{(2\pi)^{n_1}e^{-\varphi_X(z_{\beta})}\int_Y|f_{\alpha,\beta}|^2e^{-\varphi_{Y}}}{\prod_{1\le j\le n_1}(\alpha_j+1)c_{j,\beta_j}^{2\alpha_{j}+2}}<+\infty.$$
Then there exists a holomorphic $(n,0)$ form $F$ on $M$ satisfying that $(F-f,z)\in(\mathcal{O}(K_M)\otimes\mathcal{I}(\psi))_{z}$ for any $z\in Z_0$ and
\begin{displaymath}
	\begin{split}
	\int_{M}|F|^2e^{-\varphi}c(-\psi)
\le\left(\int_0^{+\infty}c(s)e^{-s}ds\right)\sum_{\beta\in\tilde I_1}\sum_{\alpha\in E_{\beta}}\frac{(2\pi)^{n_1}e^{-\varphi_X(z_{\beta})}\int_Y|f_{\alpha,\beta}|^2e^{-\varphi_{Y}}}{\prod_{1\le j\le n_1}(\alpha_j+1)c_{j,\beta_j}^{2\alpha_{j}+2}}.	
	\end{split}
\end{displaymath}
\end{Proposition}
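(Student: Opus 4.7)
The plan is to apply the $L^2$-extension Lemma \ref{lem:L2} to $f$ on the sublevel sets $\{\psi<-t_0\}$ for large $t_0$, and then extract a limit via Lemma \ref{l:converge}. By Lemma \ref{l:G-compact} applied on each $\Omega_j$, for $t_0$ large enough the set $\{\psi<-t_0\}$ decomposes as a disjoint union $\bigsqcup_{\beta\in\tilde I_1}(\{\psi_1<-t_0\}\cap V_\beta)\times Y$, where $\psi_1=\max_j\tilde\pi_j^*(2\sum_{k}p_{j,k}G_{\Omega_j}(\cdot,z_{j,k}))$; on this union $f$ is defined by its stated local decomposition on each $V_\beta\times Y$. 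Thus Lemma \ref{lem:L2} applies provided we can control the annular integral $C_{t_0,B}:=\frac{1}{B}\int_{\{-t_0-B<\psi<-t_0\}}|f|^2e^{-\varphi}$.

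The key step is to show $\limsup_{t_0\to+\infty,\,B\to 0^+}C_{t_0,B}\leq A$, where $A$ denotes the right-hand side series of the proposition. I would approximate $\varphi_X$ from above by smooth plurisubharmonic functions $\Phi_\ell\downarrow\varphi_X$ via Lemma \ref{l:FN1}, so that at each $z_\beta$ the weight $e^{-\varphi_X}$ is replaced by $e^{-\Phi_\ell(z_\beta)}$ up to $o(1)$ on the shrinking shell. Using the logarithmic-capacity identity defining $c_{j,\beta_j}$, the shells $\{-t_0-B<\psi<-t_0\}\cap(V_\beta\times Y)$ convert to thin model shells in the $w_\beta$-coordinates; expanding $|f|^2$ into pairwise-orthogonal Taylor monomials and applying the polar/Fubini computation of Lemma \ref{l:m2} shows that each $\alpha\in E_\beta$ (i.e.\ $q_\alpha=0$) contributes the $\beta$-th summand of $A$, while each $\alpha\in\tilde E_\beta\setminus E_\beta$ (where $q_\alpha>0$) contributes a factor of order $e^{-q_\alpha t_0}$ which vanishes in the limit. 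Dominated convergence, justified by the finiteness of $A$, allows the exchange of the limit with the (possibly infinite) summation over $\beta\in\tilde I_1$.

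With the bound $C_{t_0,B}\leq A+o(1)$ in hand, Lemma \ref{lem:L2} produces $\tilde F_{t_0,B}\in H^0(M,\mathcal O(K_M))$ satisfying
\begin{equation*}
\int_M\bigl|\tilde F_{t_0,B}-(1-b_{t_0,B}(\psi))f\bigr|^2 e^{-\varphi+v_{t_0,B}(\psi)}c(-v_{t_0,B}(\psi))\leq C_{t_0,B}\int_0^{t_0+B}c(t)e^{-t}\,dt;
\end{equation*}
a triangle inequality combined with the same annular estimate applied to $(1-b_{t_0,B}(\psi))f$ then yields a uniform $L^2$ bound on $\tilde F_{t_0,B}$ itself. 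Since $v_{t_0,B}(\psi)\downarrow\psi$ pointwise off $Z_0$, Lemma \ref{l:converge} extracts a subsequential locally uniform limit $F\in H^0(M,\mathcal O(K_M))$ satisfying $\int_M|F|^2e^{-\varphi}c(-\psi)\leq A\int_0^{+\infty}c(s)e^{-s}\,ds$; the inclusion $(F-f,z)\in(\mathcal O(K_M)\otimes\mathcal I(\psi))_z$ for $z\in Z_0$ follows because on $\{\psi<-t_0\}$ one has $v_{t_0,B}(\psi)=\psi$, forcing $F-f$ to be locally $L^2$ against $e^{-\psi}$ near $Z_0$. The principal technical obstacle is the uniformity of the $C_{t_0,B}$ estimate when some $\tilde m_{j_0}=+\infty$: the interchange of the limit with the countably infinite sum over $\tilde I_1$ must be justified via dominated convergence using the finiteness of $A$, the monotonicity of $c(t)e^{-t}$, and the separation of the shells near distinct $z_\beta$ provided by Lemma \ref{l:G-compact}.
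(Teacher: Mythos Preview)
Your overall strategy---apply Lemma \ref{lem:L2} on shrinking sublevel sets, control the annular integral via Lemma \ref{l:m2}, and extract a limit via Lemma \ref{l:converge}---matches the paper's skeleton in the finite-pole case. However, there is a genuine gap in the infinite-pole case that your dominated-convergence remark does not repair.

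The claim ``for $t_0$ large enough the set $\{\psi<-t_0\}$ decomposes as $\bigsqcup_{\beta\in\tilde I_1}(\{\psi_1<-t_0\}\cap V_\beta)\times Y$'' rests on Lemma \ref{l:G-compact}, which is a statement about a \emph{single} pole. When some $\tilde m_{j_0}=+\infty$ the poles $z_{j_0,k}$ accumulate to $\partial\Omega_{j_0}$, the pairwise-disjoint neighborhoods $V_{z_{j_0,k}}$ must shrink, and there is in general no single $t_0$ for which $\bigl\{2\sum_k p_{j_0,k}G_{\Omega_{j_0}}(\cdot,z_{j_0,k})<-t_0\bigr\}\subset\bigcup_k V_{z_{j_0,k}}$. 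Consequently $f$ need not be defined on all of $\{\psi<-t_0\}$, so Lemma \ref{lem:L2} cannot be invoked with $F=f$ there. The obstacle is thus not merely the interchange of $\lim_{t_0}$ with $\sum_{\beta}$, but the applicability of the extension lemma itself.

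The paper handles this by a different reduction: it first exhausts each $\Omega_j$ by relatively compact subdomains $\Omega_{j,l}$ (Lemma \ref{l:green-approx}), so that on $\Omega_{j,l}$ only finitely many poles survive and the decomposition you want genuinely holds for large $t_0$. One then runs the argument on $M_l=(\prod_j\Omega_{j,l})\times Y$ with the approximating Green functions, obtains extensions $F_l$ with uniformly bounded norms (using monotonicity of $G_{\Omega_{j,l}}$ and of $c(t)e^{-t}$), and passes to the limit via Lemma \ref{l:converge} and Lemma \ref{l:closedness}. Within the finite-pole step the paper also exhausts $Y$ by weakly pseudoconvex K\"ahler $D_{l'}\Subset Y$, replaces $f$ by a truncated $\tilde f$ retaining only the $E_\beta$-terms in adapted coordinates (which cleans up the annular computation), and smooths $\varphi_X$ by $\Phi_l\downarrow\varphi_X$ as you propose.

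A minor additional point: your justification of $(F-f,z)\in\mathcal I(\psi)_z$ (``forcing $F-f$ to be locally $L^2$ against $e^{-\psi}$'') misreads the weight in Lemma \ref{lem:L2}: near $Z_0$ the weight is $e^{-\varphi+\psi}c(-\psi)$, not $e^{-\psi}$. The jet condition is instead inherited from each approximant (since $b_{t,1}(\psi)=0$ where $\psi<-t-1$, so $\tilde F-(1-b_{t,1}(\psi))\tilde f=\tilde F-\tilde f$ there) and then preserved under locally uniform limits by Lemma \ref{l:closedness}.
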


\begin{proof}
The following Remark shows that it suffices to prove Proposition \ref{p:exten-fibra} for the case $\tilde m_j<+\infty$ for any $j\in\{1,\ldots,n_1\}$.
\begin{Remark} Assume that Proposition \ref{p:exten-fibra} holds for the case $\tilde m_j<+\infty$ for any $j\in\{1,\ldots,n_1\}$.
For any $j\in\{1,\ldots,n_1\}$, it follows from Lemma \ref{l:green-approx} that there exists a sequence of Riemann surfaces $\{\Omega_{j,l}\}_{l\in\mathbb{Z}_{\ge1}}$, which satisfies that $\Omega_{j,l}\Subset\Omega_{j,l+1}\Subset\Omega_{j}$ for any $l$, $\cup_{l\in\mathbb{Z}_{\ge1}}\Omega_{j,l}=\Omega_j$ and $\{G_{\Omega_{j,l}}(\cdot,z)-G_{\Omega_j}(\cdot,z)\}_{l\in\mathbb{Z}_{\ge1}}$ is decreasingly convergent to $0$ with respect to $l$ for any $z\in\Omega_j$. As ${Z}_j$ is a discrete subset of $\Omega_j$, $ Z_{j,l}:=\Omega_{j,l}\cap{Z}_{j}$ is a set of finite points. Denote that $M_l:=\left(\prod_{1\le j\le n_1}\Omega_{j,l}\right)\times Y$ and $\psi_l:=\max_{1\le j\le n_1}\left\{\pi_{1,j}^*\left(\sum_{z_{j,k}\in {Z}_{j,l}}2p_{j,k}G_{\Omega_{j,l}}(\cdot,z_{j,k})\right)\right\}$  on $M_l$.
Denote that $$c_{j,k,l}=\exp\lim_{z\rightarrow z_{j,k}}\left(\frac{\sum_{z_{j,k_1}\in Z_{j,l}}p_{j,k_1}G_{\Omega_{j,l}}(z,z_{j,k_1})}{p_{j,k}}-\log|w_{j,k}(z)|\right)$$
for any $1\le j\le n_1$, $l\in\mathbb{Z}_{\ge1}$ and $1\le k<\tilde m_j$ satisfying $z_{j,k}\in Z_{j,l}$.
Hence $c_{j,k,l}$ is decreasingly convergent to $c_{j,k}$ with respect to $l$, $\psi_l$ is decreasingly convergent to $\psi$ with respect to $l$ and $\cup_{l\in\mathbb{Z}_{\ge1}}M_l=M$.

Then there exists a holomorphic $(n,0)$ form $F_l$ on $M_l$ such that $(F_l-f,(z_{\beta},y))\in(\mathcal{O}(K_{M_l})\otimes\mathcal{I}(\psi_l))_{(z_{\beta},y)}=(\mathcal{O}(K_{M})\otimes\mathcal{I}(\psi))_{(z_{\beta},y)}$ for any $\beta\in\{\tilde\beta\in\tilde{I}_1:z_{\tilde\beta}\in \prod_{1\le j\le n_1}\Omega_{j,l}\}$ and $y\in Y$, and $F_l$ satisfies
\begin{displaymath}
	\begin{split}
		&\int_{M_l}|F_l|^2e^{-\varphi}c(-\psi_l)\\
		\leq&\left(\int_0^{+\infty}c(s)e^{-s}ds\right)\sum_{\beta\in\{\tilde\beta\in\tilde{I}_1:z_{\tilde\beta}\in \prod_{1\le j\le n_1}\Omega_{j,l}\}}\sum_{\alpha\in E_{\beta}}\frac{(2\pi)^{n_1}e^{-\varphi_X(z_{\beta})}\int_Y|f_{\alpha,\beta}|^2e^{-\varphi_{Y}}}{\prod_{1\le j\le n_1}(\alpha_j+1)c_{j,\beta_j,l}^{2\alpha_{j}+2}}\\
		\le& \left(\int_{0}^{+\infty}c(s)e^{-s}ds\right)\sum_{\beta\in\tilde I_1}\sum_{\alpha\in E_{\beta}}\frac{(2\pi)^{n_1}e^{-\varphi_X(z_{\beta})}\int_Y|f_{\alpha,\beta}|^2e^{-\varphi_{Y}}}{\prod_{1\le j\le n_1}(\alpha_j+1)c_{j,\beta_j}^{2\alpha_{j}+2}}\\
		<&+\infty.
	\end{split}
\end{displaymath}
Since $\psi\le\psi_l$ and $c(t)e^{-t}$ is decreasing on $(0,+\infty)$, we have
\begin{equation}
	\label{eq:211223c}\begin{split}
		&\int_{M_l}|F_l|^2e^{-\varphi-\psi_l+\psi}c(-\psi)\\
		\le&\int_{M_l}|F_l|^2e^{-\varphi}c(-\psi_l)\\
		\leq&\left(\int_{0}^{+\infty}c(s)e^{-s}ds\right)\sum_{\beta\in\tilde I_1}\sum_{\alpha\in E_{\beta}}\frac{(2\pi)^{n_1}e^{-\varphi_X(z_{\beta})}\int_Y|f_{\alpha,\beta}|^2e^{-\varphi_{Y}}}{\prod_{1\le j\le n_1}(\alpha_j+1)c_{j,\beta_j}^{2\alpha_{j}+2}}.
	\end{split}
\end{equation}
Note that $\psi$ is continuous on $M\backslash Z_0$, $\psi_l$ is continuous on $M_l\backslash Z_0$ and $Z_0$ is a closed complex submanifold of $M$.
 For any compact subset $K$ of $M\backslash Z_0$, there exist $l_K>0$ such that $K\Subset M_{l_K}$ and $C_K>0$  such that $\frac{e^{\varphi+\psi_l-\psi}}{c(-\psi)}\le C_K$ for any $l\ge l_K$.
 It follows from Lemma \ref{l:converge} and the diagonal method that there exists a subsequence of $\{F_l\}$, denoted still by $\{F_l\}$, which is uniformly convergent to a holomorphic $(n,0)$ form $F$ on $M$ on any compact subset of $M$. It follows from the Fatou's Lemma and inequality \eqref{eq:211223c} that
\begin{displaymath}
	\begin{split}
		\int_{M}|F|^2e^{-\varphi}c(-\psi)&=\int_{M}\lim_{l\rightarrow+\infty}|F_l|^2e^{-\varphi-\psi_l+\psi}c(-\psi)\\
		&\leq\liminf_{l\rightarrow+\infty}\int_{M_l}|F_l|^2e^{-\varphi-\psi_l+\psi}c(-\psi)\\
		&\le\left(\int_{0}^{+\infty}c(s)e^{-s}ds\right)\sum_{\beta\in\tilde I_1}\sum_{\alpha\in E_{\beta}}\frac{(2\pi)^{n_1}e^{-\varphi_X(z_{\beta})}\int_Y|f_{\alpha,\beta}|^2e^{-\varphi_{Y}}}{\prod_{1\le j\le n_1}(\alpha_j+1)c_{j,\beta_j}^{2\alpha_{j}+2}}.	
	\end{split}
\end{displaymath}
 Since $\{F_l\}$ is uniformly convergent to   $F$ on any compact subset of $M$ and $(F_l-f,(z_{\beta},y))\in(\mathcal{O}(K_{M})\otimes\mathcal{I}(\psi))_{(z_{\beta},y)}$ for any $\beta\in\left\{\tilde\beta\in\tilde{I}_1:z_{\tilde\beta}\in \prod_{1\le j\le n_1}\Omega_{j,l}\right\}$ and $y\in Y$, it follows from Lemma \ref{l:closedness} that  $(F-f,(z_{\beta},y))\in(\mathcal{O}(K_{M})\otimes\mathcal{I}(\psi))_{(z_{\beta},y)}$ for any $\beta\in\tilde I_1$ and $y\in Y$.
\end{Remark}

In the following, we assume that $\tilde m_j<+\infty$ for any $1\le j\le n_1$.
Denote that $m_j=\tilde{m}_j-1$. As $\prod_{1\le j\le n_1}\Omega_j$ is a Stein manifold, it follows from Lemma \ref{l:FN1} that  there exist smooth plurisubharmonic functions $\Phi_l$ on $\prod_{1\le j\le n_1}\Omega_j$, which are decreasingly convergent to $\varphi_X$ with respect to $l$. Denote that
$$\varphi_l:=\pi_1^*(\Phi_l)+\pi_2^*(\varphi_Y).$$ As $Y$ is a weakly pseudoconvex K\"ahler manifold, it is well-known that there exist open weakly pseudoconvex K\"ahler manifolds $D_1\Subset\ldots\Subset D_{l'}\Subset D_{l'+1}\Subset\ldots$ such that $\cup_{l'\in\mathbb{Z}_{\ge1}}D_{l'}=Y$. Denote that $M_{l'}:=\left(\prod_{1\le j\le n_1}\Omega_j\right)\times D_{l'}$.

 It follows from Lemma \ref{l:green-sup} and Lemma \ref{l:green-sup2} that there exists a local coordinate $\tilde{w}_{j,k}$ on a neighborhood $\tilde{V}_{z_{j,k}}\Subset V_{z_{j,k}}$ of $z_{j,k}$ satisfying $\tilde{w}_{j,k}(z_{j,k})=0$ and
 $$|\tilde{w}_{j,k}|=\exp\left({\frac{\sum_{1\le k_1\le m_j}p_{j,k_1}G_{\Omega_j}(\cdot,z_{j,k_1})}{p_{j,k}}}\right)$$
  on $\tilde{V}_{z_{j,k}}.$
 Denote that $\tilde{V}_{\beta}:=\prod_{1\le j\le n_1}\tilde V_{j,\beta_j}$ for any $\beta\in\tilde I_1$. Let $\tilde{f}$ be a holomorphic $(n,0)$ form on $\cup_{\beta\in\tilde I_1}\tilde{V}_{\beta}\times Y$ satisfying
$$\tilde{f}=\sum_{\alpha\in E_{\beta}}c_{\alpha,\beta}\pi_1^*(\tilde{w}_{\beta}^{\alpha}d\tilde{w}_{1,\beta_1}\wedge d\tilde{w}_{2,\beta_2}\wedge\ldots\wedge d\tilde{w}_{n_1,\beta_{n_1}})\wedge\pi_2^*(f_{\alpha,\beta})$$
on $\tilde V_{\beta}\times Y$,  where $c_{\alpha,\beta}=\prod_{1\le j\le n_1}\left(\lim_{z\rightarrow z_{j,\beta_j}}\frac{w_{j,\beta_j}(z)}{\tilde{w}_{j,\beta_j}(z)}\right)^{\alpha_j+1}$.
It follows from Lemma \ref{l:0} that
$$(f-\tilde{f},z)\in(\mathcal{O}(K_{M})\otimes\mathcal{I}(\psi))_{z}$$
 for any $z\in Z_0$. Denote that
$$\psi_1:=\max_{1\le j\le n_1}\left\{2\sum_{1\le k\le m_j}p_{j,k}\tilde\pi_j^*(G_{\Omega_j}(\cdot,z_{j,k}))\right\}$$
on $\prod_{1\le j\le n_1}\Omega_j$, where $\tilde\pi_j$ is the natural projection from $\prod_{1\le j\le n_1}\Omega_j$ to $\Omega_j$. Note that $\psi=\pi_1^*(\psi_1)$.
It follows from Lemma \ref{l:G-compact} and Lemma \ref{l:green-sup2} that there exists $t_0>0$ such that $\{\psi_1<-t_0\}\Subset \cup_{\beta\in\tilde I_1}\tilde V_{\beta}$, which implies that $\int_{\{\psi_1<-t\}\times D_{l'}}|\tilde f|^2<+\infty$.

Using Lemma \ref{lem:L2}, there exists a holomorphic $(n,0)$ form $F_{l,l',t}$ on $M_{l'}$ such that
\begin{equation}
	\label{eq:1125m}
	\begin{split}
&\int_{M_{l'}}|F_{l,l',t}-(1-b_{t,1}(\psi))\tilde{f}|^{2}e^{-\varphi_l-\psi+v_{t,1}(\psi)}c(-v_{t,1}(\psi))\\
\leq& \left(\int_{0}^{t+1}c(s)e^{-s}ds\right) \int_{M_{l'}}\mathbb{I}_{\{-t-1<\psi<-t\}}|\tilde f|^2e^{-\varphi_l-\psi},
\end{split}
\end{equation}
where $t\ge t_0$. Note that $b_{t,1}(s)=0$ for large enough $s$, then $(F_{l,l',t}-\tilde f,z)\in(\mathcal{O}(K_{M})\otimes\mathcal{I}(\psi))_{z}$ for any $z\in Z_0\cap M_{l'}$.

For any $\epsilon>0$, there exists $t_1>t_0$, such that
 $\sup_{z\in\{\psi_1<-t_1\}\cap \tilde{V}_{\beta}}|\Phi_l(z)-\Phi_l(z_{\beta})|<\epsilon$ for any $\beta\in\tilde I_1$. Note that $\varphi_l=\pi_1^*(\Phi_l)+\pi_2^*(\varphi_Y)$ and $|c_{\alpha,\beta}|=\frac{1}{\prod_{1\le j\le n_1}c_{j,\beta_j}^{\alpha_j+1}}$ for any $\beta\in \tilde I_1$ and $\alpha\in E_{\beta}$. As $\{\psi_1<-t_1\}\Subset \cup_{\beta\in I_1}\tilde{V}_{\beta}$, it follows from Lemma \ref{l:m2}, the Fubini's theorem and
 $$\int_Y|f_{\alpha,\beta}|^2e^{-\varphi_Y}<+\infty$$
  that
\begin{equation}
	\label{eq:1203a}
	\begin{split}
		\int_{M_{l'}}\mathbb{I}_{\{-t-1<\psi<-t\}}|\tilde{f}|^2e^{-\varphi_l-\psi}=&	\int_{\{-t-1<\psi_1<-t\}\times D_{l'}}|\tilde{f}|^2e^{-\pi_1^*(\Phi_l+\psi)-\pi_2^*(\varphi_Y)}\\
		\le & \sum_{\beta\in\tilde I_1}\sum_{\alpha\in E_{\beta}}\frac{(2\pi)^{n_1}e^{-\Phi_l(z_{\beta})+\epsilon}}{\prod_{1\le j\le n_1}(\alpha_j+1)c_{j,\beta_j}^{2\alpha_{j}+2}}\int_{D_{l'}}|f_{\alpha,\beta}|^2e^{-\varphi_Y}
	\end{split}
\end{equation}
for $t>t_1$.
Letting $t\rightarrow+\infty$ and $\epsilon\rightarrow0$, inequality \eqref{eq:1203a} implies that
\begin{equation}
	\label{eq:1126c}\limsup_{t\rightarrow+\infty}\int_{M_{l'}}\mathbb{I}_{\{-t-1<\psi<-t\}}|\tilde{f}|^2e^{-\varphi_l-\psi}\le \sum_{\beta\in\tilde I_1}\sum_{\alpha\in E_{\beta}}\frac{(2\pi)^{n_1}e^{-\Phi_l(z_{\beta})}\int_{D_{l'}}|f_{\alpha,\beta}|^2e^{-\varphi_Y}}{\prod_{1\le j\le n_1}(\alpha_j+1)c_{j,\beta_j}^{2\alpha_{j}+2}}.
\end{equation}
As $v_{t,1}(\psi)\ge\psi$ and $c(t)e^{-t}$ is decreasing, Combining inequality \eqref{eq:1125m} and \eqref{eq:1126c}, then we have
\begin{equation}
	\label{eq:1126d}
	\begin{split}
&\limsup_{t\rightarrow+\infty}\int_{M_{l'}}|F_{l,l',t}-(1-b_{t,1}(\psi))\tilde{f}|^{2}e^{-\varphi_l}c(-\psi)\\
\le&\limsup_{t\rightarrow+\infty}\int_{M_{l'}}|F_{l,l',t}-(1-b_{t,1}(\psi))\tilde{f}|^{2}e^{-\varphi_l-\psi+v_{t,1}(\psi)}c(-v_{t,1}(\psi))\\
\leq& \limsup_{t\rightarrow+\infty}\left(\int_{0}^{t+1}c(s)e^{-s}ds\right)\int_{M_{l'}}\mathbb{I}_{\{-t-1<\psi<-t\}}|\tilde{f}|^2e^{-\varphi_l-\psi}\\
\leq&\left(\int_0^{+\infty}c(s)e^{-s}ds\right)\sum_{\beta\in\tilde I_1}\sum_{\alpha\in E_{\beta}}\frac{(2\pi)^{n_1}e^{-\Phi_l(z_{\beta})}\int_{D_{l'}}|f_{\alpha,\beta}|^2e^{-\varphi_Y}}{\prod_{1\le j\le n_1}(\alpha_j+1)c_{j,\beta_j}^{2\alpha_{j}+2}}\\
	<&+\infty.
\end{split}
\end{equation}
Note that $\psi$ is continuous on $M\backslash Z_0$. For any open set $K\Subset M_{l'}\backslash Z_0$, as $b_{t,1}(s)=1$ for any $s\ge -t$ and $c(s)e^{-s}$ is decreasing with respect to $s$, we get that there exists a constant $C_K>0$ such that
$$\int_{K}|(1-b_{t,1}(\psi))\tilde{f}|^2e^{-\varphi_l}c(-\psi)\le C_K\int_{\{\psi<-t_1\}\cap K}|\tilde{f}|^2<+\infty$$ for any $t>t_1$,
which implies that
$$\limsup_{t\rightarrow+\infty}\int_{K}|F_{l,l',t}|^2e^{-\varphi_l}c(-\psi)<+\infty.$$
Using Lemma \ref{l:converge} and the diagonal method,
we obtain that
there exists a subsequence of $\{F_{l,l',t}\}_{t\rightarrow+\infty}$ denoted by $\{F_{l,l',t_m}\}_{m\rightarrow+\infty}$
uniformly convergent on any compact subset of $M_{l'}\backslash Z_0$. As $Z_0$ is a closed complex submanifold of $M$, we obtain that $\{F_{l,l',t_m}\}_{m\rightarrow+\infty}$ is uniformly convergent to a holomorphic $(n,0)$ form $F_{l,l'}$ on $M_{l'}$ on any compact subset of $M_{l'}$. Then it follows from inequality \eqref{eq:1126d} and the Fatou's Lemma that
\begin{displaymath}
\begin{split}
&\int_{M_{l'}}|F_{l,l'}|^{2}e^{-\varphi_l}c(-\psi)
\\=&\int_{M_{l'}}\liminf_{m\rightarrow+\infty}|F_{l,l',t_m}-(1-b_{t_m,1}(\psi))\tilde{f}|^{2}e^{-\varphi_l}c(-\psi)\\
\leq&\liminf_{m\rightarrow+\infty}\int_{M_{l'}}|F_{l,l',t_m}-(1-b_{t_m,1}(\psi))\tilde{f}|^{2}e^{-\varphi_l}c(-\psi)\\
\leq&\left(\int_0^{+\infty}c(s)e^{-s}ds\right)\sum_{\beta\in\tilde I_1}\sum_{\alpha\in E_{\beta}}\frac{(2\pi)^{n_1}e^{-\Phi_l(z_{\beta})}\int_{D_{l'}}|f_{\alpha,\beta}|^2e^{-\varphi_Y}}{\prod_{1\le j\le n_1}(\alpha_j+1)c_{j,\beta_j}^{2\alpha_{j}+2}}\\
	<&+\infty.
\end{split}	
\end{displaymath}

Note that $\lim_{l\rightarrow+\infty}\Phi_l(z_{\beta})=\varphi_X(z_{\beta})>-\infty$ for any $\beta\in I_1$, then we have
\begin{equation}
	\label{eq:1126e}\begin{split}
		&\limsup_{l\rightarrow+\infty}\int_{M_{l'}}|F_{l,l'}|^{2}e^{-\varphi_l}c(-\psi)\\
		\leq&\left(\int_0^{+\infty}c(s)e^{-s}ds\right)\sum_{\beta\in\tilde I_1}\sum_{\alpha\in E_{\beta}}\frac{(2\pi)^{n_1}e^{-\varphi_X(z_{\beta})}\int_{D_{l'}}|f_{\alpha,\beta}|^2e^{-\varphi_Y}}{\prod_{1\le j\le n_1}(\alpha_j+1)c_{j,\beta_j}^{2\alpha_{j}+2}}\\
	<&+\infty.
	\end{split}
\end{equation}
Note that $\psi$ is continuous on $M\backslash Z_0$ and $Z_0$ is a closed complex submanifold of $M$.
Using Lemma \ref{l:converge},
we obtain that
there exists a subsequence of $\{F_{l,l'}\}_{l\rightarrow+\infty}$ (also denoted by $\{F_{l,l'}\}_{l\rightarrow+\infty}$)
uniformly convergent to a holomorphic $(n,0)$ form $F_{l'}$ on $M_{l'}$ on any compact subset of $M_{l'}$, which satisfies that
\begin{equation*}
\int_{M_{l'}}|F_{l'}|^2e^{-\varphi}c(-\psi)\le \left(\int_0^{+\infty}c(s)e^{-s}ds\right)\sum_{\beta\in\tilde I_1}\sum_{\alpha\in E_{\beta}}\frac{(2\pi)^{n_1}e^{-\varphi_X(z_{\beta})}\int_{D_{l'}}|f_{\alpha,\beta}|^2e^{-\varphi_Y}}{\prod_{1\le j\le n_1}(\alpha_j+1)c_{j,\beta_j}^{2\alpha_{j}+2}}.
\end{equation*}

As $\cup_{l'\in\mathbb{Z}_{\ge1}}D_{l'}=Y$, we have
\begin{equation}
	\label{eq:211223d}\begin{split}
	&\limsup_{l'\rightarrow+\infty}\int_{M_{l'}}|F_{l'}|^2e^{-\varphi}c(-\psi)\\
	\leq
		&\lim_{l'\rightarrow+\infty}\sum_{\beta\in\tilde I_1}\sum_{\alpha\in E_{\beta}}\frac{(2\pi)^{n_1}e^{-\varphi_X(z_{\beta})}\int_{D_{l'}}|f_{\alpha,\beta}|^2e^{-\varphi_Y}}{\prod_{1\le j\le n_1}(\alpha_j+1)c_{j,\beta_j}^{2\alpha_{j}+2}}\\		
	=&\sum_{\beta\in\tilde I_1}\sum_{\alpha\in E_{\beta}}\frac{(2\pi)^{n_1}e^{-\varphi_X(z_{\beta})}\int_{Y}|f_{\alpha,\beta}|^2e^{-\varphi_Y}}{\prod_{1\le j\le n_1}(\alpha_j+1)c_{j,\beta_j}^{2\alpha_{j}+2}}\\
	<&+\infty.
	\end{split}
\end{equation}
Note that $\psi$ is continuous on $M\backslash Z_0$, $Z_0$ is a closed complex submanifold of $M$ and $\cup_{l'\in\mathbb{Z}_{\ge1}}M_{l'}=M$.
Using Lemma \ref{l:converge} and  the diagonal method,
we get that
there exists a subsequence of $\{F_{l'}\}$ (also denoted by $\{F_{l'}\}$)
uniformly convergent to a holomorphic $(n,0)$ form $F$ on $M$ on any compact subset of $M$.  Then it follows from inequality \eqref{eq:211223d} and the Fatou's Lemma that
\begin{equation*}
\begin{split}
	\int_M|F|^2e^{-\varphi}c(-\psi)&=\int_{M}\liminf_{l'\rightarrow+\infty}\mathbb{I}_{M_{l'}}|F_{l'}|^2e^{-\varphi}c(-\psi)\\
	&\le\liminf_{l'\rightarrow+\infty}\int_{M_{l'}}|F_{l'}|^2e^{-\varphi}c(-\psi)\\
	&\leq\sum_{\beta\in\tilde I_1}\sum_{\alpha\in E_{\beta}}\frac{(2\pi)^{n_1}e^{-\varphi_X(z_{\beta})}\int_{Y}|f_{\alpha,\beta}|^2e^{-\varphi_Y}}{\prod_{1\le j\le n_1}(\alpha_j+1)c_{j,\beta_j}^{2\alpha_{j}+2}}.\end{split}
\end{equation*}
Following from Lemma \ref{l:closedness}, we have $(F-f,z)\in(\mathcal{O}(K_{M})\otimes\mathcal{I}(\psi))_{z}$ for any $z\in Z_0$.

Thus, Proposition \ref{p:exten-fibra} holds.
\end{proof}

\section{proofs of Theorem \ref{thm:linear-fibra-single} and remark \ref{r:fibra-single}}

In this section, we prove Theorem \ref{thm:linear-fibra-single} and Remark \ref{r:fibra-single}.

\subsection{Proofs of the sufficiency part of Theorem \ref{thm:linear-fibra-single} and Remark \ref{r:fibra-single}}\label{sec:s-1}

\

In this section, we prove the sufficiency part of Theorem \ref{thm:linear-fibra-single} and Remark \ref{r:fibra-single}.

Denote that $M':=\prod_{1\le j\le n_1}\Omega_j$, and let $\tilde\pi_j$ be the natural projection from $M'$ to $\Omega_j$.  Denote that
$\psi_1:=\max_{1\le j\le n_1}\left\{\tilde\pi_{j}^*(2p_{j}G_{\Omega_j}(\cdot,z_{j}))\right\}$ and $\tilde\varphi:=\sum_{1\le j\le n_1}\tilde\pi_j^*(\varphi_j)$ on $M'$.
It follows from statements $(2)$ and $(3)$ in Theorem \ref{thm:linear-fibra-single} that
$$\tilde{f}_{\alpha}=\wedge_{1\le j\le n_1}\tilde\pi_j^*\left(g_j(P_j)_*\left(f_{u_j}f_{z_j}^{\alpha_j}df_{z_j}\right)\right)$$
 is a (single-value) holomorphic $(n_1,0)$ form on $M'$ for any $\alpha\in E$ satisfying $f_{\alpha}\not\equiv0$, where $P_j:\Delta\rightarrow\Omega_j$ is the universal covering, $f_{u_j}$ is a holomorphic $(1,0)$ form on $\Delta$ satisfying $|f_{u_j}|=(P_j)^*(e^{u_j})$ and $f_{z_j}$ is a holomorphic $(1,0)$ form on $\Delta$ satisfying $|f_{z_j}|=(P_j)^*\left(e^{G_{\Omega_j}(\cdot,z_j)}\right)$. Denote that $\tilde E:=\{\alpha\in E:f_{\alpha}\not\equiv0\}$. Let
 $$F=\sum_{\alpha\in\tilde E}c_{\alpha}\pi_1^*(\tilde f_{\alpha})\wedge\pi_2^*(f_{\alpha})$$
 be a holomorphic $(n,0)$ form on $M$, where $c_{\alpha}=\lim_{z\rightarrow z_0}\frac{w^{\alpha}dw_1\wedge\ldots\wedge dw_{n_1}}{\tilde{f}_{\alpha}}$. As $\int_Y|f_{\alpha}|^2e^{-\varphi_Y}<+\infty$ and $\tilde{\varphi}(z_0)>-\infty$, it follows Lemma \ref{l:0} and Lemma \ref{l:phi1+phi2} that
 $$(F-f,z)\in(\mathcal{O}(K_M)\otimes\mathcal{I}(\varphi+\psi))_z$$
  for any $z\in Z_0$.

It follows from Remark \ref{r:1.1} that $\sum_{\alpha\in \tilde E}c_{\alpha}d_{\alpha}\tilde{f_{\alpha}}$
 is the unique holomorphic $(n_1,0)$ form  on $M'$ such that $\left(\sum_{\alpha\in \tilde E}c_{\alpha}d_{\alpha}\tilde{f_{\alpha}}-\sum_{\alpha\in\tilde E}d_{\alpha}w^{\alpha}dw_1\wedge\ldots\wedge dw_{n_1},z_0\right)\in(\mathcal{O}(K_{M'})\otimes\mathcal{I}(\psi_1))_{z_0}$, $\int_{\{\psi_1<-t\}}|\sum_{\alpha\in \tilde E}c_{\alpha}d_{\alpha}\tilde{f_{\alpha}}|^2e^{-\tilde\varphi}c(-\psi_1)=\inf\big\{\int_{\{\psi_1<-t\}}|\tilde F|^2e^{-\tilde\varphi}c(-\psi_1):\tilde F$ is a holomorphic $(n_1,0)$ form on $\{\psi_1<-t\}$ satisfying that $(\tilde F-\sum_{\alpha\in\tilde E}d_{\alpha}w^{\alpha}dw_1\wedge\ldots\wedge dw_{n_1},z_0)\in(\mathcal{O}(K_{M'}))_{z_0}\otimes\mathcal{I}(\psi_1)_{z_0}\big\}$ and
 \begin{equation}
 	\label{eq:211219b}
 	\begin{split}
 	&\int_{\{\psi_1<-t\}}\bigg|\sum_{\alpha\in\tilde E}c_{\alpha}d_{\alpha}\tilde{f}_{\alpha}\bigg|^2e^{-\tilde\varphi}c(-\psi_1)\\
 	=&\left(\int_t^{+\infty}c(s)e^{-s}ds\right)\sum_{\alpha\in\tilde E}\frac{|d_{\alpha}|^2(2\pi)^{n_1}e^{-\tilde\varphi(z_{0})}}{\prod_{1\le j\le n_1}(\alpha_j+1)c_{j}(z_j)^{2\alpha_{j}+2}}\end{split}
 \end{equation}
	 for any $t\ge0$, where  $c_j(z_j)=\exp\lim_{z\rightarrow z_j}(G_{\Omega_j}(z,z_j)-\log|w_j(z)|)$. Following from equality \eqref{eq:211219b} and the Fubini's theorem, we obtain that
	 \begin{equation}
	 	\label{eq:211219c}\begin{split}
	 		&\int_{\{\psi<-t\}}|F|^2e^{-\varphi}c(-\psi)\\
	 		=&\left(\int_t^{+\infty}c(s)e^{-s}ds\right)\sum_{\alpha\in E}\frac{(2\pi)^{n_1}e^{-\sum_{1\le j\le n_1}\varphi(z_j)}}{\prod_{1\le j\le n_1}(\alpha_j+1)c_{j}(z_j)^{2\alpha_{j}+2}}\int_Y|f_{\alpha}|^2e^{-\varphi_Y}\\
	 		<&+\infty
	 	\end{split}
	 \end{equation}
	 for any $t\ge 0$.
 Thus, $G(t)\le \int_{\{\psi<-t\}}|F|^2e^{-\varphi}c(-\psi)<+\infty$	 for any $t\ge 0$.

It follows from Lemma \ref{lem:A} that there exists a holomorphic $(n,0)$ form $F_t$ on $\{\psi<-t\}$ satisfying that $(F_t-f,z)\in(\mathcal{O}(K_M)\otimes\mathcal{I}(\varphi+\psi))_z$
  for any $z\in Z_0$ and $G(t)=\int_{\{\psi<-t\}}|F_t|^2e^{-\varphi}c(-\psi)$.
 For any $y_0\in Y$, let $u=(u_1,\ldots,u_{n_2})$ be a coordinate on a neighborhood $U$ of $y$ satisfying $u(y_0)=0$ and $u(U)=\Delta^{n_2}$. Lemma \ref{l:fibra-decom} implies that $F_t|_U=\sum_{\gamma \in\mathbb{Z}_{\ge0}^{n_2}}\pi_1^*(f_{t,\gamma})\wedge \pi_2^*(u^{\gamma}du_1\wedge\ldots du_{n_2})$, where $f_{t,\gamma}$ is a holomorphic $(n_1,0)$ form on $\{\psi_1<-t\}$ for any $\gamma\in\mathbb{Z}_{\ge0}^{n_2}$. There exists a holomorphic function $ f_{u,\alpha}$ on $U$ such that $f_{\alpha}= f_{u,\alpha}du_1\wedge\ldots\wedge du_{n_2}$ on $U$ for any $\alpha\in\tilde E$. Note that $f=\sum_{\alpha\in\tilde E}\pi_{1}^*\left(w^{\alpha}dw_1\wedge\ldots\wedge dw_{n_1}\right)\wedge \pi_2^*(f_{\alpha})+g_0$ on $V_0\times Y$, where  $g_0$ is a holomorphic $(n,0)$ form on $V_0\times Y$ satisfying $(g_0,z)\in(\mathcal{O}(K_M)\otimes\mathcal{I}(\varphi+\psi))_{z}$ for any $z\in Z_0$. It follows from Lemma \ref{l:0'} and  $(F_t-f,z)\in
(\mathcal{O}(K_{M})\otimes\mathcal{I}(\varphi+\psi))_z$ for any $z\in Z_0$ that $\left(\sum_{\gamma \in\mathbb{Z}_{\ge0}^{n_2}}u^{\gamma}f_{t,\gamma}- \sum_{\alpha\in\tilde E}f_{u,\alpha}(u)w^{\alpha}dw_1\wedge\ldots\wedge dw_{n_1}\right)\in
(\mathcal{O}(K_{M'})\otimes\mathcal{I}(\psi_1))_{z_0}$  for any $u\in \Delta^{n_2}$.  Let $U_1$ be an open subset of $U$, and let $V=u(U_1)\subset\Delta^{n_2}$. Note that $\left(\sum_{\alpha\in\tilde E}c_{\alpha}f_{u,\alpha}(u)\tilde{f}_{\alpha}-\sum_{\alpha\in\tilde E}f_{u,\alpha}(u)w^{\alpha}dw_1\wedge\ldots\wedge dw_{n_1}\right)\in
(\mathcal{O}(K_{M'})\otimes\mathcal{I}(\psi_1))_{z_0}$  for any $u\in \Delta^{n_2}$. Following the Fubini's theorem and the minimal property of  $\int_{\{\psi_1<-t\}}|\sum_{\alpha\in\tilde E} c_{\alpha}f_{u,\alpha}\tilde{f}_{\alpha}|^2e^{-\tilde\varphi}c(-\psi_1)$, we have
\begin{equation*}
	\begin{split}
		&\int_{\{\psi_1<-t\}\times U_1}|F_t|^2e^{-\varphi}c(-\psi)\\
		=&\int_{V}\bigg(\int_{\{\psi_1<-t\}}\bigg|\sum_{\gamma \in\mathbb{Z}_{\ge0}^{n_2}}u^{\gamma}f_{t,\gamma}\bigg|^2e^{-\tilde\varphi}c(-\psi_1)\bigg)e^{-\varphi_Y}|du_1\wedge\ldots\wedge du_{n_2}|^2\\
		\ge &\int_{V}\bigg(\int_{\{\psi_1<-t\}}\bigg|\sum_{\alpha\in\tilde E}c_{\alpha}f_{u,\alpha}(u)\tilde f_{\alpha}\bigg|^2e^{-\tilde\varphi}c(-\psi_1)\bigg)e^{-\varphi_Y}|du_1\wedge\ldots\wedge du_{n_2}|^2\\
		=&\int_{\{\psi_1<-t\}\times U_1}\bigg|\sum_{\alpha\in\tilde E}c_{\alpha}\pi_1^*(\tilde f_{\alpha})\wedge\pi_2^*(f_{\alpha})\bigg|^2e^{-\varphi}c(-\psi),
	\end{split}
\end{equation*}
which implies $G(t)=\int_{\{\psi<-t\}}|F_t|^2e^{-\varphi}c(-\psi)\ge\int_{\{\psi<-t\}}|F|^2e^{-\varphi}c(-\psi)$. It follows from $G(t)\le\int_{\{\psi<-t\}}|F|^2e^{-\varphi}c(-\psi)$ and inequality \eqref{eq:211219c} that
\begin{displaymath}
	\begin{split}
		G(t)&=\int_{\{\psi<-t\}}|F|^2e^{-\varphi}c(-\psi)\\
		&=\left(\int_t^{+\infty}c(s)e^{-s}ds\right)\sum_{\alpha\in E}\frac{(2\pi)^{n_1}e^{-\sum_{1\le j\le n_1}\varphi(z_j)}}{\prod_{1\le j\le n_1}(\alpha_j+1)c_{j}(z_j)^{2\alpha_{j}+2}}\int_Y|f_{\alpha}|^2e^{-\varphi_Y},
	\end{split}
\end{displaymath}
hence $G(h^{-1}(r))$ is linear with respect to $r\in(0,\int_0^{+\infty}c(s)e^{-s}ds]$. The uniqueness of $F$ follows from Corollary \ref{c:linear}.

Thus,  the sufficiency part of Theorem \ref{thm:linear-fibra-single} and Remark \ref{r:fibra-single} hold.

\subsection{Proof of the necessity part of Theorem \ref{thm:linear-fibra-single}}

\

In this section,  we prove the necessity part of Theorem \ref{thm:linear-fibra-single} in three steps.

\

\emph{Step 1. $f=\sum_{\alpha\in E}\pi_1^*(w^{\alpha}d w_1\wedge\ldots\wedge dw_{n_1}) \wedge \pi_2^*(f_\alpha)+g_0.$}

\

 Corollary \ref{c:linear} show that  there is a unique holomorphic $(n,0)$ form $F$ on $M$ satisfying $(F-f,z)\in(\mathcal{O}(K_{M})\otimes\mathcal{I}(\varphi+\psi))_{z}$ for any $z\in Z_0$ and $G(t)=\int_{\{\psi<-t\}}|F|^2e^{-\varphi}c(-\psi)$ for any $t\geq 0$.
It follows from Lemma \ref{l:green-sup}  that there exists a local coordinate $\tilde w_{j}$ on a neighborhood $\tilde V_{z_{j}}\Subset V_{z_j}$ of $z_{j}\in\Omega_j$ satisfying $\tilde w_{j}(z_{j})=0$ and
	$$\log|\tilde w_{j}|=G_{\Omega_j}(\cdot,z_{j})$$ on $\tilde V_{z_j}$ for any $j\in\{1,\ldots,n_1\}$. Denote that $\tilde V_0:=\prod_{1\le j\le n_1}\tilde V_{z_{j}}$ and $\tilde w:=(\tilde w_1,\ldots,\tilde w_{n_1})$ on $\tilde V_0$. 	Using Lemma \ref{l:G-compact}, we get that  there exists $t_0>0$ such that
	$$\{2p_j G_{\Omega_j}(\cdot,z_j)<-t_0\}\Subset \tilde V_{z_j}$$
for any $1\le j\le n_1$.
As $\varphi_j$ is a subharmonic function on $\Omega_j$, $\int_{\{\psi<-t_0\}}|F|^2e^{-\varphi}c(-\psi)<+\infty$ implies that
$$\int_{\{\psi<-t_0\}}|F|^2e^{-\pi_{2}^*(\varphi_Y)}c(-\psi)<+\infty.$$
Note that
$$\{\psi<-t_0\}=\left(\prod_{1\le j\le n_1}\left\{|\tilde w_j|<e^{-\frac{t_0}{2p_j}}\right\}\right)\times Y.$$
It follows from Lemma \ref{l:fibra-decom} that  there exists a unique sequence of  holomorphic  $(n_2,0)$   forms $\{F_{\alpha}\}_{\alpha\in\mathbb{Z}_{\ge0}^{n_1}}$ on $Y$ such that
	\begin{equation}\label{eq:1217a}
		F=\sum_{\alpha\in\mathbb{Z}_{\ge0}^{n_1}}\pi_1^*(\tilde w^{\alpha}d\tilde w_1\wedge\ldots\wedge d\tilde w_{n_1}) \wedge \pi_2^*(F_\alpha)
	\end{equation}
	on $\{\psi<-t_0\}$
	and
	\begin{equation}
		\label{eq:1217b}\int_Y|F_{\alpha}|^2e^{-\varphi_Y}<+\infty,
	\end{equation}
where the right term of the above equality is uniformly convergent on any compact subset of $M$. As $\frac{\int_{\{\psi<-t\}}|F|^2e^{-\varphi}c(-\psi)}{\int_{t}^{+\infty}c(s)e^{-s}ds}$ is a positive number independent of $t$, Lemma \ref{l:limit} implies that $F_{\alpha}\equiv0$ for any $\alpha\in\mathbb{Z}_{\ge0}$ satisfying $\sum_{1\le j\le n_1}\frac{\alpha_j+1}{p_j}<1$. Denote that $E_2:=\left\{\alpha\in\mathbb{Z}_{\ge0}^{n_1}:\sum_{1\le j\le n_1}\frac{\alpha_j+1}{p_j}>1\right\}$. Note that $\varphi(z_j)>-\infty$ for any $1\le j\le n_1$. It follows from Lemma \ref{l:0} and Lemma \ref{l:phi1+phi2} that $(\pi_1^*(\tilde w^{\alpha}d\tilde w_1\wedge\ldots\wedge d\tilde w_{n_1}) \wedge \pi_2^*(F_\alpha),z)\in(\mathcal{O}(K_{M})\otimes\mathcal{I}(\varphi+\psi))_{z}$ for any $z\in Z_0$ and
$\alpha\in E_2$, thus
$$\left(\sum_{\alpha\in E_2}\pi_1^*(\tilde w^{\alpha}d\tilde w_1\wedge\ldots\wedge d\tilde w_{n_1}) \wedge \pi_2^*(F_\alpha),z\right)\in(\mathcal{O}(K_{M})\otimes\mathcal{I}(\varphi+\psi))_{z}$$
for any $z\in Z_0$ (by using Lemma \ref{l:closedness}). As $(F-f,z)\in(\mathcal{O}(K_{M})\otimes\mathcal{I}(\varphi+\psi))_{z}$ for any $z\in Z_0$, we have
$$\left(f-\sum_{\alpha\in E}\pi_1^*(\tilde w^{\alpha}d\tilde w_1\wedge\ldots\wedge d\tilde w_{n_1}) \wedge \pi_2^*(F_\alpha),z\right)\in(\mathcal{O}(K_{M})\otimes\mathcal{I}(\varphi+\psi))_{z}$$
for any $z\in Z_0$.  Denote that
$$\psi_1:=\max_{1\le j\le n_1}\left\{\tilde\pi_{j}^*(2p_{j}G_{\Omega_j}(\cdot,z_{j}))\right\}$$ on $\prod_{1\le j\le n_1}\Omega_j$, where $\tilde\pi_j$ is the natural projection from $\prod_{1\le j\le n_1}\Omega_j$ to $\Omega_j$. Taking $c_{\alpha}=\prod_{1\le j\le n_1}\left(\lim_{z\rightarrow z_j}\frac{\tilde w_j}{w_j}\right)^{\alpha_j+1}$, it follows from Lemma \ref{l:0} and Lemma \ref{l:phi1+phi2} that  $(\tilde w^{\alpha}d\tilde w_1\wedge\ldots\wedge d\tilde w_{n_1}-c_{\alpha}w^{\alpha}d w_1\wedge\ldots\wedge d w_{n_1},z_0)\in\mathcal{O}(K_{\prod_{1\le j\le n_1}\Omega_j})_{z_0}\otimes\mathcal{I}\left(\sum_{1\le j\le n_1}\tilde{\pi}_j(\varphi_j)+\psi_1\right)_{z_0}$ for any $\alpha\in E$, which implies that $\big(\sum_{\alpha\in E}\pi_1^*(\tilde w^{\alpha}d\tilde w_1\wedge\ldots\wedge d\tilde w_{n_1}) \wedge \pi_2^*(F_\alpha)-\sum_{\alpha\in E}\pi_1^*(c_{\alpha}w^{\alpha}d w_1\wedge\ldots\wedge dw_{n_1}) \wedge \pi_2^*(F_\alpha),z\big)\in(\mathcal{O}(K_{M})\otimes\mathcal{I}(\varphi+\psi))_{z}$ for any $z\in Z_0$. Taking $f_{\alpha}=c_{\alpha}F_{\alpha}$, there exists a holomorphic $(n,0)$ form $g_0$ on $V_0\times Y$ such that
$$f=\sum_{\alpha\in E}\pi_1^*(w^{\alpha}d w_1\wedge\ldots\wedge dw_{n_1}) \wedge \pi_2^*(f_\alpha)+g_0$$
and $(g_0,z)\in(\mathcal{O}(K_{M})\otimes\mathcal{I}(\varphi+\psi))_{z}$ for any $z\in Z_0$. As $G(0)>0$, we know that there exists $\alpha\in E$ such that $f_{\alpha}\not\equiv0$.

\

\emph{Step 2. $G(-\log r;\tilde{c}\equiv1 )$ is linear with respect to $r$.}

\

It follows from Corollary \ref{c:linear} that $G(t;\tilde{c}\equiv1 )\le\int_{\{\psi<-t\}}|F|^2e^{-\varphi}=\frac{G(0;c)}{\int_{0}^{+\infty}c(s)e^{-s}ds}e^{-t}<+\infty$.
Denote
\begin{equation*}
\begin{split}
\inf\bigg\{\int_{\{\psi<-t\}}|\tilde{f}|^{2}e^{-\varphi}:(\tilde{f}-f,z)\in(\mathcal{O}(K_M)&\otimes\mathcal{I}(\psi))_{z}\mbox{ for any $z\in Z_0$} \\&\&{\,}\tilde{f}\in H^{0}(\{\psi<-t\},\mathcal{O}(K_{M}))\bigg\}
\end{split}
\end{equation*}
by $\tilde G(t)$, where $t\ge0$.
It follows from Lemma \ref{l:G1=G2} that $G(t;\tilde{c}\equiv1 )=\tilde G(t)$ for any $t\ge 0$. Denote that $M':=\prod_{1\le j\le n_1}\Omega_j$, and let $K_{M'}$ be the canonical (holomorphic) line bundle on $M'$.
Using Lemma \ref{l:G1=G2}, Lemma \ref{l:orth1} and Lemma \ref{l:orth2}, we obtain that there exists a unique holomorphic $(n,0)$ form $F_t=\sum_{\alpha\in E}\pi_1^*(h_{t,\alpha})\wedge\pi_2^*(f_{\alpha})$ on $\{\psi<-t\}$
satisfying
\begin{equation}
	\label{eq:211218d}G(t;\tilde{c}\equiv1)=\tilde G(t)=\int_{\{\psi<-t\}}|F_t|^2e^{-\varphi}=\sum_{\alpha\in E}\int_{\{\psi<-t\}}|\pi_1^*(h_{t,\alpha})\wedge\pi_2^*(f_{\alpha})|^2e^{-\varphi},
\end{equation}
where $h_{t,\alpha}$ is a holomorphic $(n_1,0)$ form on $\{\psi_1<-t\}$ satisfying $$(h_{t,\alpha}-w^{\alpha}d w_1\wedge\ldots\wedge dw_{n_1},z_0)\in (\mathcal{O}(K_{M'})\otimes\mathcal{I}(\psi_1))_{z_0}$$
 and $\int_{\{\psi_1<-t\}}|h_{t,\alpha}|^2e^{-\sum_{1\le j\le n_1}\tilde{\pi}_j^*(\varphi_j)}=\inf\big\{\int_{\{\psi_1<-t\}}|\tilde F|^2e^{-\sum_{1\le j\le n_1}\tilde{\pi}_j^*(\varphi_j)}:\tilde F$ is a holomorphic $(n_1,0)$ form on $\{\psi_1<-t\}$ satisfying $(\tilde F-w^{\alpha}d w_1\wedge\ldots\wedge dw_{n_1},z_0)\in (\mathcal{O}(K_{M'})\otimes\mathcal{I}(\psi_1))_{z_0}\big\}<+\infty$. It follows from Lemma \ref{p:exten-pro-finite} that there exists a holomorphic $(n_1,0)$ form $\tilde h_{\alpha}$ on $M'$ such that $\int_{M'}|\tilde{h}_{\alpha}|^2e^{-\sum_{1\le j\le n_1}\tilde\pi_j^*(\varphi_j)}c(-\psi_1)<+\infty$ and $(\tilde h_{\alpha}-w^{\alpha}d w_1\wedge\ldots\wedge dw_{n_1},z_0)\in (\mathcal{O}(K_{M'})\otimes\mathcal{I}(\psi_1))_{z_0}$. As $\varphi_j(z_j)>-\infty$ for any $1\le j\le n_1$, it follows from Lemma \ref{l:phi1+phi2} that there exists $t_1>t$ such that
 $$\int_{\{\psi_1<-t_1\}}|h_{t,\alpha}-\tilde h_{\alpha}|^2e^{-\sum_{1\le j\le n_1}\tilde\pi_j^*(\varphi_j)-\psi_1}<+\infty$$
 for any $\alpha\in E$.
 As $c(s)e^{-s}$ is a positive decreasing function on $(0,+\infty)$, for any $t>0$, we obtain that
\begin{equation*}
\begin{split}
		&\int_{\{\psi_1<-t\}}|h_{t,\alpha}|^2e^{-\sum_{1\le j\le n_1}\tilde\pi_j^*(\varphi_j)}c(-\psi_1)\\
		\le&C\int_{\{\psi_1<-t_1\}}|h_{t,\alpha}-\tilde h_{\alpha}|^2e^{-\sum_{1\le j\le n_1}\tilde\pi_j^*(\varphi_j)-\psi_1}\\
		&+\int_{\{\psi_1<-t_1\}}|\tilde h_{\alpha}|^2e^{-\sum_{1\le j\le n_1}\tilde\pi_j^*(\varphi_j)}c(-\psi_1)\\
		&+\sup_{s\in(t,t_1]}c(s)\times\int_{\{-t_1\le \psi_1<-t\}}|h_{t,\alpha}|^2e^{-\sum_{1\le j\le n_1}\tilde\pi_j^*(\varphi_j)}\\
		<&+\infty
	\end{split}
\end{equation*}
for any $\alpha\in E$,
which implies that
 \begin{equation}
 	\label{eq:211218c}\begin{split}
 	&\int_{\{\psi<-t\}}|F_t|^2e^{-\varphi}c(-\psi)\\
 	\le&C\sum_{\alpha\in E}\int_{\{\psi_1<-t\}}|h_{t,\alpha}|^2e^{-\sum_{1\le j\le n_1}\tilde\pi_j^*(\varphi_j)}c(-\psi_1)\times\int_Y|f_{\alpha}|^2e^{-\varphi_Y} 	\\
 	<&+\infty.
 	\end{split} \end{equation}
It follows from Lemma \ref{l:linear2} and inequality \eqref{eq:211218c} that
$$G(t;\tilde{c}\equiv1 )=\int_{\{\psi<-t\}}|F|^2e^{-\varphi}=\frac{G(0;c)}{\int_0^{+\infty}c(s)e^{-s}ds}e^{-t}$$
for any $t>0$. Theorem \ref{thm:general_concave} shows that $\lim_{t\rightarrow 0+}G(t;\tilde{c}\equiv1 )=G(0;\tilde{c}\equiv1 )$, hence we get $G(-\log r;\tilde c\equiv1 )$ is linear with respect to $r\in(0,1]$.

\

\emph{Step 3. proofs of statements $(2)$ and $(3)$ in Theorem \ref{thm:linear-fibra-single}.}

\

Denote
\begin{equation*}
\begin{split}
\inf\bigg\{\int_{\{\psi_1<-t\}}|\tilde{f}|^{2}e^{-\varphi}:(\tilde{f}-w^{\alpha}dw_1\wedge\ldots\wedge dw_{n_1}&,z_0)\in(\mathcal{O}(K_{M'})\otimes\mathcal{I}(\psi_1))_{z_0} \\&\&{\,}\tilde{f}\in H^{0}(\{\psi_1<-t\},\mathcal{O}(K_{M'}))\bigg\}
\end{split}
\end{equation*}
by $ G_{\alpha}(t)$, where $t\ge0$. Lemma \ref{l:G equal to 0} and Lemma \ref{l:0} show that $G_{\alpha}(t)\not=0$ for any $\alpha\in E$. It follows from equality \eqref{eq:211218d} that
\begin{equation}
	\label{eq:211219a}G(t,\tilde c\equiv1)=\sum_{\alpha\in E}G_{\alpha}(t)\int_Y|f_{\alpha}|^2e^{-\varphi}.
\end{equation}
  Theorem \ref{thm:general_concave} tells us that $G_{\alpha}(-\log r)$ is concave with respect to $r$. It follows from the linearity of $G(-\log r;\tilde{c})$ and equality \eqref{eq:211219a} that $G_{\alpha}(-\log r)$ is linear with respect to $r$ for any $\alpha\in E$ satisfying $f_{\alpha}\not\equiv0$.  It follows from Theorem \ref{thm:linear-2d} and the linearity of $G_{\alpha}(-\log r)$ that statements $(2)$ and $(3)$ in Theorem \ref{thm:linear-fibra-single} hold.

Thus,  the necessity part of Theorem \ref{thm:linear-fibra-single} holds.

\section{Proofs of Theorem \ref{thm:linear-fibra-finite} and Reamrk \ref{r:fibra-finite}}\label{sec:proof-2}

In this section, we prove Theorem \ref{thm:linear-fibra-finite} and Remark \ref{r:fibra-finite}.

Denote that $M':=\prod_{1\le j\le n_1}\Omega_j$, and let $K_{M'}$ be the conanical (holomorphic) line bundle on $M'$. Denote that
$$\psi_1:=\max_{1\le j\le n_1}\left\{\tilde\pi_j^*\left(2\sum_{1\le k\le m_j}p_{j,k}G_{\Omega_j}(\cdot,z_{j,k})\right)\right\}$$
 on $M'$, where $\tilde\pi_j$ is the natural projection from $M'$ to $\Omega_j$.
For any $\beta\in I_1$ and any holomorphic function $h$, it follows from Lemma \ref{l:0'} that $(h,(z_{\beta},y))\in\mathcal{I}(\psi)_{(z_{\beta},y)}$ for any $y\in Y$ if and only if $(h(\cdot,y),z_{\beta})\in\mathcal{I}(\psi_1)_{z_{\beta}}$ for any $y\in Y$.
The sufficiency part of Theorem \ref{thm:linear-fibra-finite} follows from Proposition \ref{p:fibra}, Theorem \ref{thm:prod-finite-point} and Lemma \ref{l:G1=G2}. In the following, we prove the necessity part of Theorem \ref{thm:linear-fibra-finite} and Remark \ref{r:fibra-finite}.

Following from the linearity of $G(h^{-1}(r))$ and Corollary \ref{c:linear}, there exists a holomorphic $(n,0)$ form $F$ on $M$, such that $(F-f,z)\in(\mathcal{O}(K_M)\otimes\mathcal{I}(\varphi+\psi))_{z}$ for any $z\in Z_0$ and
\begin{equation}
	\label{eq:211221a}G(t)=\int_{\{\psi<-t\}}|F|^2e^{-\varphi}c(-\psi).
\end{equation}
It follows from Lemma \ref{l:green-sup2} and Lemma \ref{l:G-compact} that there exists $t_0>0$ such that $\{\psi_1<-t_0\}\Subset\cup_{\beta\in I_1}V_{\beta}$ and $\left\{z\in\Omega_j:2\sum_{1\le k\le m_j}p_{j,k}G_{\Omega_j}(z,z_{j,k})<-t_0\right\}\cap V_{z_{j,k}}$ is simply connected for any $j\in\{1,\ldots,n_1\}$ and $k\in\{1,\ldots,m_j\}$. For any $\beta\in I_1$,
 denote
\begin{equation*}
\begin{split}
\inf\bigg\{\int_{\{\psi<-t\}\cap(V_{\beta}\times Y)}|\tilde{f}|^{2}&e^{-\varphi}c(-\psi):\tilde{f}\in H^{0}\left(\{\psi<-t\}\cap(V_{\beta}\times Y),\mathcal{O}(K_{M})\right)\\&\&\,(\tilde{f}-f,(z_{\beta},y))\in(\mathcal{O}(K_M)\otimes\mathcal{I}(\varphi+\psi))_{(z_{\beta},y)}{,}\,\forall y\in Y \bigg\}
\end{split}
\end{equation*}
by $G_{\beta}(t)$,  where $t\in[t_0,+\infty)$. Note that $\{\psi<-t\}=\cup_{\beta\in I_1}(\{\psi<-t\}\cap(V_{\beta}\times Y))$ for any $t\ge t_0$.  Following from the definition of $G(t)$ and $G_{\beta}(t)$, we have $G(t)=\sum_{\beta\in I_1}G_{\beta}(t)$ for $t\ge t_0$. Thus, we have
$$G_{\beta}(t)=\int_{\{\psi<-t\}\cap(V_{\beta}\times Y)}|F|^2e^{-\varphi}c(-\psi)$$
for any $t\ge t_0$.
 Theorem \ref{thm:general_concave} tells us that $G_{\beta}(h^{-1}(r))$ is concave with respect to $r\in(0,\int_{t_0}^{+\infty}c(s)e^{-s}ds]$. As $G(h^{-1}(r))$ is linear with respect to $r$, we have $G_{\beta}(h^{-1}(r))$ is linear with respect to $r\in(0,\int_{t_0}^{+\infty}c(s)e^{-s}ds]$.

Note that $f=\pi_1^*\left(w_{\beta^*}^{\alpha_{\beta^*}}dw_{1,1}\wedge\ldots\wedge dw_{n_1,1}\right)\wedge\pi_2^*\left(f_{\alpha_{\beta^*}}\right)+\sum_{\alpha\in E'}\pi_1^*(w^{\alpha}dw_{1,1}\wedge\ldots\wedge dw_{n_1,1})\wedge\pi_2^*(f_{\alpha})$ on $V_{\beta^*}\times Y$, where   $E'=\left\{\alpha\in\mathbb{Z}_{\ge0}^{n_1}:\sum_{j=1}^{n_1}\frac{\alpha_j+1}{p_{j,1}}>\sum_{j=1}^{n_1}\frac{\alpha_{\beta^*,j}+1}{p_{j,1}}\right\}$.  As $\frac{1}{2p_{j,1}}\left(2\sum_{1\le k\le m_j}p_{j,k}G_{\Omega_j}(\cdot,z_{j,k})+t_0\right)$ is the Green function on $\big\{z\in\Omega_j:2\sum_{1\le k\le m_j}p_{j,k}G_{\Omega_j}(z,z_{j,k})<-t_0\big\}\cap V_{z_{j,1}}$, it follows from Theorem \ref{thm:linear-fibra-single} that $\big(f-\sum_{\alpha\in E_{\beta^*}}\pi_1^*(w_{\beta^*}^{\alpha}dw_{1,1,}\wedge\ldots\wedge dw_{n_1,1})\wedge\pi_2^*(\tilde f_{\alpha}),(z_{\beta^*},y)\big)\in(\mathcal{O}(K_M)\otimes\mathcal{I}(\varphi+\psi))_{(z_{\beta^*},y)}$ for any $y\in Y$, where $E_{\beta^*}=\left\{\alpha\in\mathbb{Z}_{\ge0}^{n_1}:\sum_{1\le j\le n_1}\frac{\alpha_j+1}{p_{j,\beta^*_j}}=1\right\}$ and $\tilde f_{\alpha}$ is a holomorphic $(n_2,0)$ form on $Y$ satisfying $\int_Y|\tilde f_{\alpha}|^2e^{-\varphi_Y}<+\infty$ for any $\alpha\in E_{\beta^*}$.   Following from Lemma \ref{l:0} and Lemma \ref{l:0'}, we have $\alpha_{\beta^*}\in E_{\beta^*}$, $f_{\alpha_{\beta^*}}=\tilde f_{\alpha_{\beta^*}}$ and $\tilde f_{\alpha}\equiv0$ for any $\alpha\not=\alpha_{\beta^*}$. Using Theorem \ref{thm:linear-fibra-single} and Remark \ref{r:fibra-single}, we obtain that there exists a holomorphic $(n_1,0)$ form $h_{0}$ on $\{\psi_1<-t_0\}\cap V_{\beta^*}$ such that
$$F=\pi_1^*(h_0)\wedge\pi_2^*(f_{\alpha_{\beta^*}})$$
 on $(\{\psi_1<-t_0\}\cap V_{\beta^*})\times Y$.  It follows from Lemma \ref{l:decom-product} that there exists a holomorphic $(n_1,0)$ form $h_{1}$ on $M'$ such that
 \begin{equation}
 	\label{eq:211221b}F=\pi_1^*(h_1)\wedge\pi_2^*(f_{\alpha_{\beta^*}})
 \end{equation}
on $M$ and $h_{0}=h_{1}$ on $\{\psi_1<-t_0\}\cap V_{\beta^*}$.

Denote that $\tilde\varphi=\sum_{1\le j\le n_1}\tilde\pi_j^*(\varphi_j)$ on $M'$.
Denote
\begin{equation*}
\begin{split}
\inf\bigg\{\int_{\{\psi_1<-t\}}|\tilde{f}|^{2}e^{-\tilde\varphi}c(-\psi_1):(\tilde{f}-h_1,z_{\beta})\in&(\mathcal{O}(K_{M'})\otimes\mathcal{I}(\psi_1))_{z_{\beta}}{,}\,\forall \beta\in I_1 \\&\&{\,}\tilde{f}\in H^{0}(\{\psi_1<-t\},\mathcal{O}(K_{M'}))\bigg\}
\end{split}
\end{equation*}
by $G'(t)$,  where $t\in[0,+\infty)$.
 Note that $f_{\alpha_{\beta^*}}=\tilde f_{\alpha_{\beta^*}}$ satisfies $\int_Y|f_{\alpha_{\beta^*}}|^2e^{-\varphi_Y}<+\infty$. For any $\beta\in I_1$ and any holomorphic function $h$, note that $(h,(z_{\beta},y))\in\mathcal{I}(\psi)_{(z_{\beta},y)}$ for any $y\in Y$ if and only if $(h(\cdot,y),z_{\beta})\in\mathcal{I}(\psi_1)_{z_{\beta}}$ for any $y\in Y$.  Following from Lemma \ref{l:G1=G2}, equality \eqref{eq:211221b} and Proposition \ref{p:fibra}, we get that $G'(0)<+\infty$, $G'(h^{-1}(r))$ is linear with respect to $r\in(0,\int_0^{+\infty}c(s)e^{-s}ds]$
 and \begin{equation}
 	\label{eq:211221c}G'(t)=\int_{\{\psi_1<-t\}}|h_1|^2e^{-\tilde\varphi}c(-\psi_1)
 \end{equation}
 for any $t\ge0$.
 Theorem \ref{thm:prod-finite-point} tells us that the following statements hold:

$(1)$ $\varphi_j=2\log|g_j|+2u_j$ for any $j\in\{1,\ldots,n\}$, where $u_j$ is a harmonic function on $\Omega_j$ and $g_j$ is a holomorphic function on $\Omega_j$ satisfying $g_j(z_{j,k})\not=0$ for any $k\in\{1,\ldots,m_j\}$;
	
	$(2)$ There exists a nonnegative integer $\gamma_{j,k}$ for any $j\in\{1,\ldots,n_1\}$ and $k\in\{1,\ldots,m_j\}$, which satisfies that $\prod_{1\le k\leq m_j}\chi_{j,z_{j,k}}^{\gamma_{j,k}+1}=\chi_{j,-u_j}$ and $\sum_{1\le j\le n_1}\frac{\gamma_{j,\beta_j}+1}{p_{j,\beta_j}}=1$ for any $\beta\in I_1$;
	
	$(3)$ $h_1=\left(c_{\beta}\prod_{1\le j\le n_1}w_{j,\beta_j}^{\gamma_{j,\beta_j}}+\tilde g_{\beta}\right)dw_{1,\beta_1}\wedge\ldots\wedge dw_{n_1,\beta_{n_1}}$ on $V_{\beta}$ for any $\beta\in I_1$, where $c_{\beta}$ is a constant and $g_{\beta}$ is a holomorphic function on $V_{\beta}$ such that $(g_{\beta},z_{\beta})\in\mathcal{I}(\psi_1)_{z_{\beta}}$;
	
	$(4)$ $\lim_{z\rightarrow z_{\beta}}\frac{c_{\beta}\prod_{1\le j\le n_1}w_{j,\beta_j}^{\gamma_{j,\beta_j}}dw_{1,\beta_1}\wedge\ldots\wedge dw_{n_1,\beta_{n_1}}}{\wedge_{1\le j\le n_1}\tilde\pi_{j}^*\left(g_j(P_{j})_*\left(f_{u_j}\left(\prod_{1\le k\le m_j}f_{z_{j,k}}^{\gamma_{j,k}+1}\right)\left(\sum_{1\le k\le m_j}p_{j,k}\frac{df_{z_{j,k}}}{f_{z_{j,k}}}\right)\right)\right)}=c_0$ for any $\beta\in I_1$, where $c_0\in\mathbb{C}\backslash\{0\}$ is a constant independent of $\beta$, $f_{u_j}$ is a holomorphic function $\Delta$ such that $|f_{u_j}|=P_j^*(e^{u_j})$ and $f_{z_{j,k}}$ is a holomorphic function on $\Delta$ such that $|f_{z_{j,k}}|=P_j^*\left(e^{G_{\Omega_j}(\cdot,z_{j,k})}\right)$ for any $j\in\{1,\ldots,n\}$ and $k\in\{1,\ldots,m_j\}$.

As $\int_Y|f_{\alpha_{\beta^*}}|^2e^{-\varphi_Y}<+\infty$ and $\tilde\varphi(z_{\beta})>-\infty$ for any $\beta\in I_1$, it follows from Lemma \ref{l:phi1+phi2} that $\pi_1^*(\tilde g_{\beta}dw_{1,\beta_1}\wedge\ldots\wedge dw_{n_1,\beta_{n_1}})\wedge\pi_2^*(f_{\alpha_{\beta^*}}),z)\in(\mathcal{O}(K_M)\otimes\mathcal{I}(\varphi+\psi))_z$ for any $z\in Z_0$.  As $(F-f,z)\in(\mathcal{O}(K_M)\otimes\mathcal{I}(\varphi+\psi))_z$ for any $z\in Z_0$ and $F=\pi_1^*(h_1)\wedge\pi_2^*(f_{\alpha_{\beta^*}})$, we have
$$f=\pi_1^*\left(c_{\beta}\left(\prod_{1\le j\le n_1}w_{j,\beta_j}^{\gamma_{j,\beta_j}}\right)dw_{1,\beta_1}\wedge\ldots\wedge dw_{n,\beta_n}\right)\wedge\pi_2^*\left(f_{\alpha_{\beta^*}}\right)+g_\beta$$
 on $V_{\beta}\times Y$ for any $\beta\in I_1$, where $g_{\beta}$ is a holomorphic $(n,0)$ form on $V_{\beta}\times Y$ such that $(g_{\beta},z)\in(\mathcal{O}(K_M)\otimes\mathcal{I}(\varphi+\psi))_{z}$ for any $z\in\{z_\beta\}\times Y$. Take $f_0=f_{\alpha_{\beta^*}}$. Thus, Theorem \ref{thm:linear-fibra-finite} holds.

Note that $G'(h^{-1}(r))$ is linear with respect to $r$. Following from Theorem \ref{thm:prod-finite-point}, Remark \ref{r:1.2} and equality \eqref{eq:211221c}, we have
$$h_1=c_0\wedge_{1\le j\le n_1}\tilde\pi_{j}^*\left(g_j(P_{j})_*\left(f_{u_j}\left(\prod_{1\le k\le m_j}f_{z_{j,k}}^{\gamma_{j,k}+1}\right)\left(\sum_{1\le k\le m_j}p_{j,k}\frac{df_{z_{j,k}}}{f_{z_{j,k}}}\right)\right)\right)$$
and
\begin{displaymath}
	\begin{split}
		G'(t)&=\int_{\{\psi_1<-t\}}|h_1|^2e^{-\tilde\varphi}c(-\psi_1)\\
		&=\left(\int_{t}^{+\infty}c(s)e^{-s}ds\right)\sum_{\beta\in I_1}\frac{|c_{\beta}|^2(2\pi)^{n_1}e^{-\tilde\varphi(z_{\beta})}}{\prod_{1\le j\le n_1}(\gamma_{j,\beta_j}+1)c_{j,\beta_j}^{2\gamma_{j,\beta_j}+2}}.
	\end{split}
\end{displaymath}
Thus, we have
$$F=c_0\left(\wedge_{1\le j\le n_1}\tilde\pi_{j}^*\left(g_j(P_{j})_*\left(f_{u_j}\left(\prod_{k=1}^{m_j}f_{z_{j,k}}^{\gamma_{j,k}+1}\right)\left(\sum_{k=1}^{m_j}p_{j,k}\frac{df_{z_{j,k}}}{f_{z_{j,k}}}\right)\right)\right)\right)\wedge\pi_2^*(f_0)$$
and
\begin{displaymath}
	\begin{split}
		G(t)&=\int_{\{\psi<-t\}}|F|^2e^{-\varphi}c(-\psi)\\
		&=\left(\int_{t}^{+\infty}c(s)e^{-s}ds\right)\sum_{\beta\in I_1}\frac{|c_{\beta}|^2(2\pi)^{n_1}e^{-\tilde\varphi(z_{\beta})}}{\prod_{1\le j\le n_1}(\gamma_{j,\beta_j}+1)c_{j,\beta_j}^{2\gamma_{j,\beta_j}+2}}\int_Y|f_0|^2e^{-\varphi_Y}.
	\end{split}
\end{displaymath}
The uniqueness of $F$ follows from Corollary \ref{c:linear}. Thus, Remark \ref{r:fibra-finite} holds.

\section{Proofs of Theorem \ref{thm:linear-fibra-infinite} and Proposition \ref{p:M=M_1}}

In this section, we prove Theorem \ref{thm:linear-fibra-infinite} and Proposition \ref{p:M=M_1}.

\subsection{Proof of Theorem \ref{thm:linear-fibra-infinite}}
\

In this section, we prove Theorem \ref{thm:linear-fibra-infinite} by contradiction. Assume that $G(h^{-1}(r))$ is linear with respect to $r\in(0,\int_0^{+\infty}c(s)e^{-s}ds]$.

Denote that $M':=\prod_{1\le j\le n_1}\Omega_j$, and let $K_{M'}$ be the canonical (holomorphic) line bundle on $M'$. Denote that
$$\psi_1:=\max_{1\le j\le n_1}\left\{\tilde\pi_j^*\left(2\sum_{1\le k<\tilde m_j}p_{j,k}G_{\Omega_j}(\cdot,z_{j,k})\right)\right\}$$
 on $M'$, where $\tilde\pi_j$ is the natural projection from $M'$ to $\Omega_j$.
Following from the linearity of $G(h^{-1}(r))$ and Corollary \ref{c:linear}, there exists a holomorphic $(n,0)$ form $F$ on $M$, such that $(F-f,z)\in(\mathcal{O}(K_M)\otimes\mathcal{I}(\varphi+\psi))_{z}$ for any $z\in Z_0$ and
\begin{equation}
	\label{eq:211221d}G(t)=\int_{\{\psi<-t\}}|F|^2e^{-\varphi}c(-\psi).
\end{equation}
For any $\beta\in \tilde I_1$, it follows from Lemma \ref{l:green-sup2} and Lemma \ref{l:G-compact} that there exists $t_\beta>0$ such that $\{\psi_1<-t_{\beta}\}\cap V_{\beta}\Subset V_{\beta}$ and $\big\{z\in\Omega_j:2\sum_{1\le k<\tilde m_j}p_{j,k}G_{\Omega_j}(z,z_{j,k})<-t_0\big\}\cap V_{z_{j,k}}$ is simply connected for any $1\le j\le n_1$ and $1\le k<\tilde m_j$. For any $\beta\in\tilde I_1$,
 denote
\begin{equation*}
\begin{split}
\inf\bigg\{\int_{\{\psi<-t\}\cap(V_{\beta}\times Y)}|\tilde{f}|^{2}&e^{-\varphi}c(-\psi):\tilde{f}\in H^{0}(\{\psi<-t\}\cap(V_{\beta}\times Y),\mathcal{O}(K_{M}))\\&\&\,(\tilde{f}-f,(z_{\beta},y))\in(\mathcal{O}(K_M)\otimes\mathcal{I}(\varphi+\psi))_{(z_{\beta},y)}{,}\,\forall y\in Y\bigg\}
\end{split}
\end{equation*}
by $G_{\beta}(t)$,  where $t\in[t_\beta,+\infty)$, and  denote
\begin{equation*}
\begin{split}
\inf\bigg\{\int_{\{\psi<-t\}\backslash(V_{\beta}\times Y)}&|\tilde{f}|^{2}e^{-\varphi}c(-\psi):\tilde{f}\in H^{0}(\{\psi<-t\}\backslash(V_{\beta}\times Y),\mathcal{O}(K_{M}))\\&\&\,(\tilde{f}-f,z)\in(\mathcal{O}(K_M)\otimes\mathcal{I}(\varphi+\psi))_{z}{,}\,\forall z\in \left(\tilde I_1\backslash\{\beta\}\right)\times Y\bigg\}
\end{split}
\end{equation*}
by $\tilde G_{\beta}(t)$,  where $t\in[t_\beta,+\infty)$.
 By the definition of $G(t)$, $G_{\beta}(t)$ and $\tilde G_{\beta}(t)$, we have $G(t)=G_{\beta}(t)+\tilde G_{\beta}(t)$ for $t\ge t_\beta$. Thus, we have
$$G_{\beta}(t)=\int_{\{\psi<-t\}\cap(V_{\beta}\times Y)}|F|^2e^{-\varphi}c(-\psi)$$
for any $t\ge t_\beta$.
 Theorem \ref{thm:general_concave} tells us that $G_{\beta}(h^{-1}(r))$ and $\tilde G_{\beta}(h^{-1}(r))$ are concave with respect to $r\in(0,\int_{t_\beta}^{+\infty}c(s)e^{-s}ds]$. As $G(h^{-1}(r))$ is linear with respect to $r$, we have $G_{\beta}(h^{-1}(r))$ is linear with respect to $r\in(0,\int_{t_\beta}^{+\infty}c(s)e^{-s}ds]$.

 Following from Lemma \ref{l:green-sup} and \ref{l:green-sup2}, we know $\frac{1}{2p_{j,1}}\left(2\sum_{1\le k<\tilde m_j}p_{j,k}G_{\Omega_j}(\cdot,z_{j,k})+t_{\beta^*}\right)$ is the Green function on $\left\{z\in\Omega_j:2\sum_{1\le k<\tilde m_j}p_{j,k}G_{\Omega_j}(z,z_{j,k})<-t_{\beta^*}\right\}\cap V_{z_{j,1}}$. Note that   $f=\pi_1^*\left(w_{\beta^*}^{\alpha_{\beta^*}}dw_{1,1}\wedge\ldots\wedge dw_{n_1,1}\right)\wedge\pi_2^*\left(f_{\alpha_{\beta^*}}\right)+\sum_{\alpha\in E'}\pi_1^*(w^{\alpha}dw_{1,1}\wedge\ldots\wedge dw_{n_1,1})\wedge\pi_2^*(f_{\alpha})$ on $V_{\beta^*}\times Y$, where  $E'=\left\{\alpha\in\mathbb{Z}_{\ge0}^{n_1}:\sum_{j=1}^{n_1}\frac{\alpha_j+1}{p_{j,1}}>\sum_{j=1}^{n_1}\frac{\alpha_{\beta^*,j}+1}{p_{j,1}}\right\}$. It follows from Theorem \ref{thm:linear-fibra-single} that $\big(f-\sum_{\alpha\in E_{\beta^*}}\pi_1^*(w_{\beta^*}^{\alpha}dw_{1,1,}\wedge\ldots\wedge dw_{n_1,1})\wedge\pi_2^*(\tilde f_{\alpha}),(z_{\beta^*},y)\big)\in(\mathcal{O}(K_M)\otimes\mathcal{I}(\varphi+\psi))_{(z_{\beta^*},y)}$ for any $y\in Y$, where $E_{\beta^*}=\left\{\alpha\in\mathbb{Z}_{\ge0}^{n_1}:\sum_{1\le j\le n_1}\frac{\alpha_j+1}{p_{j,\beta^*_j}}=1\right\}$ and $\tilde f_{\alpha}$ is a holomorphic $(n_2,0)$ form on $Y$ satisfying $\int_Y|\tilde f_{\alpha}|^2e^{-\varphi_Y}<+\infty$ for any $\alpha\in E_{\beta^*}$.   Following from Lemma \ref{l:0} and Lemma \ref{l:0'}, we have $\alpha_{\beta^*}\in E_{\beta^*}$, $f_{\alpha_{\beta^*}}=\tilde f_{\alpha_{\beta^*}}$ and $\tilde f_{\alpha}\equiv0$ for any $\alpha\not=\alpha_{\beta^*}$. Using Theorem \ref{thm:linear-fibra-single} and Remark \ref{r:fibra-single}, we obtain that there exists a holomorphic $(n_1,0)$ form $h_{0}$ on $\{\psi_1<-t_{\beta^*}\}\cap V_{\beta^*}$ such that
$$F=\pi_1^*(h_0)\wedge\pi_2^*(f_{\alpha_{\beta^*}})$$
 on $(\{\psi_1<-t_{\beta^*}\}\cap V_{\beta^*})\times Y$.  It follows from Lemma \ref{l:decom-product} that there exists a holomorphic $(n_1,0)$ form $h_{1}$ on $M'$ such that
 \begin{equation}
 	\label{eq:211221e}F=\pi_1^*(h_1)\wedge\pi_2^*(f_{\alpha_{\beta^*}})
 \end{equation}
on $M$ and $h_{0}=h_{1}$ on $\{\psi_1<-t_{\beta^*}\}\cap V_{\beta^*}$.

Denote that $\tilde\varphi=\sum_{1\le j\le n_1}\tilde\pi_j^*(\varphi_j)$ on $M'$.
Denote
\begin{equation*}
\begin{split}
\inf\bigg\{\int_{\{\psi_1<-t\}}|\tilde{f}|^{2}e^{-\tilde\varphi}c(-\psi_1):(\tilde{f}-h_1,z_{\beta})\in&(\mathcal{O}(K_{M'})\otimes\mathcal{I}(\psi_1))_{z_{\beta}}{,}\,\forall \beta\in\tilde I_1 \\&\&{\,}\tilde{f}\in H^{0}(\{\psi_1<-t\},\mathcal{O}(K_{M'}))\bigg\}
\end{split}
\end{equation*}
by $G'(t)$,  where $t\in[0,+\infty)$.
 Note that $f_{\alpha_{\beta^*}}=\tilde f_{\alpha_{\beta^*}}$ satisfies $\int_Y|f_{\alpha_{\beta^*}}|^2e^{-\varphi_Y}<+\infty$. For any $\beta\in\tilde I_1$ and any holomorphic function $h$, it follows from Lemma \ref{l:0'} that $(h,(z_{\beta},y))\in\mathcal{I}(\psi)_{(z_{\beta},y)}$ for any $y\in Y$ if and only if $(h(\cdot,y),z_{\beta})\in\mathcal{I}(\psi_1)_{z_{\beta}}$ for any $y\in Y$. Following from Lemma \ref{l:G1=G2}, equality \eqref{eq:211221e} and Proposition \ref{p:fibra}, we get that $G'(0)<+\infty$ and $G'(h^{-1}(r))$ is linear with respect to $r\in(0,\int_0^{+\infty}c(s)e^{-s}ds]$, which contradicts to Theorem \ref{thm:prod-infinite-point}.

Thus, we obtain that $G(h^{-1}(r))$ is not linear.

\subsection{Proof of Proposition \ref{p:M=M_1}}
\

It follows from Corollary \ref{c:linear} that there exists a holomorphic $(n,0)$ form $F$ on $M_1$, which satisfies that $(F-f,z)\in(\mathcal{O}(K_{M_1})\otimes\mathcal{I}(\varphi+\psi))_z$ for any $z\in Z_0$ and
\begin{equation}
	\label{eq:1226a}
	G(t)=\int_{\{\psi<-t\}\cap M_1}|F|^2e^{-\varphi}c(-\psi)
\end{equation}
for any $t\ge0$.

It follows from Lemma \ref{l:green-sup} and Lemma \ref{l:green-sup2} that there exists a local coordinate $w_{j,k}$  on a neighborhood $V_{z_{j,k}}\Subset\Omega_{j}$ of $z_{j,k}\in\Omega_j$ satisfying $w_{j,k}(z_{j,k})=0$ and
$$\log|w_{j,k}|=\frac{1}{p_{j,k}}\sum_{1\le k<\tilde m_j}p_{j,k}G_{\Omega_j}(\cdot,z_{j,k})$$
 for any $j\in\{1,\ldots,n_1\}$ and $1\le k<\tilde{m}_j$, where $V_{z_{j,k}}\cap V_{z_{j,k'}}=\emptyset$ for any $j$ and $k\not=k'$. Denote that $\tilde I_1:=\{(\beta_1,\ldots,\beta_{n_1}):1\le \beta_j< \tilde m_j$ for any $j\in\{1,\ldots,n_1\}\}$, $V_{\beta}:=\prod_{1\le j\le n_1}V_{z_{j,\beta_j}}$ for any $\beta=(\beta_1,\ldots,\beta_{n_1})\in\tilde I_1$ and $w_{\beta}:=(w_{1,\beta_1},\ldots,w_{n_1,\beta_{n_1}})$ is a local coordinate on $V_{\beta}$ of $z_{\beta}:=(z_{1,\beta_1},\ldots,z_{n_1,\beta_{n_1}})\in M$.
 It follows from Lemma \ref{l:fibra-decom-2} that
$$F=\sum_{\alpha\in\mathbb{Z}_{\ge0}^{n_1}}\pi_1^*(w_{\beta}^{\alpha}dw_{1,\beta_1}\wedge\ldots\wedge dw_{n_1,\beta_{n_1}})\wedge\pi_2^*(F_{\alpha,\beta})$$
on a neighborhood $U_{\beta}\subset(V_{\beta}\times Y)\cap M_1$ of  $\{z_{\beta}\}\times Y$ for any $\beta\in\tilde I_1$, where $F_{\alpha,\beta}$ is a holomorphic $(n_2,0)$ form on $Y$.
Following from Lemma \ref{l:limit2} and equality \eqref{eq:1226a}, we obtain that
$$F_{\alpha,\beta}\equiv0$$
for any $\alpha\in\left\{\alpha\in\mathbb{Z}_{\ge0}^{n_1}:\sum_{1\le j\le n_1}\frac{\alpha_j+1}{p_{j,\beta_j}}<1\right\}$ and $\beta\in\tilde I_1$, and we have
\begin{equation}\label{eq:1226b}
	\frac{G(0)}{\int_0^{+\infty}c(s)e^{-s}ds}\ge \sum_{\beta\in\tilde I_1}\sum_{\alpha\in E_{\beta}}\frac{(2\pi)^{n_1}e^{-\sum_{1\le j\le n_1}\varphi_j(z_{j,\beta_j})}}{\prod_{1\le j\le n_1}(\alpha_j+1)}\int_{Y}|F_{\alpha,\beta}|^2e^{-\varphi_Y},
\end{equation}
where $E_{\beta}=\left\{\alpha\in\mathbb{Z}_{\ge0}^{n_1}:\sum_{1\le j\le n_1}\frac{\alpha_j+1}{p_{j,\beta_j}}=1\right\}$ for any $\beta\in \tilde I_1$.
Proposition \ref{p:exten-fibra} shows that there exists a holomorphic $(n,0)$ form $F_1$ on $M$ such that $(F_1-F,z)\in(\mathcal{O}(K_M)\otimes\mathcal{I}(\psi))_z$ for any $z\in Z_0$ and
\begin{equation}
\label{eq:1226c} \begin{split}
		&\int_{M}|F_1|^2e^{-\varphi}c(-\psi)\\
		\le&\left(\int_0^{+\infty}c(s)e^{-s}ds\right)\sum_{\beta\in\tilde I_1}\sum_{\alpha\in E_{\beta}}\frac{(2\pi)^{n_1}e^{-\sum_{1\le j\le n_1}\varphi_j(z_{j,\beta_j})}}{\prod_{1\le j\le n_1}(\alpha_j+1)}\int_{Y}|F_{\alpha,\beta}|^2e^{-\varphi_Y}.
\end{split}
\end{equation}
Denote that $\tilde E_{\beta}:=\left\{\alpha\in\mathbb{Z}_{\ge0}^{n_1}:\sum_{1\le j\le n_1}\frac{\alpha_j+1}{p_{j,\beta_j}}\ge1\right\}$ for any $\beta\in \tilde I_1$. As $(F_1-F,z)\in(\mathcal{O}(K_M)\otimes\mathcal{I}(\psi))_z$. It follows from Lemma \ref{l:fibra-decom} and Lemma \ref{l:0} that
\begin{displaymath}
	\begin{split}
	F_1=&\sum_{\alpha\in E_{\beta}}\pi_1^*(w_{\beta}^{\alpha}dw_{1,\beta_1}\wedge\ldots\wedge dw_{n_1,\beta_{n_1}})\wedge\pi_2^*(F_{\alpha,\beta})	\\
	&+\sum_{\alpha\in\tilde E_{\beta}\backslash E_{\beta}}\pi_1^*(w_{\beta}^{\alpha}dw_{1,\beta_1}\wedge\ldots\wedge dw_{n_1,\beta_{n_1}})\wedge\pi_2^*(\tilde F_{\alpha,\beta})	\end{split}
\end{displaymath}
on a neighborhood  of  $\{z_{\beta}\}\times Y$ for any $\beta\in\tilde I_1$, where $\tilde F_{\alpha,\beta}$ is a holomorphic $(n_2,0)$ form on $Y$. It follows from Lemma \ref{l:fibra-decom-2} that $(F_{\alpha,\beta},y)\in(\mathcal{O}(K_Y)\otimes\mathcal{I}(\varphi_Y))_y$ and $(\tilde F_{\alpha,\beta},y)\in(\mathcal{O}(K_Y)\otimes\mathcal{I}(\varphi_Y))_y$ for any $y\in Y$. Using Lemma \ref{l:0} and Lemma \ref{l:phi1+phi2}, we obtain that $(F_1-F,z)\in(\mathcal{O}(K_M)\otimes\mathcal{I}(\varphi+\psi))_z$ for any $z\in Z_0$. Combining inequality \eqref{eq:1226b} and \eqref{eq:1226c}, we have
\begin{displaymath}
	\begin{split}
		\frac{G(0)}{\int_0^{+\infty}c(s)e^{-s}ds}=\int_{M}|F_1|^2e^{-\varphi}c(-\psi)=\int_{M_1}|F_1|^2e^{-\varphi}c(-\psi),
	\end{split}
\end{displaymath}
which implies that $M_1=M$.

\section{Proofs of Theorem \ref{thm:exten-fibra-single} and Remark \ref{r:1.7}}

In this section, we prove Theorem \ref{thm:exten-fibra-single} and Remark \ref{r:1.7}.

\subsection{Proof of Theorem \ref{thm:exten-fibra-single}}
\

As $c(t)e^{-t}$ is decreasing and $\Psi\le0$, it follows from Proposition \ref{p:exten-fibra} that
there exists a holomorphic $(n,0)$ form $F$ on $M$, which satisfies that $(F-f,z)\in\left(\mathcal{O}(K_{M})\otimes\mathcal{I}\left(\max_{1\le j\le n_1}\left\{2p_j\pi_{1,j}^{*}(G_{\Omega_j}(\cdot,z_j))\right\}\right)\right)_{z}$ for any $z\in Z_0$ and
\begin{equation}\label{eq:211224a}
	\begin{split}
	&\int_{M_1}|F|^2e^{-\varphi}c(-\psi)\\
	\le& \int_{M}|F|^2e^{-\varphi-\pi_1^*(\Psi)}c(-\psi+\pi_1^*(\Psi))\\
	\le&\left(\int_0^{+\infty}c(s)e^{-s}ds\right)\sum_{\alpha\in E}\frac{(2\pi)^{n_1}e^{-\left(\Psi+\sum_{1\le j\le n_1}\tilde\pi_j^*(\varphi_j)\right)(z_{0})}}{\prod_{1\le j\le n_1}(\alpha_j+1)c_{j}(z_j)^{2\alpha_{j}+2}}\int_Y|f_{\alpha}|^2e^{-\varphi_Y}.	\end{split}	
\end{equation}

If $\Psi\equiv0$,
as $(F-f,z)\in\left(\mathcal{O}(K_{M})\otimes\mathcal{I}\left(\max_{1\le j\le n_1}\left\{2p_j\pi_{1,j}^{*}(G_{\Omega_j}(\cdot,z_j))\right\}\right)\right)_{z}$ for any $z\in Z_0$, it follows from Lemma \ref{l:0} and Lemma \ref{l:fibra-decom} that we have $F=\sum_{\alpha\in E}\pi_1^*(w^{\alpha}dw_1\wedge\ldots\wedge dw_{n_1})\wedge\pi_2^*(f_{\alpha})+\sum_{\alpha\in\tilde E\backslash E}\pi_1^*(w^{\alpha}dw_1\wedge\ldots\wedge dw_{n_1})\wedge\pi_2^*(\tilde f_{\alpha})$ on $V_0\times Y$, where $\tilde f_{\alpha}$ is a holomorphic $(n_2,0)$ form on $ Y$ satisfying $\int_Y|\tilde f_{\alpha}|^2e^{-\varphi_Y}<+\infty$ for any $\alpha\in\tilde E\backslash E$. Note that $\left(\Psi+\sum_{1\le j\le n_1}\tilde\pi_j^*(\varphi_j)\right)(z_0)>-\infty$. It follows from Lemma \ref{l:0}, Lemma \ref{l:phi1+phi2} and Lemma \ref{l:closedness} that $\left(\sum_{\alpha\in\tilde E\backslash E}\pi_1^*(w^{\alpha}dw_1\wedge\ldots\wedge dw_{n_1})\wedge\pi_2^*(\tilde f_{\alpha}),z\right)\in(\mathcal{O}(K_{M})\otimes\mathcal{I}(\varphi+\psi))_{z}$ for any $z\in Z_0$.

In the following, we prove the characterization of the holding of the equality in Theorem \ref{thm:exten-fibra-single}.

Firstly, we prove the necessity.
Using inequality \eqref{eq:211224a}, we have
$$\int_{M_1}|F|^2e^{-\varphi}c(-\psi)= \int_{M}|F|^2e^{-\varphi-\pi_1^*(\Psi)}c(-\psi+\pi_1^*(\Psi)).$$ Note that $c(t)e^{-t}$ is decreasing. As $F\not\equiv0$, we get that
$$M_1=M=\left(\prod_{1\le j\le n_1}\Omega_j\right)\times Y.$$ As  $\Psi\le0$, it follows from Lemma \ref{l:psi=G} that $\Psi\equiv0$, i.e.,
$$\psi=\max_{1\le j\le n_1}\left\{\pi_{1,j}^*(2p_jG_{\Omega_j}(\cdot,z_j))\right\}.$$
Denote
\begin{displaymath}
	\begin{split}
		\inf\bigg\{\int_{\{\psi<-t\}}|\tilde f|^2e^{-\varphi}&c(-\psi):\tilde f\in H^0(\{\psi<-t\},\mathcal{O}(K_M))\\
		&\&\,(\tilde f-F,z)\in(\mathcal{O}(K_M)\otimes\mathcal{I}(\varphi+\psi))_z\mbox{ for any $z\in Z_0$}\bigg\}
	\end{split}
\end{displaymath}
by $G(t)$, where $t\ge0$.
 Denote
\begin{displaymath}
	\begin{split}
		\inf\bigg\{\int_{\{\psi<-t\}}|\tilde f|^2e^{-\varphi}&c(-\psi):\tilde f\in H^0(\{\psi<-t\},\mathcal{O}(K_M))\\
		&\&\,(\tilde f-F,z)\in(\mathcal{O}(K_M)\otimes\mathcal{I}(\psi))_z\mbox{ for any $z\in Z_0$}\bigg\}
	\end{split}
\end{displaymath}
by $\tilde G(t)$, where $t\ge0$.
It follows from Lemma \ref{l:G1=G2} that $G(t)=\tilde G(t)$ for any $t\ge0$.
Let $t\ge0$. It follows from Proposition \ref{p:exten-fibra}  ($M\sim\{\psi<-t\}$, $\psi\sim\psi+t$ and $c(\cdot)\sim c(\cdot+t)$, here $\sim$ means the former replaced by the latter) that there exists a holomorphic $(n,0)$ form $F_t$ on $\{\psi<-t\}$ satisfying that $(F_t-F,z)\in(\mathcal{O}(K_M)\otimes\mathcal{I}(\psi))_{z}$ for any $z\in Z_0$ and
\begin{equation}\label{eq:211224b} \begin{split}
&\int_{\{\psi<-t\}}|F_t|^2e^{-\varphi}c(-\psi)\\
\le&\left(\int_t^{+\infty}c(s)e^{-s}ds\right)\sum_{\alpha\in E}\frac{(2\pi)^{n_1}e^{-\sum_{1\le j\le n_1}\varphi_j(z_{j})}}{\prod_{1\le j\le n_1}(\alpha_j+1)c_{j}(z_j)^{2\alpha_{j}+2}}\int_Y|f_{\alpha}|^2e^{-\varphi_Y}.	
\end{split}\end{equation}
Following from inequality \eqref{eq:211224b}, we have
\begin{equation*}
	\frac{\tilde G(t)}{\int_t^{+\infty}c(s)e^{-s}ds}\leq	\sum_{\alpha\in E}\frac{(2\pi)^{n_1}e^{-\sum_{1\le j\le n_1}\varphi_j(z_{j})}}{\prod_{1\le j\le n_1}(\alpha_j+1)c_{j}(z_j)^{2\alpha_{j}+2}}\int_Y|f_{\alpha}|^2e^{-\varphi_Y}\end{equation*}
holds for any $t\geq0$.
Note that
$$\tilde G(0)=\left(\int_0^{+\infty}c(s)e^{-s}ds\right)\sum_{\alpha\in E}\frac{(2\pi)^{n_1}e^{-\sum_{1\le j\le n_1}\varphi_j(z_{j})}}{\prod_{1\le j\le n_1}(\alpha_j+1)c_{j}(z_j)^{2\alpha_{j}+2}}\int_Y|f_{\alpha}|^2e^{-\varphi_Y}.$$
 Combining Theorem \ref{thm:general_concave}, we obtain that  $\tilde G({h}^{-1}(r))$ is linear with respect to $r$, which implies that $G({h}^{-1}(r))$ is linear with respect to $r$, where $h(t)=\int_t^{+\infty}c(s)e^{-s}ds$.
It follows from Theorem \ref{thm:linear-fibra-single} that   statements  $(2)$ and $(3)$ in Theorem \ref{thm:exten-fibra-single} hold.

Now, we prove the sufficiency. Following from Remark \ref{r:fibra-single} and $G(0)=\tilde G(0)$, we obtain that
$$\tilde G(0)=\left(\int_0^{+\infty}c(s)e^{-s}ds\right)\sum_{\alpha\in E}\frac{(2\pi)^{n_1}e^{-\sum_{1\le j\le n_1}\varphi_j(z_{j})}}{\prod_{1\le j\le n_1}(\alpha_j+1)c_{j}(z_j)^{2\alpha_{j}+2}}\int_Y|f_{\alpha}|^2e^{-\varphi_Y}.$$

Thus, Theorem \ref{thm:exten-fibra-single} holds.

\subsection{Proof of Remark \ref{r:1.7}}
\label{sec:proof-1.7}
\

Note that $\left(\Psi+\sum_{1\le j\le n_1}\tilde\pi_j^*(\varphi_j)\right)(z_{0})>-\infty$.  As $(f_{\alpha},y)\in(\mathcal{O}(K_Y)\otimes\mathcal{I}(\varphi_Y))_y$ for any $y\in Y$ and $\alpha\in\tilde E\backslash E$, following from Lemma \ref{l:phi1+phi2}, Lemma \ref{l:0} and Lemma \ref{l:closedness}, we get that $\left(\sum_{\alpha\in \tilde E\backslash E}\pi_1^*(w^{\alpha}dw_1\wedge\ldots dw_{n_1})\wedge\pi_2^*(f_{\alpha}),z\right)\in(\mathcal{O}(K_{M_1})\otimes\mathcal{I}(\varphi+\psi))_z$ for any $z\in Z_0$.

As $c(t)e^{-t}$ is decreasing and $\Psi\le0$, it follows from Proposition \ref{p:exten-fibra} that
there exists a holomorphic $(n,0)$ form $F$ on $M$, which  satisfies that $(F-f,z)\in\left(\mathcal{O}(K_{M})\otimes\mathcal{I}\left(\max_{1\le j\le n_1}\left\{2p_j\pi_{1,j}^{*}(G_{\Omega_j}(\cdot,z_j))\right\}\right)\right)_{z}$ for any $z\in Z_0$ and
\begin{equation}\label{eq:211224c}
	\begin{split}
	&\int_{M_1}|F|^2e^{-\varphi}c(-\psi)\\
	\le& \int_{M}|F|^2e^{-\varphi-\pi_1^*(\Psi)}c(-\psi+\pi_1^*(\Psi))\\
	\le&\left(\int_0^{+\infty}c(s)e^{-s}ds\right)\sum_{\alpha\in E}\frac{(2\pi)^{n_1}e^{-\left(\Psi+\sum_{1\le j\le n_1}\tilde\pi_j^*(\varphi_j)\right)(z_{0})}}{\prod_{1\le j\le n_1}(\alpha_j+1)c_{j}(z_j)^{2\alpha_{j}+2}}\int_Y|f_{\alpha}|^2e^{-\varphi_Y}.	\end{split}	
\end{equation}

If $\Psi\equiv0$,
as $(F-f,z)\in\left(\mathcal{O}(K_{M})\otimes\mathcal{I}\left(\max_{1\le j\le n_1}\left\{2p_j\pi_{1,j}^{*}(G_{\Omega_j}(\cdot,z_j))\right\}\right)\right)_{z}$ for any $z\in Z_0$, it follows from Lemma \ref{l:0} and Lemma \ref{l:fibra-decom} that  $F=\sum_{\alpha\in E}\pi_1^*(w^{\alpha}dw_1\wedge\ldots\wedge dw_{n_1})\wedge\pi_2^*(f_{\alpha})+\sum_{\alpha\in\tilde E\backslash E}\pi_1^*(w^{\alpha}dw_1\wedge\ldots\wedge dw_{n_1})\wedge\pi_2^*(\tilde f_{\alpha})$ on $V_0\times Y$, where $\tilde f_{\alpha}$ is a holomorphic $(n_2,0)$ form on $ Y$ satisfying $\int_Y|\tilde f_{\alpha}|^2e^{-\varphi_Y}<+\infty$ for any $\alpha\in\tilde E\backslash E$. Note that $\left(\Psi+\sum_{1\le j\le n_1}\tilde\pi_j^*(\varphi_j)\right)(z_0)>-\infty$. It follows from Lemma \ref{l:0}, Lemma \ref{l:phi1+phi2} and Lemma \ref{l:closedness} that $\left(\sum_{\alpha\in\tilde E\backslash E}\pi_1^*(w^{\alpha}dw_1\wedge\ldots\wedge dw_{n_1})\wedge\pi_2^*(\tilde f_{\alpha}),z\right)\in(\mathcal{O}(K_{M})\otimes\mathcal{I}(\varphi+\psi))_{z}$ for any $z\in Z_0$.
Thus, we have $(F-f,z)\in(\mathcal{O}(K_{M_1})\otimes\mathcal{I}(\varphi+\psi))_z$ for any $z\in Z_0$.

In the following, we prove the characterization of the holding of the equality (replacing  the ideal sheaf $\mathcal{I}\left(\max_{1\le j\le n_1}\left\{2p_j\pi_{1,j}^{*}(G_{\Omega_j}(\cdot,z_j))\right\}\right)$  by $\mathcal{I}(\varphi+\psi)$) in Theorem \ref{thm:exten-fibra-single}.

Firstly, we prove the necessity.
Using inequality \eqref{eq:211224c}, we have
$$\int_{M_1}|F|^2e^{-\varphi}c(-\psi)= \int_{M}|F|^2e^{-\varphi-\pi_1^*(\Psi)}c(-\psi+\pi_1^*(\Psi)).$$ Note that $c(t)e^{-t}$ is decreasing. As $F\not\equiv0$, we get that
$$M_1=M=\left(\prod_{1\le j\le n_1}\Omega_j\right)\times Y.$$ As  $\Psi\le0$, it follows from Lemma \ref{l:psi=G} that $\Psi\equiv0$, i.e.,
$$\psi=\max_{1\le j\le n_1}\left\{\pi_{1,j}^*(2p_jG_{\Omega_j}(\cdot,z_j))\right\}.$$
Denote
\begin{displaymath}
	\begin{split}
		\inf\bigg\{\int_{\{\psi<-t\}}|\tilde f|^2e^{-\varphi}&c(-\psi):\tilde f\in H^0(\{\psi<-t\},\mathcal{O}(K_M))\\
		&\&\,(\tilde f-F,z)\in(\mathcal{O}(K_M)\otimes\mathcal{I}(\varphi+\psi))_z\mbox{ for any $z\in Z_0$}\bigg\}
	\end{split}
\end{displaymath}
by $G(t)$, where $t\ge0$.
 Denote
\begin{displaymath}
	\begin{split}
		\inf\bigg\{\int_{\{\psi<-t\}}|\tilde f|^2e^{-\varphi}&c(-\psi):\tilde f\in H^0(\{\psi<-t\},\mathcal{O}(K_M))\\
		&\&\,(\tilde f-F,z)\in(\mathcal{O}(K_M)\otimes\mathcal{I}(\psi))_z\mbox{ for any $z\in Z_0$}\bigg\}
	\end{split}
\end{displaymath}
by $\tilde G(t)$, where $t\ge0$.
It follows from Lemma \ref{l:G1=G2} that $G(t)=\tilde G(t)$ for any $t\ge0$.
Let $t\ge0$. It follows from Proposition \ref{p:exten-fibra}  ($M\sim\{\psi<-t\}$, $\psi\sim\psi+t$ and $c(\cdot)\sim c(\cdot+t)$, here $\sim$ means the former replaced by the latter) that
\begin{equation*}
	\frac{\tilde G(t)}{\int_t^{+\infty}c(s)e^{-s}ds}\leq	\sum_{\alpha\in E}\frac{(2\pi)^{n_1}e^{-\sum_{1\le j\le n_1}\varphi_j(z_{j})}}{\prod_{1\le j\le n_1}(\alpha_j+1)c_{j}(z_j)^{2\alpha_{j}+2}}\int_Y|f_{\alpha}|^2e^{-\varphi_Y}.\end{equation*}
Note that
$$ G(0)=\left(\int_0^{+\infty}c(s)e^{-s}ds\right)\sum_{\alpha\in E}\frac{(2\pi)^{n_1}e^{-\sum_{1\le j\le n_1}\varphi_j(z_{j})}}{\prod_{1\le j\le n_1}(\alpha_j+1)c_{j}(z_j)^{2\alpha_{j}+2}}\int_Y|f_{\alpha}|^2e^{-\varphi_Y}.$$
 Combining Theorem \ref{thm:general_concave}, we obtain that  $ G({h}^{-1}(r))$ is linear with respect to $r$,  where $h(t)=\int_t^{+\infty}c(s)e^{-s}ds$.
It follows from Theorem \ref{thm:linear-fibra-single} that   statements  $(2)$ and $(3)$ in Theorem \ref{thm:exten-fibra-single} hold.

Now, we prove the sufficiency. Following from Remark \ref{r:fibra-single}, we obtain that
$$G(0)=\left(\int_0^{+\infty}c(s)e^{-s}ds\right)\sum_{\alpha\in E}\frac{(2\pi)^{n_1}e^{-\sum_{1\le j\le n_1}\varphi_j(z_{j})}}{\prod_{1\le j\le n_1}(\alpha_j+1)c_{j}(z_j)^{2\alpha_{j}+2}}\int_Y|f_{\alpha}|^2e^{-\varphi_Y}.$$

Thus, Remark \ref{r:1.7} holds.

\section{Proofs of Theorem \ref{thm:exten-fibra-finite} and Remark \ref{r:1.8}}

In this section, we prove Theorem \ref{thm:exten-fibra-finite} and Remark \ref{r:1.8}.

\subsection{Proof of Theorem \ref{thm:exten-fibra-finite}}
\

As $c(t)e^{-t}$ is decreasing and $\Psi\le0$, it follows from Proposition \ref{p:exten-fibra} that
there exists a holomorphic $(n,0)$ form $F$ on $M$, which  satisfies that $(F-f,z)\in\left(\mathcal{O}(K_{M})\otimes\mathcal{I}\left(\max_{1\le j\le n_1}\left\{2\sum_{1\le k\le m_j}p_{j,k}\pi_{1,j}^{*}(G_{\Omega_j}(\cdot,z_{j,k}))\right\}\right)\right)_{z}$ for any $z\in Z_0$ and
\begin{equation}\label{eq:1225a}
	\begin{split}
	&\int_{M_1}|F|^2e^{-\varphi}c(-\psi)\\
	\le& \int_{M}|F|^2e^{-\varphi-\pi_1^*(\Psi)}c(-\psi+\pi_1^*(\Psi))\\
	\le&\left(\int_0^{+\infty}c(s)e^{-s}ds\right)\sum_{\beta\in I_1}\sum_{\alpha\in E_{\beta}}\frac{(2\pi)^{n_1}e^{-\left(\Psi+\sum_{1\le j\le n_1}\tilde\pi_j^*(\varphi_j)\right)(z_{\beta})}}{\prod_{1\le j\le n_1}(\alpha_j+1)c_{j}(z_j)^{2\alpha_{j}+2}}\int_Y|f_{\alpha,\beta}|^2e^{-\varphi_Y}.	
	\end{split}	
\end{equation}

If $\Psi\equiv0$,
as $(F-f,z)\in\left(\mathcal{O}(K_{M})\otimes\mathcal{I}\left(\max_{1\le j\le n_1}\left\{2\sum_{1\le k\le m_j}p_{j,k}\pi_{1,j}^{*}(G_{\Omega_j}(\cdot,z_{j,k}))\right\}\right)\right)_{z}$ for any $z\in Z_0$, it follows from Lemma \ref{l:0} and Lemma \ref{l:fibra-decom} that we have $F=\sum_{\alpha\in E_{\beta}}\pi_1^*(w_{\beta}^{\alpha}dw_{1,\beta_1}\wedge\ldots\wedge dw_{n_1,\beta_{n_1}})\wedge\pi_2^*(f_{\alpha,\beta})+\sum_{\alpha\in\tilde E_{\beta}\backslash E_{\beta}}\pi_1^*(w_{\beta}^{\alpha}dw_{1,\beta_1}\wedge\ldots\wedge dw_{n_1,\beta_{n_1}})\wedge\pi_2^*(\tilde f_{\alpha,\beta})$ on $V_\beta\times Y$, where $\tilde f_{\alpha,\beta}$ is a holomorphic $(n_2,0)$ form on $Y$ satisfying $\int_Y|\tilde f_{\alpha,\beta}|^2e^{-\varphi_Y}<+\infty$ for any $\alpha\in\tilde E_{\beta}\backslash E_{\beta}$ and $\beta\in I_1$. Note that $\left(\Psi+\sum_{1\le j\le n_1}\tilde\pi_j^*(\varphi_j)\right)(z_\beta)>-\infty$. It follows from Lemma \ref{l:0}, Lemma \ref{l:phi1+phi2} and Lemma \ref{l:closedness} that $\left(\sum_{\alpha\in\tilde E_{\beta}\backslash E_{\beta}}\pi_1^*(w_{\beta}^{\alpha}dw_{1,\beta_1}\wedge\ldots\wedge dw_{n_1,\beta_{n_1}})\wedge\pi_2^*(\tilde f_{\alpha,\beta}),z\right)\in(\mathcal{O}(K_{M_1})\otimes\mathcal{I}(\varphi+\psi))_{z}$ for any $z\in \{z_\beta\}\times Y$, where $\beta\in I_1$.

In the following, we prove the characterization of the holding of the equality in Theorem \ref{thm:exten-fibra-finite}.

Firstly, we prove the necessity.
Using inequality \eqref{eq:1225a}, we have
$$\int_{M_1}|F|^2e^{-\varphi}c(-\psi)= \int_{M}|F|^2e^{-\varphi-\pi_1^*(\Psi)}c(-\psi+\pi_1^*(\Psi)).$$ Note that $c(t)e^{-t}$ is decreasing. As $F\not\equiv0$, we get that
$$M_1=M=\left(\prod_{1\le j\le n_1}\Omega_j\right)\times Y.$$ As  $\Psi\le0$, it follows from Lemma \ref{l:psi=G} that $\Psi\equiv0$, i.e.,
$$\psi=\max_{1\le j\le n_1}\left\{2\sum_{1\le k\le m_j}p_{j,k}\pi_{1,j}^{*}(G_{\Omega_j}(\cdot,z_{j,k}))\right\}.$$
Denote
\begin{displaymath}
	\begin{split}
		\inf\bigg\{\int_{\{\psi<-t\}}|\tilde f|^2e^{-\varphi}&c(-\psi):\tilde f\in H^0(\{\psi<-t\},\mathcal{O}(K_M))\\
		&\&\,(\tilde f-F,z)\in(\mathcal{O}(K_M)\otimes\mathcal{I}(\varphi+\psi))_z\mbox{ for any $z\in Z_0$}\bigg\}
	\end{split}
\end{displaymath}
by $G(t)$, where $t\ge0$.
 Denote
\begin{displaymath}
	\begin{split}
		\inf\bigg\{\int_{\{\psi<-t\}}|\tilde f|^2e^{-\varphi}&c(-\psi):\tilde f\in H^0(\{\psi<-t\},\mathcal{O}(K_M))\\
		&\&\,(\tilde f-F,z)\in(\mathcal{O}(K_M)\otimes\mathcal{I}(\psi))_z\mbox{ for any $z\in Z_0$}\bigg\}
	\end{split}
\end{displaymath}
by $\tilde G(t)$, where $t\ge0$.
It follows from Lemma \ref{l:G1=G2} that $G(t)=\tilde G(t)$ for any $t\ge0$.
Let $t\ge0$. It follows from Proposition \ref{p:exten-fibra}  ($M\sim\{\psi<-t\}$, $\psi\sim\psi+t$ and $c(\cdot)\sim c(\cdot+t)$, here $\sim$ means the former replaced by the latter) that
\begin{equation*}
	\frac{\tilde G(t)}{\int_t^{+\infty}c(s)e^{-s}ds}\leq	\sum_{\beta\in I_1}\sum_{\alpha\in E_{\beta}}\frac{(2\pi)^{n_1}e^{-\sum_{1\le j\le n_1}\varphi_j(z_{j,\beta_j})}}{\prod_{1\le j\le n_1}(\alpha_j+1)c_{j}(z_j)^{2\alpha_{j}+2}}\int_Y|f_{\alpha,\beta}|^2e^{-\varphi_Y}.\end{equation*}
Note that
$$\tilde G(0)=\left(\int_0^{+\infty}c(s)e^{-s}ds\right)\sum_{\beta\in I_1}\sum_{\alpha\in E_{\beta}}\frac{(2\pi)^{n_1}e^{-\sum_{1\le j\le n_1}\varphi_j(z_{j,\beta_j})}}{\prod_{1\le j\le n_1}(\alpha_j+1)c_{j}(z_j)^{2\alpha_{j}+2}}\int_Y|f_{\alpha,\beta}|^2e^{-\varphi_Y}.$$
 Combining Theorem \ref{thm:general_concave}, we obtain that  $\tilde G({h}^{-1}(r))$ is linear with respect to $r$, which implies that $G({h}^{-1}(r))$ is linear with respect to $r$, where $h(t)=\int_t^{+\infty}c(s)e^{-s}ds$.  As $f_{\alpha,\beta^*}\equiv0$ for any $\alpha\not=\alpha_{\beta^*}$ satisfying $\sum_{1\le j\le n_1}\frac{\alpha_j+1}{p_{j,1}}=1$, where $\beta^*=(1,\ldots,1)\in I_1$,
it follows from Theorem \ref{thm:linear-fibra-finite} that   statements  $(2)$, $(3)$, $(4)$ and $(5)$ in Theorem \ref{thm:exten-fibra-finite} hold.

Now, we prove the sufficiency.  Following from Remark \ref{r:fibra-finite} and $G(0)=\tilde G(0)$, we obtain that
$$\tilde G(0)=\left(\int_0^{+\infty}c(s)e^{-s}ds\right)\sum_{\beta\in I_1}\sum_{\alpha\in E_{\beta}}\frac{(2\pi)^{n_1}e^{-\sum_{1\le j\le n_1}\varphi_j(z_{j,\beta_j})}}{\prod_{1\le j\le n_1}(\alpha_j+1)c_{j}(z_j)^{2\alpha_{j}+2}}\int_Y|f_{\alpha,\beta}|^2e^{-\varphi_Y}.$$

Thus, Theorem \ref{thm:exten-fibra-finite} holds.

\subsection{Proof of Remark \ref{r:1.8}}\label{sec:proof-1.8}
\

Note that $\left(\Psi+\sum_{1\le j\le n_1}\tilde\pi_j^*(\varphi_j)\right)(z_{\beta})>-\infty$ for any $\beta\in I_1$.  As $(f_{\alpha,\beta},y)\in(\mathcal{O}(K_Y)\otimes\mathcal{I}(\varphi_Y))_y$ for any $y\in Y$, $\alpha\in\tilde E_{\beta}\backslash E_{\beta}$ and $\beta\in I_1$, following from Lemma \ref{l:phi1+phi2}, Lemma \ref{l:0} and Lemma \ref{l:closedness}, we get that $\big(\sum_{\alpha\in \tilde E_{\beta}\backslash E_{\beta}}\pi_1^*(w_{\beta}^{\alpha}dw_{1,\beta_1}\wedge\ldots dw_{n_1,\beta_{n_1}})\wedge\pi_2^*(f_{\alpha,\beta}),z\big)\in(\mathcal{O}(K_{M_1})\otimes\mathcal{I}(\varphi+\psi))_z$ for any $z\in \{z_{\beta}\}\times Y$, where $\beta\in I_1$.

As $c(t)e^{-t}$ is decreasing and $\Psi\le0$, it follows from Proposition \ref{p:exten-fibra} that
there exists a holomorphic $(n,0)$ form $F$ on $M$, which satisfies that $(F-f,z)\in\left(\mathcal{O}(K_{M})\otimes\mathcal{I}\left(\max_{1\le j\le n_1}\left\{2\sum_{1\le k\le m_j}p_{j,k}\pi_{1,j}^{*}(G_{\Omega_j}(\cdot,z_{j,k}))\right\}\right)\right)_{z}$ for any $z\in Z_0$ and
\begin{equation}\label{eq:1225b}
	\begin{split}
	&\int_{M_1}|F|^2e^{-\varphi}c(-\psi)\\
	\le& \int_{M}|F|^2e^{-\varphi-\pi_1^*(\Psi)}c(-\psi+\pi_1^*(\Psi))\\
	\le&\left(\int_0^{+\infty}c(s)e^{-s}ds\right)\sum_{\beta\in I_1}\sum_{\alpha\in E_{\beta}}\frac{(2\pi)^{n_1}e^{-\left(\Psi+\sum_{1\le j\le n_1}\tilde\pi_j^*(\varphi_j)\right)(z_{\beta})}}{\prod_{1\le j\le n_1}(\alpha_j+1)c_{j}(z_j)^{2\alpha_{j}+2}}\int_Y|f_{\alpha,\beta}|^2e^{-\varphi_Y}.	
	\end{split}	
\end{equation}

If $\Psi\equiv0$, as $(F-f,z)\in\left(\mathcal{O}(K_{M})\otimes\mathcal{I}\left(\psi\right)\right)_{z}$ for any $z\in Z_0$, it follows from Lemma \ref{l:0} and Lemma \ref{l:fibra-decom} that we have $F=\sum_{\alpha\in E_{\beta}}\pi_1^*(w_{\beta}^{\alpha}dw_{1,\beta_1}\wedge\ldots\wedge dw_{n_1,\beta_{n_1}})\wedge\pi_2^*(f_{\alpha,\beta})+\sum_{\alpha\in\tilde E_{\beta}\backslash E_{\beta}}\pi_1^*(w_{\beta}^{\alpha}dw_{1,\beta_1}\wedge\ldots\wedge dw_{n_1,\beta_{n_1}})\wedge\pi_2^*(\tilde f_{\alpha,\beta})$ on $V_{\beta}\times Y$, where $\beta\in I_1$ and $\tilde f_{\alpha,\beta}$ is a holomorphic $(n_2,0)$ form on $ Y$ satisfying $\int_Y|\tilde f_{\alpha,\beta}|^2e^{-\varphi_Y}<+\infty$ for any $\alpha\in\tilde E_{\beta}\backslash E_{\beta}$. Note that $(\Psi+\sum_{1\le j\le n_1}\tilde\pi_j^*(\varphi_j))(z_\beta)>-\infty$ for any $\beta\in I_1$. Following from Lemma \ref{l:0}, Lemma \ref{l:phi1+phi2} and Lemma \ref{l:closedness}, we obtain that $\left(\sum_{\alpha\in\tilde E_{\beta}\backslash E_{\beta}}\pi_1^*(w_{\beta}^{\alpha}dw_{1,\beta_1}\wedge\ldots\wedge dw_{n_1,\beta_{n_1}})\wedge\pi_2^*(\tilde f_{\alpha,\beta}),z\right)\in(\mathcal{O}(K_{M})\otimes\mathcal{I}(\varphi+\psi))_{z}$ for any $z\in \{z_{\beta}\}\times Y$.
Thus, we have $(F-f,z)\in(\mathcal{O}(K_{M})\otimes\mathcal{I}(\varphi+\psi))_z$ for any $z\in Z_0$.

In the following, we prove the characterization of the holding of the equality (replacing  the ideal sheaf $\mathcal{I}\left(\max_{1\le j\le n_1}\left\{2\sum_{1\le k\le m_j}p_{j,k}\pi_{1,j}^{*}(G_{\Omega_j}(\cdot,z_{j,k}))\right\}\right)$  by $\mathcal{I}(\varphi+\psi)$) in Theorem \ref{thm:exten-fibra-finite}.

Firstly, we prove the necessity.
Using inequality \eqref{eq:1225b}, we have
$$\int_{M_1}|F|^2e^{-\varphi}c(-\psi)= \int_{M}|F|^2e^{-\varphi-\pi_1^*(\Psi)}c(-\psi+\pi_1^*(\Psi)).$$ Note that $c(t)e^{-t}$ is decreasing. As $F\not\equiv0$, we get that
$$M_1=M=\left(\prod_{1\le j\le n_1}\Omega_j\right)\times Y.$$ As  $\Psi\le0$, it follows from Lemma \ref{l:psi=G} that $\Psi\equiv0$, i.e.,
$$\psi=\max_{1\le j\le n_1}\left\{2\sum_{1\le k\le m_j}p_{j,k}\pi_{1,j}^{*}(G_{\Omega_j}(\cdot,z_{j,k}))\right\}.$$
Denote
\begin{displaymath}
	\begin{split}
		\inf\bigg\{\int_{\{\psi<-t\}}|\tilde f|^2e^{-\varphi}&c(-\psi):\tilde f\in H^0(\{\psi<-t\},\mathcal{O}(K_M))\\
		&\&\,(\tilde f-F,z)\in(\mathcal{O}(K_M)\otimes\mathcal{I}(\varphi+\psi))_z\mbox{ for any $z\in Z_0$}\bigg\}
	\end{split}
\end{displaymath}
by $G(t)$, where $t\ge0$.
 Denote
\begin{displaymath}
	\begin{split}
		\inf\bigg\{\int_{\{\psi<-t\}}|\tilde f|^2e^{-\varphi}&c(-\psi):\tilde f\in H^0(\{\psi<-t\},\mathcal{O}(K_M))\\
		&\&\,(\tilde f-F,z)\in(\mathcal{O}(K_M)\otimes\mathcal{I}(\psi))_z\mbox{ for any $z\in Z_0$}\bigg\}
	\end{split}
\end{displaymath}
by $\tilde G(t)$, where $t\ge0$.
It follows from Lemma \ref{l:G1=G2} that $G(t)=\tilde G(t)$ for any $t\ge0$.
Let $t\ge0$. It follows from Proposition \ref{p:exten-fibra}  ($M\sim\{\psi<-t\}$, $\psi\sim\psi+t$ and $c(\cdot)\sim c(\cdot+t)$, here $\sim$ means the former replaced by the latter) that
\begin{equation*}
	\frac{\tilde G(t)}{\int_t^{+\infty}c(s)e^{-s}ds}\leq	\sum_{\beta\in I_1}\sum_{\alpha\in E_{\beta}}\frac{(2\pi)^{n_1}e^{-\sum_{1\le j\le n_1}\varphi_j(z_{j,\beta_j})}}{\prod_{1\le j\le n_1}(\alpha_j+1)c_{j}(z_j)^{2\alpha_{j}+2}}\int_Y|f_{\alpha,\beta}|^2e^{-\varphi_Y}.\end{equation*}
Note that
$$ G(0)=\left(\int_0^{+\infty}c(s)e^{-s}ds\right)\sum_{\beta\in I_1}\sum_{\alpha\in E_{\beta}}\frac{(2\pi)^{n_1}e^{-\sum_{1\le j\le n_1}\varphi_j(z_{j,\beta_j})}}{\prod_{1\le j\le n_1}(\alpha_j+1)c_{j}(z_j)^{2\alpha_{j}+2}}\int_Y|f_{\alpha,\beta}|^2e^{-\varphi_Y}.$$
 Combining Theorem \ref{thm:general_concave}, we obtain that  $ G({h}^{-1}(r))$ is linear with respect to $r$,  where $h(t)=\int_t^{+\infty}c(s)e^{-s}ds$.
It follows from Theorem \ref{thm:linear-fibra-single} that   statements  $(2)$, $(3)$, $(4)$ and $(5)$ in Theorem \ref{thm:exten-fibra-finite} hold.

Now, we prove the sufficiency. Following from Remark \ref{r:fibra-finite}, we obtain that
$$G(0)=\left(\int_0^{+\infty}c(s)e^{-s}ds\right)\sum_{\beta\in I_1}\sum_{\alpha\in E_{\beta}}\frac{(2\pi)^{n_1}e^{-\sum_{1\le j\le n_1}\varphi_j(z_{j,\beta_j})}}{\prod_{1\le j\le n_1}(\alpha_j+1)c_{j}(z_j)^{2\alpha_{j}+2}}\int_Y|f_{\alpha,\beta}|^2e^{-\varphi_Y}.$$

Thus, Remark \ref{r:1.8} holds.

\section{Proofs of Theorem \ref{thm:exten-fibra-infinite} and Remark \ref{r:1.9}}

In this section, we prove Theorem \ref{thm:exten-fibra-infinite} and Remark \ref{r:1.9}.

\subsection{Proof of Theorem \ref{thm:exten-fibra-infinite}}
\

As $c(t)e^{-t}$ is decreasing and $\Psi\le0$, it follows from Proposition \ref{p:exten-fibra} that
there exists a holomorphic $(n,0)$ form $F$ on $M$, which satisfies that $(F-f,z)\in\left(\mathcal{O}(K_{M})\otimes\mathcal{I}\left(\max_{1\le j\le n_1}\left\{2\sum_{1\le k<\tilde m_j}p_{j,k}\pi_{1,j}^{*}(G_{\Omega_j}(\cdot,z_{j,k}))\right\}\right)\right)_{z}$ for any $z\in Z_0$ and
\begin{equation}\label{eq:1225c}
	\begin{split}
	&\int_{M_1}|F|^2e^{-\varphi}c(-\psi)\\
	\le& \int_{M}|F|^2e^{-\varphi-\pi_1^*(\Psi)}c(-\psi+\pi_1^*(\Psi))\\
	\le&\left(\int_0^{+\infty}c(s)e^{-s}ds\right)\sum_{\beta\in\tilde I_1}\sum_{\alpha\in E_{\beta}}\frac{(2\pi)^{n_1}e^{-\left(\Psi+\sum_{1\le j\le n_1}\tilde\pi_j^*(\varphi_j)\right)(z_{\beta})}}{\prod_{1\le j\le n_1}(\alpha_j+1)c_{j}(z_j)^{2\alpha_{j}+2}}\int_Y|f_{\alpha,\beta}|^2e^{-\varphi_Y}.	
	\end{split}	
\end{equation}

If $\Psi\equiv0$, as $(F-f,z)\in(\mathcal{O}(K_{M_1})\otimes\mathcal{I}(\psi))_{z}$ for any $z\in Z_0$, it follows from Lemma \ref{l:0} and Lemma \ref{l:fibra-decom} that we have $F=\sum_{\alpha\in E_{\beta}}\pi_1^*(w_{\beta}^{\alpha}dw_{1,\beta_1}\wedge\ldots\wedge dw_{n_1,\beta_{n_1}})\wedge\pi_2^*(f_{\alpha,\beta})+\sum_{\alpha\in\tilde E_{\beta}\backslash E_{\beta}}\pi_1^*(w_{\beta}^{\alpha}dw_{1,\beta_1}\wedge\ldots\wedge dw_{n_1,\beta_{n_1}})\wedge\pi_2^*(\tilde f_{\alpha,\beta})$ on $V_\beta\times Y$, where $\tilde f_{\alpha,\beta}$ is a holomorphic $(n_2,0)$ form on $Y$ satisfying $\int_Y|\tilde f_{\alpha,\beta}|^2e^{-\varphi_Y}<+\infty$ for any $\alpha\in\tilde E_{\beta}\backslash E_{\beta}$ and $\beta\in\tilde I_1$. Note that $\left(\Psi+\sum_{1\le j\le n_1}\tilde\pi_j^*(\varphi_j)\right)(z_\beta)>-\infty$. For any $\beta\in\tilde I_1$, it follows from Lemma \ref{l:0}, Lemma \ref{l:phi1+phi2} and Lemma \ref{l:closedness} that $\left(\sum_{\alpha\in\tilde E_{\beta}\backslash E_{\beta}}\pi_1^*(w_{\beta}^{\alpha}dw_{1,\beta_1}\wedge\ldots\wedge dw_{n_1,\beta_{n_1}})\wedge\pi_2^*(\tilde f_{\alpha,\beta}),z\right)\in(\mathcal{O}(K_{M})\otimes\mathcal{I}(\varphi+\psi))_{z}$ for any $z\in \{z_\beta\}\times Y$.

Denote that
$\tilde \psi:=\max_{1\le j\le n_1}\left\{2\sum_{1\le k<\tilde m_j}p_{j,k}\pi_{1,j}^{*}(G_{\Omega_j}(\cdot,z_{j,k}))\right\}.$
Now, we assume  $\left(\int_0^{+\infty}c(s)e^{-s}ds\right)\sum_{\beta\in\tilde I_1}\sum_{\alpha\in E_{\beta}}\frac{(2\pi)^{n_1}e^{-\left(\Psi+\sum_{1\le j\le n_1}\tilde\pi_j^*(\varphi_j)\right)(z_{\beta})}}{\prod_{1\le j\le n_1}(\alpha_j+1)c_{j}(z_j)^{2\alpha_{j}+2}}\int_Y|f_{\alpha,\beta}|^2e^{-\varphi_Y}= \inf\big\{\int_{M_1}|\tilde{F}|^2e^{-\varphi}c(-\psi):\tilde{F}$ is a holomorphic $(n,0)$ form on $M_1$ such that $(\tilde{F}-f,z)\in(\mathcal{O}(K_{M_1})\otimes\mathcal{I}(\tilde\psi))_{z}$ for any $z\in Z_0\big\}$ to get a contradiction.	

Using inequality \eqref{eq:1225c}, we have
$$\int_{M_1}|F|^2e^{-\varphi}c(-\psi)= \int_{M}|F|^2e^{-\varphi-\pi_1^*(\Psi)}c(-\psi+\pi_1^*(\Psi)).$$ Note that $c(t)e^{-t}$ is decreasing. As $F\not\equiv0$, we get that
$$M_1=M=\left(\prod_{1\le j\le n_1}\Omega_j\right)\times Y.$$ As  $\Psi\le0$, it follows from Lemma \ref{l:psi=G} that $\Psi\equiv0$, i.e.,
$$\psi=\max_{1\le j\le n_1}\left\{2\sum_{1\le k<\tilde m_j}p_{j,k}\pi_{1,j}^{*}(G_{\Omega_j}(\cdot,z_{j,k}))\right\}.$$
Denote
\begin{displaymath}
	\begin{split}
		\inf\bigg\{\int_{\{\psi<-t\}}|\tilde f|^2e^{-\varphi}&c(-\psi):\tilde f\in H^0(\{\psi<-t\},\mathcal{O}(K_M))\\
		&\&\,(\tilde f-F,z)\in(\mathcal{O}(K_M)\otimes\mathcal{I}(\varphi+\psi))_z\mbox{ for any $z\in Z_0$}\bigg\}
	\end{split}
\end{displaymath}
by $G(t)$, where $t\ge0$.
 Denote
\begin{displaymath}
	\begin{split}
		\inf\bigg\{\int_{\{\psi<-t\}}|\tilde f|^2e^{-\varphi}&c(-\psi):\tilde f\in H^0(\{\psi<-t\},\mathcal{O}(K_M))\\
		&\&\,(\tilde f-F,z)\in(\mathcal{O}(K_M)\otimes\mathcal{I}(\psi))_z\mbox{ for any $z\in Z_0$}\bigg\}
	\end{split}
\end{displaymath}
by $\tilde G(t)$, where $t\ge0$.
It follows from Lemma \ref{l:G1=G2} that $G(t)=\tilde G(t)$ for any $t\ge0$.
Let $t\ge0$. It follows from Proposition \ref{p:exten-fibra}  ($M\sim\{\psi<-t\}$, $\psi\sim\psi+t$ and $c(\cdot)\sim c(\cdot+t)$, here $\sim$ means the former replaced by the latter) that
\begin{equation*}
	\frac{\tilde G(t)}{\int_t^{+\infty}c(s)e^{-s}ds}\leq	\sum_{\beta\in\tilde I_1}\sum_{\alpha\in E_{\beta}}\frac{(2\pi)^{n_1}e^{-\sum_{1\le j\le n_1}\varphi_j(z_{j,\beta_j})}}{\prod_{1\le j\le n_1}(\alpha_j+1)c_{j}(z_j)^{2\alpha_{j}+2}}\int_Y|f_{\alpha,\beta}|^2e^{-\varphi_Y}.\end{equation*}
Note that
$$\tilde G(0)=\left(\int_0^{+\infty}c(s)e^{-s}ds\right)\sum_{\beta\in\tilde I_1}\sum_{\alpha\in E_{\beta}}\frac{(2\pi)^{n_1}e^{-\sum_{1\le j\le n_1}\varphi_j(z_{j,\beta_j})}}{\prod_{1\le j\le n_1}(\alpha_j+1)c_{j}(z_j)^{2\alpha_{j}+2}}\int_Y|f_{\alpha,\beta}|^2e^{-\varphi_Y}.$$
 Combining Theorem \ref{thm:general_concave}, we obtain that  $\tilde G({h}^{-1}(r))$ is linear with respect to $r$, which implies that $G({h}^{-1}(r))$ is linear with respect to $r$, where $h(t)=\int_t^{+\infty}c(s)e^{-s}ds$.  As $f_{\alpha,\beta^*}\equiv0$ for any $\alpha\not=\alpha_{\beta^*}$ satisfying $\sum_{1\le j\le n_1}\frac{\alpha_j+1}{p_{j,1}}=1$, where $\beta^*=(1,\ldots,1)\in\tilde I_1$,
the linearity of $G({h}^{-1}(r))$ contradicts to Theorem \ref{thm:linear-fibra-infinite}.
 Thus, we obtain that there exists a holomorphic $(n,0)$ form $\tilde F$ on $M_1$, which satisfies that $(\tilde{F}-f,z)\in\left(\mathcal{O}(K_{M})\otimes\mathcal{I}\left(\max_{1\le j\le n_1}\left\{2\sum_{1\le k<\tilde m_j}p_{j,k}\pi_{1,j}^{*}(G_{\Omega_j}(\cdot,z_{j,k}))\right\}\right)\right)_{z}$ for any $z\in Z_0$ and
 \begin{displaymath}
 	\begin{split}
 		&\int_{M}|\tilde F|^2e^{-\varphi}c(-\psi)\\
 		<&\left(\int_0^{+\infty}c(s)e^{-s}ds\right)\sum_{\beta\in\tilde I_1}\sum_{\alpha\in E_{\beta}}\frac{(2\pi)^{n_1}e^{-\left(\Psi+\sum_{1\le j\le n_1}\tilde\pi_j^*(\varphi_j)\right)(z_{\beta})}}{\prod_{1\le j\le n_1}(\alpha_j+1)c_{j}(z_j)^{2\alpha_{j}+2}}\int_Y|f_{\alpha,\beta}|^2e^{-\varphi_Y}.
 	\end{split}
 \end{displaymath}

\subsection{Proof of Remark \ref{r:1.9}}\label{sec:proof-1.9}
\

Note that $\left(\Psi+\sum_{1\le j\le n_1}\tilde\pi_j^*(\varphi_j)\right)(z_{\beta})>-\infty$ for any $\beta\in \tilde I_1$.  As $(f_{\alpha,\beta},y)\in(\mathcal{O}(K_Y)\otimes\mathcal{I}(\varphi_Y))_y$ for any $y\in Y$, $\alpha\in\tilde E_{\beta}\backslash E_{\beta}$ and $\beta\in\tilde I_1$, following from Lemma \ref{l:phi1+phi2}, Lemma \ref{l:0} and Lemma \ref{l:closedness}, we get that $\big(\sum_{\alpha\in \tilde E_{\beta}\backslash E_{\beta}}\pi_1^*(w_{\beta}^{\alpha}dw_{1,\beta_1}\wedge\ldots dw_{n_1,\beta_{n_1}})\wedge\pi_2^*(f_{\alpha,\beta}),z\big)\in(\mathcal{O}(K_{M_1})\otimes\mathcal{I}(\varphi+\psi))_z$ for any $z\in \{z_{\beta}\}\times Y$, where $\beta\in\tilde I_1$.

As $c(t)e^{-t}$ is decreasing and $\Psi\le0$, it follows from Proposition \ref{p:exten-fibra} that
there exists a holomorphic $(n,0)$ form $F$ on $M$, which  satisfies that $(F-f,z)\in\left(\mathcal{O}(K_{M})\otimes\mathcal{I}\left(\max_{1\le j\le n_1}\left\{2\sum_{1\le k<\tilde m_j}p_{j,k}\pi_{1,j}^{*}(G_{\Omega_j}(\cdot,z_{j,k}))\right\}\right)\right)_{z}$ for any $z\in Z_0$ and
\begin{equation}\label{eq:1225d}
	\begin{split}
	&\int_{M_1}|F|^2e^{-\varphi}c(-\psi)\\
	\le& \int_{M}|F|^2e^{-\varphi-\pi_1^*(\Psi)}c(-\psi+\pi_1^*(\Psi))\\
	\le&\left(\int_0^{+\infty}c(s)e^{-s}ds\right)\sum_{\beta\in\tilde I_1}\sum_{\alpha\in E_{\beta}}\frac{(2\pi)^{n_1}e^{-\left(\Psi+\sum_{1\le j\le n_1}\tilde\pi_j^*(\varphi_j)\right)(z_{\beta})}}{\prod_{1\le j\le n_1}(\alpha_j+1)c_{j}(z_j)^{2\alpha_{j}+2}}\int_Y|f_{\alpha,\beta}|^2e^{-\varphi_Y}.	
	\end{split}	
\end{equation}

If $\Psi\equiv0$, as $(F-f,z)\in\left(\mathcal{O}(K_{M})\otimes\mathcal{I}\left(\psi\right)\right)_{z}$ for any $z\in Z_0$, it follows from Lemma \ref{l:0} and Lemma \ref{l:fibra-decom} that we have $F=\sum_{\alpha\in E_{\beta}}\pi_1^*(w_{\beta}^{\alpha}dw_{1,\beta_1}\wedge\ldots\wedge dw_{n_1,\beta_{n_1}})\wedge\pi_2^*(f_{\alpha,\beta})+\sum_{\alpha\in\tilde E_{\beta}\backslash E_{\beta}}\pi_1^*(w_{\beta}^{\alpha}dw_{1,\beta_1}\wedge\ldots\wedge dw_{n_1,\beta_{n_1}})\wedge\pi_2^*(\tilde f_{\alpha,\beta})$ on $V_\beta\times Y$, where $\tilde f_{\alpha,\beta}$ is a holomorphic $(n_2,0)$ form on $Y$ satisfying $\int_Y|\tilde f_{\alpha,\beta}|^2e^{-\varphi_Y}<+\infty$ for any $\alpha\in\tilde E_{\beta}\backslash E_{\beta}$ and $\beta\in\tilde I_1$. Note that $\left(\Psi+\sum_{1\le j\le n_1}\tilde\pi_j^*(\varphi_j)\right)(z_\beta)>-\infty$. Following from Lemma \ref{l:0}, Lemma \ref{l:phi1+phi2} and Lemma \ref{l:closedness}, we obtain that $\left(\sum_{\alpha\in\tilde E_{\beta}\backslash E_{\beta}}\pi_1^*(w_{\beta}^{\alpha}dw_{1,\beta_1}\wedge\ldots\wedge dw_{n_1,\beta_{n_1}})\wedge\pi_2^*(\tilde f_{\alpha,\beta}),z\right)\in(\mathcal{O}(K_{M})\otimes\mathcal{I}(\varphi+\psi))_{z}$ for any $z\in \{z_\beta\}\times Y$, where $\beta\in\tilde I_1$. Hence, we have $(F-f,z)\in(\mathcal{O}(K_{M_1})\otimes\mathcal{I}(\varphi+\psi))_z$ for any $z\in Z_0$.

In the following, we assume that $\inf\big\{\int_{M_1}|\tilde{F}|^2e^{-\varphi}c(-\psi):\tilde{F}$ is a holomorphic $(n,0)$ form on $M_1$ such that $(\tilde{F}-f,z)\in(\mathcal{O}(K_{M_1})\otimes\mathcal{I}(\varphi+\psi))_{z}$ for any $z\in Z_0\big\}=\left(\int_0^{+\infty}c(s)e^{-s}ds\right)\sum_{\beta\in\tilde I_1}\sum_{\alpha\in E_{\beta}}\frac{(2\pi)^{n_1}e^{-\left(\Psi+\sum_{1\le j\le n_1}\tilde\pi_j^*(\varphi_j)\right)(z_{\beta})}}{\prod_{1\le j\le n_1}(\alpha_j+1)c_{j}(z_j)^{2\alpha_{j}+2}}\int_Y|f_{\alpha,\beta}|^2e^{-\varphi_Y}$ to get a contradiction.	

Using inequality \eqref{eq:1225d}, we have
$$\int_{M_1}|F|^2e^{-\varphi}c(-\psi)= \int_{M}|F|^2e^{-\varphi-\pi_1^*(\Psi)}c(-\psi+\pi_1^*(\Psi)).$$ Note that $c(t)e^{-t}$ is decreasing. As $F\not\equiv0$, we get that
$$M_1=M=\left(\prod_{1\le j\le n_1}\Omega_j\right)\times Y.$$ As  $\Psi\le0$, it follows from Lemma \ref{l:psi=G} that $\Psi\equiv0$, i.e.,
$$\psi=\max_{1\le j\le n_1}\left\{2\sum_{1\le k<\tilde m_j}p_{j,k}\pi_{1,j}^{*}(G_{\Omega_j}(\cdot,z_{j,k}))\right\}.$$
Denote
\begin{displaymath}
	\begin{split}
		\inf\bigg\{\int_{\{\psi<-t\}}|\tilde f|^2e^{-\varphi}&c(-\psi):\tilde f\in H^0(\{\psi<-t\},\mathcal{O}(K_M))\\
		&\&\,(\tilde f-F,z)\in(\mathcal{O}(K_M)\otimes\mathcal{I}(\varphi+\psi))_z\mbox{ for any $z\in Z_0$}\bigg\}
	\end{split}
\end{displaymath}
by $G(t)$, where $t\ge0$.
 Denote
\begin{displaymath}
	\begin{split}
		\inf\bigg\{\int_{\{\psi<-t\}}|\tilde f|^2e^{-\varphi}&c(-\psi):\tilde f\in H^0(\{\psi<-t\},\mathcal{O}(K_M))\\
		&\&\,(\tilde f-F,z)\in(\mathcal{O}(K_M)\otimes\mathcal{I}(\psi))_z\mbox{ for any $z\in Z_0$}\bigg\}
	\end{split}
\end{displaymath}
by $\tilde G(t)$, where $t\ge0$.
It follows from Lemma \ref{l:G1=G2} that $G(t)=\tilde G(t)$ for any $t\ge0$.
Let $t\ge0$. It follows from Proposition \ref{p:exten-fibra}  ($M\sim\{\psi<-t\}$, $\psi\sim\psi+t$ and $c(\cdot)\sim c(\cdot+t)$, here $\sim$ means the former replaced by the latter) that
\begin{equation*}
	\frac{\tilde G(t)}{\int_t^{+\infty}c(s)e^{-s}ds}\leq	\sum_{\beta\in\tilde I_1}\sum_{\alpha\in E_{\beta}}\frac{(2\pi)^{n_1}e^{-\sum_{1\le j\le n_1}\varphi_j(z_{j,\beta_j})}}{\prod_{1\le j\le n_1}(\alpha_j+1)c_{j}(z_j)^{2\alpha_{j}+2}}\int_Y|f_{\alpha,\beta}|^2e^{-\varphi_Y}.\end{equation*}
Note that
$$G(0)=\left(\int_0^{+\infty}c(s)e^{-s}ds\right)\sum_{\beta\in\tilde I_1}\sum_{\alpha\in E_{\beta}}\frac{(2\pi)^{n_1}e^{-\sum_{1\le j\le n_1}\varphi_j(z_{j,\beta_j})}}{\prod_{1\le j\le n_1}(\alpha_j+1)c_{j}(z_j)^{2\alpha_{j}+2}}\int_Y|f_{\alpha,\beta}|^2e^{-\varphi_Y}.$$
 Combining Theorem \ref{thm:general_concave}, we obtain that  $ G({h}^{-1}(r))$ is linear with respect to $r$, which implies that $G({h}^{-1}(r))$ is linear with respect to $r$, where $h(t)=\int_t^{+\infty}c(s)e^{-s}ds$.  As $f_{\alpha,\beta^*}\equiv0$ for any $\alpha\not=\alpha_{\beta^*}$ satisfying $\sum_{1\le j\le n_1}\frac{\alpha_j+1}{p_{j,1}}=1$, where $\beta^*=(1,\ldots,1)\in\tilde I_1$,
the linearity of $G({h}^{-1}(r))$ contradicts to Theorem \ref{thm:linear-fibra-infinite}.
 Thus, we obtain that there exists a holomorphic $(n,0)$ form $\tilde F$ on $\Omega$ such that $(\tilde{F}-f,z)\in(\varphi+\psi)_{z}$ for any $z\in Z_0$ and
 \begin{displaymath}
 	\begin{split}
 		&\int_{M}|\tilde F|^2e^{-\varphi}c(-\psi)\\
 		<&\left(\int_0^{+\infty}c(s)e^{-s}ds\right)\sum_{\beta\in\tilde I_1}\sum_{\alpha\in E_{\beta}}\frac{(2\pi)^{n_1}e^{-\left(\Psi+\sum_{1\le j\le n_1}\tilde\pi_j^*(\varphi_j)\right)(z_{\beta})}}{\prod_{1\le j\le n_1}(\alpha_j+1)c_{j}(z_j)^{2\alpha_{j}+2}}\int_Y|f_{\alpha,\beta}|^2e^{-\varphi_Y}.
 	\end{split}
 \end{displaymath}

\section{Proofs of Theorem \ref{thm:suita}, Remark \ref{r:suita},  Theorem \ref{thm:extend} and Remark \ref{r:extend}}
In this section, we prove Theorem \ref{thm:suita}, Remark \ref{r:suita},  Theorem \ref{thm:extend} and Remark \ref{r:extend}.

\subsection{Proofs of Theorem \ref{thm:suita} and Remark \ref{r:suita}}
\

Let $f_1=dw_1\wedge\ldots\wedge dw_{n_1}\wedge d\tilde w_1\wedge\ldots\wedge d\tilde w_{n_2}$ on $V_0\times U_0$, and let $f_2=d\tilde w_1\wedge\ldots\wedge d\tilde w_{n_2}$ on $U_0$. Let $\psi=\max_{1\le j\le n_1}\left\{\pi_{1,j}^*(2n_1G_{\Omega_j}(\cdot,z_j))\right\}$. Following from Lemma \ref{l:0}, we get that $(H_1-H_2,(z_0,y))\in\mathcal{I}(\psi)_{(z_0,y)}$ for any $y\in Y$ if and only if $(H_1-H_2)|_{\{z_0\}\times Y}=0$, where $H_1$ and $H_2$ are holomorphic $(n,0)$ form on a neighborhood of $\{z_0\}\times Y$. Let $f$ be a holomorphic $(n_2,0)$ form on $Y$ satisfying $\int_Y|f|^2<+\infty$. It follows from Proposition \ref{p:exten-fibra} that there exists a holomorphic $(n,0)$ form $F$ on $M$ such that
$F|_{\{z_0\}\times Y}=\pi_1^*(dw_1\wedge\ldots\wedge dw_{n_1})\wedge\pi_2^*(f)$
 and
$$\int_{M}|F|^2\le \frac{(2\pi)^{n_1}}{\prod_{1\le j\le n_1}c_j(z_j)^2}\int_Y|f|^2.$$
Note that
$$B_Y(y_0)=\frac{2^{n_2}}{\inf\left\{\int_Y|f|^2:f\in H^0(Y,\mathcal{O}(K_Y))\,\&\,f(y_0)=f_2(y_0)\right\}}$$
and
$$B_M((z_0,y_0))=\frac{2^{n}}{\inf\left\{\int_M|F|^2:F\in H^0(M,\mathcal{O}(K_M))\,\&\,F((z_0,y_0))=f_1((z_0,y_0))\right\}}.$$
Thus, we have $\prod_{1\le j\le n_1}c_j(z_j)^2 B_Y(y_0)\le \pi^{n_1}B_M((z_0,y_0)).$

In the following, we prove the characterization of the holding of the  equality $\prod_{1\le j\le n_1}c_j(z_j)^2 B_Y(y_0)= \pi^{n_1}B_M((z_0,y_0))$.

There exists a holomorphic $(n_2,0)$ form $f_0$ on $Y$ such that $f_0(y_0)=f_2(y_0)$ and
$$B_Y(y_0)=\frac{2^{n_2}}{\int_Y|f_0|^2}>0.$$
It follows from Proposition \ref{p:exten-fibra} that there exists a holomorphic $(n,0)$ form $F_0$ on $M$ such that $F_0=\pi_1^*(dw_1\wedge\ldots\wedge dw_{n_1})\wedge\pi_2^*(f_0)$ and
\begin{equation}
	\label{eq:1226d}\int_{M}|F_0|^2\le \frac{(2\pi)^{n_1}}{\prod_{1\le j\le n_1}c_j(z_j)^2}\int_Y|f_0|^2.
\end{equation}

Firstly, we prove the necessity. Note that $B_M((z_0,y_0))\ge\frac{2^n}{\int_M|\tilde F|^2}$ for any holomorphic $(n,0)$ form  $\tilde F$  on $M$ satisfying that  $\tilde F=\pi_1^*(dw_1\wedge\ldots\wedge dw_{n_1})\wedge\pi_2^*(f_0)$ on $\{z_0\}\times Y$. Combining $\prod_{1\le j\le n_1}c_j(z_j)^2 B_Y(y_0)= \pi^{n_1}B_M((z_0,y_0))$, $B_Y(y_0)=\frac{2^{n_2}}{\int_Y|f_0|^2}$ and inequality \eqref{eq:1226d}, we obtain that  $\frac{(2\pi)^{n_1}}{\prod_{1\le j\le n_1}c_j(z_j)^2}\int_Y|f_0|^2=\inf\big\{\int_{M}|\tilde F|^2:\tilde F\in H^0(M,\mathcal{O}(K_M))\,\&\, \tilde F|_{\{z_0\}\times Y}=\pi_1^*(dw_1\wedge\ldots\wedge dw_{n_1})\wedge\pi_2^*(f_0)\big\}.$
It follows from Theorem \ref{thm:exten-fibra-single} that $\chi_{j,z_j}=1$ for any $1\le j\le n_1$. $\chi_{j,z_j}=1$ implies that there exists a holomorphic function $f_j$ on $\Omega_j$ such that $|f_j|=e^{G_{\Omega_j}(\cdot,z_j)}$, thus $\Omega_j$ is conformally equivalent to the unit disc less a (possible) closed set of inner capacity zero (see \cite{suita72}, see also \cite{Yamada} and \cite{GZ15}).

Now, we prove the sufficiency. As $\Omega_j$ is conformally equivalent to the unit disc less a (possible) closed set of inner capacity zero, we have $\chi_{j,z_j}=1$. We prove $\prod_{1\le j\le n_1}c_j(z_j)^2 B_Y(y_0)= \pi^{n_1}B_M((z_0,y_0))$ by contradiction: if not, there exists a holomorphic $(n,0)$ form $\tilde F_0$ on $M$ such that $\tilde F_0((z_0,y_0))=f_1((z_0,y_0))$ and
\begin{equation}
	\label{eq:1226e}\int_M|\tilde F_0|^2<\frac{(2\pi)^{n_1}}{\prod_{1\le j\le n_1}c_j(z_j)^2}\int_Y|f_0|^2.
\end{equation}
There exists a holomorphic $(n_2,0)$ form $\tilde f_0$ on $Y$ such that $\tilde F_0=\pi_1^*(dw_1\wedge\ldots\wedge dw_{n_1})\wedge\pi_2^*(\tilde f_0)$ on $\{z_0\}\times Y$. Hence $\tilde f_0(y_0)=f_2(y_0)=f_0(y_0)$, which implies that $\int_Y|\tilde f_0|^2\ge \int_Y|f_0|^2$. Combining inequality \eqref{eq:1226e}, we have $\inf\big\{\int_M|\tilde F|^2:F\in H^0(M,\mathcal{O}(K_M))\,\&\,\tilde F|_{\{z_0\}\times Y}=\pi_1^*(dw_1\wedge\ldots\wedge dw_{n_1})\wedge\pi_2^*(\tilde f_0)\big\}<\frac{(2\pi)^{n_1}}{\prod_{1\le j\le n_1}c_j(z_j)^2}\int_Y|\tilde f_0|^2$, which contradicts to Theorem \ref{thm:exten-fibra-single}, hence $\prod_{1\le j\le n_1}c_j(z_j)^2 B_Y(y_0)= \pi^{n_1}B_M((z_0,y_0))$.

Thus, Theorem \ref{thm:suita} holds.

Note that $B_{M_1}((z_0,y_0))\ge B_M((z_0,y_0))>0$ and $B_{M_1}((z_0,y_0))= B_M((z_0,y_0))$ if and only if $M=M_1$, thus Theorem \ref{thm:suita} shows Remark \ref{r:suita} holds.

\subsection{Proofs of Theorem \ref{thm:extend} and Remark \ref{r:extend}}
\

Let $f_1=dw_1\wedge\ldots\wedge dw_{n_1}\wedge d\tilde w_1\wedge\ldots\wedge d\tilde w_{n_2}$ on $V_0\times U_0$, and let $f_2=d\tilde w_1\wedge\ldots\wedge d\tilde w_{n_2}$ on $U_0$. Let $\psi=\max_{1\le j\le n_1}\left\{\pi_{1,j}^*(2n_1G_{\Omega_j}(\cdot,z_j))\right\}$. Following from Lemma \ref{l:0}, we get that $(H_1-H_2,(z_0,y))\in\mathcal{I}(\psi)_{(z_0,y)}$ for any $y\in Y$ if and only if $(H_1-H_2)|_{\{z_0\}\times Y}=0$, where $H_1$ and $H_2$ are holomorphic $(n,0)$ form on a neighborhood of $\{z_0\}\times Y$. Let $f$ be a holomorphic $(n_2,0)$ form on $Y$ satisfying $\int_Y|f|^2<+\infty$. It follows from Proposition \ref{p:exten-fibra} that there exists a holomorphic $(n,0)$ form $F$ on $M$ such that
$F|_{\{z_0\}\times Y}=\pi_1^*(dw_1\wedge\ldots\wedge dw_{n_1})\wedge\pi_2^*(f)$
 and
$$\int_{M}|F|^2\rho\le \frac{(2\pi)^{n_1}\rho(z_0)}{\prod_{1\le j\le n_1}c_j(z_j)^2}\int_Y|f|^2.$$
Note that
$$B_Y(y_0)=\frac{2^{n_2}}{\inf\left\{\int_Y|f|^2:f\in H^0(Y,\mathcal{O}(K_Y))\,\&\,f(y_0)=f_2(y_0)\right\}}$$
and
$$B_{M,\rho}((z_0,y_0))=\frac{2^{n}}{\inf\left\{\int_M|F|^2\rho:F\in H^0(M,\mathcal{O}(K_M))\,\&\,F((z_0,y_0))=f_1((z_0,y_0))\right\}}.$$
Thus, we have $\prod_{1\le j\le n_1}c_j(z_j)^2 B_Y(y_0)\le \pi^{n_1}\rho(z_0)B_{M,\rho}((z_0,y_0)).$

In the following, we prove the characterization of the holding of the  equality $\prod_{1\le j\le n_1}c_j(z_j)^2 B_Y(y_0)= \pi^{n_1}\rho(z_0)B_{M,\rho}((z_0,y_0))$.

There exists a holomorphic $(n_2,0)$ form $f_0$ on $Y$ such that $f_0(y_0)=f_2(y_0)$ and
$$B_Y(y_0)=\frac{2^{n_2}}{\int_Y|f_0|^2}>0.$$
It follows from Proposition \ref{p:exten-fibra} that there exists a holomorphic $(n,0)$ form $F_0$ on $M$ such that $F_0=\pi_1^*(dw_1\wedge\ldots\wedge dw_{n_1})\wedge\pi_2^*(f_0)$ and
\begin{equation}
	\label{eq:1226f}\int_{M}|F_0|^2\rho\le \frac{(2\pi)^{n_1}\rho(z_0)}{\prod_{1\le j\le n_1}c_j(z_j)^2}\int_Y|f_0|^2.
\end{equation}

Firstly, we prove the necessity. Note that $B_{M,\rho}((z_0,y_0))\ge\frac{2^n}{\int_M|\tilde F|^2\rho}$ for any holomorphic $(n,0)$ form $\tilde F$  on $M$ satisfying that  $\tilde F=\pi_1^*(dw_1\wedge\ldots\wedge dw_{n_1})\wedge\pi_2^*(f_0)$ on $\{z_0\}\times Y$. Combining $\prod_{1\le j\le n_1}c_j(z_j)^2 B_Y(y_0)= \pi^{n_1}\rho(z_0)B_{M,\rho}((z_0,y_0))$, $B_Y(y_0)=\frac{2^{n_2}}{\int_Y|f_0|^2}$ and inequality \eqref{eq:1226f}, we obtain that  $\frac{(2\pi)^{n_1}\rho(z_0)}{\prod_{1\le j\le n_1}c_j(z_j)^2}\int_Y|f_0|^2=\inf\big\{\int_{M}|\tilde F|^2\rho:\tilde F\in H^0(M,\mathcal{O}(K_M))\,\&\, \tilde F|_{\{z_0\}\times Y}=\pi_1^*(dw_1\wedge\ldots\wedge dw_{n_1})\wedge\pi_2^*(f_0)\big\}.$
It follows from Theorem \ref{thm:exten-fibra-single} that $\chi_{j,z_j}=\chi_{j,-u_j}$ for any $1\le j\le n_1$.

Now, we prove $\prod_{1\le j\le n_1}c_j(z_j)^2 B_Y(y_0)= \pi^{n_1}\rho(z_0)B_{M,\rho}((z_0,y_0))$ by contradiction: if not, there exists a holomorphic $(n,0)$ form $\tilde F_0$ on $M$ such that $\tilde F_0((z_0,y_0))=f_1((z_0,y_0))$ and
\begin{equation}
	\label{eq:1226g}\int_M|\tilde F_0|^2\rho<\frac{(2\pi)^{n_1}\rho(z_0)}{\prod_{1\le j\le n_1}c_j(z_j)^2}\int_Y|f_0|^2.
\end{equation}
There exists a holomorphic $(n_2,0)$ form $\tilde f_0$ on $Y$ such that $\tilde F_0=\pi_1^*(dw_1\wedge\ldots\wedge dw_{n_1})\wedge\pi_2^*(\tilde f_0)$ on $\{z_0\}\times Y$. Hence $\tilde f_0(y_0)=f_2(y_0)=f_0(y_0)$, which implies that $\int_Y|\tilde f_0|^2\ge \int_Y|f_0|^2$. Combining inequality \eqref{eq:1226g}, we have $\inf\big\{\int_M|\tilde F|^2\rho:F\in H^0(M,\mathcal{O}(K_M))\,\&\,\tilde F|_{\{z_0\}\times Y}=\pi_1^*(dw_1\wedge\ldots\wedge dw_{n_1})\wedge\pi_2^*(\tilde f_0)\big\}<\frac{(2\pi)^{n_1}\rho(z_0)}{\prod_{1\le j\le n_1}c_j(z_j)^2}\int_Y|\tilde f_0|^2$, which contradicts to Theorem \ref{thm:exten-fibra-single}, hence $\prod_{1\le j\le n_1}c_j(z_j)^2 B_Y(y_0)= \pi^{n_1}\rho(z_0)B_{M,\rho}((z_0,y_0))$.

Thus, Theorem \ref{thm:extend} holds.

Note that $B_{M_1,\rho}((z_0,y_0))\ge B_{M,\rho}((z_0,y_0))>0$ and $B_{M_1,\rho}((z_0,y_0))= B_{M,\rho}((z_0,y_0))$ if and only if $M=M_1$, thus Theorem \ref{thm:extend} shows Remark \ref{r:extend} holds.

\


\vspace{.1in} {\em Acknowledgements}.
The second named author was supported by NSFC-11825101, NSFC-11522101 and NSFC-11431013.

\bibliographystyle{references}
\bibliography{xbib}

\end{document}